\newtheorem{thm}{Theorem}[section]
\newtheorem{prop}{Proposition}[section]
\newtheorem{lem}{Lemma}[section]
\newtheorem{cor}{Corollary}[section]
\newtheorem{defi}{Definition}[section]
\newtheorem{rem}{Remark}[section]
\newtheorem{q}{Question}[section]
\newtheorem*{ques}{\it{Question}}
\begin{document}

\title{Shadowing, transitivity and a variation of omega-chaos}
\author{Noriaki Kawaguchi}
\subjclass[2020]{37D45; 37B65}
\keywords{shadowing; transitivity; compact Hausdorff spaces; zero-dimension; omega-chaos; generic; irregular points; historical behaviour}
\address{Research Institute of Science and Technology, Tokai University, 4-4-1 Kitakaname, Hiratsuka, Kanagawa 259-1292, Japan}
\email{gknoriaki@gmail.com}

\begin{abstract}
We study a special type of shadowing (DSP) of chain transitive continuous self-maps of compact Hausdorff spaces. We prove some basic properties of DSP.  As application of DSP, we obtain sufficient conditions for a statistical variant of $\omega$-chaos and prove the topological genericity of it. We also consider topological distribution of irregular points under the assumption of DSP.
\end{abstract}

\maketitle

\markboth{NORIAKI KAWAGUCHI}{Shadowing, transitivity and a variation of omega-chaos}

\section{Introduction}

{\em Shadowing} is a phenomenon discovered in the study of hyperbolic differentiable dynamics, especially, in the study of geodesic flows on negatively curved closed Riemannian manifolds by Anosov \cite{An} and Axiom A diffeomorphisms by Bowen \cite{B}. It is one of ``hyperbolic'' features of dynamical systems, which is invariant under the topological conjugacy, and generally refers to a property that coarse orbits, or {\em pseudo-orbits}, can be approximated by true orbits. Since Bowen and until recently, various types of shadowing properties have been defined for different purposes (see \cite{AH,P} for general background).

{\em Chaos} is a central notion in the modern theory of dynamical systems. One of topological definitions of chaos is the  {\em distributional chaos of type 1} (abbrev.\:DC1) introduced by Schweizer and Sm\'ital \cite{SS} as a statistical enhancement of Li--Yorke chaos \cite{LY}. The relationship between the shadowing and DC1 has been a subject of several studies \cite{AC,Ka2,Ka3,Ka4,LLT,OW}. Especially, in \cite{LLT}, Li, Li, and Tu raised the following question:

\begin{ques}
For any continuous self-map of a compact metric space, do the transitivity, the shadowing, and positive topological entropy imply DC1?
\end{ques}

Recently in \cite{Ka4}, the author gave a partial answer to this question. Precisely, it is proved in \cite{Ka4} that for any continuous self-map $f\colon X\to X$ of a compact metric space $X$, if
\begin{itemize}
\item[(1)] $f$ is transitive and has the shadowing property,
\item[(2)] $f$ has positive topological entropy,
\end{itemize}
and if $(X,f)$ has an extension $(Y,g)$ such that
\begin{itemize}
\item[(3)] $Y$ is a zero-dimensional, i.e., totally disconnected, compact metric space,
\item[(4)] $g$ is transitive and has the shadowing property,
\end{itemize}
then $f$ exhibits DC1 in a strong sense. It gives a positive answer to the above question especially when $X$ is zero-dimensional. A key technical observation in the proof of this result is that a {\em constrained} shadowing with respect to a certain closed partition of the phase space, followed from the conditions (1), (3), and (4), allows us to apply a theorem of Mycielski \cite[Theorem 1]{My} to obtain distributionally scrambled Mycielski sets in each part of the partition (see also \cite{Ka2}). The purpose of this paper is to extend the study of this special type of shadowing and provide its applications. Below is a brief summary of the contents of this paper.
\begin{itemize}
\item We generalize the special type of shadowing considered in \cite{Ka4}, so-called {\em $\mathcal{D}$-shadowing property} (abbrev.\:DSP), for chain transitive continuous self-maps of {\em compact Hausdorff spaces} and prove some of its basic properties. The properties include the preservation of DSP with respect to factorization and the continuity, with respect to the Vietoris topology, of the closed partition of the phase space that DSP relies on.  
\item We introduce {\em $\overline{\omega}$-chaos}, a statistical variant of $\omega$-chaos \cite{SHLi}, and prove that for any continuous self-map of a compact metric space, the transitivity, DSP, and positive topological entropy are sufficient conditions for $\overline{\omega}$-chaos. The proof is by a statistical shadowing followed from DSP, which is a generalization of the {\em asymptotic average shadowing property}. As a corollary, we show that the $C^0$-generic dynamics on closed differentiable manifolds is $\overline{\omega}$-chaotic. We also prove that for interval maps, $\overline{\omega}$-chaos is equivalent to positive topological entropy.
\item Under the assumption of DSP, we consider topological distribution of {\em irregular points} and improve a result of \cite{Ka4}.
\end{itemize}

First, we recall the definition of (standard) shadowing. The shadowing is considered, for example, in \cite{GMa}, for continuous self-maps of compact Hausdorff spaces. Let $X$ be a compact Hausdorff space. A finite {\em open cover} of $X$ is a finite set of open subsets of $X$ whose union is $X$. We denote by $\mathscr{U}(X)$ the set of finite open covers of $X$.

\begin{defi}
\normalfont
Given a continuous map $f\colon X\to X$ and $\mathcal{U}\in\mathscr{U}(X)$, a finite sequence $(x_i)_{i=0}^{k}$ of points in $X$, where $k>0$ is a positive integer, is called a {\em $\mathcal{U}$-chain} of $f$ if $\{f(x_i),x_{i+1}\}\subset U_i$ for all $0\le i\le k-1$ for some $U_i\in\mathcal{U}$. A $\mathcal{U}$-chain $(x_i)_{i=0}^{k}$ of $f$ is said to be a {\em $\mathcal{U}$-cycle} of $f$ if $x_0=x_k$. Let $\xi=(x_i)_{i\ge0}$ be a sequence of points in $X$. For $\mathcal{U}\in\mathscr{U}(X)$, $\xi$ is called a {\em $\mathcal{U}$-pseudo orbit} of $f$ if $\{f(x_i),x_{i+1}\}\subset U_i$ for all $i\ge0$ for some $U_i\in\mathcal{U}$. For $\mathcal{V}\in\mathscr{U}(X)$, $\xi$ is said to be {\em $\mathcal{V}$-shadowed} by $x\in X$ if $\{f^i(x),x_i\}\subset V_i$ for all $i\ge 0$ for some $V_i\in\mathcal{V}$. We say that $f$ has the {\em shadowing property} if for any $\mathcal{V}\in\mathscr{U}(X)$, there is $\mathcal{U}\in\mathscr{U}(X)$ such that every $\mathcal{U}$-pseudo orbit of $f$ is $\mathcal{V}$-shadowed by some point of $X$.
\end{defi}

\begin{rem}
\normalfont
The corresponding metric definitions are given in Section 2.2.
\end{rem}

The definition of the special shadowing relies on an equivalence relation, i.e., a partition of the phase space, defined for chain transitive continuous self-maps of compact Hausdorff spaces. It should be noted that the relation was introduced in \cite{Sim} and later rediscovered in \cite{RW} (based on the argument given in \cite[Exercise 8.22]{A}). The relation is generalized in, for example, \cite{AWC} for chain transitive uniformly continuous self-maps of uniform spaces. We deal with the case of compact Hausdorff spaces.

Let $X$ be a compact Hausdorff space.

\begin{defi}
\normalfont
We say that a continuous map $f\colon X\to X$ is {\em chain transitive} if for any $x,y\in X$ and $\mathcal{U}\in\mathscr{U}(X)$, there is a $\mathcal{U}$-chain $(x_i)_{i=0}^k$ of $f$ such that $x_0=x$ and $x_k=y$. 
\end{defi}

Let $f\colon X\to X$ be a chain transitive continuous map. Given any $\mathcal{U}\in\mathscr{U}(X)$, the {\em length} of a $\mathcal{U}$-cycle $(x_i)_{i=0}^k$ of $f$ is defined to be $k$. Let $m=m(\mathcal{U})>0$ be the greatest common divisor of the lengths of all $\mathcal{U}$-cycles of $f$. A relation $\sim_{f,\mathcal{U}}$ in $X^2$ is defined by: for all $x,y\in X$, $x\sim_{f,\mathcal{U}}y$ if and only if there is a $\mathcal{U}$-chain $(x_i)_{i=0}^k$ of $f$ with $x_0=x$, $x_k=y$, and $m|k$. 

\begin{rem}
\normalfont
Under the chain transitivity assumption, the following properties hold:
\begin{itemize}
\item[(P1)] $\sim_{f,\mathcal{U}}$ is an open and closed $(f\times f)$-invariant equivalence relation in $X^2=X\times X$.
\item[(P2)] Any $x,y\in X$ with $\{x,y\}\subset U$ for some $U\in\mathcal{U}$ satisfies $x\sim_{f,\mathcal{U}}y$, so for any $\mathcal{U}$-chain $(x_i)_{i=0}^k$ of $f$, we have $f(x_i)\sim_{f,\mathcal{U}}x_{i+1}$ for each $0\le i\le k-1$, implying $x_i\sim_{f,\mathcal{U}}f^i(x_0)$ for every $0\le i\le k$.
\item[(P3)] For any $x\in X$ and $n\ge0$, $x\sim_{f,\mathcal{U}}f^{mn}(x)$.
\item[(P4)] There exists $N>0$ such that for any $x,y\in X$ with $x\sim_{f,\mathcal{U}}y$ and $n\ge N$, there is a $\mathcal{U}$-chain $(x_i)_{i=0}^k$ of $f$ with $x_0=x$, $x_k=y$, and $k=mn$.
\end{itemize}

Property (P2) can be proved as follows. For any $x,y\in X$ with $\{x,y\}\subset U$ for some $U\in\mathcal{U}$, since $f$ is chain transitive and so surjective, $f(z)=x$ for some $z\in X$. The chain transitivity of $f$ gives a $\mathcal{U}$-chain $(x_i)_{i=0}^k$ of $f$ with $x_0=x$ and $x_k=z$. Then, since
\[
(z,x_0,x_1,\dots,x_k)
\]
is a $\mathcal{U}$-cycle of $f$, by the definition of $m$, we obtain $m|k+1$. Consider a $\mathcal{U}$-chain
\[
(y_0,y_1,\dots,y_n)=(x_0,x_1,\dots,x_k,y)
\]
of $f$ and note that $n=k+1$. By the definition of $\sim_{f,\mathcal{U}}$, we obtain $x\sim_{f,\mathcal{U}}y$, proving the claim.

Property (P4) is stated in \cite[Lemma 2.3]{BMR} for the metrizable case. It is, in fact, a consequence of a Schur's theorem implying that for any positive integers $m,k_1,k_2,\dots,k_n$, if
\[
\gcd(k_1,k_2,\dots,k_n)=m,
\]
then every sufficiently large multiple $M$ of $m$ can be expressed as a linear combination
\[
M=a_1k_1+a_2k_2+\cdots+a_nk_n
\]
where $a_1,a_2,\dots,a_n$ are non-negative integers. 
\end{rem}

Fix $x\in X$ and let $D_i$, $i\ge0$, denote the equivalence class of $\sim_{f,\mathcal{U}}$ including $f^i(x)$. Then, $D_m=D_0$, and $X=\bigsqcup_{i=0}^{m-1}D_i$ gives the partition of $X$ into the equivalence classes of $\sim_{f,\mathcal{U}}$. Note that every $D_i$, $0\le i\le m-1$, is a clopen subset of $X$ and satisfies $f(D_i)=D_{i+1}$. We call
\[
\mathcal{D}_f^\mathcal{U}=\{D_i\colon 0\le i\le m-1\}
\]
the {\em $\mathcal{U}$-cyclic decomposition} of $X$.

\begin{defi}
\normalfont
We define a relation $\sim_f$ in $X^2$ by: for all $x,y\in X$, $x\sim_f y$ if and only if $x\sim_{f,\mathcal{U}}y$ for every $\mathcal{U}\in\mathscr{U}(X)$. It is a closed $(f\times f)$-invariant equivalence relation in $X^2$.
\end{defi}

\begin{rem}
\normalfont
\begin{itemize}
\item[(1)] We say that $(x,y)\in X^2$ is a {\em chain proximal} pair for $f$ if for every $\mathcal{U}\in\mathscr{U}(X)$, there is a pair of $\mathcal{U}$-chains
\[
((x_i)_{i=0}^k,(y_i)_{i=0}^k)
\]
of $f$ such that $(x_0,y_0)=(x,y)$ and $x_k=y_k$.  By the $\mathcal{U}$-cyclic decomposition, we easily see that any chain proximal pair $(x,y)\in X^2$ for $f$ satisfies $x\sim_{f,\mathcal{U}}y$ for all $\mathcal{U}\in\mathscr{U}(X)$, that is, $x\sim_f y$. Conversely, by property (P4) of $\sim_{f,\mathcal{U}}$, for any $(x,y)\in X^2$, $x\sim_f y$ implies that $(x,y)$ is a chain proximal pair for $f$. In other words, for any $(x,y)\in X^2$, it holds that $x\sim_f y$ if only if $(x,y)$ is a chain proximal pair for $f$. This fact is stated in \cite{RW} for the metrizable case.
\item[(2)] We say that a continuous map $f\colon X\to X$ is {\em chain mixing} if for any $x,y\in X$ and $\mathcal{U}\in\mathscr{U}(X)$, there exists $N>0$ such that for each $k\ge N$, there is a $\mathcal{U}$-chain $(x_i)_{i=0}^k$ of $f$ with $x_0=x$ and $x_k=y$. If a continuous map $f\colon X\to X$ is chain mixing, then $f$ is chain transitive, and $m=m(\mathcal{U})=1$ for every $\mathcal{U}(X)$, so we have $x\sim_f y$ for all $x,y\in X$. Conversely, if a chain transitive map $f\colon X\to X$ satisfies $x\sim_f y$ for all $x,y\in X$, then for every $\mathcal{U}\in\mathcal{U}(X)$, by the $\mathcal{U}$-cyclic decomposition, we have $m=m(\mathcal{U})=1$, so by property (P4) of $\sim_{f,\mathcal{U}}$, $f$ is chain mixing. In other words, a continuous map $f\colon X\to X$ is chain mixing if only if $f$ is chain transitive and satisfies $x\sim_f y$ for all $x,y\in X$. 
\end{itemize}
\end{rem}

\begin{rem}
\normalfont
The corresponding metric definitions are given in Section 2.2.
\end{rem}

\begin{rem}
\normalfont
A continuous map $f\colon X\to X$ is said to be {\em transitive} (resp.\:{\em mixing}) if for any non-empty open subsets $U,V$ of $X$, it holds that $f^j(U)\cap V\ne\emptyset$ for some $j>0$ (resp.\:for all $j\ge i$ for some $i>0$). If $f$ is transitive (resp.\:mixing), then $f$ is chain transitive (resp.\:chain mixing), and the converse holds when $f$ has the shadowing property. 
\end{rem}

Let $X$ be a compact Hausdorff space and let $f\colon X\to X$ be a chain transitive continuous map. We denote by $\mathcal{D}(f)$ the set of equivalence classes of $\sim_f$. Since $\sim_f$ is a closed relation in $X^2$, every $D\in\mathcal{D}(f)$ is a closed subset of $X$. For any $\mathcal{U},\mathcal{V}\in\mathscr{U}(X)$, let $\mathcal{D}^{\mathcal{V},\mathcal{U}}(f)$ denote the set of $D\in\mathcal{D}(f)$ such that every $\mathcal{U}$-pseudo orbit $(x_i)_{i\ge0}$ of $f$ with $x_0\in D$ is $\mathcal{V}$-shadowed by some $x\in D$. We set
\[
\mathcal{D}_{\rm sh}(f)=\bigcap_{\mathcal{V}\in\mathscr{U}(X)}\bigcup_{\mathcal{U}\in\mathscr{U}(X)}\mathcal{D}^{\mathcal{V},\mathcal{U}}(f).
\]

The first result of this paper is the following.

\begin{thm}
For any chain transitive continuous map $f\colon X\to X$, the following properties are equivalent:
\begin{itemize}
\item[(1)] $\mathcal{D}_{\rm sh}(f)\ne\emptyset$,
\item[(2)] $\mathcal{D}(f)=\mathcal{D}_{\rm sh}(f)$,
\item[(3)] for any $\mathcal{V}\in\mathscr{U}(X)$, there exists $\mathcal{U}\in\mathscr{U}(X)$ such that $\mathcal{D}(f)=\mathcal{D}^{\mathcal{V},\mathcal{U}}(f)$.
\end{itemize}
\end{thm}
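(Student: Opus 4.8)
The implications $(3)\Rightarrow(2)\Rightarrow(1)$ are routine, and I would dispatch them first: if $(3)$ holds then for each $\mathcal V\in\mathscr U(X)$ one has $\mathcal D(f)=\mathcal D^{\mathcal V,\mathcal U}(f)\subseteq\bigcup_{\mathcal U'}\mathcal D^{\mathcal V,\mathcal U'}(f)\subseteq\mathcal D(f)$, so every set intersected in the definition of $\mathcal D_{\rm sh}(f)$ equals $\mathcal D(f)$ and $\mathcal D_{\rm sh}(f)=\mathcal D(f)$, which is $(2)$; and $(2)\Rightarrow(1)$ since $\mathcal D(f)\ne\emptyset$. The content is $(1)\Rightarrow(3)$, and that is where the work goes.

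For $(1)\Rightarrow(3)$, fix $D^\ast\in\mathcal D_{\rm sh}(f)$ and a point $p\in D^\ast$, and fix $\mathcal V\in\mathscr U(X)$. Using regularity of the compact Hausdorff space $X$ I would first pass to a refinement $\mathcal V_1\in\mathscr U(X)$ of $\mathcal V$ with the shrinking property that each $V_1\in\mathcal V_1$ satisfies $\overline{V_1}\subseteq V$ for some $V\in\mathcal V$. Since $D^\ast\in\mathcal D_{\rm sh}(f)$, there is $\mathcal U\in\mathscr U(X)$ such that every $\mathcal U$-pseudo orbit of $f$ with initial point in $D^\ast$ is $\mathcal V_1$-shadowed by a point of $D^\ast$; the plan is to show $\mathcal D(f)=\mathcal D^{\mathcal V,\mathcal U}(f)$ for this $\mathcal U$. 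As the inclusion $\mathcal D^{\mathcal V,\mathcal U}(f)\subseteq\mathcal D(f)$ is automatic, it suffices, given an arbitrary $D\in\mathcal D(f)$, $x_0\in D$, and a $\mathcal U$-pseudo orbit $\xi=(x_i)_{i\ge0}$ of $f$, to produce a point of $D$ that $\mathcal V$-shadows $\xi$.

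The device is to transport $\xi$ into $D^\ast$ by prepending a connecting chain, but to do so with many different lengths — one for each $\mathcal W\in\mathscr U(X)$ — and pass to a limit. For each $\mathcal W$, chain transitivity applied to $\mathcal W\wedge\mathcal U$ yields a chain $(p=w_0,\dots,w_{\ell_{\mathcal W}}=x_0)$ that is both a $\mathcal U$-chain and a $\mathcal W$-chain, so $x_0\sim_{f,\mathcal W}f^{\ell_{\mathcal W}}(p)$ by (P2). Prepending it to $\xi$ gives a $\mathcal U$-pseudo orbit $\eta_{\mathcal W}=(w_0,\dots,w_{\ell_{\mathcal W}-1},x_0,x_1,\dots)$ with initial point $p\in D^\ast$, hence $\mathcal V_1$-shadowed by some $q_{\mathcal W}\in D^\ast$; reading off coordinates of index $\ge\ell_{\mathcal W}$ shows that $y_{\mathcal W}:=f^{\ell_{\mathcal W}}(q_{\mathcal W})$ already $\mathcal V_1$-shadows $\xi$, so $y_{\mathcal W}$ lies in the closed set $S=\{x\in X:\forall i\ \exists V_1\in\mathcal V_1,\ x_i\in V_1,\ f^i(x)\in\overline{V_1}\}$, which by the shrinking property of $\mathcal V_1$ is contained in the set of $\mathcal V$-shadowing points of $\xi$. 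Moreover $q_{\mathcal W}$ and $p$ lie in the single $\sim_f$-class $D^\ast$, so $q_{\mathcal W}\sim_{f,\mathcal W}p$; by the $(f\times f)$-invariance (P1) of $\sim_{f,\mathcal W}$ this gives $y_{\mathcal W}\sim_{f,\mathcal W}f^{\ell_{\mathcal W}}(p)\sim_{f,\mathcal W}x_0$, so $y_{\mathcal W}$ lies in the clopen $\sim_{f,\mathcal W}$-class $D_{\mathcal W}$ of $x_0$.

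Finally I would view $(y_{\mathcal W})_{\mathcal W\in\mathscr U(X)}$ as a net ($\mathscr U(X)$ directed by refinement) and take a cluster point $y$, using compactness of $X$. For a fixed $\mathcal W_0$, every $\mathcal W$ refining $\mathcal W_0$ satisfies $\sim_{f,\mathcal W}\,\subseteq\,\sim_{f,\mathcal W_0}$ (a $\mathcal W$-cycle is a $\mathcal W_0$-cycle, so $m(\mathcal W_0)\mid m(\mathcal W)$), hence $y_{\mathcal W}\in D_{\mathcal W_0}$; thus the net is eventually in the closed set $D_{\mathcal W_0}$ and $y\in D_{\mathcal W_0}$. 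As $\mathcal W_0$ was arbitrary, $y\in\bigcap_{\mathcal W_0}D_{\mathcal W_0}$, which is exactly the $\sim_f$-class $D$ of $x_0$; and $y\in S$ since $S$ is closed, so $y$ $\mathcal V$-shadows $\xi$, giving $D\in\mathcal D^{\mathcal V,\mathcal U}(f)$. The step I expect to be the real obstacle is precisely this one: a single transport through $D^\ast$ only places the shadowing point in the coarser $\sim_{f,\mathcal U}$-class of $x_0$, not in $D$, so the net of varying chain-lengths is genuinely needed to squeeze a cluster point into $D=\bigcap_{\mathcal W}D_{\mathcal W}$, and replacing $\mathcal V$ by the shrunk cover $\mathcal V_1$ is what makes the $\mathcal V$-shadowing condition closed so that the cluster point inherits it; the two extreme cases — a chain mixing system with shadowing (where $\mathcal D(f)$ is a single class) and an equicontinuous example such as an odometer (where $D=D_{\mathcal W}$ for a suitable $\mathcal W$) — are exactly what this argument has to unify.
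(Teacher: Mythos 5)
Your proof is correct. It shares the paper's overall skeleton for $(1)\Rightarrow(3)$ --- fix $D^\ast\in\mathcal{D}_{\rm sh}(f)$ and $p\in D^\ast$, connect $p$ to $x_0$ by a chain, shadow the concatenated pseudo-orbit inside $D^\ast$, shift forward, and extract a limit over a directed family, using a shrunk cover so that the shadowing condition is closed and survives the limit --- but the implementation of the crucial step differs. The paper indexes its net by open neighborhoods $A$ of $D$ and must force the shifted shadowing point into $A$; to do this it prepends a genuine orbit segment of a point $q\in D^\ast$ (rather than the connecting chain itself), which requires a second application of the hypothesis with the auxiliary two-element cover $\{B,X\setminus\overline{C}\}$ to arrange $f^k(q)\in B$, together with a semicontinuity argument producing a neighborhood $B$ of $x_0$ with $\mathcal{D}_f(y)\subset A$ for all $y\in B$. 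You instead index by covers $\mathcal{W}$ directed by refinement, prepend the chain taken from the common refinement of $\mathcal{W}$ and $\mathcal{U}$ so that the concatenation is automatically a $\mathcal{U}$-pseudo orbit needing only one shadowing per index, and locate $y_{\mathcal{W}}=f^{\ell_{\mathcal{W}}}(q_{\mathcal{W}})$ in the clopen class $D_{\mathcal{W}}$ of $x_0$ purely from properties (P1) and (P2) of $\sim_{f,\mathcal{W}}$; since $D=\bigcap_{\mathcal{W}}D_{\mathcal{W}}$, the cluster point lands in $D$. Your version is leaner, avoiding the second shadowing step and any appeal to continuity of $\mathcal{D}_f(\cdot)$, while the paper's neighborhood-indexed net is essentially the same compactness argument in different clothing (by compactness every open $A\supset D$ contains some $D_{\mathcal{W}}$, so the two directed families are cofinally equivalent). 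Both are complete proofs.
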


In view of this theorem, we shall define the special form of shadowing for chain transitive continuous self-maps of compact Hausdorff spaces.

\begin{defi}
\normalfont
We say that a chain transitive continuous map $f\colon X\to X$ has the {\em $\mathcal{D}$-shadowing property} (abbrev.\:DSP) if one of the equivalent three properties in Theorem 1.1 is satisfied. 
\end{defi}

\begin{rem}
\normalfont
By property (3) in Theorem 1.1, we see that if a chain transitive continuous map $f\colon X\to X$ has DSP, then $f$ satisfies the shadowing property.
\end{rem}

Let $X$ be a compact metric space endowed with a metric $d$. For any $\mathcal{W}\in\mathscr{U}(X)$, $\delta>0$ is called a {\em Lebesgue number} of $\mathcal{W}$ if every subset $S$ of $X$ with
\[
{\rm diam}(S)=\sup_{x,y\in S}d(x,y)\le\delta
\]
satisfies $S\subset W$ for some $W\in\mathcal{W}$. We define a number ${\rm mesh}(\mathcal{W})$ to be
\[
{\rm mesh}(\mathcal{W})=\sup_{W\in\mathcal{W}}{\rm diam}(W).
\]
For any $\delta,\epsilon>0$, let $\mathcal{D}^{\epsilon,\delta}(f)$ denote the set of $D\in\mathcal{D}(f)$ such that every $\delta$-pseudo orbit $(x_i)_{i\ge0}$ of $f$ with $x_0\in D$ is $\epsilon$-shadowed by some $x\in D$. For any $\mathcal{U},\mathcal{V}\in\mathscr{U}(X)$ and $\delta,\epsilon>0$, we see that
\begin{itemize}
\item if $\delta$ is a Lebesgue number of $\mathcal{U}$, and if ${\rm mesh}(\mathcal{V})\le\epsilon$, then $\mathcal{D}^{\epsilon,\delta}(f)\subset\mathcal{D}^{\mathcal{V},\mathcal{U}}(f)$,
\item if ${\rm mesh}(\mathcal{U})\le\delta$, and if $\epsilon$ is a Lebesgue number of $\mathcal{V}$, then $\mathcal{D}^{\mathcal{V},\mathcal{U}}(f)\subset\mathcal{D}^{\epsilon,\delta}(f)$;
\end{itemize}
therefore, by Theorem 1.1, we obtain the following corollary. 

\begin{cor}
Let $X$ be a compact metric space and let  $f\colon X\to X$ be a chain transitive continuous map. Then, it holds
\[
\mathcal{D}_{\rm sh}(f)=\bigcap_{\epsilon>0}\bigcup_{\delta>0}\mathcal{D}^{\epsilon,\delta}(f),
\]
and the following properties are equivalent:
\begin{itemize}
\item[(1)] $f$ has DSP,
\item[(2)] $\mathcal{D}_{\rm sh}(f)\ne\emptyset$,
\item[(3)] $\mathcal{D}(f)=\mathcal{D}_{\rm sh}(f)$,
\item[(4)] for any $\epsilon>0$, there exists $\delta>0$ such that $\mathcal{D}(f)=\mathcal{D}^{\epsilon,\delta}(f)$.
\end{itemize}
\end{cor}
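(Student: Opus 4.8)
The plan is to read off the corollary from Theorem 1.1 by translating between finite open covers and the metric, so the only ingredients beyond Theorem 1.1 are the two inclusions displayed just before the statement together with two standard facts about a compact metric space $X$: every $\mathcal{W}\in\mathscr{U}(X)$ has a Lebesgue number $\delta>0$, and for every $\delta>0$ there is an element of $\mathscr{U}(X)$ of mesh at most $\delta$ (e.g. a finite subcover of the cover by $\delta/3$-balls). I will also freely use the trivial containments $\mathcal{D}^{\epsilon,\delta}(f)\subset\mathcal{D}(f)$ and $\mathcal{D}^{\mathcal{V},\mathcal{U}}(f)\subset\mathcal{D}(f)$, which are immediate from the definitions.

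First I would prove the identity $\mathcal{D}_{\rm sh}(f)=\bigcap_{\epsilon>0}\bigcup_{\delta>0}\mathcal{D}^{\epsilon,\delta}(f)$ by double inclusion, starting from $\mathcal{D}_{\rm sh}(f)=\bigcap_{\mathcal{V}\in\mathscr{U}(X)}\bigcup_{\mathcal{U}\in\mathscr{U}(X)}\mathcal{D}^{\mathcal{V},\mathcal{U}}(f)$. For ``$\subset$'': fix $D$ in the left-hand side and $\epsilon>0$, choose $\mathcal{V}\in\mathscr{U}(X)$ with ${\rm mesh}(\mathcal{V})\le\epsilon$, then $\mathcal{U}\in\mathscr{U}(X)$ with $D\in\mathcal{D}^{\mathcal{V},\mathcal{U}}(f)$, then a Lebesgue number $\delta>0$ of $\mathcal{U}$; the relevant displayed inclusion then places $D$ in $\mathcal{D}^{\epsilon,\delta}(f)$, and since $\epsilon$ was arbitrary we are done. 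For ``$\supset$'': fix $D$ in the right-hand side and $\mathcal{V}\in\mathscr{U}(X)$, choose a Lebesgue number $\epsilon>0$ of $\mathcal{V}$, then $\delta>0$ with $D\in\mathcal{D}^{\epsilon,\delta}(f)$, then $\mathcal{U}\in\mathscr{U}(X)$ with ${\rm mesh}(\mathcal{U})\le\delta$; the other displayed inclusion places $D$ in $\mathcal{D}^{\mathcal{V},\mathcal{U}}(f)$, hence in $\bigcup_{\mathcal{U}\in\mathscr{U}(X)}\mathcal{D}^{\mathcal{V},\mathcal{U}}(f)$, and since $\mathcal{V}$ was arbitrary we are done.

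With the identity established, the equivalences fall out of Theorem 1.1 and Definition 1.3: statement (2) of the corollary is statement (1) of Theorem 1.1, and statement (3) of the corollary is statement (2) of Theorem 1.1 rewritten through the identity, so by Definition 1.3 each is equivalent to (1). There remains only the equivalence of (4) with statement (3) of Theorem 1.1, which I would prove by the same cover-to-metric translation. From Theorem 1.1(3): given $\epsilon>0$, take $\mathcal{V}$ with ${\rm mesh}(\mathcal{V})\le\epsilon$, then $\mathcal{U}$ with $\mathcal{D}(f)=\mathcal{D}^{\mathcal{V},\mathcal{U}}(f)$, then a Lebesgue number $\delta$ of $\mathcal{U}$; a displayed inclusion gives $\mathcal{D}(f)=\mathcal{D}^{\mathcal{V},\mathcal{U}}(f)\subset\mathcal{D}^{\epsilon,\delta}(f)\subset\mathcal{D}(f)$, hence $\mathcal{D}(f)=\mathcal{D}^{\epsilon,\delta}(f)$. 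Conversely, from (4): given $\mathcal{V}\in\mathscr{U}(X)$, take a Lebesgue number $\epsilon$ of $\mathcal{V}$, then $\delta$ with $\mathcal{D}(f)=\mathcal{D}^{\epsilon,\delta}(f)$, then $\mathcal{U}$ with ${\rm mesh}(\mathcal{U})\le\delta$; the other displayed inclusion gives $\mathcal{D}(f)=\mathcal{D}^{\epsilon,\delta}(f)\subset\mathcal{D}^{\mathcal{V},\mathcal{U}}(f)\subset\mathcal{D}(f)$, hence $\mathcal{D}(f)=\mathcal{D}^{\mathcal{V},\mathcal{U}}(f)$.

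There is no real obstacle here; the proof is pure bookkeeping. The only points demanding attention are keeping straight which displayed inclusion applies at each step — that is, whether a Lebesgue number or a mesh bound is what one needs on the pseudo-orbit side versus the shadowing side — and remembering to invoke $\mathcal{D}^{\epsilon,\delta}(f)\subset\mathcal{D}(f)$ and $\mathcal{D}^{\mathcal{V},\mathcal{U}}(f)\subset\mathcal{D}(f)$ when upgrading an inclusion to an equality in the argument for (3) $\Leftrightarrow$ (4).
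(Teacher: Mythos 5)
Your proposal is correct and takes essentially the same route as the paper, which derives the corollary from Theorem 1.1 together with the two mesh/Lebesgue-number inclusions displayed just before the statement; you simply write out the routine double-inclusion bookkeeping that the paper leaves implicit. One small remark: the pairing of hypotheses with conclusions that you actually use (e.g.\ ${\rm mesh}(\mathcal{V})\le\epsilon$ and $\delta$ a Lebesgue number of $\mathcal{U}$ giving $\mathcal{D}^{\mathcal{V},\mathcal{U}}(f)\subset\mathcal{D}^{\epsilon,\delta}(f)$) is the correct one, whereas the paper's two displayed bullets appear to have their two conclusions interchanged, so your blind reconstruction silently fixes a typo rather than inheriting it.
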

We present three results on DSP. Let $X$ be a compact Hausdorff space and let $K(X)$ denote the set of non-empty closed subsets of $X$. Let $\mathscr{F}(X)$ be the set of finite sets of open subsets of $X$. For any $\mathcal{F}\in\mathscr{F}(X)$, we define a subset $[\mathcal{F}]$ of $K(X)$ by
\[
[\mathcal{F}]=\{A\in K(X)\colon A\subset\bigcup_{U\in\mathcal{F}}U, A\cap U\ne \emptyset,\:\forall U\in\mathcal{F}\}.
\]
Note that $[\{X\}]=K(X)$. For any $\mathcal{F},\mathcal{G}\in\mathscr{F}(X)$, letting
\[
\mathcal{H}=\{U\cap\bigcup_{V\in\mathcal{G}}V\colon U\in\mathcal{F}\}\cup\{V\cap\bigcup_{U\in\mathcal{F}}U\colon V\in\mathcal{G}\},
\]
we have $\mathcal{H}\in\mathscr{F}(X)$ and $[\mathcal{F}]\cap[\mathcal{G}]=[\mathcal{H}]$. This implies that
\[
\{[\mathcal{F}]\colon\mathcal{F}\in\mathscr{F}(X)\}
\]
forms a basis for a topology on $K(X)$. The topology is called the {\em Vietoris topology}.

\begin{rem}
\normalfont
When $X$ is a compact metric space, endowed with a metric $d$, the Vietoris topology coincides with the topology induced from the {\em Hausdorff distance} $d_H\colon K(X)\times K(X)\to[0,\infty)$; for any $A,B\in K(X)$,
\[
d_H(A,B)=\inf\{\epsilon>0\colon A\subset B_\epsilon(B), B\subset B_\epsilon(A)\},
\]
where $B_\epsilon(\cdot)$ denotes the $\epsilon$-neighborhood.
\end{rem}

One more definition is needed.

\begin{defi}
\normalfont
Let $f\colon X\to X$ be a chain transitive continuous map. For $\mathcal{U}\in\mathscr{U}(X)$, a $\mathcal{U}$-pseudo orbit $\xi=(x_i)_{i\ge0}$ of $f$ is called a {\em $\mathcal{U}$-pseudo orbit of $f$ along $\mathcal{D}(f)$} if $f(x_i)\sim_f x_{i+1}$ for all $i\ge0$. We say that $f$ has the {\em shadowing along $\mathcal{D}(f)$} if for any $\mathcal{V}\in\mathscr{U}(X)$, there is $\mathcal{U}\in\mathscr{U}(X)$ such that every $\mathcal{U}$-pseudo orbit $\xi=(x_i)_{i\ge0}$ of $f$ along $\mathcal{D}(f)$ is $\mathcal{V}$-shadowed by some $x\in X$ with $x\sim_f x_0$.
\end{defi}

Given any chain transitive continuous map $f\colon X\to X$ and $x\in X$, define $\mathcal{D}_f(x)\in\mathcal{D}(f)$ by $x\in\mathcal{D}_f(x)$. Then, the following theorem gives a characterization of DSP.

\begin{thm}
For any chain transitive continuous map $f\colon X\to X$, $f$ has DSP if and only if the following conditions are satisfied:
\begin{itemize}
\item[(1)] $f$ has the shadowing along $\mathcal{D}(f)$,
\item[(2)] $\mathcal{D}_f(\cdot)\colon X\to K(X)$ is continuous, here $K(X)$ has the Vietoris topology. 
\end{itemize}
\end{thm}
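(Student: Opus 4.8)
The plan is to prove the two implications separately, using throughout the relations $\sim_{f,\mathcal{U}}$, the cyclic decompositions $\mathcal{D}_f^{\mathcal{U}}$, and properties (P1)--(P4).

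\emph{From DSP to (1) and (2).} Condition (1) is immediate from the form (3) of DSP in Theorem 1.1: given $\mathcal{V}$, pick $\mathcal{U}$ with $\mathcal{D}(f)=\mathcal{D}^{\mathcal{V},\mathcal{U}}(f)$; a $\mathcal{U}$-pseudo orbit $\xi=(x_i)_{i\ge0}$ along $\mathcal{D}(f)$ is, a fortiori, a $\mathcal{U}$-pseudo orbit with $x_0\in\mathcal{D}_f(x_0)\in\mathcal{D}(f)$, so it is $\mathcal{V}$-shadowed by some $x\in\mathcal{D}_f(x_0)$, i.e.\ with $x\sim_f x_0$. For (2), upper semicontinuity of $\mathcal{D}_f(\cdot)$ is automatic: its graph is the closed set $\sim_f\subseteq X\times X$, and $X$ is compact. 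So it remains to prove lower semicontinuity. Fix $x_0$, a point $p\sim_f x_0$ and an open $V\ni p$; I would take the two-element cover $\mathcal{V}=\{V,\,X\setminus\{p\}\}$, choose $\mathcal{U}$ with $\mathcal{D}(f)=\mathcal{D}^{\mathcal{V},\mathcal{U}}(f)$, and let $W=[x_0]_{\sim_{f,\mathcal{U}}}$, a clopen neighbourhood of $x_0$ which also contains $p$. By (P4) fix $l>0$, a multiple of $m(\mathcal{U})$, such that any $\sim_{f,\mathcal{U}}$-equivalent pair is joined by a $\mathcal{U}$-chain of length $l$. For $x\in W$ we have $x\sim_{f,\mathcal{U}}p$, so prolonging such a $\mathcal{U}$-chain from $x$ to $p$ by the forward orbit of $p$ gives a $\mathcal{U}$-pseudo orbit starting at $x$ whose $l$-th term is $p$; DSP $\mathcal{V}$-shadows it by some $w\in\mathcal{D}_f(x)$, so $f^l(w)\sim_f f^l(x)$ and, by the choice of $\mathcal{V}$, $f^l(w)\in V$; thus $\mathcal{D}_f(f^l(x))\cap V\ne\emptyset$. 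Finally $f^l(W)=W$ because $W$ is a member of $\mathcal{D}_f^{\mathcal{U}}$ and $m(\mathcal{U})\mid l$, so in fact $\mathcal{D}_f(y)\cap V\ne\emptyset$ for every $y\in W$, which is the desired lower semicontinuity.

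\emph{From (1) and (2) to DSP.} Given $\mathcal{V}$, fix star refinements $\mathcal{V}_1\prec^{*}\mathcal{V}_0\prec^{*}\mathcal{V}$, use (1) to obtain $\mathcal{U}_1$ so that every $\mathcal{U}_1$-pseudo orbit along $\mathcal{D}(f)$ is $\mathcal{V}_0$-shadowed by a $\sim_f$-equivalent point, and use the uniform (as $X$ is compact) continuity of $\mathcal{D}_f(\cdot)$. The aim is, for a $\mathcal{U}$-pseudo orbit $\xi=(x_i)_{i\ge0}$ with $\mathcal{U}$ sufficiently fine, to produce a $\mathcal{U}_1$-pseudo orbit $\eta=(y_i)_{i\ge0}$ along $\mathcal{D}(f)$ with $y_0=x_0$ and with $y_i$, $x_i$ in a common $\mathcal{V}_1$-element for all $i$: then $\eta$ is $\mathcal{V}_0$-shadowed by some $z$ with $z\sim_f x_0$, and, $\mathcal{V}_0$ and $\mathcal{V}_1$ being star refinements of $\mathcal{V}$, $z$ $\mathcal{V}$-shadows $\xi$, whence $\mathcal{D}(f)=\mathcal{D}^{\mathcal{V},\mathcal{U}}(f)$. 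The natural construction is inductive: having $y_i$ close to $x_i$, note that $f(y_i)$ is then close to $x_{i+1}$, so by continuity $\mathcal{D}_f(f(y_i))$ is Vietoris-close to $\mathcal{D}_f(x_{i+1})\ni x_{i+1}$, and one chooses $y_{i+1}\in\mathcal{D}_f(f(y_i))$ close to $x_{i+1}$ and $\mathcal{U}_1$-close to $f(y_i)$.

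\emph{The main obstacle.} I expect the hard part to be the error control in this rounding-off: a priori the discrepancy between $y_i$ and $x_i$ can grow along the orbit---the modulus of continuity of $x\mapsto\mathcal{D}_f(f(x))$ need not be a contraction---so no single choice of a fine $\mathcal{U}$ keeps $y_i$ and $x_i$ in a common $\mathcal{V}_1$-element for all $i$ by a step-by-step estimate alone. The way around this is, I believe, to exploit the structure of the factor $X/\!\sim_f$: it is the inverse limit of the finite cyclic decompositions $\mathcal{D}_f^{\mathcal{U}}$ (since $\sim_f=\bigcap_{\mathcal{U}}\sim_{f,\mathcal{U}}$ and $m(\mathcal{U})\mid m(\mathcal{W})$ whenever $\mathcal{W}$ refines $\mathcal{U}$), hence an odometer, on which every pseudo orbit is shadowed by its own initial point; condition (2) says precisely that the quotient map $\pi\colon X\to X/\!\sim_f$ is open, and one separates the rounding into a ``base'' part, controlled ultrametrically in the odometer, and a ``fibre'' part, controlled by the shadowing along $\mathcal{D}(f)$ from (1). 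The same splitting is implicitly what drives the lower-semicontinuity argument above, where the identity $f^l(W)=W$ plays the role of the odometer's rigidity.
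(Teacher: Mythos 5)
Your forward direction (DSP implies (1) and (2)) is essentially correct. The derivation of (1) is the same as the paper's. For (2), your closed-graph argument for the ``$\mathcal{D}_f(y)\subset\bigcup\mathcal{F}$'' half and your direct argument for the ``$\mathcal{D}_f(y)\cap V\ne\emptyset$'' half --- running a $\mathcal{U}$-pseudo orbit from $x\in W$ through $p$ into the true orbit of $p$, shadowing with respect to $\{V,X\setminus\{p\}\}$ inside $\mathcal{D}_f(x)$, and then using $f^l(W)=W$ to propagate the conclusion to all of $W$ --- is valid and is a somewhat cleaner alternative to the paper's proof, which instead derives a contradiction from a periodic pseudo orbit $\alpha\beta\beta\cdots$ shadowed within $\mathcal{D}_f(y)$.

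The converse direction is where the proposal has a genuine gap. You correctly diagnose that the step-by-step rounding $y_{i+1}\in\mathcal{D}_f(f(y_i))$ loses control of the error, but the proposed repair --- an odometer/base-fibre splitting of $X/\!\sim_f$ --- is left as a belief rather than an argument, and it is not what is needed. The paper's resolution is more elementary: anchor the rounding to the true orbit of $x_0$ rather than to the previous approximant. By property (P2), any $\mathcal{U}$-pseudo orbit $(x_i)_{i\ge0}$ satisfies $x_i\sim_{f,\mathcal{U}}f^i(x_0)$ for every $i$, i.e.\ $x_i\in\mathcal{D}_f^{\mathcal{U}}(f^i(x_0))$. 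Using the continuity of $\mathcal{D}_f(\cdot)$ together with compactness, one first proves a uniform statement (the paper's property (P)): for each $\mathcal{T}\in\mathscr{U}(X)$ there is a single $\mathcal{U}\in\mathscr{U}(X)$ such that whenever $y\in\mathcal{D}_f^{\mathcal{U}}(x)$, the class $\mathcal{D}_f(x)$ meets some $T\in\mathcal{T}$ containing $y$. Applying this with $x=f^i(x_0)$ and $y=x_i$ produces, for each $i$ independently, a point $y_i\in\mathcal{D}_f(f^i(x_0))$ lying in a common $\mathcal{T}$-element with $x_i$; the sequence $(y_i)_{i\ge0}$ is then automatically a pseudo orbit along $\mathcal{D}(f)$ (both $f(y_i)$ and $y_{i+1}$ lie in $\mathcal{D}_f(f^{i+1}(x_0))$, and the jump is small because $f(y_i)$ is close to $f(x_i)$, which is close to $x_{i+1}$, which is close to $y_{i+1}$), and condition (1) finishes the proof. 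There is no error accumulation because each $y_i$ is compared with $f^i(x_0)$, not with $y_{i-1}$. To complete your proof you would need either to carry out this anchoring via (P2) or to make the odometer argument precise; as written, the converse implication is only a plan.
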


Let $X,Y$ be compact Hausdorff spaces and let $f\colon X\to X,g\colon Y\to Y$ be continuous maps. A map $\pi\colon Y\to X$ is said to be a {\em factor map} if $\pi$ is surjective and satisfies $f\circ\pi=\pi\circ g$. A factor map $\pi\colon Y\to X$ is also denoted as
\[
\pi\colon(Y,g)\to(X,f).
\]
The next theorem gives sufficient conditions for $f$ to have DSP in terms of the factor map.

\begin{thm}
Let $X$ and $Y$ be compact Hausdorff spaces. For any chain transitive continuous maps $f\colon X\to X$ and $g\colon Y\to Y$, suppose that the following conditions are satisfied:
\begin{itemize}
\item[(1)] there is a factor map $\pi\colon(Y,g)\to(X,f)$,
\item[(2)] $g$ has DSP,
\item[(3)] $f$ has the shadowing property.
\end{itemize}
Then, $f$ satisfies DSP.
\end{thm}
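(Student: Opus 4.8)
The plan is to establish, for $f$, the two properties that characterize DSP in Theorem 1.2: that $\mathcal{D}_f(\cdot)\colon X\to K(X)$ is continuous, and that $f$ has the shadowing along $\mathcal{D}(f)$. I begin by recording what the factor map gives. For $\mathcal{U}\in\mathscr{U}(X)$, a $\pi^{-1}(\mathcal{U})$-chain of $g$ projects under $\pi$ to a $\mathcal{U}$-chain of $f$ of the same length; comparing the associated periods, $m(\mathcal{U})$ divides $m(\pi^{-1}(\mathcal{U}))$, and hence $y\sim_g y'$ implies $\pi(y)\sim_f\pi(y')$. Thus $\pi$ induces a continuous surjection $\bar\pi\colon\mathcal{D}(g)\to\mathcal{D}(f)$ that intertwines the maps induced by $g$ and $f$ and satisfies $\pi(\mathcal{D}_g(y))\subset\bar\pi(\mathcal{D}_g(y))=\mathcal{D}_f(\pi(y))$ for every $y\in Y$.

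For the continuity of $\mathcal{D}_f(\cdot)$: since $\sim_f$ is closed, $\mathcal{D}_f(\cdot)$ is upper semicontinuous, so it is enough to show it is lower semicontinuous, i.e., that $q_f^{-1}(q_f(U))$ is open for every open $U\subset X$, where $q_f\colon X\to\mathcal{D}(f)$ is the quotient map; equivalently, that $q_f$ is open. The point is that $(\mathcal{D}(f),\bar f)$ is, through the $\mathcal{U}$-cyclic decompositions, the inverse limit of the finite cyclic rotations $(\mathbb Z/m(\mathcal{U})\mathbb Z,\,+1)$, hence a minimal rotation on a compact procyclic group, and likewise for $(\mathcal{D}(g),\bar g)$; therefore $\bar\pi$ is, up to a translation, a continuous surjective homomorphism of compact groups, hence open. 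Since $g$ has DSP, Theorem 1.2 gives that $q_g\colon Y\to\mathcal{D}(g)$ is open; combining this with the openness of $\bar\pi$ in $q_f\circ\pi=\bar\pi\circ q_g$, and using the surjectivity of $\pi$ (so that $q_f(U)=(q_f\circ\pi)(\pi^{-1}(U))$), we conclude that $q_f$ is open. This settles the first property, and it uses DSP of $g$ but not the shadowing of $f$.

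For the shadowing of $f$ along $\mathcal{D}(f)$: fix $\mathcal{V}\in\mathscr{U}(X)$. Using that $g$ has DSP, hence the shadowing along $\mathcal{D}(g)$ (Theorem 1.2), choose $\mathcal{U}'\in\mathscr{U}(Y)$ such that every $\mathcal{U}'$-pseudo orbit of $g$ along $\mathcal{D}(g)$ is $\pi^{-1}(\mathcal{V})$-shadowed by a point of its own $\sim_g$-class. The heart of the argument, and the step I expect to be the main obstacle, is a lifting lemma: for a suitable $\mathcal{U}\in\mathscr{U}(X)$ (depending on $\mathcal{U}'$, on $f$, and on $\pi$), every $\mathcal{U}$-pseudo orbit $(x_i)_{i\ge0}$ of $f$ along $\mathcal{D}(f)$ lifts to a $\mathcal{U}'$-pseudo orbit $(y_i)_{i\ge0}$ of $g$ along $\mathcal{D}(g)$ with $\pi(y_i)=x_i$ for all $i$ and with $y_0$ chosen in any prescribed fibre $\pi^{-1}(x_0)$. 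Here is where the shadowing of $f$ is essential: since $\pi$ need not be an open map, pseudo orbits do not lift naively, and one must first use the shadowing property of $f$ to prove that $\pi$ carries each $\sim_g$-class onto — not merely into — the corresponding $\sim_f$-class, that is, $\pi(\mathcal{D}_g(w))=\mathcal{D}_f(\pi(w))$ for all $w\in Y$. Granting this identity, and using that $\mathcal{D}_f(\cdot)$ and $\mathcal{D}_g(\cdot)$ are continuous, one shows that for a $\sim_g$-class $E$ lying over a $\sim_f$-class $D$ the map $x\mapsto E\cap\pi^{-1}(x)$ $(x\in D)$ is continuous into $K(Y)$; this continuity is exactly what makes the inductive lift possible: having chosen $y_i\in\bar g^{\,i}(\mathcal{D}_g(y_0))$ with $\pi(y_i)=x_i$, one replaces the naive candidate $g(y_i)$ by a point $y_{i+1}$ of the same $\sim_g$-class $\bar g^{\,i+1}(\mathcal{D}_g(y_0))$ with $\pi(y_{i+1})=x_{i+1}$ that is still $\mathcal{U}'$-close to $g(y_i)$, which can be arranged once $\mathcal{U}$ is fine enough relative to $\mathcal{U}'$, the uniform continuity of $f$ and $\pi$, and the modulus of continuity of the maps $x\mapsto E\cap\pi^{-1}(x)$.

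Granting the lifting lemma, the conclusion follows quickly. Given a $\mathcal{U}$-pseudo orbit $(x_i)$ of $f$ along $\mathcal{D}(f)$, pick $y_0\in\pi^{-1}(x_0)$ and lift to $(y_i)$ as above; by the shadowing of $g$ along $\mathcal{D}(g)$ there is $y\in Y$ with $y\sim_g y_0$ that $\pi^{-1}(\mathcal{V})$-shadows $(y_i)$; then $x:=\pi(y)$ satisfies $x\sim_f\pi(y_0)=x_0$ and $\mathcal{V}$-shadows $(\pi(y_i))=(x_i)$. Hence $f$ has the shadowing along $\mathcal{D}(f)$, and by Theorem 1.2, $f$ has DSP. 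The only parts of this plan that require genuine work are the openness of $\bar\pi$ (a soft fact about procyclic rotations) and, above all, the lifting lemma — concretely, the identity $\pi(\mathcal{D}_g(w))=\mathcal{D}_f(\pi(w))$ and its use, via the shadowing of $f$, to carry out the step-by-step lift inside the correct classes.
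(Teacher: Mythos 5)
There is a genuine gap at the step you yourself identify as the heart of the argument: the lifting lemma. Your plan requires lifting an arbitrary $\mathcal{U}$-pseudo orbit $(x_i)_{i\ge0}$ of $f$ along $\mathcal{D}(f)$ to a $\mathcal{U}'$-pseudo orbit of $g$ with $\pi(y_i)=x_i$, and the mechanism you propose for the inductive step is the continuity of the fibre maps $x\mapsto E\cap\pi^{-1}(x)$ on a class $D$, together with a ``modulus of continuity'' for them. But a factor map gives no such control: $x\mapsto\pi^{-1}(x)$ is upper semicontinuous but in general not lower semicontinuous, and the same is true of $x\mapsto E\cap\pi^{-1}(x)$ even granting the surjectivity $\pi(\mathcal{D}_g(w))=\mathcal{D}_f(\pi(w))$ (surjectivity of $\pi|_E\colon E\to D$ does not make its fibre map continuous, just as $\pi$ itself being surjective does not make it open). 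Concretely, $x_{i+1}$ being $\mathcal{U}$-close to $f(x_i)=\pi(g(y_i))$ does not guarantee that $\pi^{-1}(x_{i+1})$ (or its intersection with the right $\sim_g$-class) contains a point $\mathcal{U}'$-close to $g(y_i)$; this is exactly the reason pseudo-orbits do not pull back through factor maps, and no amount of refining $\mathcal{U}$ fixes it because the failure of lower semicontinuity is not uniform. In the compact Hausdorff setting there is also no ``modulus of continuity'' to speak of. So the lemma on which the whole shadowing half of your argument rests is not established, and the route sketched for it would fail.

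The paper's proof avoids lifting pseudo-orbits altogether, and this is where hypothesis (3) is really used: given a $\mathcal{U}$-pseudo orbit of $f$ starting in $D$, one first applies the \emph{shadowing property of $f$} to replace it by a true $f$-orbit of a point $x$ lying in a prescribed neighborhood $U$ of $D$; a true orbit lifts trivially (choose any $p\in\pi^{-1}(x)$, which lies in a prescribed neighborhood $V$ of $\pi^{-1}(D)$). One then forms a $g$-pseudo orbit with a single jump at time $0$, replacing $p$ by a nearby $q\in\pi^{-1}(D)$ (a compactness argument on neighborhoods of $\pi^{-1}(D)$), applies DSP of $g$ to shadow this by some $r\sim_g q$, and projects $r$ back down to get a point of $D$ that $\mathcal{V}$-shadows the original pseudo-orbit. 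This verifies condition (2) of Theorem 1.1 directly, so the paper also does not need your separate argument for the continuity of $\mathcal{D}_f(\cdot)$ via procyclic rotations (which, while plausible, itself leans on a structural identification of $(\mathcal{D}(f),\bar f)$ with an inverse limit of finite rotations that you do not prove in the compact Hausdorff setting). To repair your proof you would need to either prove the lifting lemma by some genuinely different mechanism, or, more simply, reorganize it as the paper does so that only true orbits ever need to be lifted.
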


For any compact Hausdorff space $X$, let $\dim{X}$ denote the topological dimension of $X$. We know that $X$ satisfies $\dim{X}=0$ if and only if $X$ is totally disconnected. In Section 4, by using a recent result of Good and Meddaugh \cite{GM}, we prove the following theorem. 

\begin{thm}
Given any chain transitive continuous map $f\colon X\to X$, if $\dim{X}=0$, and if $f$ has the shadowing property, then $f$ satisfies DSP.
\end{thm}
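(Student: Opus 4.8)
The plan is to reduce the statement to the single assertion that the partition $\mathcal{D}(f)$ is \emph{finite}, and then to deduce DSP from finiteness of $\mathcal{D}(f)$ together with the ordinary shadowing property. For the reduction I would first record the purely combinatorial observation that $\sim_f=\sim_{f,\mathcal{U}}$ for a suitable $\mathcal{U}\in\mathscr{U}(X)$ as soon as $\sup_{\mathcal{U}\in\mathscr{U}(X)}m(\mathcal{U})<\infty$. Indeed, if $\mathcal{V}$ refines $\mathcal{U}$ then every $\mathcal{V}$-cycle is a $\mathcal{U}$-cycle, so $m(\mathcal{U})\mid m(\mathcal{V})$ and, since a $\mathcal{V}$-chain is a $\mathcal{U}$-chain, $\sim_{f,\mathcal{V}}\subseteq\sim_{f,\mathcal{U}}$; choosing $\mathcal{U}^{*}$ with $m(\mathcal{U}^{*})$ maximal then forces $m(\mathcal{V})=m(\mathcal{U}^{*})$ and hence $\sim_{f,\mathcal{V}}=\sim_{f,\mathcal{U}^{*}}$ for all refinements $\mathcal{V}$ of $\mathcal{U}^{*}$, so $\sim_f=\sim_{f,\mathcal{U}^{*}}$ and $|\mathcal{D}(f)|=m(\mathcal{U}^{*})<\infty$. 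Conversely, if $\sup_{\mathcal{U}}m(\mathcal{U})=\infty$ then $|\mathcal{D}(f)|=\infty$, since $\sim_f$ refines every $\sim_{f,\mathcal{U}}$, and $\sim_{f,\mathcal{U}}$ has exactly $m(\mathcal{U})$ classes. Thus $\mathcal{D}(f)$ is finite if and only if $\sup_{\mathcal{U}}m(\mathcal{U})<\infty$.

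Next I would check the easy implication: if $\mathcal{D}(f)=\{D_0,\dots,D_{m-1}\}$ is finite then $f$ has DSP. First, each $D_i$ is clopen (its complement is a finite union of closed sets) and $f(D_i)=D_{i+1}$. Given $\mathcal{V}\in\mathscr{U}(X)$, set $\mathcal{V}'=\{V\cap D_i:V\in\mathcal{V},\ 0\le i\le m-1\}$, use the shadowing property to obtain $\mathcal{U}_0$ such that every $\mathcal{U}_0$-pseudo orbit of $f$ is $\mathcal{V}'$-shadowed, and put $\mathcal{U}=\{U\cap D_i:U\in\mathcal{U}_0,\ 0\le i\le m-1\}$. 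Since $\mathcal{U}$ refines the partition $\{D_0,\dots,D_{m-1}\}$, any $\mathcal{U}$-pseudo orbit $(x_i)_{i\ge0}$ with $x_0\in D_j$ satisfies $x_i\in D_{j+i}$ for all $i$; and since $\mathcal{V}'$ also refines $\{D_0,\dots,D_{m-1}\}$, any point $\mathcal{V}'$-shadowing $(x_i)_{i\ge0}$ must lie in $D_j$. As $\mathcal{V}'$ refines $\mathcal{V}$, this gives $\mathcal{D}(f)=\mathcal{D}^{\mathcal{V},\mathcal{U}}(f)$, i.e.\ property (3) of Theorem 1.1, so $f$ has DSP.

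It remains to prove $\sup_{\mathcal{U}}m(\mathcal{U})<\infty$, and this is the only place where zero-dimensionality and the result of Good and Meddaugh are used. I would argue by contradiction: suppose $\sup_{\mathcal{U}}m(\mathcal{U})=\infty$, pick $\mathcal{U}_1,\mathcal{U}_2,\dots$ with $m(\mathcal{U}_n)\to\infty$, and replace $\mathcal{U}_n$ by the common refinement of $\mathcal{U}_1,\dots,\mathcal{U}_n$, so that $\mathcal{U}_{n+1}$ refines $\mathcal{U}_n$, $m(\mathcal{U}_n)\mid m(\mathcal{U}_{n+1})$, and $m(\mathcal{U}_n)\to\infty$. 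The quotient of $X$ by the clopen equivalence relation $\sim_{f,\mathcal{U}_n}$ is the finite cyclic system $(\mathbb{Z}/m(\mathcal{U}_n)\mathbb{Z},x\mapsto x+1)$ (using $f(D_i)=D_{i+1}$ on the $\mathcal{U}_n$-cyclic decomposition), and the quotient projections are compatible factor maps, so $(X,f)$ factors onto the odometer $\bigl(\varprojlim_n\mathbb{Z}/m(\mathcal{U}_n)\mathbb{Z},x\mapsto x+1\bigr)$, an infinite equicontinuous system with \emph{no periodic points}. On the other hand, since $\dim X=0$ and $f$ has the shadowing property, the Good--Meddaugh structure theorem presents $(X,f)$ as an inverse limit of shifts of finite type with the Mittag--Leffler property; combined with chain transitivity (hence transitivity, by the remark after Definition~1.4) this yields that the periodic points of $f$ are dense in $X$. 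In particular $f$ has a periodic point, whose image would be a periodic point of the odometer — a contradiction. Hence $\mathcal{D}(f)$ is finite and, by the previous paragraph, $f$ has DSP.

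The main obstacle is exactly the last step: extracting a periodic point of $f$ (equivalently, ruling out an odometer factor) from zero-dimensionality, shadowing and chain transitivity. This is genuinely non-trivial, since an inverse limit of finite systems can itself be an odometer and have no periodic points at all; it is precisely the Mittag--Leffler (stability) condition provided by the Good--Meddaugh theorem that prevents this degeneration and makes the density of periodic points, which holds at each shift-of-finite-type stage, persist to the limit. If one prefers, this step can be phrased directly as: the tower of cyclic decompositions $X/\!\sim_{f,\mathcal{U}}$ stabilizes. Everything else is elementary bookkeeping with the cyclic decompositions and one application of the shadowing property.
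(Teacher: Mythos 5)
Your reduction has a fatal gap: the intermediate claim that $\mathcal{D}(f)$ is finite (equivalently, that $\sup_{\mathcal{U}}m(\mathcal{U})<\infty$) is false under the hypotheses of the theorem. Any infinite odometer $(X,f)=\varprojlim(\mathbb{Z}/m_n\mathbb{Z},\,x\mapsto x+1)$ with $m_n\mid m_{n+1}$ and $m_n\to\infty$ is a minimal (hence chain transitive) homeomorphism of a zero-dimensional compact metric space with the shadowing property, yet $m(\mathcal{U})$ is unbounded over $\mathcal{U}\in\mathscr{U}(X)$ and $\mathcal{D}(f)$ consists of all singletons of $X$. The precise point where your argument breaks is the assertion that the Mittag--Leffler condition, together with transitivity of the SFT stages, forces dense periodic points in the inverse limit: the odometer is itself an inverse limit of finite cyclic systems (which are transitive SFTs) with \emph{surjective} bonding maps, so the MLC is satisfied trivially, and yet there are no periodic points at all. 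Transitivity plus shadowing yields density of minimal points (this is the result of Moothathu cited in Section 6 of the paper), not of periodic points, so the contradiction you derive does not exist and the odometer factor cannot be ruled out. Note that the theorem is nevertheless true for the odometer --- each singleton class does satisfy the constrained shadowing --- which shows that any correct proof must handle the case $|\mathcal{D}(f)|=\infty$.

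Your second paragraph (finitely many classes plus shadowing implies DSP, by refining covers through the clopen partition) is correct as far as it goes, but it only covers a special case. The paper instead keeps the full inverse system of transitive SFTs supplied by Good--Meddaugh and works with it directly: it identifies each $D\in\mathcal{D}(f)$ with a thread $(D_\lambda)_{\lambda\in I}$ of cyclic-decomposition classes at the finite stages (Lemma 4.2), shows that the MLC passes down to these threads (Lemma 4.3), and then performs the shadowing of a pseudo-orbit inside a fixed thread by pushing it to a sufficiently deep stage, shadowing there within the correct class of the SFT, and lifting back via the stabilized images (Lemma 4.4). That mechanism works whether $\mathcal{D}(f)$ is finite or not, and is what your approach would need to be replaced by.
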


By Theorems 1.3 and 1.4, we obtain the following corollary.

\begin{cor}
Let $X$ be a compact Hausdorff space and let $f\colon X\to X$ be a chain transitive continuous map with the shadowing property. Suppose that there are a compact Hausdorff space $Y$ and a continuous map $g\colon Y\to Y$ such that
\begin{itemize}
\item[(1)] there is a factor map $\pi\colon(Y,g)\to(X,f)$,
\item[(2)] $\dim{Y}=0$,
\item[(3)] $g$ is chain transitive and has the shadowing property.
\end{itemize}
Then, $f$ satisfies DSP.
\end{cor}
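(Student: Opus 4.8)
The plan is to deduce Corollary 1.2 by chaining Theorems 1.4 and 1.3, which already do all the real work. First I would focus on the system $(Y,g)$: by hypotheses (2) and (3), $g\colon Y\to Y$ is a chain transitive continuous self-map of a compact Hausdorff space with $\dim Y=0$ and the shadowing property. These are exactly the hypotheses of Theorem 1.4 applied to $(Y,g)$ in place of $(X,f)$, so Theorem 1.4 yields that $g$ has DSP.

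Next I would feed this into Theorem 1.3 applied to the pair $(f,g)$. Its hypothesis (1) is supplied by the factor map $\pi\colon(Y,g)\to(X,f)$ assumed in (1) of the corollary; its hypothesis (2), that $g$ has DSP, is the conclusion just obtained from Theorem 1.4; and its hypothesis (3), that $f$ has the shadowing property, is assumed in the corollary. (Both $f$ and $g$ are chain transitive: $f$ by assumption, $g$ by hypothesis (3).) Theorem 1.3 then concludes that $f$ satisfies DSP, which is the assertion of the corollary.

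Since the proof is purely a composition of two previously established theorems, there is no genuine obstacle; the only point requiring any care is bookkeeping, namely verifying that the hypotheses of Theorems 1.4 and 1.3 line up term by term with those of the corollary — in particular that the chain transitivity requirements on both $f$ and $g$ in Theorem 1.3 are met. All of the substantive content (the zero-dimensional case via Good–Meddaugh, and the lifting of DSP through a factor map) has already been absorbed into those two theorems, so the present argument is essentially immediate.
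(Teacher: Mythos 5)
Your proposal is correct and is exactly the paper's own argument: the paper derives Corollary 1.2 by first applying Theorem 1.4 to $(Y,g)$ to conclude that $g$ has DSP, and then applying Theorem 1.3 to the factor map $\pi\colon(Y,g)\to(X,f)$ together with the shadowing property of $f$. The hypothesis bookkeeping you describe matches the paper's intended reading term by term.
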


\begin{q}
\normalfont
It is natural to ask whether for any compact Hausdorff space $X$ and any chain transitive continuous map $f\colon X\to X$, if $f$ has the shadowing property, then $f$ has also DSP. In particular, can we show it by finding $g\colon Y\to Y$ and a factor map
\[
\pi\colon(Y,g)\to(X,f)
\]
such that conditions (2) and (3) in Corollary 1.2 are satisfied?   
\end{q}

Given any continuous map $f\colon X\to X$, let
\[
T(f)=\{x\in X\colon X=\omega(x,f)\},
\]
where $\omega(x,f)$ is the $\omega$-limit set of $x$ for $f$. When $X$ is a compact metric space, it holds that $f$ is transitive if and only if $T(f)\ne\emptyset$ if and only if $T(f)$ is a dense $G_\delta$-subset of $X$. The next statement is essentially proved in Section 5 of \cite{Ka4}.

\begin{thm}
Let $X$ be a compact metric space and let $f\colon X\to X$ be a chain transitive continuous map. If $f$ satisfies DSP and $h_{\rm top}(f)>0$, then there exists a sequence of positive numbers $(\delta_n)_{n\ge2}$ such that every $D\in\mathcal{D}(f)$ contains a dense Mycielski subset $S$ which is included in $T(f)$ and is distributionally $n$-$\delta_n$-scrambled for all $n\ge2$.
\end{thm}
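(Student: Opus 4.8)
The plan is to follow the construction in Section 5 of \cite{Ka4}, replacing the constrained shadowing used there by the $\mathcal{D}$-shadowing property. First I would record the reductions: since $f$ has the $\mathcal{D}$-shadowing property it has the shadowing property, so (being chain transitive on a compact metric space) it is transitive and $T(f)$ is a dense $G_\delta$ in $X$; the shadowing property also makes the cyclic decomposition finite, $\mathcal{D}(f)=\{D_0,\dots,D_{p-1}\}$, with each $D_j$ clopen, $f(D_j)=D_{j+1\bmod p}$, and $g_j:=f^p|_{D_j}$ topologically mixing with $h_{\rm top}(g_j)=p\,h_{\rm top}(f)>0$; so each $D_j$ is a perfect compact metric space and $T(f)\cap D_j$ is a dense $G_\delta$ in it. As $f$ permutes the $D_j$ homeomorphically, it is enough to handle one piece: I would fix $D:=D_j$, $g:=g_j$, produce $\delta_n^{(j)}>0$ ($n\ge2$) and the desired Mycielski set inside $D$, and finally set $\delta_n=\min_{0\le j<p}\delta_n^{(j)}$ (legitimate, since distributional $n$-$\delta$-scrambledness only weakens as $\delta$ decreases). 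The whole construction lives inside the clopen, perfect piece $D$, where the $\mathcal{D}$-shadowing property says precisely that a pseudo orbit of $f$ starting in $D$ is shadowed by a point of $D$.

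Inside $D$, the plan is to invoke Mycielski's theorem \cite[Theorem 1]{My}. I would let $R_n\subset D^n$ ($n\ge2$) be the set of $(x_1,\dots,x_n)$ with pairwise distinct coordinates, all in $T(f)$, such that $\{x_1,\dots,x_n\}$ is distributionally $n$-$\delta_n$-scrambled for $f$ (with $\delta_n>0$ fixed in the next step). Unwinding the scrambling into upper-density statements --- ``${\rm diam}\,\{f^ix_1,\dots,f^ix_n\}<\epsilon$ with upper density $1$, for every $\epsilon>0$'' and ``$d(f^ix_l,f^ix_{l'})>\delta_n$ with upper density $1$, for each $l\ne l'$'' --- each is a countable intersection of conditions open in $(x_1,\dots,x_n)$; combined with distinctness (open) and with membership in $T(f)$ (the dense $G_\delta$ set $(T(f)\cap D)^n$), this shows $R_n$ is $G_\delta$. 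Once I show each $R_n$ is dense, Mycielski's theorem hands back a dense Mycielski set $S\subset D$ every pairwise-distinct tuple of which lies in $R_n$ for all $n$; then $S\subset T(f)$ and $S$ is distributionally $n$-$\delta_n$-scrambled for every $n\ge2$, as required for this $D$.

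The work is in the density of $R_n$, and this is exactly where $h_{\rm top}(f)>0$ and the $\mathcal{D}$-shadowing property come in, as in \cite{Ka4}. The mixing of $g$ together with shadowing inside $D$ lets me route orbits: from any small sets $V_1,\dots,V_n\subset D$ onto one common orbit segment of $g$ of any prescribed length, and from there near any prescribed periodic orbit of $g$. Positive entropy is what supplies, as in \cite{Ka4}, a constant $\delta_n>0$ and, for arbitrarily long stretches, enough pairwise $\delta_n$-separated periodic orbits of $g$ to keep any two of the $n$ tracked orbits $\delta_n$-apart at every time. Concretely, given a box $V_1\times\dots\times V_n$ and a precision $1/m$, I would build finite pseudo orbit segments $\xi_1,\dots,\xi_n$ in $D$ with $\xi_l(0)\in V_l$, either a single long ``gathering'' block on which all the $\xi_l$ follow one common orbit segment of $g$ (so the shadows are within $1/(2m)$ there) or, for a chosen pair $\{l,l'\}$, a long ``separating'' block on which $\xi_l$ and $\xi_{l'}$ follow two $\delta_n$-apart periodic orbits of $g$ (so their shadows stay $\delta_n$-separated there), then shadow each $\xi_l$ inside $D$ by a point $x_l\in D$ at precision finer than $1/(2m)$, small enough that $x_l\in V_l$. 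This makes the sets ``${\rm diam}\,\{f^ix_1,\dots,f^ix_n\}<1/m$ on more than a $(1-1/m)$-fraction of some sufficiently long initial window'' and, for each pair, ``$d(f^ix_l,f^ix_{l'})>\delta_n$ on more than a $(1-1/m)$-fraction of some sufficiently long initial window'' open and dense in $D^n$. Intersecting all of these over $m$ and over pairs, together with distinctness and $(T(f)\cap D)^n$, Baire's theorem gives that $R_n$ is a dense $G_\delta$, completing the proof for $D$.

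The hard part is precisely this density step: one must realize, inside an arbitrarily small box and over arbitrarily long windows, a single finite family of pseudo orbits that is at once almost entirely ``gathered'' on some windows and, pairwise, almost entirely $\delta_n$-separated on others, and shadow it without leaving $D$ --- so the block lengths have to be organized to meet several competing density demands simultaneously, and every pseudo orbit used must be one the $\mathcal{D}$-shadowing property can shadow within $D$. This balancing is carried out in Section 5 of \cite{Ka4}, and the present statement follows by performing it inside each piece of $\mathcal{D}(f)$.
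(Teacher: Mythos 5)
Your overall strategy --- show that the set $R_n$ of distributionally $n$-$\delta_n$-scrambled tuples of transitive points is residual in $D^n$ and then invoke Mycielski's theorem --- is exactly the route the paper takes (Theorem 1.5 is deferred to Section 5 of \cite{Ka4}, and its strengthening, Theorem 1.10, is proved by combining the residuality of $D^n\cap{\rm DC1}_n^{\delta_n}(X,f)$ with Lemma 7.1). But your opening reduction contains a genuine error that the rest of the argument is built on: it is \emph{not} true that chain transitivity plus shadowing (or DSP) forces $\mathcal{D}(f)$ to be a finite cyclic decomposition $\{D_0,\dots,D_{p-1}\}$ into clopen sets permuted by $f$ with $f^p|_{D_j}$ mixing. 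That is the structure of a single $\mathcal{U}$-cyclic decomposition $\mathcal{D}_f^{\mathcal{U}}$; the partition $\mathcal{D}(f)$ is the common refinement over \emph{all} $\mathcal{U}$ and can be uncountable, with classes that are closed but not open and not invariant under any power of $f$. A concrete example satisfying every hypothesis of the theorem: let $f=g\times\sigma$ on $Y\times\{0,1\}^{\mathbb N}$ with $g$ the dyadic odometer and $\sigma$ the full shift. This is chain transitive, has the shadowing property and positive entropy, and has DSP (the odometer coordinate is chain continuous, so a pseudo-orbit starting at $(y,z)$ is shadowed by a point of the form $(y,z')$), yet $m(\delta)=2^{n(\delta)}\to\infty$ and $\mathcal{D}_f((y,z))=\{y\}\times\{0,1\}^{\mathbb N}$, so $\mathcal{D}(f)$ is uncountable and no class is clopen or fixed by any $f^p$.

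Because of this, the machinery your density argument relies on --- a mixing return map $g=f^p|_D$, periodic orbits of $g$ lying inside $D$, routing pseudo-orbits of $g$ within the clopen set $D$, taking $\delta_n=\min_j\delta_n^{(j)}$ over finitely many pieces, and deducing that $T(f)\cap D$ is dense in $D$ merely because $D$ is clopen --- is simply unavailable in general. The paper's route avoids all of this: it works directly with the relations $\sim_{f,\delta}$ and the uniform statement $\mathcal{D}(f)=\mathcal{D}^{\epsilon,\delta}(f)$ of Corollary 1.1, builds pseudo-orbits \emph{along} $\mathcal{D}(f)$ (i.e., with $f(x_i)\sim_f x_{i+1}$) whose DSP-shadows stay in the prescribed class, and gets a single sequence $(\delta_n)$ uniformly over all $D$ from globally chosen disjoint closed invariant sets supplied by positive entropy; residuality of $D\cap T(f)$ in $D$ likewise requires an argument of the type of Lemma 6.5 and Corollaries 6.1--6.2 rather than clopenness. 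Your proof would be correct as written when $\mathcal{D}(f)$ happens to be finite (e.g., for transitive SFTs), but in general the reduction step must be replaced by the $\sim_{f,\delta}$-based construction of \cite{Ka4}.
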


\begin{rem}
\normalfont
If the answer to Question 1.1 is positive, then by Theorem 1.5, for any continuous self-map of a compact metric space, the transitivity, the shadowing, and positive topological entropy imply a strong DC1, which means a positive answer to the question by Li, Li, and Tu in \cite{LLT} mentioned above.
\end{rem}

Next, for application of DSP, we consider a statistical variant of $\omega$-chaos. The notion of $\omega$-chaos was introduced by S.H.\:Li \cite{SHLi} as a variation of Li--Yorke chaos. In \cite{SHLi}, it is proved for interval maps that $\omega$-chaos is equivalent to positive topological entropy. This equivalence does not hold in general. In \cite{D,DO,K,L2,Pik}, the relationships between $\omega$, Li--Yorke, and distributional chaos are examined in detail. In \cite{Bo,L1,SSt}, for transitive interval maps, the size of $\omega$-scrambled sets is discussed with respect to the Lebesgue measure. In \cite{HR,LO,WWH}, specification properties are used to study $\omega$-chaos. In \cite{O}, it is shown that any non-minimal transitive map with the two-sided limit shadowing exhibits $\omega$-chaos in a strong sense. Moreover, in \cite{DLO}, a stronger version of $\omega$-chaos is considered. 

Based on a statistical modification of the definition of $\omega$-limit set, we shall introduce a notion of {\em $\overline{\omega}$}-chaos. In the rest of this section, $X$ denotes a compact metric space endowed with a metric $d$. For any $p\in X$ and $\epsilon>0$, we denote by $B_\epsilon(p)$ the open ball of radius $\epsilon$ centered at $X$:
\[
B_\epsilon(p)=\{q\in X\colon d(p,q)<\epsilon\}.
\]
For any subset $A$ of $\mathbb{N}_0=\{0\}\cup\mathbb{N}=\{0,1,2,\dots\}$, we denote by $\overline{d}(A)$ (resp.\:$\underline{d}(A)$) the {\em upper density} (resp.\:{\em lower density}) of $A$:
\[
\overline{d}(A)=\limsup_{n\to\infty}\frac{1}{n}|A\cap\{0,1,\dots,n-1\}|
\]
and
\[
\underline{d}(A)=\liminf_{n\to\infty}\frac{1}{n}|A\cap\{0,1,\dots,n-1\}|,
\]
here $|\cdot|$ is the cardinality of the set. If $\overline{d}(A)=\underline{d}(A)$, we set $d(A)=\overline{d}(A)=\underline{d}(A)$.

\begin{defi}
\normalfont
For any continuous map $f\colon X\to X$ and $x\in X$, we say that $y\in X$ is an {\em $\overline{\omega}$-limit point} of $x$ if
\[
\overline{d}(\{i\ge0\colon f^i(x)\in B_\epsilon(y)\})>0
\]
for all $\epsilon>0$. We denote by $\overline{\omega}(x,f)$ the set of $\overline{\omega}$-limit points of $x$.
\end{defi}

By this definition, we see that $\overline{\omega}(x,f)$ is a closed $f$-invariant subset of $X$.

\begin{rem}
\normalfont
The $\overline{\omega}$-limit set $\overline{\omega}(x,f)$ was introduced by Hilmy \cite{H} as the {\em minimal center of attraction of $x\in X$} (see also \cite{Sig}). It is treated in \cite{A} as $\omega_{\#}f(x)$ (see page 163 of \cite{A}). The same idea is also used in the definition of statistical limit set by Il'yashenko (see page 148 of \cite{AAIS}). It also appears in the recent paper \cite{CT} where the notation $\omega_{\overline{d}}(x)$ is used.
\end{rem}

For any continuous map $f\colon X\to X$ and $x\in X$, the $\omega$-limit set $\omega(x,f)$ is defined by the set of $y\in X$ such that $\lim_{j\to\infty}f^{i_j}(x)=y$ for some sequence $0\le i_1<i_2<\cdots$. On the other hand, for any $x,y\in X$, it holds that $y\notin\overline{\omega}(x,f)$ if and only if
\[
\lim_{n\to\infty}\frac{1}{n}|\{0\le i\le n-1\colon f^i(x)\in B_\epsilon(y)\}|=0
\]
for some $\epsilon>0$. Therefore, $\overline{\omega}(x,f)$ can be obtained by discarding $y$ from $\omega(x,f)$ such that the upper density of the approach time to some neighborhood of $y$ is $0$. As shown in Section 5, $\overline{\omega}(x,f)$ is always {\em non-empty} and shares basic properties with $\omega(x,f)$ (see Corollary 5.1 and Lemma 5.2 in Section 5).

Here, we recall the definition of $\omega$-chaos.

\begin{defi}
\normalfont
Let $f\colon X\to X$ be a  continuous map. We say that a subset $S$ of $X$ is {\em $\omega$-scrambled} if for any $x,y\in S$ with $x\ne y$, the following conditions are satisfied:
\begin{itemize}
\item[(1)] $\omega(x,f)\setminus\omega(y,f)$ is an uncountable set,
\item[(2)] $\omega(x,f)\cap\omega(y,f)\ne\emptyset$,
\item[(3)] $\omega(x,f)\setminus Per(f)\ne\emptyset$, here $Per(f)$ is the set of periodic points for $f$:
\[
Per(f)=\bigcup_{i>0}\{x\in X\colon f^i(x)=x\}.
\]
\end{itemize}
We say that $f$ exhibits {\em $\omega$-chaos} (or $f$ is {\em $\omega$-chaotic}) if there is an uncountable $\omega$-scrambled subset of $X$.
\end{defi}

As a modification of $\omega$-chaos, we define $\overline{\omega}$-chaos.

\begin{defi}
\normalfont
Let $f\colon X\to X$ be a  continuous map. We say that a subset $S$ of $X$ is {\em $\overline{\omega}$-scrambled} if for any $x,y\in S$ with $x\ne y$, the following conditions are satisfied:
\begin{itemize}
\item[(1)] $\overline{\omega}(x,f)\setminus\overline{\omega}(y,f)$ is an uncountable set,
\item[(2)] $\overline{\omega}(x,f)\cap\overline{\omega}(y,f)\ne\emptyset$,
\item[(3)] $\overline{\omega}(x,f)\setminus Per(f)\ne\emptyset$.
\end{itemize}
We say that $f$ exhibits {\em $\overline{\omega}$-chaos} (or $f$ is {\em $\overline{\omega}$-chaotic}) if there is an uncountable $\overline{\omega}$-scrambled subset of $X$.
\end{defi}

The next theorem gives sufficient conditions for $\overline{\omega}$-chaos. A  subset $S$ of $X$ is said to be {\em $\mathfrak{c}$-dense} if for any non-empty open subset $U$ of $X$, the cardinality $|U\cap S|$ of $U\cap S$ satisfies $|U\cap S|=|\mathbb{R}|$.

\begin{thm}
Let $f\colon X\to X$ be a chain transitive continuous map. If $f$ satisfies DSP and $h_{\rm top}(f)>0$, then $X$ contains an $\overline{\omega}$-scrambled subset $S$ such that $D\cap S$ is $\mathfrak{c}$-dense in $D$ for all $D\in\mathcal{D}(f)$, which implies that $S$ is $\mathfrak{c}$-dense in $X$, thus in particular, $f$ exhibits $\overline{\omega}$-chaos.
\end{thm}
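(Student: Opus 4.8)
The plan is to deduce Theorem~1.6 from the already-available Theorem~1.5 together with the abstract framework used to prove statistical versions of scrambled-set results. First I would invoke Theorem~1.5: since $f$ is chain transitive with DSP and $h_{\rm top}(f)>0$, there is a sequence $(\delta_n)_{n\ge2}$ of positive numbers such that every $D\in\mathcal{D}(f)$ contains a dense Mycielski subset $S_D\subset T(f)$ which is distributionally $n$-$\delta_n$-scrambled for all $n\ge2$. A Mycielski set is a countable increasing union of Cantor sets, so each such $S_D$ can be arranged (by the standard Mycielski/Kuratowski--Mycielski argument, cf.\ \cite{My}) to be not merely dense but $\mathfrak c$-dense in $D$; indeed the construction in \cite{Ka4} produces, inside a residual subset of a Cantor set of independent points, a scrambled set of full cardinality in every relatively open piece. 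I would then set $S=\bigcup_{D\in\mathcal{D}(f)}S_D$. Since $\mathcal{D}(f)=\{D_0,\dots,D_{m-1}\}$ is a finite clopen partition of $X$, $S$ is $\mathfrak c$-dense in each $D$ and hence $\mathfrak c$-dense in $X$.

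The next step is to check that $S$ is $\overline{\omega}$-scrambled. For pairs $x,y\in S_D$ lying in the \emph{same} part $D$, the relevant properties follow from the distributional $n$-$\delta_n$-scrambling together with the fact that $x,y\in T(f)$. Concretely: because $x\in T(f)$ we have $\omega(x,f)=X$, and I would show that distributional scrambling forces $\overline{\omega}(x,f)$ to be large as well — the point is that a distributionally scrambled pair has, for a positive-upper-density set of times, orbit points staying $\delta_n$-close, which translates into the upper density of visits to suitable balls being positive, giving an uncountable (indeed perfect, hence $\mathfrak c$-size) common part $\overline{\omega}(x,f)\cap\overline{\omega}(y,f)$, while the $\delta_n$-separation for a positive-upper-density set of times yields an uncountable difference $\overline{\omega}(x,f)\setminus\overline{\omega}(y,f)$. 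Property (3), $\overline{\omega}(x,f)\setminus Per(f)\ne\emptyset$, follows since $h_{\rm top}(f)>0$ forces $X$ (and thus $\overline{\omega}(x,f)$, which I will have shown to be uncountable) to be uncountable, whereas on a chain transitive system with shadowing and positive entropy one cannot have $X\subset Per(f)$; more directly, one picks a point in $\overline{\omega}(x,f)$ outside the closed set $\overline{Per(f)}\ne X$, using that $Per(f)$ cannot be dense here (or simply that any uncountable $\overline{\omega}$-limit set meets the complement of the countable-in-each-period set of periodic points, which fails only if some $\overline{\omega}(x,f)$ is entirely periodic — excluded by its uncountability).

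For pairs $x,y\in S$ lying in \emph{different} parts, say $x\in D_i$ and $y\in D_j$ with $i\ne j$, I would handle them by passing to the iterate $g=f^m$, under which each $D_\ell$ is invariant and $g|_{D_\ell}$ inherits chain transitivity, DSP and positive entropy; applying the single-part analysis inside a fixed $D_\ell$ already shows the three conditions for $\overline{\omega}(\cdot,f)$ because $\overline{\omega}(x,f)$ is a union of the $\overline{\omega}(f^k(x),g)$ over $0\le k<m$ and contains at least one full $D_\ell$-piece; since every $S_D$ is dense in $D$ with $D$ of full topological size and the partition is clopen, $\overline{\omega}(x,f)$ and $\overline{\omega}(y,f)$ will each contain an entire $D_\ell$ (in fact all of them, as $T(f)\ni x$ gives $\omega(x,f)=X$ and the density argument promotes this to $\overline{\omega}$), so (1) and (2) persist. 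Thus the verification reduces, uniformly, to the within-a-part case.

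I expect the main obstacle to be the passage from ``distributionally $n$-$\delta_n$-scrambled and in $T(f)$'' to the precise $\overline{\omega}$-statements (1) and (2): one must convert statements about the upper density of \emph{pairwise} closeness/separation of $f^i(x)$ and $f^i(y)$ into statements about the upper density of \emph{individual} visits $f^i(x)\in B_\epsilon(z)$ for appropriate limit points $z$, and then show the resulting collections of $z$'s are uncountable. The clean way to do this is to note that the Mycielski set from Theorem~1.5 is built inside a Cantor set $C$ of ``$\delta_n$-independent'' points — meaning every finite pattern of $\delta_n$-close/$\delta_n$-far relations among orbits of points of $C$ is realized on a positive-upper-density set of times — and to use this independence to select, for each element of a Cantor set of ``addresses'', a distinct $\overline{\omega}$-limit point, exactly as in the construction of distributionally scrambled Mycielski sets in \cite{Ka2, Ka4}; the $\overline{\omega}$ refinement only requires tracking upper densities, which are exactly what that construction controls. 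The remaining parts — $\mathfrak c$-density of $S$ in each $D$, clopenness of the partition, and property (3) — are routine given Theorem~1.5 and $h_{\rm top}(f)>0$.
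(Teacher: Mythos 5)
Your proposal has a genuine and, I believe, fatal gap at its central step: the passage from ``distributionally $n$-$\delta_n$-scrambled'' to condition (1) of $\overline{\omega}$-scrambledness. Distributional separation says only that $d(f^i(x),f^i(y))\ge\delta$ for a set of times of positive upper density; this does \emph{not} force $\overline{\omega}(x,f)\setminus\overline{\omega}(y,f)$ to be nonempty, let alone uncountable. The implication in the paper goes only the other way (Proposition 5.1: a nonempty difference of $\overline{\omega}$-limit sets implies $F_{xy}(\delta)<1$). Two points can be distributionally scrambled while both satisfy $\overline{\omega}(x,f)=\overline{\omega}(y,f)=X$, and this is not a hypothetical worry: Remark 1.10 and Lemma 6.5 show that under DSP the set $\overline{T}(f)=\{x\colon X=\overline{\omega}(x,f)\}$ is residual in each $D\in\mathcal{D}(f)$, and Theorem 1.10 actually produces dense Mycielski, distributionally scrambled sets \emph{inside} $\overline{T}(f)$ --- a direct counterexample to your claimed inference. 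For the same reason the overall strategy of building $S$ out of residual relations via Mycielski's theorem cannot work here: any $\overline{\omega}$-scrambled set must be disjoint from $\overline{T}(f)$ and hence meager in each $D$ (Remark 1.10), whereas generic points have $\overline{\omega}$-limit set equal to all of $X$, killing condition (1). Membership in $T(f)$ (i.e., $\omega(x,f)=X$) gives no control on $\overline{\omega}(x,f)$ and, if promoted to $\overline{\omega}(x,f)=X$ as your cross-part argument suggests, would again destroy condition (1).

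The paper's proof is necessarily of a different character: it does not try to extract $\overline{\omega}$-information from distributional scrambling at all. Instead it produces continuum many infinite minimal sets $\mathcal{F}$ (Lemma 6.2), fixes $M_0\in\mathcal{F}$, and for each other $M\in\mathcal{F}$ builds an average pseudo-orbit along $\mathcal{D}(f)$ that alternates increasingly long stays near $M_0$ and $M$ (Lemma 6.4), so that its $\overline{\omega}$-limit set is exactly $M_0\cup M$. The DSP-based statistical shadowing (Lemma 1.1) combined with Lemma 5.4 then yields a true orbit, arbitrarily close to any prescribed base point in any prescribed $D$, with $\overline{\omega}(x,f)=M_0\cup M$ exactly; conditions (1)--(3) then hold because distinct points of $S$ carry distinct uncountable minimal sets $M$ and share the common piece $M_0$. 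The $\mathfrak{c}$-density is arranged by an explicit parametrization over $\{0,1\}^{\mathbb{N}}$. If you want to salvage your outline, the part worth keeping is the reduction to a per-part construction and the use of positive entropy to get many minimal sets; but the scrambling must come from explicit control of $\overline{\omega}$-limit sets via Lemmas 1.1, 5.4 and 6.4, not from DC1-type pair statistics.
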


For any complete metric space $Y$, a subset $S$ of $Y$ is said to be a {\em residual} subset of $Y$ if $S$ contains a countable intersection of open and dense subsets of $Y$.

\begin{rem}
\normalfont
Given any continuous map $f\colon X\to X$, let
\[
\overline{T}(f)=\{x\in X\colon X=\overline{\omega}(x,f)\}.
\]
Then, it is obvious that any $\overline{\omega}$-scrambled subset $S$ of $X$ satisfies $S\cap\overline{T}(f)=\emptyset$. If a continuous map $f\colon X\to X$ is chain transitive and has DSP, then by Lemma 6.5 of Section 6,
$D\cap\overline{T}(f)$ is residual in $D$ for all $D\in\mathcal{D}(f)$, thus in particular, $\overline{T}(f)\ne\emptyset$, and so by Lemma 5.3 of Section 5, $\overline{T}(f)$ is residual in $X$. As a consequence, for any $\overline{\omega}$-scrambled subset $S$ of $X$, $D\cap S$ must be meager in $D$ for all $D\in\mathcal{D}(f)$, and $S$ must be meager in $X$.
\end{rem}

The proof of Theorem 1.6 in Section 6 is through the following lemma which may be of independent interest.

\begin{lem}
Let $f\colon X\to X$ be a chain transitive continuous map with DSP. Suppose that a sequence $(x_i)_{i\ge0}$ of points in $X$ satisfies $f(x_i)\sim_f x_{i+1}$ for all $i\ge0$, and
\[
\lim_{n\to\infty}\frac{1}{n}\sum_{i=0}^{n-1}d(f(x_i),x_{i+1})=0.
\]
Then, for any $\epsilon>0$, there is $x\in X$ such that $d(x,x_0)<\epsilon$, $x\sim_f x_0$, and 
\[
\lim_{n\to\infty}\frac{1}{n}\sum_{i=0}^{n-1}d(f^i(x),x_i)=0.
\]
\end{lem}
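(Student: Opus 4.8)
The plan is to obtain $x$ by a shadowing argument applied to a suitably modified pseudo-orbit, using the ``statistical'' strength of DSP. First I would fix $\epsilon>0$ and, using Corollary 1.1, choose $\delta>0$ so small that $\mathcal{D}(f)=\mathcal{D}^{\epsilon/2,\delta}(f)$; more generally, for a decreasing sequence $\epsilon_k\downarrow 0$ (with $\epsilon_0<\epsilon$) I would pick $\delta_k\downarrow 0$ with $\mathcal{D}(f)=\mathcal{D}^{\epsilon_k,\delta_k}(f)$. The hypothesis gives $\frac{1}{n}\sum_{i=0}^{n-1}d(f(x_i),x_{i+1})\to 0$, so for each $k$ the set $B_k=\{i\ge0\colon d(f(x_i),x_{i+1})\ge\delta_k\}$ has density $0$. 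The key elementary observation is that one can choose a strictly increasing sequence of ``break times'' $0=n_0<n_1<n_2<\cdots$ such that the blocks $[n_{k-1},n_k)$ are very long compared with the frequency of the bad indices $B_k$ inside them, and so that almost all of the sequence is covered by blocks of high ``quality''. On each block $[n_{k-1},n_k)$ the finite segment $(x_i)_{i=n_{k-1}}^{n_k}$ is, after deleting the finitely many bad indices in $B_k\cap[n_{k-1},n_k)$ and splicing over them using chain transitivity along $\mathcal{D}(f)$ (property (P4) and $f(x_i)\sim_f x_{i+1}$), a genuine $\delta_k$-pseudo-orbit whose endpoints lie in the correct classes.

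Next I would stitch these corrected blocks into a single $\delta_0$-pseudo-orbit $\xi'=(x_i')_{i\ge0}$ of $f$ with $x_0'=x_0$ and $x_i'\sim_f x_i$ for all $i$ (the splices use (P4) to realize the required chains of exact length, so they do not shift the indices of the blocks); on the block $[n_{k-1},n_k)$ this pseudo-orbit is in fact a $\delta_k$-pseudo-orbit. Since $\mathcal{D}(f)=\mathcal{D}^{\epsilon_0,\delta_0}(f)$ and $x_0'=x_0\in\mathcal{D}_f(x_0)$, DSP provides $x\in\mathcal{D}_f(x_0)$, i.e.\ $x\sim_f x_0$, which $\epsilon_0$-shadows $\xi'$; in particular $d(x,x_0)\le\epsilon_0<\epsilon$. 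The finer point is that on each block $[n_{k-1},n_k)$ the shadowing can be upgraded: because on that block $\xi'$ is a $\delta_k$-pseudo-orbit starting in the appropriate class, and $\mathcal{D}(f)=\mathcal{D}^{\epsilon_k,\delta_k}(f)$, the relevant tail of the orbit of $x$ stays $\epsilon_k$-close to $\xi'$ there. (Concretely one re-runs the shadowing from the iterate $f^{n_{k-1}}(x)$, which lies in the right class, and matches it against the shadowing of $\xi'$; uniqueness of shadowing orbits up to $\epsilon$ on long stretches forces the two to nearly coincide.) Hence $d(f^i(x),x_i')\le\epsilon_k$ for all $i$ in a set whose complement has density $0$ (the union of the short transition zones near the break times), so $\frac{1}{n}\sum_{i=0}^{n-1}d(f^i(x),x_i')\to 0$.

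Finally, since $x_i'$ agrees with $x_i$ outside the sparse set of deleted bad indices, and on each block the number of deleted indices is $|B_k\cap[n_{k-1},n_k)|$ which by construction is a vanishing fraction of the block, we get $\frac{1}{n}\sum_{i=0}^{n-1}d(x_i',x_i)\to 0$ as well (using that $X$ has finite diameter to bound the contribution on the bad set). Combining with the previous paragraph and the triangle inequality yields $\frac{1}{n}\sum_{i=0}^{n-1}d(f^i(x),x_i)\to 0$, which is the desired conclusion. The main obstacle, and the part that needs the most care, is the block-wise bookkeeping: one must choose the break times $n_k$, the error scales $\epsilon_k,\delta_k$, and the splicing chains (whose lengths are constrained by (P4)) in a mutually compatible way so that (a) the spliced sequence is globally a $\delta_0$-pseudo-orbit along $\mathcal{D}(f)$, (b) it is a $\delta_k$-pseudo-orbit on the $k$-th block, and (c) the total measure of all transition/deletion zones has density zero; the ``upgraded shadowing on each block'' step, i.e.\ propagating the small error scale $\epsilon_k$ from the per-block hypothesis to the single globally shadowing point $x$, is where I expect to spend the most effort, and it is essentially a compactness/uniqueness argument for shadowing orbits that must be made quantitative.
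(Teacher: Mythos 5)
Your first half (density-zero bad set, splicing over bad indices with chains of exact length supplied by (P4), producing a pseudo-orbit along $\mathcal{D}(f)$ that is $\delta_k$-fine on the $k$-th block) matches the first stage of the paper's argument in spirit. The genuine gap is in your ``upgraded shadowing'' step. You apply DSP once, at the coarsest scale, to obtain a single point $x$ with $d(f^i(x),x_i')\le\epsilon_0$ for all $i$, and then assert that on the $k$-th block the orbit of $x$ is in fact $\epsilon_k$-close to $\xi'$ because that block is a $\delta_k$-pseudo-orbit and ``uniqueness of shadowing orbits up to $\epsilon$ on long stretches forces the two to nearly coincide.'' No such uniqueness is available: the standing hypotheses are only chain transitivity and DSP, with no expansivity, so two true orbits can both $\epsilon_0$-shadow the same pseudo-orbit while staying a definite distance apart. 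What the hypothesis $\mathcal{D}(f)=\mathcal{D}^{\epsilon_k,\delta_k}(f)$ gives you is that \emph{some} point $\epsilon_k$-shadows the $k$-th block; it says nothing about the particular iterate $f^{n_{k-1}}(x)$ of the point you already fixed. As stated, the step fails, and with it the density-one estimate $d(f^i(x),x_i')\le\epsilon_k$ that your final averaging argument relies on.

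The paper's proof avoids this by not trying to upgrade a single shadowing point. After producing a corrected sequence $(y_i)$ with $d(f(y_i),y_{i+1})\to0$ (pointwise, not merely blockwise), it runs an iterative, limit-shadowing-style construction: it chooses $\eta_j$ with $\sum_j\eta_j<\epsilon$ and $\zeta_j$ with $\mathcal{D}(f)=\mathcal{D}^{\eta_j,\zeta_j}(f)$, and builds points $w_0,w_1,w_2,\dots$ where $w_k$ is obtained by $\eta_k$-shadowing a $\zeta_k$-pseudo-orbit that begins with the \emph{genuine orbit segment} $(w_{k-1},f(w_{k-1}),\dots,f^{n_k}(w_{k-1}))$, continues through a connecting chain supplied by (P4), and ends with the tail of $(y_i)$. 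This forces $d(w_{k-1},w_k)\le\eta_k$, so $(w_k)$ is Cauchy with limit $x$ satisfying $d(x,y_0)<\epsilon$, and a telescoping estimate shows $x$ shadows the $j$-th tail block within $\sum_{l\ge j}\eta_l\to0$ on a density-one set of times. To repair your proof you would need to replace the one-shot shadowing plus upgrade with such a successive-correction scheme (or impose expansivity, which the lemma does not assume).
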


\begin{rem}
\normalfont
We say that a continuous map $f\colon X\to X$ has the {\em asymptotic average shadowing property} (abbrev.\:AASP) if for any sequence $(x_i)_{i\ge0}$ of points in $X$ such that
\[
\lim_{n\to\infty}\frac{1}{n}\sum_{i=0}^{n-1}d(f(x_i),x_{i+1})=0,
\]
we have
\[
\lim_{n\to\infty}\frac{1}{n}\sum_{i=0}^{n-1}d(f^i(x),x_i)=0
\]
for some $x\in X$. According to Theorem 3.8 of \cite{KKO}, we know that for any continuous map $f\colon X\to X$,
\begin{itemize}
\item[(1)] if $f$ is surjective and has AASP, then $f$ is chain mixing, 
\item[(2)] if $f$ is chain mixing and has the shadowing property, then $f$ satisfies AASP. 
\end{itemize}
Recall that a continuous map $f\colon X\to X$ is chain mixing if and only if $f$ is chain transitive and satisfies $x\sim_f y$ for all $x,y\in X$. By this, we see that the shadowing given in Lemma 1.1 generalizes AASP (under the assumption of surjectivity), and Lemma 1.1 generalizes (2).
\end{rem}

By Theorems 1.4 and 1.6, we obtain the following theorem.

\begin{thm}
Let $f\colon X\to X$ be a chain transitive continuous map. If $\dim{X}=0$, $f$ has the shadowing property, and if $h_{\rm top}(f)>0$, then $f$ exhibits $\overline{\omega}$-chaos.
\end{thm}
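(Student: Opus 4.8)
The plan is to obtain Theorem~1.7 as an immediate consequence of Theorems~1.4 and~1.6, so the argument is a short two-step deduction. First I would verify that the hypotheses of Theorem~1.4 are met: $X$ is a compact metric, hence compact Hausdorff, space with $\dim X=0$, and $f\colon X\to X$ is a chain transitive continuous map with the shadowing property. Theorem~1.4 then yields that $f$ has DSP. A minor bookkeeping point worth recording is that for a compact Hausdorff space $\dim X=0$ is equivalent to total disconnectedness, so the hypothesis here matches exactly the form in which Theorem~1.4 is stated and no separate reduction is needed.

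Second, I would apply Theorem~1.6 to this same map $f$: it is chain transitive, it satisfies DSP by the previous step, and $h_{\rm top}(f)>0$ by assumption. Theorem~1.6 therefore provides an $\overline{\omega}$-scrambled subset $S\subset X$ such that $D\cap S$ is $\mathfrak{c}$-dense in $D$ for every $D\in\mathcal{D}(f)$; in particular $S$ is $\mathfrak{c}$-dense in $X$, hence uncountable, and by the definition of $\overline{\omega}$-chaos this means that $f$ is $\overline{\omega}$-chaotic. This finishes the proof.

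Since both ingredients are already available, there is no genuine obstacle in this deduction itself. If one were to trace the content back, the real work lies in the two cited results: Theorem~1.4 upgrades plain shadowing to DSP in the zero-dimensional setting, which is where the result of Good and Meddaugh is used to control the $\mathcal{U}$-cyclic decompositions and the shadowing along $\mathcal{D}(f)$ simultaneously; and Theorem~1.6 constructs the $\overline{\omega}$-scrambled Mycielski set from the statistical shadowing of Lemma~1.1 together with the positive-entropy hypothesis. Thus the only thing to be careful about in writing out Theorem~1.7 is to state the hypotheses of Theorems~1.4 and~1.6 in a form in which they plainly apply to $(X,f)$, which the argument above does.
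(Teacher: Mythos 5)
Your proposal is correct and coincides with the paper's own argument: the paper derives Theorem 1.7 precisely by combining Theorem 1.4 (zero-dimensionality plus shadowing plus chain transitivity yields DSP) with Theorem 1.6 (chain transitivity, DSP and positive entropy yield an uncountable $\overline{\omega}$-scrambled set). Nothing further is needed.
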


Given any continuous map $f\colon X\to X$, the chain recurrent set $CR(f)$ is partitioned into the disjoint union of chain components, i.e., maximal chain transitive subsets, for $f$. If $\dim{CR(f)}=0$, $f$ has the shadowing property, and $h_{\rm top}(f)>0$, then by Lemma 4.1 of \cite{Ka4}, there is a chain component $C$ for $f$ such that $\dim{C}=0$, $f|_C$ has the shadowing property, and $h_{\rm top}(f|_{C})>0$. Thus, by applying Theorem 1.7 to $f|_C$, we obtain the following corollary.

\begin{cor}
For any continuous map $f\colon X\to X$, if $\dim{CR(f)}=0$, $f$ has the shadowing property, and if $h_{\rm top}(f)>0$, then $f$ exhibits $\overline{\omega}$-chaos.
\end{cor}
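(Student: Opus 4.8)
The plan is to reduce the statement to Theorem 1.7 by passing to a chain component of $f$ that carries the entropy. Recall that $CR(f)$ is the disjoint union of its chain components, each of which is a closed $f$-invariant set on which $f$ acts chain transitively. If $\dim CR(f)=0$, then every chain component $C$ is a closed subset of a zero-dimensional compact metric space, hence $\dim C=0$; moreover the shadowing property of $f$ restricts to $f|_C$. The only nontrivial input is the distribution of entropy among the chain components: since $f$ has the shadowing property, $h_{\rm top}(f)=h_{\rm top}(f|_{CR(f)})$, and $h_{\rm top}(f|_{CR(f)})=\sup_{C}h_{\rm top}(f|_C)$ over the chain components $C$ of $f$, so $h_{\rm top}(f)>0$ forces $h_{\rm top}(f|_C)>0$ for at least one chain component $C$. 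This whole package—$\dim C=0$, $f|_C$ has shadowing, $h_{\rm top}(f|_C)>0$—is exactly the content of Lemma 4.1 of \cite{Ka4}, which I would simply invoke.

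Having fixed such a chain component $C$, I would apply Theorem 1.7 to the subsystem $(C,f|_C)$: it is chain transitive, $\dim C=0$, $f|_C$ has the shadowing property, and $h_{\rm top}(f|_C)>0$, so $f|_C$ is $\overline{\omega}$-chaotic; hence there is an uncountable subset $S\subseteq C$ that is $\overline{\omega}$-scrambled for $f|_C$.

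The remaining point is to check that $S$ is still $\overline{\omega}$-scrambled when viewed inside $(X,f)$. For $x\in C$ the $f$-orbit of $x$ stays in the closed invariant set $C$, and since $C$ is closed we have $B_\epsilon(y)\cap C$ equal to the ball of radius $\epsilon$ about $y$ in the metric space $C$ for every $y\in C$ and $\epsilon>0$; hence $\overline{\omega}(x,f)=\overline{\omega}(x,f|_C)$ for all $x\in C$. Likewise $Per(f|_C)=Per(f)\cap C$, so that $\overline{\omega}(x,f)\setminus Per(f)=\overline{\omega}(x,f|_C)\setminus Per(f|_C)$. Consequently the three conditions defining an $\overline{\omega}$-scrambled set transfer verbatim from $f|_C$ to $f$, and the uncountable set $S$ witnesses $\overline{\omega}$-chaos of $f$.

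The main obstacle is the entropy-allocation step, i.e.\ extracting a single chain component with positive topological entropy; this is where the argument leans on Lemma 4.1 of \cite{Ka4} (which rests on the identity $h_{\rm top}(f)=h_{\rm top}(f|_{CR(f)})$ under shadowing, together with the fact that the entropy of $f|_{CR(f)}$ is the supremum of the entropies over the chain components). Everything after that is routine bookkeeping about the behaviour of $\overline{\omega}$-limit sets and periodic points under restriction to a closed invariant subset.
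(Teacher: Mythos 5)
Your proposal is correct and follows essentially the same route as the paper: invoke Lemma 4.1 of \cite{Ka4} to extract a chain component $C$ with $\dim C=0$, shadowing for $f|_C$, and $h_{\rm top}(f|_C)>0$, then apply Theorem 1.7 to $(C,f|_C)$. The only difference is that you spell out the (routine) transfer of the $\overline{\omega}$-scrambled set from the subsystem back to $(X,f)$, which the paper leaves implicit.
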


We present one more corollary. Let $M$ be a closed differentiable manifold and let $\mathcal{C}(M)$ (resp.\:$\mathcal{H}(M)$) denote the set of continuous self-maps (resp.\:homeomorphisms) of $M$, endowed with the $C^0$-topology. Because generic $f\in\mathcal{C}(M)$ (resp.\:$f\in\mathcal{H}(M)$, if $\dim{M}>1$) satisfies the assumptions of Corollary 1.3 (see Section 1 of \cite{Ka4} for details), we obtain the following.

\begin{cor}
Generic $f\in\mathcal{C}(M)$ (resp.\:$f\in\mathcal{H}(M)$, if $\dim{M}>1$) exhibits $\overline{\omega}$-chaos.
\end{cor}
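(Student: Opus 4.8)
The plan is to combine Corollary 1.3 with three standard $C^0$-genericity facts for self-maps of $M$, all of which are collected, with references, in Section 1 of \cite{Ka4}. First I would observe that $\mathcal{C}(M)$, with the uniform ($C^0$) metric, and $\mathcal{H}(M)$, with the complete metric $d(f,g)=\sup_x d(f(x),g(x))+\sup_x d(f^{-1}(x),g^{-1}(x))$, are completely metrizable, hence Baire spaces. Consequently a countable intersection of residual subsets is again residual (in particular non-empty), and it suffices to exhibit three residual subsets of $\mathcal{C}(M)$ (resp.\ of $\mathcal{H}(M)$ when $\dim M>1$) on each of which one of the three hypotheses of Corollary 1.3 is satisfied.

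The three facts are: (i) the shadowing property is $C^0$-generic, i.e.\ $\{f:f\text{ has the shadowing property}\}$ is residual in $\mathcal{C}(M)$, and in $\mathcal{H}(M)$ when $\dim M>1$; (ii) a zero-dimensional chain recurrent set is $C^0$-generic, i.e.\ $\{f:\dim CR(f)=0\}$ is residual — here one uses that $f\mapsto CR(f)$ is upper semicontinuous in the $C^0$-topology, so that $\{f:CR(f)\subset U\}$ is open for every open $U\subset M$, together with perturbation arguments confining the chain recurrence inside arbitrarily fine finite unions of small balls; and (iii) positive topological entropy is $C^0$-generic — in fact $h_{\rm top}(f)=\infty$ generically by Yano's theorem (and its interval/circle analogue for $\mathcal{C}(M)$), so that a fortiori $\{f:h_{\rm top}(f)>0\}$ is residual. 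Each of these is imported from Section 1 of \cite{Ka4}.

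I would then set $\mathcal{R}$ to be the intersection of the three residual sets furnished by (i)--(iii); it is a residual subset of $\mathcal{C}(M)$ (resp.\ of $\mathcal{H}(M)$ with $\dim M>1$). For every $f\in\mathcal{R}$ one has $\dim CR(f)=0$, $f$ has the shadowing property, and $h_{\rm top}(f)>0$, so Corollary 1.3 applies to $f$ and gives that $f$ exhibits $\overline{\omega}$-chaos. Since $\mathcal{R}$ is residual, this proves that generic $f\in\mathcal{C}(M)$ (resp.\ $f\in\mathcal{H}(M)$ when $\dim M>1$) is $\overline{\omega}$-chaotic.

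The genuinely delicate input is fact (ii): the upper semicontinuity of $CR(\cdot)$ makes $\{f:CR(f)\subset U\}$ open, but one still needs, for each finite open cover $\mathcal{V}$ of $M$ by sets of small diameter, to $C^0$-approximate an arbitrary $f$ by a map $g$ whose chain recurrent set meets $M$ only inside a refinement of $\mathcal{V}$, i.e.\ a perturbation argument trapping the recurrence in an essentially totally disconnected set; this is precisely the content taken from \cite{Ka4} and the underlying $C^0$-generic dynamics literature. Facts (i) and (iii) are comparatively classical, and the Baire-category bookkeeping that glues everything to Corollary 1.3 is routine.
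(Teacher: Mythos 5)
Your argument is correct and is exactly the paper's: the paper deduces the corollary by noting that generic $f\in\mathcal{C}(M)$ (resp.\ $f\in\mathcal{H}(M)$ for $\dim M>1$) has the shadowing property, zero-dimensional chain recurrent set, and positive topological entropy (citing Section 1 of \cite{Ka4} for these three residuality facts), and then applies Corollary 1.3 together with the standard fact that a finite intersection of residual sets in a Baire space is residual. Your additional commentary on where each genericity fact comes from is consistent with that source and does not change the route.
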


\begin{rem}
\normalfont
In \cite{FKOT}, an example is given of a subshift $(X,f)$ such that
\begin{itemize}
\item[(1)] $f$ is $\omega$-chaotic, 
\item[(2)] every $x\in X$ is a generic point for some ergodic $f$-invariant measure on $X$,
\item[(3)] any ergodic $f$-invariant measure on $X$ is supported by a single periodic orbit for $f$.
\end{itemize}
For this $f\colon X\to X$, by (2), (3), and Lemma 5.1 in Section 5, we obtain $\overline{\omega}(x,f)\subset Per(f)$ for all $x\in X$, so $f$ does not exhibit $\overline{\omega}$-chaos. From (1), it follows that $\omega$-chaos is not a sufficient condition for $\overline{\omega}$-chaos. Conversely, it is natural to expect that $\overline{\omega}$-chaos is not a sufficient condition $\omega$-chaos. For a transitive continuous map $f\colon X\to X$, $(X,f)$ is said to be a {\em ToM system} if $f$ is not minimal, and every $x\in X$ is either a transitive point for $f$ or a minimal point for $f$ \cite{Ko}. We easily see that such $f$ cannot be $\omega$-chaotic. Then, is there an $\overline{\omega}$-chaotic map $f\colon X\to X$ such that $(X,f)$ is ToM? If exist, $\overline{\omega}$-chaos is not a sufficient condition for $\omega$-chaos. In Appendix A, we prove that an interval map $f\colon[0,1]\to[0,1]$ is $\overline{\omega}$-chaotic if and only if $h_{\rm top}(f)>0$.
\end{rem}

For another application of DSP, we consider the distribution of {\em irregular points} (also called as {\em points with historical behaviour} \cite{R}). The irregular points are defined to be points for which Birkhoff averages do not converge. Among many studies on the irregular points, in \cite{LW}, it is proved that for any continuous map $f\colon X\to X$ with the specification property, the $\phi$-irregular set is either empty or residual in $X$ for each continuous function $\phi\colon X\to\mathbb{R}$. It is proved in \cite{DOT} that  if a continuous map satisfies the shadowing, the set of irregular points has full entropy. Also, in \cite{FKOT}, under the assumption of shadowing, $\phi$-irregular and $\phi$-level sets are studied in detail.

Let $\mathcal{M}(X)$ denote the set of Borel probability measures on $X$. Note that $\mathcal{M}(X)$ is a compact metrizable space when equipped with the $\text{weak}^\ast$-topology. For any $p\in X$, we denote by $\delta_p$ the Dirac measure at $p$: for any Borel subset $A$ of $X$,
\begin{equation*}
\delta_p(A)=
\begin{cases}
0&\text{if $p\not\in A$}\\
1&\text{if $p\in A$}
\end{cases}
.
\end{equation*}
We denote by $C(X)$ the set of real-valued continuous functions on $X$. Given any continuous map $f\colon X\to X$, let
\[
I(f,\phi)=\{x\in X\colon\lim_{n\to\infty}\frac{1}{n}\sum_{i=0}^{n-1}\phi(f^i(x))\text{ does not exist}\}
\]
for all $\phi\in C(X)$, and let
\[
I(f)=\bigcup_{\phi\in C(X)}I(f,\phi).
\]
Then, it holds that
\[
I(f)=\{x\in X\colon\lim_{n\to\infty}\frac{1}{n}\sum_{i=0}^{n-1}\delta_{f^i(x)}\text{ does not exist}\},
\]
and each $x\in I(f)$ is said to be an {\em irregular point} for $f$.

The following two theorems describe topological distribution of irregular points.

\begin{thm}
Given any chain transitive continuous map $f\colon X\to X$, if $f$ satisfies DSP and $h_{\rm top}(f)>0$, then there exists $\phi\in C(X)$ such that for every $D\in\mathcal{D}(f)$, $D\cap I(f,\phi)$ is a residual subset of $D$. 
\end{thm}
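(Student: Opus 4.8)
The plan is to produce a single $\phi\in C(X)$, together with rationals $a<b$, for which the set
$J:=\{x\in X:\liminf_{n\to\infty}A_n(x)\le a\}\cap\{x\in X:\limsup_{n\to\infty}A_n(x)\ge b\}$, where $A_n(x):=\frac1n\sum_{i=0}^{n-1}\phi(f^i(x))$, meets every $D\in\mathcal D(f)$ in a residual subset of $D$; since $a<b$ we have $J\subset I(f,\phi)$, so the theorem follows. Each $A_n$ is continuous, and $\{x:\limsup_{n}A_n(x)\ge b\}=\bigcap_{\eta\in\mathbb Q_{>0}}\bigcap_{M\ge1}\bigcup_{n\ge M}\{x:A_n(x)>b-\eta\}$ is a countable intersection of open sets; similarly $\{x:\liminf_{n}A_n(x)\le a\}$ is $G_\delta$, hence so is $J$. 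As each $D\in\mathcal D(f)$ is closed in the compact metric space $X$, it is a Baire space, so $D\cap J$ (a $G_\delta$ subset of $D$) is residual in $D$ as soon as it is dense in $D$; thus it suffices to prove that $D\cap J$ is dense in $D$ for every $D\in\mathcal D(f)$.

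Now the data. Since $f$ has DSP it has the shadowing property and, being chain transitive, is transitive; consequently $\mathcal D(f)$ is finite, say $\mathcal D(f)=\{D_0,\dots,D_{m-1}\}$ with each $D_j$ clopen, $f(D_j)=D_{j+1}$ (indices mod $m$), and $\rho:=\min_{j\ne k}d(D_j,D_k)>0$. Since $f$ is transitive with shadowing, it has a periodic point $p$; let $\mu_2$ be the invariant measure carried by its orbit. Since $h_{\rm top}(f)>0$, the variational principle gives an ergodic invariant measure $\mu_1$ with $h_{\mu_1}(f)>0$, and such a $\mu_1$ is not supported on a finite set, so $\mu_1\ne\mu_2$; hence there is $\phi\in C(X)$ with $c_1:=\int\phi\,d\mu_1<c_2:=\int\phi\,d\mu_2$ (replace $\phi$ by $-\phi$ if necessary). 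Fix rationals $a,b$ with $c_1<a<b<c_2$, put $\eta_0:=\min(a-c_1,c_2-b)>0$, let $z_1$ be a point with $A_n(z_1)\to c_1$ (such a point exists by Birkhoff's ergodic theorem applied to $\phi$), and put $z_2:=p$ (so $A_n(z_2)\to c_2$).

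Fix $D_j$, a non-empty open $V\subset D_j$, and $x_0\in V$. Choose $\epsilon>0$ with $\epsilon<\rho$, with $B_\epsilon(x_0)\cap D_j\subset V$, and small enough that $d(x,y)<\epsilon$ implies $|\phi(x)-\phi(y)|<\eta_0$; let $\delta>0$ be given by the shadowing property for this $\epsilon$, and let $L(\delta)$ be a bound, valid by chain transitivity and compactness, on the length of a $\delta$-chain joining any two points of $X$. Using chain transitivity, build a $\delta$-pseudo orbit $\xi=(\xi_i)_{i\ge0}$ of $f$ with $\xi_0=x_0$ consisting of: a $\delta$-chain from $x_0$ to $z_1$; then the segment $z_1,f(z_1),\dots,f^{L_1-1}(z_1)$; then a $\delta$-chain to $z_2$; then $z_2,\dots,f^{L_2-1}(z_2)$; then a $\delta$-chain back to $z_1$; then $L_3$ iterates of $z_1$; and so on, using $z_1$ in odd-indexed blocks and $z_2$ in even-indexed ones, where the lengths $L_k$ are chosen recursively so large that $\left|\frac1{L_k}\sum_{i=0}^{L_k-1}\phi(f^i(z))-c\right|<\frac1k$ (with $(z,c)=(z_1,c_1)$ for $k$ odd and $(z,c)=(z_2,c_2)$ for $k$ even) and $L_k\ge k\bigl((k+1)L(\delta)+L_1+\dots+L_{k-1}\bigr)$. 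Let $x\in X$ be an $\epsilon$-shadowing point of $\xi$; since $d(x,x_0)<\epsilon<\rho$ and $x_0\in D_j$, we get $x\in B_\epsilon(x_0)\cap D_j\subset V$. Let $N_k$ be the number of terms of $\xi$ up to and including its $k$-th block. The growth condition forces $L_k/N_k\to1$ and makes the contribution to $\frac1{N_k}\sum_{i<N_k}\phi(\xi_i)$ from the transition chains and from the blocks before the $k$-th tend to $0$, so $\frac1{N_k}\sum_{i<N_k}\phi(\xi_i)\to c_1$ along odd $k$ and $\to c_2$ along even $k$. Since $d(f^i(x),\xi_i)<\epsilon$ for all $i$, we have $\bigl|A_n(x)-\frac1n\sum_{i<n}\phi(\xi_i)\bigr|<\eta_0$ for every $n$; taking $n=N_k$ gives $\liminf_n A_n(x)\le c_1+\eta_0\le a$ and $\limsup_n A_n(x)\ge c_2-\eta_0\ge b$, i.e.\ $x\in V\cap J$. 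Hence $D_j\cap J$ is dense in $D_j$, as required.

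The step I expect to be the main obstacle is obtaining, from the hypotheses $h_{\rm top}(f)>0$ and shadowing, the two distinct ergodic measures $\mu_1\ne\mu_2$ and a continuous function separating them: one combines the variational principle with the existence of periodic points for transitive maps with shadowing (alternatively, with a horseshoe/specification argument in the spirit of \cite{DOT,FKOT}). Granting this, the remaining ingredients — finiteness and clopenness of $\mathcal D(f)$, which makes an $\epsilon$-shadowing point with $\epsilon<\rho$ land automatically in the correct clopen piece; the fast-growing block lengths; and the $G_\delta$/Baire-category bookkeeping — are routine.
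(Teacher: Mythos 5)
Your overall strategy (alternate long blocks tracking two orbits with different Birkhoff averages, shadow the resulting pseudo-orbit, and land the shadowing point in the prescribed class $D$) is sound in outline, but two of the foundational claims you make are false, and the first of them sits exactly where DSP, as opposed to plain shadowing, has to do the work. The claim that $\mathcal{D}(f)$ is finite with clopen classes at positive mutual distance is wrong: the classes of $\sim_f$ are the intersections over \emph{all} $\mathcal{U}\in\mathscr{U}(X)$ of the finitely many clopen classes of $\sim_{f,\mathcal{U}}$, and this common refinement can be uncountable with non-open classes. For instance, take $f=\sigma\times T$ on $X=\{0,1\}^{\mathbb N}\times Z$ with $\sigma$ the full shift and $(Z,T)$ the dyadic odometer: $f$ is chain transitive, has the shadowing property and positive entropy, has DSP by Theorem 1.4 (since $\dim X=0$), yet $\mathcal{D}(f)=\{\{0,1\}^{\mathbb N}\times\{z\}\colon z\in Z\}$ is uncountable and no two classes are at positive distance. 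Consequently your $\rho$ does not exist and the step ``$d(x,x_0)<\epsilon<\rho$ forces $x\in D_j$'' collapses; since that was your only mechanism for placing the shadowing point in $D$, your argument in fact only uses the shadowing property, which does not suffice. The repair is to invoke DSP directly: by Corollary 1.1, for the given $\epsilon$ there is $\delta$ with $\mathcal{D}(f)=\mathcal{D}^{\epsilon,\delta}(f)$, so every $\delta$-pseudo-orbit starting in $D$ is $\epsilon$-shadowed by a point \emph{of} $D$. This is precisely how the paper's proof of Theorem 1.8 places $y_0,y_1$ in $D$.

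The second false claim is that a transitive map with shadowing has a periodic point: the same product example has none (no point of the odometer factor is periodic), even with positive entropy, so your $\mu_2$ need not exist. This is easily repaired: Lemma 6.2 gives, from shadowing and $h_{\rm top}(f)>0$, two disjoint (infinite minimal, hence closed invariant) sets $K_0,K_1$, and a Urysohn function $\phi$ with $\phi\equiv j$ on $K_j$ separates the relevant averages; equivalently, two distinct ergodic measures always exist here. With both repairs your single alternating pseudo-orbit would yield one point of $D$ realizing simultaneously the small $\liminf$ and the large $\limsup$, which is more laborious than necessary: the paper instead builds two separate pseudo-orbits (one ending in $K_0$ forever, one in $K_1$ forever), concludes that $D\cap X^-(f,\phi,1/3)$ and $D\cap X^+(f,\phi,2/3)$ are each dense $G_\delta$ in $D$, and intersects them. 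Your $G_\delta$/Baire bookkeeping and the block-length estimates are fine as written.
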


\begin{thm}
Given any chain transitive continuous map $f\colon X\to X$, if $f$ satisfies DSP, then for any $\phi\in C(X)$, $I(f,\phi)\ne\emptyset$ implies that for every $D\in\mathcal{D}(f)$, $D\cap I(f,\phi)$ is a residual subset of $D$. 
\end{thm}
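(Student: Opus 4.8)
The plan is to reduce the statement to a density claim inside each equivalence class of $\sim_f$ and then extract that density from the metric form of DSP recorded in Corollary 1.1. Fix $\phi\in C(X)$ with $I(f,\phi)\neq\emptyset$, a class $D\in\mathcal{D}(f)$ (a non-empty closed subspace of $X$), and a point $x_0\in I(f,\phi)$; for $x\in X$ and $n\ge1$ write $A_n\phi(x)=\frac{1}{n}\sum_{i=0}^{n-1}\phi(f^i(x))$. Since $\lim_{n\to\infty}A_n\phi(x_0)$ does not exist, fix reals $a<b$ with $A_n\phi(x_0)<a$ for infinitely many $n$ and $A_n\phi(x_0)>b$ for infinitely many $n$, and for $N\ge1$ put
\[
G_N^-=D\cap\{x\in X:A_n\phi(x)<a\text{ for some }n\ge N\},\qquad G_N^+=D\cap\{x\in X:A_n\phi(x)>b\text{ for some }n\ge N\}.
\]
Each $G_N^\pm$ is open in $D$ because every $A_n\phi$ is continuous, and any point lying in all the $G_N^-$ and in all the $G_N^+$ has $A_n\phi(x)<a$ and $A_n\phi(x)>b$ each for infinitely many $n$, hence belongs to $I(f,\phi)$. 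Therefore $D\cap I(f,\phi)\supseteq\big(\bigcap_N G_N^-\big)\cap\big(\bigcap_N G_N^+\big)$, and once every $G_N^-$ and every $G_N^+$ is shown to be dense in $D$, the set $D\cap I(f,\phi)$ contains a countable intersection of open dense subsets of $D$, i.e.\ is residual in $D$, as required.

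The main step is proving that $G_N^-$ is dense in $D$ (the case of $G_N^+$ is symmetric, using $b$ and $\limsup$ in place of $a$ and $\liminf$). I would fix a non-empty open $U\subseteq X$ with $U\cap D\neq\emptyset$, pick $p\in U\cap D$, and aim to produce $x\in U\cap D$ with $A_M\phi(x)<a$ for some $M\ge N$. Choose $\epsilon>0$ with $B_\epsilon(p)\subseteq U$ and small enough that $|\phi(u)-\phi(v)|$ is tiny whenever $d(u,v)<\epsilon$; by property (4) in Corollary 1.1, DSP provides $\delta>0$ with $\mathcal{D}(f)=\mathcal{D}^{\epsilon,\delta}(f)$, so every $\delta$-pseudo orbit of $f$ starting in $D$ is $\epsilon$-shadowed by some point of $D$. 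Using chain transitivity, take a $\delta$-chain $(p=w_0,\dots,w_r=x_0)$; then fix $\alpha'$ with $\liminf_n A_n\phi(x_0)<\alpha'<a$ and choose $n$ so large that $A_n\phi(x_0)<\alpha'$, that $n+r\ge N$, and that $\frac{rC+n\alpha'}{n+r}<a$, where $C=\max_X|\phi|$ (this is possible since that fraction tends to $\alpha'$ as $n\to\infty$). Let $(z_i)_{i\ge0}$ be the sequence with $z_i=w_i$ for $i<r$ and $z_{r+j}=f^j(x_0)$ for $j\ge0$; this is a $\delta$-pseudo orbit starting at $p\in D$, hence it is $\epsilon$-shadowed by some $x\in D$. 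Then $d(x,p)<\epsilon$ yields $x\in U$, while summing $\phi$ along the first $M=n+r$ terms and controlling the shadowing error through uniform continuity gives $A_M\phi(x)<a$; thus $x\in G_N^-\cap U$.

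The one genuinely essential use of the hypotheses is the appeal to Corollary 1.1: bare shadowing would only deliver a shadowing point somewhere in $X$, whereas the argument needs it inside the prescribed class $D$, and this is exactly the content of $\mathcal{D}(f)=\mathcal{D}^{\epsilon,\delta}(f)$; chain transitivity then supplies the connecting $\delta$-chain from an arbitrary point of $D$ to $x_0$, and following the true orbit of $x_0$ for a long enough block swamps the bounded prefix in the Birkhoff average. I expect no real obstacle beyond organizing this reduction — the averaging estimate that makes the length-$r$ prefix negligible and the uniform-continuity bookkeeping for the shadowing error are routine. Carrying this out for every $N\ge1$ gives the density of all the $G_N^\pm$ in $D$, and since $D$ was an arbitrary element of $\mathcal{D}(f)$, the theorem follows.
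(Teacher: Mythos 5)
Your proposal is correct and follows essentially the same route as the paper: fix a single irregular witness, use chain transitivity to build a $\delta$-chain from an arbitrary point of $D$ to it, invoke $\mathcal{D}(f)=\mathcal{D}^{\epsilon,\delta}(f)$ to shadow the concatenated pseudo-orbit by a point of $D$ itself, and observe that the bounded prefix is negligible in the Birkhoff average. The only difference is bookkeeping — you exhibit residuality via the two families of open dense sets $G_N^{\pm}$ cut at levels $a<b$, while the paper works with the single $G_\delta$ set $I(f,\phi,\epsilon)$ of points whose averages oscillate by more than $\epsilon$ — and this changes nothing of substance.
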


For any continuous map $f\colon X\to X$, let
\[
\overline{T}(f)=\{x\in X\colon X=\overline{\omega}(x,f)\}
\]
as in Remark 1.10. By Theorems 1.8, 1.9, and Lemma 6.5 in Section 6, we obtain the following result which improves Theorem 1.5.

\begin{thm}
Let $f\colon X\to X$ be a chain transitive continuous map. If $f$ satisfies DSP and $h_{\rm top}(f)>0$, then there exists $\phi\in C(X)$ such that $I(f,\phi)\ne\emptyset$, and there is a sequence of positive numbers $(\delta_n)_{n\ge2}$ such that for any $\psi\in C(X)$ with $I(f,\psi)\ne\emptyset$, every $D\in\mathcal{D}(f)$ contains a dense Mycielski subset $S$ which is included in $\overline{T}(f)\cap I(f,\psi)$ and is distributionally $n$-$\delta_n$-scrambled for all $n\ge2$.
\end{thm}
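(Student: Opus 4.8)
\section*{Proof proposal for Theorem 1.10}

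The plan is to assemble Theorem 1.10 from Theorems 1.5, 1.8, 1.9 and Lemma 6.5, using the Baire category theorem inside each $D\in\mathcal{D}(f)$ and Mycielski's theorem \cite[Theorem 1]{My} as the glue. Throughout I use the standing facts that every $D\in\mathcal{D}(f)$ is a non-empty closed subset of the compact metric space $X$, hence a non-empty Polish space, in particular a Baire space; and that, under the present hypotheses, each such $D$ is perfect, which is what makes Mycielski's theorem applicable on $D$ --- this perfectness is already implicit in (and needed for) the proofs of Theorems 1.5 and 1.6.

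First, Theorem 1.8 produces $\phi\in C(X)$ such that $D\cap I(f,\phi)$ is residual in $D$ for every $D\in\mathcal{D}(f)$; since each $D$ is a non-empty Baire space this forces $D\cap I(f,\phi)\ne\emptyset$, hence $I(f,\phi)\ne\emptyset$, which is the first assertion and also shows that the family of $\psi\in C(X)$ with $I(f,\psi)\ne\emptyset$ is non-empty. Second, applying Theorem 1.5 to $f$ supplies positive numbers $(\delta_n)_{n\ge2}$ \emph{depending only on $f$}. Inspecting its proof (equivalently, Section 5 of \cite{Ka4}), the dense distributionally $n$-$\delta_n$-scrambled Mycielski set in each $D$ is obtained by feeding Mycielski's theorem the relations
\[
R_{D,n}=\{(z_1,\dots,z_n)\in D^n\colon(z_1,\dots,z_n)\text{ is distributionally }\delta_n\text{-scrambled}\}\qquad(n\ge2),
\]
and for the chosen $(\delta_n)_{n\ge2}$ each $R_{D,n}$ is residual in $D^n$; it is this residuality, rather than the bare existence statement of Theorem 1.5, that I carry forward. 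The point to stress is that $(\delta_n)_{n\ge2}$ and the relations $R_{D,n}$ are now fixed once and for all, independently of $\psi$.

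Now fix any $\psi\in C(X)$ with $I(f,\psi)\ne\emptyset$ and any $D\in\mathcal{D}(f)$. By Theorem 1.9, $D\cap I(f,\psi)$ is residual in $D$; by Lemma 6.5, $D\cap\overline{T}(f)$ is residual in $D$. Hence $G_D=D\cap\overline{T}(f)\cap I(f,\psi)$ is residual in the Baire space $D$, so it contains a dense $G_\delta$ subset of $D$. I would then apply Mycielski's theorem on the perfect Polish space $D$ to the residual relations $R_{D,n}$ ($n\ge2$), with the additional requirement --- permitted by that theorem --- that the resulting dense Mycielski set $S$ lie inside the prescribed dense $G_\delta$ subset of $G_D$. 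This yields a dense Mycielski set $S\subset G_D\subset\overline{T}(f)\cap I(f,\psi)$ such that every $n$-tuple of distinct points of $S$ lies in $R_{D,n}$, i.e.\ $S$ is distributionally $n$-$\delta_n$-scrambled for all $n\ge2$, which is exactly the required set.

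The step I expect to be the main obstacle is the reformulation in the second paragraph: one must rephrase the conclusion of Theorem 1.5 not as ``there exists a scrambled Mycielski set in $D$'' but as ``Mycielski's theorem, applied to an explicit, $\psi$-independent family of residual relations $R_{D,n}\subset D^n$, produces such a set,'' because only in that form can one simultaneously impose membership in the further residual set $G_D$ coming from Theorem 1.9 and Lemma 6.5. Secondary points that still need care are the perfectness of each $D$ (so that Mycielski's theorem applies on $D$) and the fact that the numbers $(\delta_n)_{n\ge2}$ furnished by Theorem 1.5 depend on $f$ alone and not on $\psi$; both are routine given the cited proofs.
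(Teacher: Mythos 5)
Your proposal is correct and follows essentially the same route as the paper: the paper likewise extracts from the proof of \cite[Lemma 1.3]{Ka4} a $\psi$-independent sequence $(\delta_n)_{n\ge2}$ for which the distributionally $n$-$\delta_n$-scrambled tuples form a residual subset of $D^n$, intersects these with $\overline{T}(f)^n\cap I(f,\psi)^n$ (residual by Theorem 1.9 and Lemma 6.5), and applies the simplified Mycielski theorem (Lemma 7.1); your variant of imposing $S\subset G_D$ directly rather than folding $G_D^n$ into the product relations is an immaterial difference. Your explicit remark that the bare existence statement of Theorem 1.5 must be upgraded to residuality of the underlying relations is exactly the point the paper's proof relies on.
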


This paper consists of seven sections. The basic notations, definitions and facts are briefly collected in Section 2. In Section 3, we prove Theorems 1.1, 1.2, and 1.3. In Section 4, we prove Theorem 1.4. In Section 5, we recall some facts about $\overline{\omega}$-limit sets and prove some basic properties of them. Theorem 1.6 is proved in Section 6. In Section 7, we prove Theorems 1.8, 1.9, and 1.10. In Appendix A, we show that for interval maps, $\overline{\omega}$-chaos is equivalent to positive topological entropy. In Appendix B, we discuss properties of $\overline{\omega}$-limit sets other than those discussed in Section 5.

\section{Preliminaries}

In this section, we collect some basic definitions, notations, facts, and prove some lemmas which will be used in the sequel.

\subsection{Chain components}

Let $X$ be a compact Hausdorff space and let $f\colon X\to X$ be a continuous map. For any $x,y\in X$ and $\mathcal{U}\in\mathscr{U}(X)$, the notation $x\rightarrow_{f,\mathcal{U}}y$ means that there is a $\mathcal{U}$-chain $(x_i)_{i=0}^k$ of $f$ with $x_0=x$ and $x_k=y$. We write $x\rightarrow_f y$ if $x\rightarrow_{f,\mathcal{U}}y$ for all $\mathcal{U}\in\mathscr{U}(X)$. We say that $x\in X$ is a {\em chain recurrent point} for $f$ if $x\rightarrow_f x$. Let $CR(f)$ denote the set of chain recurrent points for $f$. We define a relation $\leftrightarrow_f$ in $CR(f)^2$ by: for any $x,y\in CR(f)$, $x\leftrightarrow_f y$ if and only if $x\rightarrow_f y$ and $y\rightarrow_f x$. It follows that $\leftrightarrow_f$ is a closed $(f\times f)$-invariant equivalence relation in $CR(f)^2$. An equivalence class $C$ of $\leftrightarrow_f$ is called a {\em chain component} for $f$. We denote by $\mathcal{C}(f)$ the set of chain components for $f$. Then, the following properties hold:
\begin{itemize}
\item[(1)] $CR(f)=\bigsqcup_{C\in\mathcal{C}(f)}C$,
\item[(2)] Every $C\in\mathcal{C}(f)$ is a closed $f$-invariant subset of $CR(f)$,
\item[(3)] $f|_C\colon C\to C$ is chain transitive for all $C\in\mathcal{C}(f)$.
\end{itemize}
Note that $f$ is chain transitive if and only if $f$ satisfies $X=CR(f)$ and $\mathcal{C}(f)=\{X\}$.

\subsection{Metric definitions}

Throughout this subsection, $X$ denotes a compact metric space endowed with a metric $d$.

\subsubsection{{\it Metric definitions of chains, cycles, pseudo-orbits and the shadowing property}}

Given a continuous map $f\colon X\to X$ and $\delta>0$, a finite sequence $(x_i)_{i=0}^{k}$ of points in $X$, where $k>0$ is a positive integer, is called a {\em $\delta$-chain} of $f$ if $d(f(x_i),x_{i+1})\le\delta$ for every $0\le i\le k-1$. A $\delta$-chain $(x_i)_{i=0}^{k}$ of $f$ is said to be a {\em $\delta$-cycle} of $f$ if $x_0=x_k$. Let $\xi=(x_i)_{i\ge0}$ be a sequence of points in $X$. For $\delta>0$, $\xi$ is called a {\em $\delta$-pseudo orbit} of $f$ if $d(f(x_i),x_{i+1})\le\delta$ for all $i\ge0$. For $\epsilon>0$, $\xi$ is said to be {\em $\epsilon$-shadowed} by $x\in X$ if $d(f^i(x),x_i)\leq \epsilon$ for all $i\ge 0$. We say here that $f$ has the {\em metric shadowing property} if for any $\epsilon>0$, there is $\delta>0$ such that every $\delta$-pseudo orbit of $f$ is $\epsilon$-shadowed by some point of $X$. For completeness, we prove the following lemma.

\begin{lem}
For any continuous map $f\colon X\to X$, $f$ has the shadowing property (in the sense of Definition 1.1) if only if $f$ has the metric shadowing property.
\end{lem}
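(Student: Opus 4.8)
The plan is to prove both implications by passing between finite open covers and their Lebesgue numbers/meshes, exploiting the fact that a compact metric space is what it is: a compact Hausdorff space in which every finite open cover has a positive Lebesgue number and in which the family of covers by $\delta$-balls is cofinal among all finite open covers as $\delta\to 0$. Two elementary observations do all the work. First, if $\delta>0$ is a Lebesgue number of $\mathcal{U}\in\mathscr{U}(X)$, then every $\delta$-pseudo orbit of $f$ is a $\mathcal{U}$-pseudo orbit: indeed for each $i$ the two-point set $\{f(x_i),x_{i+1}\}$ has diameter $\le\delta$, hence lies in some $U\in\mathcal{U}$. Second, if $\mathrm{mesh}(\mathcal{V})\le\epsilon$, then a point that $\mathcal{V}$-shadows a sequence also $\epsilon$-shadows it, since $\{f^i(x),x_i\}\subset V$ for some $V\in\mathcal{V}$ forces $d(f^i(x),x_i)\le\mathrm{diam}(V)\le\epsilon$. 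The reverse comparisons are equally routine: if $\mathrm{mesh}(\mathcal{U})\le\delta$ then every $\mathcal{U}$-pseudo orbit is a $\delta$-pseudo orbit, and if $\epsilon$ is a Lebesgue number of $\mathcal{V}$ then $\epsilon$-shadowing implies $\mathcal{V}$-shadowing.

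First I would prove that the shadowing property (Definition 1.1) implies the metric shadowing property. Given $\epsilon>0$, choose a finite open cover $\mathcal{V}$ of $X$ with $\mathrm{mesh}(\mathcal{V})\le\epsilon$ — for instance $\mathcal{V}=\{B_{\epsilon/3}(p):p\in X\}$ reduced to a finite subcover by compactness. By hypothesis there is $\mathcal{U}\in\mathscr{U}(X)$ such that every $\mathcal{U}$-pseudo orbit is $\mathcal{V}$-shadowed. Let $\delta>0$ be a Lebesgue number of $\mathcal{U}$ (which exists by compactness). Then any $\delta$-pseudo orbit of $f$ is a $\mathcal{U}$-pseudo orbit, hence is $\mathcal{V}$-shadowed by some $x\in X$, hence is $\epsilon$-shadowed by $x$. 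This gives the metric shadowing property.

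Conversely, suppose $f$ has the metric shadowing property, and let $\mathcal{V}\in\mathscr{U}(X)$ be arbitrary. Let $\epsilon>0$ be a Lebesgue number of $\mathcal{V}$. Apply metric shadowing to obtain $\delta>0$ such that every $\delta$-pseudo orbit is $\epsilon$-shadowed. Now choose $\mathcal{U}\in\mathscr{U}(X)$ with $\mathrm{mesh}(\mathcal{U})\le\delta$ (again a finite subcover of $\{B_{\delta/3}(p):p\in X\}$ works). Any $\mathcal{U}$-pseudo orbit $\xi=(x_i)_{i\ge0}$ satisfies $d(f(x_i),x_{i+1})\le\mathrm{diam}(U)\le\delta$ for a suitable $U\in\mathcal{U}$ at each step, so $\xi$ is a $\delta$-pseudo orbit, hence $\epsilon$-shadowed by some $x$; since $\epsilon$ is a Lebesgue number of $\mathcal{V}$, the two-point sets $\{f^i(x),x_i\}$ each lie in some member of $\mathcal{V}$, so $\xi$ is $\mathcal{V}$-shadowed by $x$. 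Thus $f$ has the shadowing property in the sense of Definition 1.1.

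There is no real obstacle here; the only point requiring a little care is the systematic use of the two facts that finite open covers of a compact metric space have positive Lebesgue numbers and that balls of small radius form arbitrarily fine finite covers. Both are standard consequences of compactness, and I would simply invoke them. The argument is essentially the metric–topological dictionary already sketched right before Corollary 1.1 in the excerpt, now applied verbatim to the special case of pseudo orbits and shadowing points rather than to the $\mathcal{D}^{\mathcal{V},\mathcal{U}}(f)$ sets.
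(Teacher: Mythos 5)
Your proposal is correct and follows essentially the same route as the paper's own proof of Lemma 2.1: both directions are handled by the identical Lebesgue-number/mesh dictionary (take $\mathcal{V}$ with $\mathrm{mesh}(\mathcal{V})\le\epsilon$ and a Lebesgue number of $\mathcal{U}$ for one implication, and a Lebesgue number of $\mathcal{V}$ together with $\mathcal{U}$ of small mesh for the other). Nothing is missing.
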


\begin{proof}
Assume that $f$ has the shadowing property (in the sense of Definition 1.1), and for any $\epsilon>0$, take $\mathcal{V}\in\mathscr{U}(X)$ with ${\rm mesh}(\mathcal{V})\le\epsilon$. Then, there is $\mathcal{U}\in\mathscr{U}(X)$ such that every $\mathcal{U}$-pseudo orbit of $f$ is $\mathcal{V}$-shadowed by some point of $X$. For such $\mathcal{U}$, by taking a Lebesgue number $\delta>0$ of $\mathcal{U}$, we see that every $\delta$-pseudo orbit $\xi$ of $f$ is $\epsilon$-shadowed by some $x\in X$. Since $\epsilon>0$ is arbitrary,  we conclude that $f$ has the metric shadowing property.

Conversely, assume that $f$ has the metric shadowing property. For any $\mathcal{V}\in\mathscr{U}(X)$, take a Lebesgue number $\epsilon>0$ of $\mathcal{V}$ and $\delta>0$ such that every $\delta$-pseudo orbit of $f$ is $\epsilon$-shadowed by some point of $X$. Then, by taking $\mathcal{U}\in\mathscr{U}(X)$ with ${\rm mesh}(\mathcal{U})\le\delta$, we see that every $\mathcal{U}$-pseudo orbit $\xi$ of $f$ is $\mathcal{V}$-shadowed by some $x\in X$. Since $\mathcal{V}\in\mathscr{U}(X)$ is arbitrary, we conclude that $f$ has the shadowing property (in the sense of Definition 1.1), completing the proof.
\end{proof}

\subsubsection{{\it Metric definition of the relation $\sim_f$}}
Given any continuous map $f\colon X\to X$, it holds that, for any $\mathcal{U}\in\mathscr{U}(X)$ and $\delta>0$,
\begin{itemize}
\item if ${\rm mesh}(\mathcal{U})\le\delta$, then every $\mathcal{U}$-chain of $f$ is a $\delta$-chain of $f$,
\item if $\delta$ is a Lebesgue number of $\mathcal{U}$, then every $\delta$-chain of $f$ is a $\mathcal{U}$-chain of $f$.
\end{itemize}
For any $x,y\in X$ and $\delta>0$, we write $x\rightarrow_{f,\delta}y$ if there is a $\delta$-chain $(x_i)_{i=0}^k$ of $f$ with $x_0=x$ and $x_k=y$. Then, it is obvious that $x\rightarrow_f y$ (in the sense of Section 2.1) if only if $x\rightarrow_{f,\delta}y$ for all $\delta>0$. In particular, $f$ is chain transitive (in the sense of Definition 1.2) if and only if $x\rightarrow_{f,\delta}y$ for all $x,y\in X$ and $\delta>0$. 

Let $f\colon X\to X$ be a chain transitive continuous map. Given any $\delta>0$, the {\em length} of a $\delta$-cycle $(x_i)_{i=0}^k$ of $f$ is defined to be $k$. Let $m=m(\delta)>0$ be the greatest common divisor of the lengths of all $\delta$-cycles of $f$. A relation $\sim_{f,\delta}$ in $X^2$ is defined by: for all $x,y\in X$, $x\sim_{f,\delta}y$ if and only if there is a $\delta$-chain $(x_i)_{i=0}^k$ of $f$ with $x_0=x$, $x_k=y$, and $m|k$. Similarly as $\sim_{f,\mathcal{U}}$, $\mathcal{U}\in\mathscr{U}(X)$, the following properties hold:
\begin{itemize}
\item[(P1)] $\sim_{f,\delta}$ is an open and closed $(f\times f)$-invariant equivalence relation in $X^2$,
\item[(P2)] any $x,y\in X$ with $d(x,y)\le\delta$ satisfies $x\sim_{f,\delta}y$, so for any $\delta$-chain $(x_i)_{i=0}^k$ of $f$, we have $f(x_i)\sim_{f,\delta}x_{i+1}$ for each $0\le i\le k-1$, implying $x_i\sim_{f,\delta}f^i(x_0)$ for every $0\le i\le k$,
\item[(P3)] for any $x\in X$ and $n\ge0$, $x\sim_{f,\delta}f^{mn}(x)$,
\item[(P4)] there exists $N>0$ such that for any $x,y\in X$ with $x\sim_{f,\delta}y$ and $n\ge N$, there is a $\mathcal{U}$-chain $(x_i)_{i=0}^k$ of $f$ with $x_0=x$, $x_k=y$, and $k=mn$.
\end{itemize}

\begin{rem}
\normalfont
Since it holds that, for any $\mathcal{U}\in\mathscr{U}(X)$ and $\delta>0$,
\begin{itemize}
\item if ${\rm mesh}(\mathcal{U})\le\delta$, then every $\mathcal{U}$-cycle of $f$ is a $\delta$-cycle of $f$, so $m(\delta)|m(\mathcal{U})$, and for all $x,y\in X$, $x\sim_{f,\mathcal{U}}y$ implies $x\sim_{f,\delta}y$,
\item if $\delta$ is a Lebesgue number of $\mathcal{U}$, then every $\delta$-cycle of $f$ is a $\mathcal{U}$-cycle of $f$,  so $m(\mathcal{U})|m(\delta)$, and for all $x,y\in X$, $x\sim_{f,\delta}y$ implies $x\sim_{f,\mathcal{U}}y$,   
\end{itemize}
we see that for all $x,y\in X$, $x\sim_f y$ (in the sense of Definition 1.3) if only if $x\sim_{f,\delta}y$ for every $\delta>0$.
\end{rem}

\begin{rem}
\normalfont
\begin{itemize}
\item[(1)] For any $(x,y)\in X^2$, $(x,y)$ is a chain proximal pair for $f$ (in the sense of Remark 1.3 (1)) if and only if for every $\delta>0$, there is a pair of $\delta$-chains
\[
((x_i)_{i=0}^k,(y_i)_{i=0}^k)
\]
of $f$ such that $(x_0,y_0)=(x,y)$ and $x_k=y_k$.
\item[(2)] A continuous map $f\colon X\to X$ is chain mixing (in the sense of Remark 1.3 (2)) if and only if for any $x,y\in X$ and $\delta>0$, there exists $N>0$ such that for each $k\ge N$, there is a $\delta$-chain $(x_i)_{i=0}^k$ of $f$ with $x_0=x$ and $x_k=y$.
\end{itemize}
\end{rem}

\subsection{Net}

A non-empty set $I$ equipped with a preorder $\le$ is said to be a {\em directed set} when for any $a,b\in I$, there is $c\in I$ such that $a\le c$ and $b\le c$. Let $Y$ be a topological space. For any directed set $I$, a map $x_\bullet\colon I\to Y$ is called a {\em net} in $Y$ and denoted as $x_\bullet=(x_a)_{a\in I}$. For a net $x_\bullet=(x_a)_{a\in I}$ in $Y$ and $x\in Y$, we say that $x_\bullet$ {\em converges} to $x$ if for any open neighborhood $U$ of $x$, there is $a\in I$ such that $x_b\in U$ for all $b\in I$ with $a\le b$, and we denote it by $x_\bullet\to x$. For any two nets $x_\bullet=(x_a)_{a\in I}$ and $y_\bullet=(y_b)_{b\in J}$ in $Y$, $y_\bullet$ is said to be a {\em subnet} of $x_\bullet$ if there is a map $h\colon J\to I$ such that (i) $y_b=x_{h(b)}$ for each $b\in J$, (ii) $h(b)\le h(c)$ for all $b,c\in J$ with $b\le c$, and (iii) for any $a\in I$, $a\le h(b)$ holds for some $b\in J$. We know that $Y$ is compact if only if every net in $Y$ has a convergent subnet in $Y$ (see, e.g. \cite{Wil}).

\subsection{Equivariance, factor and the topological conjugacy}

Let $X$ and $Y$ be compact Hausdorff spaces. Given any two continuous maps $f\colon X\to X$ and $g\colon Y\to Y$, a continuous map $\pi\colon X\to Y$ is said to be {\em equivariant} if $\pi\circ f=g\circ\pi$, and such $\pi$ is also denoted as
\[
\pi\colon(X,f)\to(Y,g).
\]
An equivariant map $\pi\colon(X,f)\to(Y,g)$ is called a {\em factor map} (resp.\:{\em topological conjugacy}) if it is surjective (resp.\:a homeomorphism). Two systems $(X,f)$ and $(Y,g)$ are said to be {\em topologically conjugate} if there is a topological conjugacy $h\colon(X,f)\to(Y,g)$.

\subsection{Inverse limit}

Let $I$ be a directed set. For any continuous self-maps $f_\lambda\colon X_\lambda\to X_\lambda$ of compact Hausdorff spaces $X_\lambda$, $\lambda\in I$, we say that a family of equivariant maps
\[
\pi=(\pi_\lambda^\mu\colon(X_\mu,f_\mu)\to(X_\lambda,f_\lambda))_{\lambda\le\mu\in I}
\]
is an {\em inverse system} of equivariant maps if the following conditions are satisfied:
\begin{itemize}
\item[(1)] $\pi_\lambda^\lambda=id_{X_\lambda}$ for every $\lambda\in I$,
\item[(2)] $\pi_\lambda^\nu=\pi_\lambda^\mu\circ\pi_\mu^\nu$ for all $\lambda\le\mu\le\nu\in I$.
\end{itemize}

Let
\[
\pi=(\pi_\lambda^\mu\colon(X_\mu,f_\mu)\to(X_\lambda,f_\lambda))_{\lambda\le\mu\in I}
\]
be an inverse system of equivariant maps. We define the {\em inverse limit space} $X=\varprojlim X_\lambda$ by 
\[
X=\{x=(x_\lambda)_{\lambda\in I}\in\prod_{\lambda\in I}X_\lambda\colon\pi_\lambda^\mu(x_\mu)=x_\lambda,\forall\lambda\le\mu\in I\},
\]
which is a compact Hausdorff space as a closed subspace of
\[
\prod_{\lambda\in I}X_\lambda
\]
with the product topology. Then, a continuous map $f\colon X\to X$ can be well-defined by $f(x)=(f_\lambda(x_\lambda))_{\lambda\in I}$ for all $x=(x_\lambda)_{\lambda\in I}\in X$. By this, we define the {\em inverse limit system}
\[
(X,f)=\varprojlim(X_\lambda,f_\lambda).
\]
For each $\lambda\in I$, we define a map $\pi_\lambda\colon X\to X_\lambda$ by $\pi_\lambda(x)=x_\lambda$ for all $x=(x_\lambda)_{\lambda\in I}\in X$, which gives an equivariant map
\[
\pi_\lambda\colon(X,f)\to(X_\lambda,f_\lambda).
\]
By compactness, we can show that
\[
\pi_\lambda(X)=\bigcap_{\lambda\le\mu}\pi_\lambda^\mu(X_\mu)
\]
for every $\lambda\in I$. Following \cite{GM}, we say that $\pi$ satisfies the {\em Mittag-Leffler condition} (abbrev.\:MLC) if for any $\lambda\in I$, there is $\lambda\le\mu$ such that $\pi_\lambda^\mu(X_\mu)=\pi_\lambda^\nu(X_\nu)$ for all $\mu\le\nu$, or equivalently, $\pi_\lambda(X)=\pi_\lambda^\mu(X_\mu)$.

\begin{lem}
Let
\[
\pi=(\pi_\lambda^\mu\colon(X_\mu,f_\mu)\to(X_\lambda,f_\lambda))_{\lambda\le\mu\in I}
\]
be an inverse system of equivariant maps such that $f_\lambda\colon X_\lambda\to X_\lambda$ is chain transitive for all $\lambda\in I$. Let $(X,f)=\varprojlim(X_\lambda,f_\lambda)$. Then, $f\colon X\to X$ is chain transitive.
\end{lem}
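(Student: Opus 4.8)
The plan is to reduce the chain transitivity of $f$ on the inverse limit to the chain transitivity of the maps $f_\mu$ on the factors. Two points drive the argument: first, every finite open cover of $X$ is refined by the $\pi_\mu$-preimage of a finite open cover of a single factor $X_\mu$; second, a chain in $X_\mu$ all of whose vertices lie in $\pi_\mu(X)$ lifts, vertex by vertex, to a chain in $X$. For the first point, note that the sets $\pi_\mu^{-1}(V)$ with $\mu\in I$ and $V$ open in $X_\mu$ form a basis for the topology of $X$: a basic open set $\bigcap_{i=1}^n\pi_{\lambda_i}^{-1}(U_i)$ equals $\pi_\mu^{-1}\bigl(\bigcap_{i=1}^n(\pi_{\lambda_i}^\mu)^{-1}(U_i)\bigr)$ once $\mu\in I$ is chosen above $\lambda_1,\dots,\lambda_n$, using that $I$ is directed and $\pi_{\lambda_i}=\pi_{\lambda_i}^\mu\circ\pi_\mu$. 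So, given $x=(x_\lambda)_{\lambda\in I},y=(y_\lambda)_{\lambda\in I}\in X$ and $\mathcal{U}\in\mathscr{U}(X)$, I would cover $X$ by basic sets each contained in a member of $\mathcal{U}$, pass to a finite subcover by compactness, and take $\mu\in I$ above all the indices occurring in it; this yields open sets $V_1,\dots,V_k$ in $X_\mu$ with $X=\bigcup_{l=1}^k\pi_\mu^{-1}(V_l)$ and each $\pi_\mu^{-1}(V_l)$ contained in a member of $\mathcal{U}$. In particular $\{V_l\}_{l=1}^k$ covers $L:=\pi_\mu(X)$.

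Next I would reduce to the factor. It suffices to find a finite sequence $z_0=x_\mu,z_1,\dots,z_n=y_\mu$ in $L$ with $\{f_\mu(z_i),z_{i+1}\}$ contained in some $V_l$ for every $i$: choosing $w_i\in X$ with $\pi_\mu(w_i)=z_i$, and taking $w_0=x,w_n=y$ (legitimate since $\pi_\mu(x)=x_\mu$ and $\pi_\mu(y)=y_\mu$), the identity $\pi_\mu\circ f=f_\mu\circ\pi_\mu$ gives $\{f(w_i),w_{i+1}\}\subset\pi_\mu^{-1}(V_l)$, which lies in a member of $\mathcal{U}$; hence $(w_i)_{i=0}^n$ is a $\mathcal{U}$-chain of $f$ from $x$ to $y$. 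Since $\{V_l\cap L\}_{l=1}^k\in\mathscr{U}(L)$, $x_\mu,y_\mu\in L$, and $f_\mu(L)\subset L$, the existence of such a sequence follows once $f_\mu|_L\colon L\to L$ is shown to be chain transitive; this is the remaining task.

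To prove $f_\mu|_L$ chain transitive, I would write $L=\bigcap_{\nu\ge\mu}L_\nu$ with $L_\nu=\pi_\mu^\nu(X_\nu)$: a downward-directed family of nonempty compact $f_\mu$-invariant subsets of $X_\mu$ (downward directed as $L_{\nu'}\subset L_\nu$ for $\nu\le\nu'$), on each of which $f_\mu$ acts chain transitively because $\pi_\mu^\nu$ is a factor map of $(X_\nu,f_\nu)$ onto $(L_\nu,f_\mu|_{L_\nu})$ and chain transitivity passes to factors (the $\pi_\mu^\nu$-pullback of an open cover of $L_\nu$ is an open cover of $X_\nu$, and a chain of $f_\nu$ with respect to it projects to a chain of $f_\mu|_{L_\nu}$). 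So the statement follows from a general fact: if $\{L_\nu\}$ is a downward-directed family of closed $g$-invariant subsets of a compact Hausdorff space $Z$ with $g|_{L_\nu}$ chain transitive for all $\nu$, then $g|_{\bigcap_\nu L_\nu}$ is chain transitive. To see this, fix a compatible uniformity on $Z$; given $a,b\in L:=\bigcap_\nu L_\nu$ and $\mathcal{W}\in\mathscr{U}(L)$, pick a symmetric entourage $E$ small enough that any pair in $E\circ E\circ E$ with one endpoint in $L$ lies in a common member of $\mathcal{W}$, and such that $(u,v)\in E$ implies $(g(u),g(v))\in E$; since the $L_\nu$ decrease to $L$, some $L_{\nu_0}$ lies in the $E$-neighbourhood $E[L]$ of $L$; chain transitivity of $g|_{L_{\nu_0}}$ (with $a,b\in L\subset L_{\nu_0}$) gives a finite chain $a=z_0,\dots,z_n=b$ in $L_{\nu_0}$ with $(g(z_i),z_{i+1})\in E$; replacing each $z_i$ by a point $z_i'\in L$ with $(z_i',z_i)\in E$ and keeping $z_0'=a,z_n'=b$, the composition $E\circ E\circ E$ bounds $(g(z_i'),z_{i+1}')$, so $(z_i')_{i=0}^n$ is a $\mathcal{W}$-chain of $g|_L$ from $a$ to $b$.

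The only genuinely technical step is this last one: transferring a chain that lives in the slightly larger set $L_\nu$ into the intersection $L=\pi_\mu(X)$, where one must control how the approximation of vertices propagates through $f_\mu$; this is what forces the use of the uniform structure of $X_\mu$ (in the metric case, a Lebesgue number of the target cover together with a modulus of continuity of $f_\mu$ suffices). Everything else is bookkeeping with the directed index set $I$ and the intertwining relations $\pi_\lambda^\mu\circ f_\mu=f_\lambda\circ\pi_\lambda^\mu$.
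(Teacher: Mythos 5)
Your proof is correct and follows essentially the same route as the paper's: restrict to the images $L=\pi_\mu(X)$, establish chain transitivity of $f_\mu|_L$, and lift chains vertex by vertex through the surjection $\pi_\mu$; you moreover supply a careful argument for the one step the paper asserts without proof, namely that $f_\mu$ restricted to the directed intersection $\pi_\mu(X)=\bigcap_{\nu\ge\mu}\pi_\mu^\nu(X_\nu)$ is chain transitive. The only nitpick is that an entourage $E$ with $(g\times g)(E)\subset E$ need not exist in general; but you only need $(g\times g)(E)\subset E_1$ for an intermediate entourage $E_1$ with $E_1\circ E_1\circ E_1$ refining the target cover, so the argument stands.
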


\begin{proof}
For any $\lambda\le\mu\in I$, since $\pi_\lambda=\pi_\lambda^\mu\circ\pi_\mu$,  $\pi_\lambda^\mu|_{\pi_\mu(X)}\colon\pi_\mu(X)\to\pi_\lambda(X)$ gives a factor map
\[
\pi_\lambda^\mu|_{\pi_\mu(X)}\colon(\pi_\mu(X),f_{\mu}|_{\pi_\mu(X)})\to(\pi_\lambda(X),f_{\lambda}|_{\pi_\lambda(X)}).
\]
We consider an inverse system of factor maps
\[
\hat{\pi}=(\pi_\lambda^\mu|_{\pi_\mu(X)}\colon(\pi_\mu(X),f_{\mu}|_{\pi_\mu(X)})\to(\pi_\lambda(X),f_{\lambda}|_{\pi_\lambda(X)}))_{\lambda\le\mu\in I}
\]
and let
\[
(Y,g)=\varprojlim(\pi_\lambda(X),f_{\lambda}|_{\pi_\lambda(X)}).
\]
Since
\[
Y=\{x=(x_\lambda)_{\lambda\in I}\in\prod_{\lambda\in I}\pi_\lambda(X_\lambda)\colon\pi_\lambda^\mu(x_\mu)=x_\lambda,\forall\lambda\le\mu\in I\}=X,
\]
it is obvious that $(Y,g)$ is topologically conjugate to $(X,f)$. On the other hand, for every $\lambda\in I$, since $\pi_\lambda\colon(X,f)\to(X_\lambda,f_\lambda)$ is an equivariant map, by the chain transitivity of $f_\lambda$, $f_\lambda|_{\pi_\lambda(X)}\colon\pi_\lambda(X)\to\pi_\lambda(X)$ is chain transitive. Since $\hat{\pi}$ is an inverse system of factor maps, it follows that $g$ is chain transitive and so is $f$.
\end{proof}

\subsection{Subshifts}

\subsubsection{{\it Subshifts of finite type}}

Let $S$ be a finite set with the discrete topology. The {\em shift map} $\sigma\colon S^\mathbb{N}\to S^\mathbb{N}$ is defined by $\sigma(x)=(x_{n+1})_{n\ge1}$ for all $x=(x_n)_{n\ge1}\in S^\mathbb{N}$. Note that $\sigma$ is continuous with respect to the product topology of $S^\mathbb{N}$. A closed $\sigma$-invariant subset $X$ of $S^\mathbb{N}$ (and also the subsystem $(X,\sigma|_X)$ of $(S^\mathbb{N},\sigma)$) is called a {\em subshift}. A subshift $X$ of $S^\mathbb{N}$ (and also $(X,\sigma|_X)$ of $(S^\mathbb{N},\sigma)$) is called a {\em subshift of finite type} (abbrev.\:SFT) if there are $N\ge1$ and $F\subset S^{N+1}$ such that for any $x=(x_n)_{n\ge1}\in S^\mathbb{N}$, $x\in X$ if and only if $(x_i,x_{i+1},\dots,x_{i+N})\in F$ for all $i\ge1$. The shift map $\sigma\colon S^\mathbb{N}\to S^\mathbb{N}$ is positively expansive and has the shadowing property. We know that a subshift $X$ of $S^\mathbb{N}$ is of finite type if and only if $\sigma|_X\colon X\to X$ has the shadowing property \cite{AH}.

\subsubsection{{\it Some properties of SFTs}}

Let $(X,\sigma|_X)$ be an SFT and put $f=\sigma|_X$. By Theorem 3.4.4 of \cite{AH}, $\mathcal{C}(f)$ coincides with the finite set of {\em basic sets}, and so $CR(f)$ is decomposed into the finite disjoint union of chain components. Consider the case where $f$ is transitive (or, $(X,f)$ is a transitive SFT). Then, we have $X=CR(f)$ and $\mathcal{C}(f)=\{X\}$. Again by Theorem 3.4.4 of \cite{AH}, $X$ admits a decomposition
\[
X=\bigsqcup_{i=0}^{m-1}f^i(D),
\]
where $m>0$ is a positive integer, such that $f^i(D)$, $0\le i\le m-1$, are clopen $f^m$-invariant subsets of $X$, and
\[
f^m|_{f^i(D)}\colon f^i(D)\to f^i(D)
\]
is mixing for every $0\le i\le m-1$. In this case, we easily see that $\mathcal{D}(f)=\{f^i(D)\colon0\le i\le m-1\}$.

\section{Proofs of Theorems 1.1, 1.2 and 1.3}

In this section, we prove Theorems 1.1, 1.2, and 1.3. A finite {\em open cover} of $X$ is a finite set of open subsets of $X$ whose union is $X$. We denote by $\mathscr{U}(X)$ the set of finite open covers of $X$. We need two lemmas both of which are elementary.

\begin{lem}
For any $\mathcal{U}\in\mathscr{U}(X)$, there is $\mathcal{V}\in\mathscr{U}(X)$ such that for all $m\ge1$ and $V_i\in\mathcal{V}$, $1\le i\le m$, $\bigcap_{i=1}^m V_i\ne\emptyset$ implies $\bigcup_{i=1}^m V_i\subset U$ for some $U\in\mathcal{U}$.
\end{lem}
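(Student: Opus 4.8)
The statement asks for a finite open barycentric (point-star) refinement of $\mathcal{U}$, and the plan is to construct one directly, using normality of the compact Hausdorff space $X$ together with compactness to keep the refinement finite. Writing $\mathcal{U}=\{U_1,\dots,U_n\}$, I would first invoke the shrinking lemma --- every finite open cover of a normal space, in particular of a compact Hausdorff space, admits open sets $G_1,\dots,G_n$ with $\overline{G_i}\subseteq U_i$ and $\bigcup_{i=1}^n G_i=X$.

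Next, for each $x\in X$ I would put $I(x)=\{\,i : x\in\overline{G_i}\,\}$, which is non-empty since $\{G_i\}$ covers $X$, and define the open set
\[
V(x)=\Bigl(\bigcap_{i\in I(x)}U_i\Bigr)\cap\Bigl(\bigcap_{i\notin I(x)}(X\setminus\overline{G_i})\Bigr).
\]
One checks at once that $x\in V(x)$: for $i\in I(x)$ we have $x\in\overline{G_i}\subseteq U_i$, and for $i\notin I(x)$ we have $x\notin\overline{G_i}$. Thus $\{V(x):x\in X\}$ is an open cover of $X$, and by compactness I would extract a finite subcover, which I take to be the desired $\mathcal{V}$.

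To verify the required property, take $V(x_1),\dots,V(x_m)\in\mathcal{V}$ with a common point $p$. Choosing $i_0$ with $p\in G_{i_0}$, so that $p\in\overline{G_{i_0}}$, I note that for every $\ell$ we must have $i_0\in I(x_\ell)$: otherwise $V(x_\ell)\subseteq X\setminus\overline{G_{i_0}}$, contradicting $p\in V(x_\ell)$. Hence $V(x_\ell)\subseteq U_{i_0}$ for each $\ell$, and therefore $\bigcup_{\ell=1}^m V(x_\ell)\subseteq U_{i_0}\in\mathcal{U}$, which is exactly the assertion (repetitions among the $V_i$ being harmless).

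I do not anticipate a genuine obstacle; the argument is elementary. The one point requiring a little care is the preliminary shrinking: replacing each $U_i$ by $G_i$ with $\overline{G_i}\subseteq U_i$ is precisely what makes $X\setminus\overline{G_i}$ an open neighbourhood of every point lying outside $\overline{G_i}$, and this is the sole place where normality --- hence the Hausdorff hypothesis --- is used; compactness enters only to render $\mathcal{V}$ finite.
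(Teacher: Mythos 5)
Your proof is correct and follows essentially the same route as the paper: shrink $\mathcal{U}$ to $\{G_i\}$ with $\overline{G_i}\subseteq U_i$, then cover $X$ by open sets each of which is an intersection of some $U_i$'s and some complements $X\setminus\overline{G_i}$, so that any subfamily with a common point $p$ is forced to lie inside a single $U_{i_0}$ with $p\in G_{i_0}$. The only (immaterial) difference is that you index the refinement by points and verify the star property directly, whereas the paper takes the full join $\bigvee_W\{U_W,X\setminus\overline{W}\}$ and argues by contradiction.
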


\begin{proof}
Take $\mathcal{W}\in\mathscr{U}(X)$ such that $\overline{W}\subset U_W$ for all $W\in\mathcal{W}$ for some $U_W\in\mathcal{U}$. For each $W\in\mathcal{W}$, define $\mathcal{V}_W\in\mathscr{U}(X)$ by $\mathcal{V}_W=\{U_W,X\setminus\overline{W}\}$. Let
\[
\mathcal{V}=\bigvee_{W\in\mathcal{W}}\mathcal{V}_W=\{\bigcap_{W\in\mathcal{W}}A_W\colon A_W\in\mathcal{V}_W,\:\forall W\in\mathcal{W}\}
\]
and note that $\mathcal{V}\in\mathscr{U}(X)$. For any $m\ge1$ and $V_i=\bigcap_{W\in\mathcal{W}}A_W^{(i)}\in\mathcal{V}$, $1\le i\le m$, here $A_W^{(i)}\in\mathcal{V}_W$ for each $W\in\mathcal{W}$ and each $1\le i\le m$, assume that $\bigcap_{i=1}^m V_i\ne\emptyset$. If $A_W^{(i_W)}=X\setminus\overline{W}$ for all $W\in\mathcal{W}$ for some $1\le i_W\le m$, then
\[
\bigcap_{i=1}^m V_i=\bigcap_{W\in\mathcal{W}}\bigcap_{i=1}^m A_W^{(i)}\subset\bigcap_{W\in\mathcal{W}}A_W^{(i_W)}=\bigcap_{W\in\mathcal{W}}[X\setminus\overline{W}]=\emptyset,
\]
which is a contradiction. It follows that $A_{W_0}^{(i)}=U_{W_0}$ for some $W_0\in\mathcal{W}$ for all $1\le i\le m$, and so
\[
\bigcup_{i=1}^m V_i=\bigcup_{i=1}^m\bigcap_{W\in\mathcal{W}}A_W^{(i)}\subset\bigcup_{i=1}^m A_{W_0}^{(i)}=\bigcup_{i=1}^m U_{W_0}=U_{W_0}\in\mathcal{U}.
\]
This shows that $\mathcal{V}$ has the desired property, and thus the lemma has been proved.
\end{proof}

For any $\mathcal{U},\mathcal{V}\in\mathscr{U}(X)$, the notation $\mathcal{U}\prec\mathcal{V}$ means that for every $V\in\mathcal{V}$, there is $U\in\mathcal{U}$ such that $V\subset U$. For any $\mathcal{U}\in\mathscr{U}(X)$, define $\mathcal{U}^{(n)}\in\mathscr{U}(X)$, $n\ge2$, by
\[
\mathcal{U}^{(n)}=\{\bigcup_{i=1}^n U_i\colon U_i\in\mathcal{U},\:1\le\forall i\le n,\:U_i\cap U_{i+1}\ne\emptyset,1\le\forall i\le n-1\}.
\] 

\begin{lem}
For any $\mathcal{U}\in\mathscr{U}(X)$ and $n\ge2$, there is $\mathcal{V}_n\in\mathscr{U}(X)$ such that $\mathcal{U}\prec\mathcal{V}^{(n)}_n$.
\end{lem}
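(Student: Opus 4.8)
The plan is to argue by induction on $n\ge 2$, with Lemma 3.1 supplying the inductive mechanism. For the base case $n=2$, I would apply Lemma 3.1 to $\mathcal{U}$ to obtain $\mathcal{V}_2\in\mathscr{U}(X)$ such that any $V_1,V_2\in\mathcal{V}_2$ with $V_1\cap V_2\ne\emptyset$ satisfy $V_1\cup V_2\subset U$ for some $U\in\mathcal{U}$; this is just the $m=2$ instance of Lemma 3.1. Since every member of $\mathcal{V}_2^{(2)}$ has exactly this form, we get $\mathcal{U}\prec\mathcal{V}_2^{(2)}$.

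For the inductive step, suppose the statement holds for $n-1$. First apply the inductive hypothesis to the cover $\mathcal{U}$ to obtain $\mathcal{W}\in\mathscr{U}(X)$ with $\mathcal{U}\prec\mathcal{W}^{(n-1)}$, and then apply Lemma 3.1 to $\mathcal{W}$ to obtain $\mathcal{V}_n\in\mathscr{U}(X)$ such that $V\cap V'\ne\emptyset$ for $V,V'\in\mathcal{V}_n$ implies $V\cup V'\subset W$ for some $W\in\mathcal{W}$. Now take any member of $\mathcal{V}_n^{(n)}$, say $V_1\cup\cdots\cup V_n$ with $V_i\in\mathcal{V}_n$ and $V_i\cap V_{i+1}\ne\emptyset$ for $1\le i\le n-1$. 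For each $1\le i\le n-1$ pick $W_i\in\mathcal{W}$ with $V_i\cup V_{i+1}\subset W_i$. Then $W_i\cap W_{i+1}\supset V_{i+1}\ne\emptyset$ for $1\le i\le n-2$, so $W_1\cup\cdots\cup W_{n-1}$ is a member of $\mathcal{W}^{(n-1)}$ and hence lies in some $U\in\mathcal{U}$; since moreover $V_1\cup\cdots\cup V_n\subset W_1\cup\cdots\cup W_{n-1}$, we conclude $V_1\cup\cdots\cup V_n\subset U$. This gives $\mathcal{U}\prec\mathcal{V}_n^{(n)}$ and completes the induction.

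Equivalently, and without formally invoking induction, one may apply Lemma 3.1 a total of $n-1$ times to build a tower $\mathcal{U}=\mathcal{U}_0,\mathcal{U}_1,\dots,\mathcal{U}_{n-1}$ of open covers, set $\mathcal{V}_n=\mathcal{U}_{n-1}$, and, given a chain $V_1,\dots,V_n$ of consecutively overlapping members of $\mathcal{V}_n$, repeatedly coalesce consecutive members, each coalescence replacing a chain of length $k$ in $\mathcal{U}_j$ by a chain of length $k-1$ in $\mathcal{U}_{j-1}$, until after $n-1$ steps the whole union is trapped inside a single member of $\mathcal{U}_0=\mathcal{U}$.

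The argument is elementary and I do not expect a real obstacle; the only point requiring care is the bookkeeping in the inductive step — that the pairwise merges $W_1,\dots,W_{n-1}$ again form a chain of consecutively overlapping sets (they share the middle term $V_{i+1}$) whose union still contains $V_1\cup\cdots\cup V_n$ — together with the trivial remark that the defining condition $V_i\cap V_{i+1}\ne\emptyset$ of $\mathcal{V}_n^{(n)}$ already forces every $V_i$, and hence every overlap invoked above, to be nonempty.
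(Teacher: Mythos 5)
Your proof is correct and follows essentially the same route as the paper: induction on $n$ with the base case and the inductive refinement both supplied by the $m=2$ instance of Lemma 3.1, merging consecutive overlapping pairs into a shorter chain in the coarser cover. The only difference is cosmetic indexing (you pass from $n-1$ to $n$, the paper from $n$ to $n+1$).
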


\begin{proof}
The proof is by induction on $n$. First, by Lemma 3.1, there is $\mathcal{V}_2\in\mathscr{U}(X)$ such that $\mathcal{U}\prec\mathcal{V}^{(2)}_2$. Assume that $\mathcal{U}\prec\mathcal{V}^{(n)}_n$ for some $n\ge2$ and some $\mathcal{V}_n\in\mathscr{U}(X)$. Then, again by Lemma 3.1, there is $\mathcal{V}_{n+1}\in\mathscr{U}(X)$ such that $\mathcal{V}_n\prec\mathcal{V}^{(2)}_{n+1}$. We shall show that $\mathcal{U}\prec\mathcal{V}^{(n+1)}_{n+1}$. For any $V_i\in\mathcal{V}_{n+1}$, $1\le i\le n+1$, suppose that $V_i\cap V_{i+1}\ne\emptyset$ for all $1\le i\le n$. Since $\mathcal{V}_n\prec\mathcal{V}^{(2)}_{n+1}$, we have $V_i\cup V_{i+1}\subset U_i$ for all $1\le i\le n$ for some $U_i\in\mathcal{V}_n$.
Then, because
\[
V_{i+1}\subset[V_i\cup V_{i+1}]\cap[V_{i+1}\cup V_{i+2}]\subset U_i\cap U_{i+1},
\]
it holds that $U_i\cap U_{i+1}\ne\emptyset$ for every $1\le i\le n-1$, implying $\bigcup_{i=1}^n U_i\in\mathcal{V}^{(n)}_n$. By $\mathcal{U}\prec\mathcal{V}^{(n)}_n$, we obtain $\bigcup_{i=1}^n U_i\subset U$ for some $U\in\mathcal{U}$. It follows that
\[
\bigcup_{i=1}^{n+1} V_i\subset\bigcup_{i=1}^n U_i\subset U.
\]
This shows that  $\mathcal{U}\prec\mathcal{V}^{(n+1)}_{n+1}$, and thus the induction is complete.
\end{proof}

We shall prove Theorem 1.1.

\begin{proof}[Proof of Theorem 1.1]
The implications $(3)\Rightarrow(2)\Rightarrow(1)$ are clear from the definitions. We shall prove $(1)\Rightarrow(3)$. Let  $\mathcal{V}\in\mathscr{U}(X)$. By using Lemmas 3.1 and 3.2, take $\mathcal{R},\mathcal{W},\mathcal{T}\in\mathscr{U}(X)$ with the following properties:
\begin{itemize}
\item $\overline{R}\subset V_R$ for all $R\in\mathcal{R}$ for some $V_R\in\mathcal{V}$,
\item  for all $m\ge1$ and $W_i\in\mathcal{W}$, $1\le i\le m$, $\bigcap_{i=1}^m W_i\ne\emptyset$ implies $\bigcup_{i=1}^m W_i\subset R$ for some $R\in\mathcal{R}$,
\item $\mathcal{W}\prec\mathcal{T}^{(2)}$.
\end{itemize}
Fix $D_0\in\mathcal{D}_{\rm sh}(f)$ and $p\in D_0$. By the definition of $\mathcal{D}_{\rm sh}(f)$, we have $D_0\in\mathcal{D}^{\mathcal{T},\mathcal{S}}(f)$ for some $\mathcal{S}\in\mathscr{U}(X)$. Take $\mathcal{U}\in\mathscr{U}(X)$ such that $\overline{U}\subset S_U$ for all $U\in\mathcal{U}$ for some $S_U\in\mathcal{S}$. Given any $D\in\mathcal{D}(f)$, we shall prove $D\in\mathcal{D}^{\mathcal{V},\mathcal{U}}(f)$. Let $\mathcal{N}(D)$ denote the set of open neighborhoods of $D$ and let $\xi=(x_i)_{i\ge0}$ be a $\mathcal{U}$-pseudo orbit of $f$ with $x_0\in D$. For any $A\in\mathcal{N}(D)$, we can take an open neighborhood $B$ of $x_0$ with the following properties:
\begin{itemize}
\item $(y,x_1,x_2,\dots)$ is an $\mathcal{S}$-pseudo orbit of $f$ for any $y\in B$,
\item $\mathcal{D}_f(y)\subset A$ for all $y\in B$,
\item $B\subset T_B$ for some $T_B\in\mathcal{T}$.
\end{itemize}
Choose an open neighborhood $C$ of $x_0$ with $\overline{C}\subset B$ and let
\[
\mathcal{Q}=\{B,X\setminus\overline{C}\}\in\mathscr{U}(X).
\]
Since $D_0\in\mathcal{D}_{\rm sh}(f)$, we have $D_0\in\mathcal{D}^{\mathcal{Q},\mathcal{P}}(f)$ for some $\mathcal{P}\in\mathscr{U}(X)$. The chain transitivity of $f$ gives a $\mathcal{P}$-chain $(y_i)_{i=0}^k$ of $f$ with $y_0=p$ and $y_k=x_0$. By $D_0\in\mathcal{D}^{\mathcal{Q},\mathcal{P}}(f)$ and $y_0=p\in D_0$, we obtain $q\in D_0$ such that $\{f^i(q),y_i\}\subset Q_i$ for all $0\le i\le k$ for some $Q_i\in\mathcal{Q}$. In particular, this with $y_k=x_0\in\overline{C}$ implies $f^k(q)\in B$. Consider the following sequence:
\[
\xi'=(z_i)_{i\ge0}=(q,f(q),\dots,f^k(q),x_1,x_2,\dots).
\]
Since $f^k(q)\in B$, by the choice of $B$, we see that $\xi'$ is an $\mathcal{S}$-pseudo orbit of $f$ with $z_0=q\in D_0$. Then, because $D_0\in\mathcal{D}^{\mathcal{T},\mathcal{S}}(f)$, $\xi'$ is $\mathcal{T}$-shadowed by some $z\in X$ with $z\sim_f q$. Since $\sim_f$ is $(f\times f)$-invariant, $z\sim_f q$ implies $f^k(z)\sim_f f^k(q)$, that is, $f^k(z)\in\mathcal{D}_f(f^k(q))$. Again by $f^k(q)\in B$ and the choice of $B$, we obtain $f^k(z)\in A$. On the other hand, we have $\{f^k(z),f^k(q)\}\subset T_0$ and $\{f^k(q),x_0\}\subset B\subset T_B$ for some $T_0,T_B\in\mathcal{T}$. From $\mathcal{W}\prec\mathcal{T}^{(2)}$, it follows that $\{f^k(z),x_0\}\subset W_0$ for some $W_0\in\mathcal{W}$. Moreover, since $\{f^{k+i}(z),x_i\}\subset T_i$ for all $i\ge1$ for some $T_i\in\mathcal{T}$, again by $\mathcal{W}\prec\mathcal{T}^{(2)}$, we obtain $\{f^{k+i}(z),x_i\}\subset W_i$ for all $i\ge1$ for some $W_i\in\mathcal{W}$. In other words, $\xi$ is $\mathcal{W}$-shadowed by $f^k(z)\in A$. Note that $\mathcal{N}(D)$ is a directed set with respect to the order $\le$ defined by $A_0\le A_1$ if and only if $A_1\subset A_0$ for all $A_0,A_1\in\mathcal{N}(D)$. As is already proved,  for any $A\in\mathcal{N}(D)$, $\xi$ is $\mathcal{W}$-shadowed by some $x_A\in A$. By compactness of $X$, we have a subnet $y_\bullet=(y_{A'})_{A'\in I}$ of $x_\bullet=(x_A)_{A\in\mathcal{N}(D)}$ such that $y_\bullet\to x$ for some $x\in X$. For such $x$, it holds that $x\in D$. Moreover, for each $i\ge0$, letting
\[
W'_i=\bigcup\{W\in\mathcal{W}\colon x_i\in W\},
\]  
by the choice of $\mathcal{W}$ and $\mathcal{R}$, we obtain
\[
x_i\in W'_i\subset R_i\subset\overline{R_i}\subset V_i
\]
for some $R_i\in\mathcal{R}$ and some $V_i\in\mathcal{V}$. For any $A'\in I$, because $\xi$ is $\mathcal{W}$-shadowed by $y_{A'}$, we have $f^i(y_{A'})\in W'_i\subset\overline{R_i}$ for each $i\ge0$. By continuity of $f$, we obtain $f^i(x)\in\overline{R_i}\subset V_i$ for all $i\ge0$. This means that $\xi$ is $\mathcal{V}$-shadowed by $x\in D$. Since $\xi$ is arbitrary, we conclude that $D\in\mathcal{D}^{\mathcal{V},\mathcal{U}}(f)$, completing the proof.
\end{proof}

Next, we prove Theorem 1.2.

\begin{proof}[Proof of Theorem 1.2]
Assume that $f$ satisfies the conditions (1) and (2). We shall prove that $f$ has DSP. First, we  prove that for every $\mathcal{T}\in\mathscr{U}(X)$, there is $\mathcal{U}\in\mathscr{U}(X)$ such that the following property holds:
\begin{itemize}
\item[(P)] for any $x\in X$ and $y\in\mathcal{D}_f^{\mathcal{U}}(x)$, there are $z\in\mathcal{D}_f(x)$ and $T\in\mathcal{T}$ such that $\{y,z\}\subset T$.
\end{itemize}
Let $\mathcal{T}\in\mathscr{U}(X)$. For any $x\in X$, let
\[
\mathcal{T}_x=\{T\in\mathcal{T}\colon\mathcal{D}_f(x)\cap T\ne\emptyset\}
\]
and note that $\mathcal{D}_f(x)\in[\mathcal{T}_x]$. By the assumption (2), we can take an open neighborhood $U_x$ of $\mathcal{D}_f(x)$ such that $\mathcal{D}_f(y)\in[\mathcal{T}_x]$ for all $y\in U_x$. Then, we have $\mathcal{D}_f^{\mathcal{U}_x}(x)\subset U_x$ for some $\mathcal{U}_x\in\mathscr{U}(X)$. By compactness of $X$, there are $n\ge1$ and $x_i\in X$, $1\le i\le n$, such that $X=\bigcup_{i=1}^n\mathcal{D}_f^{\mathcal{U}_{x_i}}(x_i)$. Define $\mathcal{U}\in\mathscr{U}(X)$ by
\[
\mathcal{U}=\bigvee_{i=1}^n\mathcal{U}_{x_i}=\{\bigcap_{i=1}^n A_i\colon A_i\in\mathcal{U}_{x_i},\:1\le\forall i\le n\}
\]
and note that $\mathcal{U}_{x_i}\prec\mathcal{U}$ for all $1\le i\le n$. For any $x\in X$, take $1\le i\le n$ such that $x\in\mathcal{D}_f^{\mathcal{U}_{x_i}}(x_i)$. Then, for every $y\in\mathcal{D}_f^\mathcal{U}(x)$, we have
\[
y\in\mathcal{D}_f^\mathcal{U}(x)\subset\mathcal{D}_f^{\mathcal{U}_{x_i}}(x)=\mathcal{D}_f^{\mathcal{U}_{x_i}}(x_i)\subset U_{x_i}, 
\]
so $\mathcal{D}_f(y)\in[\mathcal{T}_{x_i}]$. Because $x\in D_f^\mathcal{U}(x)$, in particular, we have $\mathcal{D}_f(x)\in[\mathcal{T}_{x_i}]$. By $y\in\mathcal{D}_f(y)$ and $\mathcal{D}_f(y)\in[\mathcal{T}_{x_i}]$, we obtain $y\in T$ for some $T\in\mathcal{T}_{x_i}$. For such $T$, by $\mathcal{D}_f(x)\in[\mathcal{T}_{x_i}]$, we obtain some $z\in\mathcal{D}_f(x)$ with $z\in T$ and so $\{y,z\}\subset T$. Since $x\in X$ and $y\in\mathcal{D}_f^\mathcal{U}(x)$ are arbitrary, this proves the claim.

We continue the proof. Given any $\mathcal{V}\in\mathscr{U}(X)$, by Lemma 3.2, we have $\mathcal{V}\prec\mathcal{R}^{(2)}$ for some $\mathcal{R}\in\mathscr{U}(X)$. For such $\mathcal{R}$, due to the assumption (1), there is $\mathcal{S}\in\mathscr{U}(X)$ such that every $\mathcal{S}$-pseudo orbit $\xi'=(y_i)_{i\ge0}$ of $f$ along $\mathcal{D}(f)$ is $\mathcal{R}$-shadowed by some $x\in X$ with $x\sim_f y_0$. For such $\mathcal{S}$, by Lemma 3.2, we have $\mathcal{S}\prec\mathcal{W}^{(3)}$ for some  $\mathcal{W}\in\mathscr{U}(X)$. Fix $\mathcal{T}\in\mathscr{U}(X)$ such that $\mathcal{R}\prec\mathcal{T}$, $\mathcal{W}\prec\mathcal{T}$, and $f(T)\subset W_{T}$ for all $T\in\mathcal{T}$ for some $W_T\in\mathcal{W}$. Take $\mathcal{U}\in\mathscr{U}(X)$ with the above property P and also $\mathcal{W}\prec\mathcal{U}$.  We shall prove $\mathcal{D}(f)=\mathcal{D}^{\mathcal{V},\mathcal{U}}(f)$. Let $D\in\mathcal{D}(f)$ and let $\xi=(x_i)_{i\ge0}$ be a $\mathcal{U}$-pseudo orbit of $f$ with $x_0\in D$. Since $\xi$ is a $\mathcal{U}$-pseudo orbit of $f$, we have $x_i\in\mathcal{D}_f^{\mathcal{U}}(f^i(x_0))$ for all $i\ge0$. For each $i\ge0$, by using the property P, we obtain $y_i\in\mathcal{D}_f(f^i(x_0))$ such that $\{x_i,y_i\}\subset T_i$ for some $T_i\in\mathcal{T}$. Then, by the choice of $\mathcal{T}$, it holds that $\{x_{i+1},y_{i+1}\}\subset W_i$ and $\{f(x_i),f(y_i)\}\subset W'_i$ for all $i\ge0$ for some $W_i,W'_i\in\mathcal{W}$. On the other hand, since $\xi$ is a $\mathcal{U}$-pseudo orbit of $f$, by $\mathcal{W}\prec\mathcal{U}$, we have $\{f(x_i),x_{i+1}\}\subset U_i\subset W''_i$ for all $i\ge0$ for some $U_i\in\mathcal{U},W''_i\in\mathcal{W}$. From $\mathcal{S}\prec\mathcal{W}^{(3)}$, it follows that $\{f(y_i),y_{i+1}\}\subset S_i$ for all $i\ge0$ for some $S_i\in\mathcal{S}$. This with $y_i\in\mathcal{D}_f(f^i(x_0))$, $i\ge0$, implies that $\xi'=(y_i)_{i\ge0}$ is an $\mathcal{S}$-pseudo orbit of $f$ along $\mathcal{D}(f)$ and so is $\mathcal{R}$-shadowed by some $x\in X$ with $x\sim_f y_0$. For such $x$, by $x_0\sim_f y_0$, we obtain $x\sim_f x_0$. Since $\xi'$ is $\mathcal{R}$-shadowed by $x$, we have $\{f^i(x),y_i\}\subset R_i$ for all $i\ge0$ for some $R_i\in\mathcal{R}$. Note that for each $i\ge0$, because $\mathcal{R}\prec\mathcal{T}$ and $\{x_i,y_i\}\subset T_i$, we have $\{x_i,y_i\}\subset R'_i$ for some $R'_i\in\mathcal{R}$. By $\mathcal{V}\prec\mathcal{R}^{(2)}$, we conclude that $\{f^i(x),x_i\}\subset V_i$ for all $i\ge0$ for some $V_i\in\mathcal{V}$, that is, $\xi$ is $\mathcal{V}$-shadowed by $x$ with $x\sim_f x_0$. Since $\xi$ is arbitrary, this implies $D\in\mathcal{D}^{\mathcal{V},\mathcal{U}}(f)$ and so $\mathcal{D}(f)=\mathcal{D}^{\mathcal{V},\mathcal{U}}(f)$. Since $\mathcal{V}\in\mathscr{U}(X)$ is arbitrary, $f$ satisfies DSP.

Conversely, assume that $f$ has DSP. Then, it is clear from the definition that $f$ has the shadowing along $\mathcal{D}(f)$. Let us prove that $\mathcal{D}_f(\cdot)\colon X\to K(X)$ is continuous. Let $x\in X$ and $\mathcal{D}_f(x)\in[\mathcal{F}]$ for some $\mathcal{F}\in\mathscr{F}(X)$. Then, since $\mathcal{D}_f(x)\subset\bigcup_{U\in\mathcal{F}}U$, we have $\mathcal{D}_f^\mathcal{W}(x)\subset\bigcup_{U\in\mathcal{F}}U$ for some $\mathcal{W}\in\mathscr{U}(X)$. This implies
\[
\mathcal{D}_f(y)\subset\mathcal{D}_f^\mathcal{W}(y)=\mathcal{D}_f^\mathcal{W}(x)\subset\bigcup_{U\in\mathcal{F}}U
\]
for all $y\in\mathcal{D}_f^\mathcal{W}(x)$. We shall show that for any $U\in\mathcal{F}$, there is $\mathcal{U}_U\in\mathscr{U}(X)$ such that $\mathcal{D}_f(y)\cap U\ne\emptyset$ for all $y\in\mathcal{D}_f^{\mathcal{U}_U}(x)$. This implies $\mathcal{D}_f(y)\in[\mathcal{F}]$ for any
\[
y\in\mathcal{D}_f^\mathcal{W}(x)\cap\bigcap_{U\in\mathcal{F}}\mathcal{D}_f^{\mathcal{U}_U}(x),
\]
and so, since $\mathcal{F}\in\mathscr{F}(X)$ is arbitrary, $\mathcal{D}_f(\cdot)\colon X\to K(X)$ is continuous at $x\in X$. Assume the contrary, that is, there is $U_0\in\mathcal{F}$ such that $\mathcal{D}_f(y)\cap U_0=\emptyset$ for all $\mathcal{U}\in\mathscr{U}(X)$ for some $y\in\mathcal{D}_f^\mathcal{U}(x)$. By $U_0\in\mathcal{F}$ and $\mathcal{D}_f(x)\in[\mathcal{F}]$, we have $\mathcal{D}_f(x)\cap U_0\ne\emptyset$. Fix $p\in\mathcal{D}_f(x)\cap U_0$ and take open neighborhoods $B$, $C$ of $p$ such that $\overline{B}\subset C\subset\overline{C}\subset U_0$. Define $\mathcal{V}\in\mathscr{U}(X)$ by $\mathcal{V}=\{C,X\setminus\overline{B}\}$. Since $f$ has DSP, we have $\mathcal{D}(f)=\mathcal{D}^{\mathcal{V},\mathcal{U}}(f)$ for some $\mathcal{U}\in\mathscr{U}(X)$. For such $\mathcal{U}$, by the assumption, we obtain $y\in\mathcal{D}_f^\mathcal{U}(x)$ with $\mathcal{D}_f(y)\cap U_0=\emptyset$, implying $\mathcal{D}_f(y)\subset X\setminus\overline{C}$. Since $y\in\mathcal{D}_f^\mathcal{U}(x)$ and $p\in\mathcal{D}_f(x)\subset\mathcal{D}_f^\mathcal{U}(x)$, by property (P4) of $\sim_{f,\mathcal{U}}$, there are some $k>0$ and $\mathcal{U}$-chains $\alpha=(x_i)_{i=0}^k$, $\beta=(y_i)_{i=0}^k$ of $f$ with $x_0=y$ and $x_k=y_0=y_k=p$. Consider the sequence
\[
\xi=(z_i)_{i\ge0}=\alpha\beta\beta\cdots
\]
which is a $\mathcal{U}$-pseudo orbit of $f$ with $z_0=y$. From $\mathcal{D}(f)=\mathcal{D}^{\mathcal{V},\mathcal{U}}(f)$, it follows that $\xi$ is $\mathcal{V}$-shadowed by some $z\in\mathcal{D}_f(y)$. Then, for any $n\ge1$, since $z_{kn}=p$ and $p\in\overline{B}$, we have $f^{kn}(z)\in C$. On the other hand, since $X\setminus\overline{C}$ is an open neighborhood of $\mathcal{D}_f(y)$, there are some $\mathcal{U}_y\in\mathscr{U}(X)$ and $m>0$ such that, letting $V=\mathcal{D}_f^{\mathcal{U}_y}(y)$, we have $\mathcal{D}_f(y)\subset V\subset X\setminus\overline{C}$ and $f^m(V)=V$. By $z\in\mathcal{D}_f(y)$, we obtain $f^{km}(z)\in f^{km}(V)=V\subset X\setminus\overline{C}$, which implies $C\cap[X\setminus\overline{C}]\ne\emptyset$, a contradiction. Thus, the claim has been proved. Since $x\in X$ is arbitrary, we conclude that $\mathcal{D}_f(\cdot)\colon X\to K(X)$ is continuous, completing the proof.
\end{proof}

Finally, we prove Theorem 1.3.

\begin{proof}[Proof of Theorem 1.3]
We shall prove that for any $D\in\mathcal{D}(f)$ and $\mathcal{V}\in\mathscr{U}(X)$, there is $\mathcal{U}\in\mathscr{U}(X)$ such that $D\in\mathcal{D}^{\mathcal{V},\mathcal{U}}(f)$. This implies $\mathcal{D}(f)=\mathcal{D}_{\rm sh}(f)$ and so that $f$ has DSP. Let $D\in\mathcal{D}(f)$ and $\mathcal{V}\in\mathscr{U}(X)$. By Lemma 3.2, we have $\mathcal{V}\prec\mathcal{W}^{(2)}$ for some $\mathcal{W}\in\mathscr{U}(X)$. Take $\mathcal{T}\in\mathscr{U}(Y)$ such that $\pi(T)\subset W_T$ for all $T\in\mathcal{T}$ for some $W_T\in\mathcal{W}$. Since $g$ has DSP, there is $\mathcal{S}\in\mathscr{U}(Y)$ such that $\mathcal{D}(g)=\mathcal{D}^{\mathcal{T},\mathcal{S}}(g)$. Take an open neighborhood $V$ of $\pi^{-1}(D)$ such that the following property hold:
\begin{itemize}
\item[(P)] for any $p\in V$, there is $q\in\pi^{-1}(D)$ such that $(q,g(p),g^2(p),\dots)$ is an $\mathcal{S}$-pseudo orbit of $g$, and $\{\pi(q),\pi(p)\}\subset W$ for some $W\in\mathcal{W}$.
\end{itemize}
Fix an open neighborhood $U$ of $D$ such that $\pi^{-1}(U)\subset V$. Since $f$ has the shadowing property, there is $\mathcal{U}\in\mathscr{U}(X)$ such that every $\mathcal{U}$-pseudo orbit $\xi=(x_i)_{i\ge0}$ of $f$ with $x_0\in D$ is $\mathcal{W}$-shadowed by some $x\in U$. Let us prove that such $\xi$ is also $\mathcal{V}$-shadowed by some $z\in D$. Since $\pi^{-1}(U)\subset V$, we have $\pi(p)=x$ for some $p\in V$. For such $p$, by the above property P, we have $q\in\pi^{-1}(D)$ such that
\[
\xi'=(y_i)_{i\ge0}=(q,g(p),g^2(p),\dots)
\]
is an $\mathcal{S}$-pseudo orbit of $g$, and $\{\pi(q),\pi(p)\}\subset W$ for some $W\in\mathcal{W}$. Then, because $\mathcal{D}(g)=\mathcal{D}^{\mathcal{T},\mathcal{S}}(g)$, $\xi'$ is $\mathcal{T}$-shadowed by some $r\in\mathcal{D}_g(q)$. Let $z=\pi(r)$ and note that
\[
z=\pi(r)\in\mathcal{D}_f(\pi(q))=D.
\]
For every $i\ge1$, since $\xi'$ is $\mathcal{T}$-shadowed by $r$, we have $\{g^i(r),g^i(p)\}\subset T_i$ for some $T_i\in\mathcal{T}$, and so, by the choice of $\mathcal{T}$, it holds that
\[
\{f^i(z),f^i(x)\}=\{f^i(\pi(r)),f^i(\pi(p))\}=\{\pi(g^i(r)),\pi(g^i(p))\}\subset W_i
\] 
for some $W_i\in\mathcal{W}$. Since $\xi$ is $\mathcal{W}$-shadowed by $x$, for any $i\ge1$, we have $\{f^i(x),x_i\}\subset W'_i$ for some $W'_i\in\mathcal{W}$. From $\mathcal{V}\prec\mathcal{W}^{(2)}$, it follows that $\{f^i(z),x_i\}\subset V_i$ for all $i\ge1$ for some $V_i\in\mathcal{V}$. Again, since $\xi'$ is $\mathcal{T}$-shadowed by $r$, we have $\{r,q\}\subset T_0$ for some $T_0\in\mathcal{T}$, and so, by the choice of $\mathcal{T}$, it holds that  
\[
\{z,\pi(q)\}=\{\pi(r),\pi(q)\}\subset W_0
\] 
for some $W_0\subset\mathcal{W}$. On the other hand, we have
\[
\{\pi(q),x\}=\{\pi(q),\pi(p)\}\subset W
\]
for some $W\in\mathcal{W}$. By $\mathcal{V}\prec\mathcal{W}^{(2)}$, we obtain $\{z,x\}\subset V_0$ for some $V_0\in\mathcal{V}$. In other words, $\xi$ is $\mathcal{V}$-shadowed by $z\in D$. Since $\xi$ is arbitrary, we conclude that $D\in\mathcal{D}^{\mathcal{V},\mathcal{U}}(f)$, proving the theorem.  
\end{proof}

\section{Proof of Theorem 1.4}

In this section, we prove Theorem 1.4. Terms related to inverse limit are defined in Section 2.5. For the proof, we need several lemmas.

\begin{lem}
Let $\pi=(\pi_\lambda^\mu\colon(X_\mu,f_\mu)\to(X_\lambda,f_\lambda))_{\lambda\le\mu\in I}$ be an inverse system of equivariant maps with the MLC, and let $(X,f)=\varprojlim(X_\lambda,f_\lambda)$. If $f$ is chain transitive, then there are $C_\lambda\in\mathcal{C}(f_\lambda)$, $\lambda\in I$, such that the following properties hold:
\begin{itemize}
\item[(1)] $\pi_\lambda^\mu(C_\mu)\subset C_{\lambda}$ for all $\lambda\le\mu\in I$,
\item[(2)] the inverse system of equivariant maps
\[
\pi'=(\pi_\lambda^\mu|_{C_\mu}\colon(C_\mu,f_\mu|_{C_\mu})\to(C_\lambda,f_\lambda|_{C_\lambda}))_{\lambda\le\mu\in I}
\]
satisfies the MLC,
\item[(3)] $(X,f)=\varprojlim(C_\lambda,f_\lambda|_{C_\lambda})$.
\end{itemize}
\end{lem}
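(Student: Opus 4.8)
The plan is to take $C_\lambda$ to be the chain component of $f_\lambda$ that contains the projection $\pi_\lambda(X)\subset X_\lambda$, and then to verify (1)--(3) almost formally. To justify the definition, I first note that a chain transitive map is surjective, so $f(X)=X$ and hence $\pi_\lambda(X)=\pi_\lambda(f(X))=f_\lambda(\pi_\lambda(X))$ is a nonempty closed $f_\lambda$-invariant subset of $X_\lambda$; regarding $\pi_\lambda$ as a map onto $\pi_\lambda(X)$, it is a factor map $(X,f)\to(\pi_\lambda(X),f_\lambda|_{\pi_\lambda(X)})$. The one auxiliary fact I use is that a factor map sends chain transitivity to chain transitivity: if $\varphi$ is a factor map and $\mathcal{U}'$ is a finite open cover of the image, then $\{\varphi^{-1}(U')\colon U'\in\mathcal{U}'\}$ is a finite open cover of the domain, the $\varphi$-image of a chain for this pulled-back cover is a $\mathcal{U}'$-chain, and $\varphi$ is onto. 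Applying this to $\pi_\lambda$ shows $f_\lambda|_{\pi_\lambda(X)}$ is chain transitive; since a chain for $f_\lambda|_{\pi_\lambda(X)}$ is also a chain for $f_\lambda$, any two points of $\pi_\lambda(X)$ are chain recurrent and $\leftrightarrow_{f_\lambda}$-equivalent, so $\pi_\lambda(X)$ lies in a single chain component $C_\lambda\in\mathcal{C}(f_\lambda)$.

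For (1), fix $\lambda\le\mu$. As $C_\mu$ is a chain component, $f_\mu|_{C_\mu}$ is chain transitive, so by the same argument $f_\lambda|_{\pi_\lambda^\mu(C_\mu)}$ is chain transitive and $\pi_\lambda^\mu(C_\mu)$ lies in one chain component of $f_\lambda$. That chain component contains $\pi_\lambda(X)$, because $\pi_\lambda=\pi_\lambda^\mu\circ\pi_\mu$ gives $\pi_\lambda(X)=\pi_\lambda^\mu(\pi_\mu(X))\subset\pi_\lambda^\mu(C_\mu)$; hence it is $C_\lambda$, and $\pi_\lambda^\mu(C_\mu)\subset C_\lambda$.

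For (2), fix $\lambda\in I$ and use the MLC of $\pi$ to choose $\mu\ge\lambda$ with $\pi_\lambda^\mu(X_\mu)=\pi_\lambda(X)$. Then for every $\nu\ge\mu$, using $\pi_\lambda=\pi_\lambda^\nu\circ\pi_\nu$ and $\pi_\lambda^\nu=\pi_\lambda^\mu\circ\pi_\mu^\nu$,
\[
\pi_\lambda(X)=\pi_\lambda^\nu(\pi_\nu(X))\subset\pi_\lambda^\nu(C_\nu)\subset\pi_\lambda^\nu(X_\nu)\subset\pi_\lambda^\mu(X_\mu)=\pi_\lambda(X),
\]
so all of these sets agree; in particular $\pi_\lambda^\mu(C_\mu)=\pi_\lambda^\nu(C_\nu)$ for all $\nu\ge\mu$, which is precisely the MLC for $\pi'$.

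Finally, (3) is a set-theoretic check: $\varprojlim(C_\lambda,f_\lambda|_{C_\lambda})\subset\varprojlim(X_\lambda,f_\lambda)=X$ since $C_\lambda\subset X_\lambda$ and the bonding maps are the same, and conversely any $x=(x_\lambda)_{\lambda\in I}\in X$ satisfies $x_\lambda=\pi_\lambda(x)\in\pi_\lambda(X)\subset C_\lambda$ for all $\lambda$, so $x\in\varprojlim(C_\lambda,f_\lambda|_{C_\lambda})$; the two inverse limit maps coincide because both send $x$ to $(f_\mu(x_\mu))_{\mu\in I}$. I expect the only step requiring a bit of care to be the ``factor map preserves chain transitivity'' reduction in the compact Hausdorff (rather than metric) setting, i.e. the manipulation of finite open covers; granting that, properties (1)--(3) drop out of the displayed chain of inclusions and the definition of $C_\lambda$.
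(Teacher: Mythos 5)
Your proof is correct and follows essentially the same route as the paper's: the paper fixes $x\in X$ and takes $C_\lambda$ to be the chain component of $x_\lambda$, which (since $\pi_\lambda(X)$ lies in a single chain component, shown there by the same ``equivariant maps send chains to chains'' device) is the same choice as yours, and the verifications of (1)--(3) rest on the same inclusions $\pi_\lambda(X)\subset C_\lambda$ and $\pi_\lambda^\nu(C_\nu)\subset\pi_\lambda^\mu(X_\mu)=\pi_\lambda(X)$. Your derivation of the MLC for $\pi'$ via the displayed chain of equalities is only a mild streamlining of the paper's argument, which instead lifts a point of $\pi_\lambda^\mu(C_\mu)$ to $X$ and pushes it back down to $\pi_\lambda^\nu(C_\nu)$.
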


\begin{proof}
Fix $x=(x_\lambda)_{\lambda\in I}\in X$ and note that for any $y=(y_\lambda)_{\lambda\in I}\in X$, since $f$ is chain transitive, we have $x\rightarrow_{f} y$. Then, for every $\lambda\in I$, since
\[
\pi_\lambda\colon(X,f)\to(X_\lambda,f_\lambda)
\]
is an equivariant map, it holds that $\pi_\lambda(x)\rightarrow_{f_\lambda}\pi_\lambda(y)$, that is, $x_\lambda\rightarrow_{f_\lambda}y_\lambda$. This particularly implies $x_\lambda\rightarrow_{f_\lambda}x_\lambda$ for all $\lambda\in I$. Take $C_\lambda\in\mathcal{C}(f_\lambda)$, $\lambda\in I$, such that $x_\lambda\in C_\lambda$ for all $\lambda\in I$. For any $\lambda\le\mu\in I$, by $x_\mu\in C_\mu$ and $\pi_\lambda^\mu(x_\mu)=x_\lambda\in C_\lambda$, we obtain $\pi_\lambda^\mu(C_\mu)\subset C_\lambda$. Consider the inverse system of equivarinat maps:
\[
\pi'=(\pi_\lambda^\mu|_{C_\mu}\colon(C_\mu,f_\mu|_{C_\mu})\to(C_\lambda,f_\lambda|_{C_\lambda}))_{\lambda\le\mu\in I}.
\]
For any $y=(y_\lambda)_{\lambda\in I}\in X$ and $\lambda\in I$, because $x_\lambda\rightarrow_{f_\lambda}y_\lambda$,
we have $y_\lambda\in C_\lambda$. This implies
\[
X=\{y=(y_\lambda)_{\lambda\in I}\in\prod_{\lambda\in I}C_\lambda\colon\pi_\lambda^\mu(y_\mu)=y_\lambda,\:\forall\lambda\le\mu\in I\}=\varprojlim C_\lambda
\]
and so
\[
(X,f)=\varprojlim(C_\lambda,f_\lambda|_{C_\lambda}).
\]
We shall prove that $\pi'$ satisfies the MLC. For any $\lambda\in I$, since $\pi$ satisfies the MLC, there is $\lambda\le\mu$ such that $\pi_\lambda^\mu(X_\mu)=\pi_\lambda^\nu(X_\nu)$ for all $\mu\le\nu$. Then, it holds that $\pi_\lambda(X)=\pi_\lambda^\mu(X_\mu)$. For any $y_\lambda\in\pi_\lambda^\mu(C_\mu)$, by
\[
\pi_\lambda^\mu(C_\mu)\subset\pi_\lambda^\mu(X_\mu)=\pi_\lambda(X),
\]
we obtain $z=(z_\lambda)_{\lambda\in I}\in X$ such that $z_\lambda=\pi_\lambda(z)=y_\lambda$. Then, for any $\mu\le\nu$, as is already proved, we have $z_\nu\in C_\nu$ and so
\[
y_\lambda=z_\lambda=\pi_\lambda^\nu(z_\nu)\in\pi_\lambda^\nu(C_\nu).
\]
Since $y_\lambda\in\pi_\lambda^\mu(C_\mu)$ is arbitrary, we obtain $\pi_\lambda^\mu(C_\mu)\subset\pi_\lambda^\nu(C_\nu)$, implying $\pi_\lambda^\mu(C_\mu)=\pi_\lambda^\nu(C_\nu)$. Since $\lambda\in I$ is arbitrary, this shows that $\pi'$ satisfies the MLC, and thus the lemma has been proved.
\end{proof}

Let $\pi=(\pi_\lambda^\mu\colon(X_\mu,f_\mu)\to(X_\lambda,f_\lambda))_{\lambda\le\mu\in I}$ be an inverse system of equivariant maps such that $f_\lambda\colon X_\lambda\to X_\lambda$ is chain transitive for all $\lambda\in I$. Let $(X,f)=\varprojlim(X_\lambda,f_\lambda)$ and note that by Lemma 2.2, $f\colon X\to X$ is chain transitive. For any $\lambda\le\mu\in I$ and $D_\mu\in\mathcal{D}(f_\mu)$, since $\pi_\lambda^\mu\colon(X_\mu,f_\mu)\to(X_\lambda,f_\lambda)$ is an equivariant map, we have $\pi_\lambda^\mu(D_\mu)\subset D_\lambda$ for some $D_\lambda\in\mathcal{D}(f_\lambda)$. Let
\[
\mathcal{D}_\pi=\{D_\ast=(D_\lambda)_{\lambda\in I}\in\prod_{\lambda\in I}\mathcal{D}(f_\lambda)\colon \pi_\lambda^\mu(D_\mu)\subset D_\lambda,\:\forall\lambda\le\mu\in I\}.
\]
Also, for any $D_\ast=(D_\lambda)_{\lambda\in I}\in\mathcal{D}_\pi$, let
\[
[D_\ast]=\{x=(x_\lambda)_{\lambda\in I}\in X\colon x_\lambda\in D_\lambda,\forall\lambda\in I\}.
\]

\begin{lem}
If $\pi$ satisfies the MLC, then it holds that $\mathcal{D}(f)=\{[D_\ast]\colon D_\ast\in\mathcal{D}_\pi\}$.
\end{lem}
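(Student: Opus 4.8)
The plan is to identify the two partitions of $X$ in play: $\mathcal{D}(f)$, the partition into equivalence classes of $\sim_f$, and $\{[D_*]:D_*\in\mathcal{D}_\pi\}$. I would first dispose of the routine half. Each projection $\pi_\lambda\colon(X,f)\to(X_\lambda,f_\lambda)$ is equivariant, and equivariant maps carry chain proximal pairs to chain proximal pairs: given $\mathcal{W}\in\mathscr{U}(X_\lambda)$, one applies $\pi_\lambda$ to a pair of $\pi_\lambda^{-1}(\mathcal{W})$-chains of $f$ witnessing chain proximality upstairs and uses $\pi_\lambda\circ f=f_\lambda\circ\pi_\lambda$ to see that the images are $\mathcal{W}$-chains of $f_\lambda$ with the same common endpoint. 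By Remark 1.3(1) this means $x\sim_f y$ implies $\pi_\lambda(x)\sim_{f_\lambda}\pi_\lambda(y)$ for every $\lambda\in I$. Hence, fixing a $\sim_f$-class $D$, a point $z=(z_\lambda)_{\lambda\in I}\in D$, and letting $D_\lambda$ be the $\sim_{f_\lambda}$-class of $z_\lambda$, the family $D_*=(D_\lambda)_{\lambda\in I}$ lies in $\mathcal{D}_\pi$ (equivariance of $\pi_\lambda^\mu$ gives $\pi_\lambda^\mu(D_\mu)\subset D_\lambda$) and $D\subset[D_*]$. The same bookkeeping shows that $\{[D_*]:D_*\in\mathcal{D}_\pi\}$ is a partition of $X$ into nonempty closed sets: $[D_*]=\bigcap_{\lambda\in I}\pi_\lambda^{-1}(D_\lambda)$ is closed and is identified with $\varprojlim(D_\lambda,\pi_\lambda^\mu|_{D_\mu})$, hence nonempty as an inverse limit, over the directed set $I$, of nonempty compact Hausdorff spaces; distinct members are disjoint because distinct classes within a fixed $\mathcal{D}(f_\lambda)$ are disjoint; and every $z\in X$ belongs to the $[D_*]$ assembled from the $\sim_{f_\lambda}$-classes of its coordinates. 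It therefore remains only to prove that each $[D_*]$ is contained in a single $\sim_f$-class, for then the two partitions refine each other and so coincide, and the injectivity of $D_*\mapsto[D_*]$ yields $\mathcal{D}(f)=\{[D_*]:D_*\in\mathcal{D}_\pi\}$.

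So let $x,y\in[D_*]$; I must show $x\sim_f y$, i.e., by Remark 1.3(1), that $(x,y)$ is chain proximal for $f$, so for each $\mathcal{U}\in\mathscr{U}(X)$ I need a pair of $\mathcal{U}$-chains of $f$ from $(x,y)$ with a common endpoint. Since the sets $\pi_\lambda^{-1}(W)$ ($\lambda\in I$, $W\subset X_\lambda$ open) form a basis for the topology of $X$ and $I$ is directed, compactness lets me replace $\mathcal{U}$ by a pulled-back cover $\pi_\lambda^{-1}(\mathcal{U}_\lambda)$ with $\mathcal{U}_\lambda\in\mathscr{U}(X_\lambda)$ and $\mathcal{U}\prec\pi_\lambda^{-1}(\mathcal{U}_\lambda)$ (so a $\pi_\lambda^{-1}(\mathcal{U}_\lambda)$-chain is in particular a $\mathcal{U}$-chain). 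Now I invoke the MLC to choose $\mu\ge\lambda$ with $\pi_\lambda^\mu(X_\mu)=\pi_\lambda(X)$, and I set $\mathcal{U}_\mu=(\pi_\lambda^\mu)^{-1}(\mathcal{U}_\lambda)\in\mathscr{U}(X_\mu)$. Because $x_\mu$ and $y_\mu$ lie in the common $\sim_{f_\mu}$-class $D_\mu$, chain proximality of $(x_\mu,y_\mu)$ for $f_\mu$ furnishes $\mathcal{U}_\mu$-chains $(a_i)_{i=0}^k$ and $(b_i)_{i=0}^k$ of $f_\mu$ with $a_0=x_\mu$, $b_0=y_\mu$ and $a_k=b_k$. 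Applying $\pi_\lambda^\mu$ produces $\mathcal{U}_\lambda$-chains of $f_\lambda$ all of whose vertices lie in $\pi_\lambda^\mu(X_\mu)=\pi_\lambda(X)$, hence admit preimages under $\pi_\lambda$. I then lift vertex by vertex, choosing preimages $\hat a_i,\hat b_i\in X$ with $\hat a_0=x$, $\hat b_0=y$ and --- the one point that needs care --- a single shared preimage $\hat c\in X$ of $\pi_\lambda^\mu(a_k)=\pi_\lambda^\mu(b_k)$ serving as the last vertex of both lifts. Since $\pi_\lambda\circ f=f_\lambda\circ\pi_\lambda$ and the $\pi_\lambda$-images of the two resulting finite sequences in $X$ are $\mathcal{U}_\lambda$-chains, these sequences are $\pi_\lambda^{-1}(\mathcal{U}_\lambda)$-chains of $f$, hence $\mathcal{U}$-chains; they start at $x$ and $y$ and end at the common point $\hat c$. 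This shows $(x,y)$ is chain proximal for $f$, so $x\sim_f y$, as desired.

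The main obstacle is exactly this chain-lifting step, and it is where the MLC is indispensable. A chain inside $X_\lambda$ witnessing $x_\lambda\sim_{f_\lambda}y_\lambda$ need not stay in the image $\pi_\lambda(X)$, so its vertices cannot be lifted to $X$; the fix is to produce the chains one level up, at $\mu$, and then push them down, the MLC being precisely what guarantees that the pushed-down vertices remain in $\pi_\lambda^\mu(X_\mu)=\pi_\lambda(X)$ and are therefore liftable. The remaining ingredients are routine: that preimages of open sets from the factors form a basis for $X$; the reduction of an arbitrary finite open cover of $X$ to one pulled back from a single $X_\lambda$ (adjoining the open set $X_\lambda\setminus\pi_\lambda(X)$, if necessary, to keep it a cover); and the non-emptiness of an inverse limit, over a directed set, of nonempty compact Hausdorff spaces.
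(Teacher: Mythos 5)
Your proof is correct and follows essentially the same route as the paper's: the central step in both is to realize chain proximality of two points of $[D_\ast]$ by producing the witnessing pair of chains at a level $\mu$ chosen via the MLC, pushing them down to $X_\lambda$ so that their vertices land in $\pi_\lambda^\mu(X_\mu)=\pi_\lambda(X)$, and then lifting vertex by vertex to $X$ with a shared terminal lift. The only differences are organizational (you phrase the conclusion as mutual refinement of two partitions rather than the paper's direct double inclusion), plus your explicit check that each $[D_\ast]$ is nonempty as an inverse limit of nonempty compact Hausdorff spaces, a point the paper leaves implicit.
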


\begin{proof}
First, we shall prove that for any $D_\ast=(D_\lambda)_{\lambda\in I}\in\mathcal{D}_\pi$ and $x=(x_\lambda)_{\lambda\in I},y=(y_\lambda)_{\lambda\in I}\in[D_\ast]$, $(x,y)$ is a chain proximal pair for $f$ and so satisfies $x\sim_f y$. Let $\mathcal{U}\in\mathscr{U}(X)$. Then, there are $\lambda\in I$ and $\mathcal{U}_\lambda\in\mathscr{U}(X_\lambda)$ such that $\pi_\lambda^{-1}(V)\subset U_V$ for all $V\in\mathcal{U}_\lambda$ for some $U_V\in\mathcal{U}$. For such $\lambda$, since $\pi$ satisfies the MLC, there is $\lambda\le\mu$ such that $\pi_\lambda^\mu(X_\mu)=\pi_\lambda^\nu(X_\nu)$ for all $\mu\le\nu$, implying $\pi_\lambda(X)=\pi_\lambda^\mu(X_\mu)$. For such $\mu$, take $\mathcal{U}_\mu\in\mathscr{U}(X_\mu)$ such that $\pi_\lambda^\mu(W)\subset V_W$ for all $W\in\mathcal{U}_\mu$ for some $V_W\in\mathcal{U}_\lambda$. Since $x_\mu,y_\mu\in D_\mu$, $(x_\mu,y_\mu)$ is a chain proximal pair $f_\mu$, so there is a pair
\[
((x_i)_{i=0}^k,(y_i)_{i=0}^k)
\]
of $\mathcal{U}_\mu$-chains of $f_\mu$ such that $(x_0,y_0)=(x_\mu,y_\mu)$ and $x_k=y_k$. Let
\[
\Pi=((z_i)_{i=0}^k,(w_i)_{i=0}^k)=((\pi_\lambda^\mu(x_i))_{i=0}^k,(\pi_\lambda^\mu(y_i))_{i=0}^k).
\]
Since $\pi_\lambda^\mu\colon(X_\mu,f_\mu)\to(X_\lambda,f_\lambda)$ is an equivariant map, we have
\[
\{f_\lambda(z_i),z_{i+1}\}=\{f_\lambda(\pi_\lambda^\mu(x_i)),\pi_\lambda^\mu(x_{i+1})\}=\{\pi_\lambda^\mu(f_\mu(x_i)),\pi_\lambda^\mu(x_{i+1})\}
\]
and
\[
\{f_\lambda(w_i),w_{i+1}\}=\{f_\lambda(\pi_\lambda^\mu(y_i)),\pi_\lambda^\mu(y_{i+1})\}=\{\pi_\lambda^\mu(f_\mu(y_i)),\pi_\lambda^\mu(y_{i+1})\}
\]
for each $0\le i\le k-1$, so by the choice of $\mathcal{U}_\mu$, $\Pi$ is a pair of $\mathcal{U}_\lambda$-chains of $f_\lambda$ with $(z_0,w_0)=(x_\lambda,y_\lambda)$ and $z_k=w_k$. By $\pi_\lambda(X)=\pi_\lambda^\mu(X_\mu)$, we obtain $(u_i,v_i)\in X^2$, $1\le i\le k-1$, and $p\in X$  such that
\[
(\pi_\lambda(u_i),\pi_\lambda(v_i))=(z_i,w_i)
\]
and $\pi_\lambda(p)=z_k=w_k$. Let $(u_0,v_0)=(x,y)$ and note that
\[
(\pi_\lambda(u_0),\pi_\lambda(v_0))=(\pi_\lambda(x),\pi_\lambda(y))=(x_\lambda,y_\lambda)=(z_0,w_0).
\]
Also, let $u_k=v_k=p$ and note that
\[
\pi_\lambda(u_k)=\pi_\lambda(v_k)=\pi_\lambda(p)=z_k=w_k.
\]
Since $\pi_\lambda\colon(X,f)\to(X_\lambda,f_\lambda)$ is an equivariant map, we have
\[
\{\pi_\lambda(f(u_i)),\pi_\lambda(u_{i+1})\}=\{f_\lambda(\pi_\lambda(u_i)),\pi_\lambda(u_{i+1})\}=\{f_\lambda(z_i),z_{i+1}\}
\]
and
\[
\{\pi_\lambda(f(v_i)),\pi_\lambda(v_{i+1})\}=\{f_\lambda(\pi_\lambda(v_i)),\pi_\lambda(v_{i+1})\}=\{f_\lambda(w_i),w_{i+1}\}
\]
for each $0\le i\le k-1$, so by the choice of $\lambda$ and $\mathcal{U}_\lambda$, we see that
\[
((u_i)_{i=0}^k,(v_i)_{i=0}^k)
\]
is a pair of $\mathcal{U}$-chains of $f$ with $(u_0,v_0)=(x,y)$ and $u_k=v_k$. Since $\mathcal{U}\in\mathscr{U}(X)$ is arbitrary, this shows that $(x,y)$ is a proximal pair for $f$ and so satisfies $x\sim_f y$.

For any $D_\ast=(D_\lambda)_{\lambda\in I}\in\mathcal{D}_\pi$, by what is shown above, we have $[D_\ast]\subset D$ for some $D\in\mathcal{D}(f)$. Fix $x=(x_\lambda)_{\lambda\in I}\in[D_\ast]$. Then, for every $y=(y_\lambda)_{\lambda\in I}\in D$, since $x,y\in D$, we have $x\sim_f y$. Since $\pi_\lambda\colon(X,f)\to(X_\lambda,f_\lambda)$, $\lambda\in I$, are equivariant maps, it holds that $\pi_\lambda(x)\sim_{f_\lambda}\pi_\lambda(y)$, that is, $x_\lambda\sim_{f_\lambda}y_\lambda$ for all $\lambda\in I$; therefore,  $y\in[D_\ast]$. Since $y\in D$ is arbitrary, we obtain $D\subset[D_\ast]$ and so $[D_\ast]=D$. This implies
\[
\{[D_\ast]\colon D_\ast\in\mathcal{D}_\pi\}\subset\mathcal{D}(f).
\]
Conversely, for any $D\in\mathcal{D}(f)$, fix $x=(x_\lambda)_{\lambda\in I}\in D$ and take $D_\lambda\in\mathcal{D}(f_\lambda)$, $\lambda\in I$, such that $x_\lambda\in D_\lambda$ for all $\lambda\in I$. By $x_\mu\in D_\mu$ and $\pi_\lambda^\mu(x_\mu)=x_\lambda\in D_\lambda$, we obtain $\pi_\lambda^\mu(D_\mu)\subset D_\lambda$ for all $\lambda\le\mu\in I$, that is, $D_\ast=(D_\lambda)_{\lambda\in I}\in\mathcal{D}_\pi$. Note that $x\in[D_\ast]$.  By $x\in D$ and what is shown above, we obtain $[D_\ast]\subset D$. Then, similarly as above, we see that $D=[D_\ast]$. Since $D\in\mathcal{D}(f)$ is arbitrary, it follows that
\[
\mathcal{D}(f)\subset\{[D_\ast]\colon D_\ast\in\mathcal{D}_\pi\},
\]
and thus the lemma has been proved.
\end{proof}

Let $\pi=(\pi_\lambda^\mu\colon(X_\mu,f_\mu)\to(X_\lambda,f_\lambda))_{\lambda\le\mu\in I}$ be an inverse system of equivariant maps such that $(X_\lambda,f_\lambda)$, $\lambda\in I$, are transitive subshifts of finite type for all $\lambda\in I$. For every $\lambda\in I$, note that $\mathcal{D}(f_\lambda)$ is a finite set, and let $m_\lambda=|\mathcal{D}(f_\lambda)|$. Then, we have $m_\lambda|m_\mu$ for all $\lambda\le\mu\in I$, and for any $\lambda\in I$ and $E\in\mathcal{D}(f_\lambda)$, $(f_\lambda^{m_\lambda})|_E\colon E\to E$ is mixing.

\begin{lem}
If $\pi$ satisfies the MLC, then for every $D_\ast=(D_\lambda)_{\lambda\in I}\in\mathcal{D}_\pi$,
\[
\tilde{\pi}=((\pi_\lambda^\mu)|_{D_\mu}\colon D_\mu\to D_\lambda)_{\lambda\le\mu\in I}
\]
satisfies the MLC.
\end{lem}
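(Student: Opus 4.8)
The plan is to recast the MLC for $\tilde\pi$ as a stabilization statement about the images of the cyclic classes, and then to settle it with an entropy argument. Fix $D_\ast=(D_\lambda)_{\lambda\in I}\in\mathcal{D}_\pi$; for $\mu\ge\lambda$ put $A_\mu=\pi_\lambda^\mu(D_\mu)$, $k_\mu=m_\mu/m_\lambda$, and $g=f_\lambda^{m_\lambda}|_{D_\lambda}$, so that $(D_\lambda,g)$ is a mixing shift of finite type (Section 2.6.2). Using $D_\ast\in\mathcal{D}_\pi$ one checks that $A_\mu\subset D_\lambda$ and $A_\nu=\pi_\lambda^\mu(\pi_\mu^\nu(D_\nu))\subset A_\mu$ for $\lambda\le\mu\le\nu$; moreover, since $f_\mu^{m_\mu}(D_\mu)=D_\mu$ and $m_\lambda k_\mu=m_\mu$, the restriction of $\pi_\lambda^\mu$ is a factor map
\[
\pi_\lambda^\mu|_{D_\mu}\colon (D_\mu,f_\mu^{m_\mu}|_{D_\mu})\longrightarrow (A_\mu,g^{k_\mu}|_{A_\mu}),\qquad g^{k_\mu}(A_\mu)=A_\mu.
\]
Hence $(A_\mu,g^{k_\mu}|_{A_\mu})$ is mixing and is a sofic shift (a continuous equivariant image of an SFT). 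The MLC for $\tilde\pi$ then says precisely that, for each $\lambda$, the decreasing family $(A_\mu)_{\mu\ge\lambda}$ of closed subsets of $D_\lambda$ is eventually constant.

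So fix $\lambda$ and, by the MLC of $\pi$, pick $\mu_0\ge\lambda$ with $\pi_\lambda^{\mu_0}(X_{\mu_0})=\pi_\lambda^\nu(X_\nu)=\pi_\lambda(X)$ for all $\nu\ge\mu_0$. From $\mathcal{D}(f_\mu)=\{f_\mu^j(D_\mu):0\le j<m_\mu\}$ one gets $\pi_\lambda^\mu(X_\mu)=\bigcup_{j=0}^{m_\mu-1}f_\lambda^j(A_\mu)$; intersecting with $D_\lambda$ and using that the cyclic pieces $f_\lambda^j(D_\lambda)$ of $X_\lambda$ are pairwise disjoint, so $f_\lambda^j(D_\lambda)\cap D_\lambda=\emptyset$ unless $m_\lambda\mid j$, one obtains
\[
\pi_\lambda^\mu(X_\mu)\cap D_\lambda=\bigcup_{l=0}^{k_\mu-1}g^l(A_\mu).
\]
Consequently, for all $\mu\ge\mu_0$ this cyclic union equals the fixed set $B:=\pi_\lambda(X)\cap D_\lambda$, and $g(B)\subset B$.

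Now the entropy step. Fix $\mu\ge\mu_0$. Each $g^l(A_\mu)$ is a closed $g^{k_\mu}$-invariant subset of $B$, with $g^l$ a factor map $(A_\mu,g^{k_\mu}|_{A_\mu})\to(g^l(A_\mu),g^{k_\mu}|_{g^l(A_\mu)})$; since the topological entropy of a finite union of invariant closed sets is the maximum of the entropies and factor maps do not increase entropy, it follows that $h_{\rm top}(g^{k_\mu}|_{A_\mu})=h_{\rm top}((g|_B)^{k_\mu})=k_\mu\,h_{\rm top}(g|_B)$. For $\mu_0\le\mu\le\nu$ we have $k_\mu\mid k_\nu$, both $A_\nu\subset A_\mu$ are $g^{k_\nu}$-invariant, and the identity $h_{\rm top}(T^n)=n\,h_{\rm top}(T)$ gives
\[
h_{\rm top}\big(g^{k_\nu}|_{A_\nu}\big)=k_\nu\,h_{\rm top}(g|_B)=\tfrac{k_\nu}{k_\mu}\,h_{\rm top}\big(g^{k_\mu}|_{A_\mu}\big)=h_{\rm top}\big(g^{k_\nu}|_{A_\mu}\big).
\]
Thus $A_\nu$ is a subsystem of $(A_\mu,g^{k_\nu}|_{A_\mu})$ of full topological entropy. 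But $(A_\mu,g^{k_\nu}|_{A_\mu})$ is the $(k_\nu/k_\mu)$-th power of the mixing sofic shift $(A_\mu,g^{k_\mu}|_{A_\mu})$, hence is again a mixing sofic shift, so it is intrinsically ergodic with a fully supported measure of maximal entropy; therefore its only full-entropy subsystem is itself, and $A_\nu=A_\mu$. Hence $A_\mu=A_{\mu_0}$ for every $\mu\ge\mu_0$, i.e.\ $\pi_\lambda^{\mu_0}(D_{\mu_0})=\pi_\lambda^\nu(D_\nu)$ for all $\nu\ge\mu_0$, which is the MLC for $\tilde\pi$ at $\lambda$; as $\lambda\in I$ was arbitrary, $\tilde\pi$ satisfies the MLC.

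The step I expect to be the real obstacle is the last one, upgrading ``full-entropy subsystem'' to ``the whole system''. This is exactly where it is used that $A_\mu$ is sofic — an image of a shift of finite type — and not an arbitrary subshift, and that the pieces of $\mathcal{D}(f_\mu)$ are mixing: the key fact is that an irreducible sofic shift carries a unique, fully supported, measure of maximal entropy, which fails for general subshifts. The translation of the MLC of $\pi$ into the stabilization of the cyclic union $\bigcup_{l<k_\mu}g^l(A_\mu)$ is more routine but genuinely uses that $D_\lambda$ is clopen and $f_\lambda^j(D_\lambda)\cap D_\lambda=\emptyset$ for $m_\lambda\nmid j$.
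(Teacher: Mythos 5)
Your proof is correct, but it takes a genuinely different route from the paper's. The paper argues without any entropy theory: it picks $E_\mu\in\mathcal{D}(f_\mu)$ whose image $\pi_\lambda^\mu(E_\mu)$ is maximal for inclusion among the images of the cyclic classes, notes that $(f_\lambda^{m_\nu})|_{\pi_\lambda^\mu(E_\mu)}$ is transitive, lifts a transitive point $p$ through $\pi_\lambda^\nu$ using the MLC of $\pi$, and reads off the chain $\pi_\lambda^\mu(E_\mu)=\omega(p,f_\lambda^{m_\nu})\subset\pi_\lambda^\nu(F_\nu)\subset\pi_\lambda^\mu(F_\mu)$, which maximality collapses into equalities; a final shift by $f^k$ transports the conclusion from $F_\nu,F_\mu$ to $D_\nu,D_\mu$. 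You instead stabilize the cyclic union $\bigcup_{l<k_\mu}g^l(A_\mu)=\pi_\lambda(X)\cap D_\lambda$ and then force $A_\nu=A_\mu$ by an entropy count plus the fact that a mixing sofic shift has no proper full-entropy subsystem. Your translation of the MLC into the stabilization of that union is sound (the disjointness of the pieces $f_\lambda^j(D_\lambda)$ does exactly what you claim), and the entropy bookkeeping checks out, including the degenerate case $h_{\rm top}(g|_B)=0$, where the mixing sofic shift collapses to a fixed point. What your route costs is the external input: intrinsic ergodicity of irreducible sofic shifts with fully supported measure of maximal entropy (or, more economically, the Lind--Marcus fact that a proper subshift of an irreducible sofic shift has strictly smaller entropy), together with the routine but nonzero care needed to transfer these two-sided statements to the one-sided powers $(A_\mu,\sigma^{m_\nu})$ after recoding. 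What it buys is a quantitative invariant, $k_\mu\,h_{\rm top}(g|_B)$, attached to each image $A_\mu$; the paper's maximality-plus-recurrence trick is self-contained and needs nothing beyond the fact that a transitive point's $\omega$-limit set is the whole space.
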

 
\begin{proof}
Let $D_\ast=(D_\lambda)_{\lambda\in I}\in\mathcal{D}_\pi$. For any $\lambda\in I$, since $\pi$ satisfies the MLC, there is $\lambda\le\mu$ such that $\pi_\lambda^\mu(X_\mu)=\pi_\lambda^\nu(X_\nu)$ for all $\mu\le\nu$. For such $\mu$, we shall prove that
\[
\pi_\lambda^\mu(D_\mu)=\pi^\nu_\lambda(D_\nu)
\]
for all $\mu\le\nu$. Take $E_\mu\in\mathcal{D}(f_\mu)$ such that $\pi_\lambda^\mu(E_\mu)$ is maximal among
\[
\{\pi_\lambda^\mu(F_\mu)\colon F_\mu\in\mathcal{D}(f_\mu)\}
\]
with respect to the inclusion relation, meaning that for every $F_\mu\in\mathcal{D}(f_\mu)$, $\pi_\lambda^\mu(E_\mu)\subset\pi_\lambda^\mu(F_\mu)$ implies $\pi_\lambda^\mu(E_\mu)=\pi_\lambda^\mu(F_\mu)$. Note that $\pi_\lambda^\mu(E_\mu)$ is $f_\lambda^{m_\mu}$-invariant, and
\[
(f_\lambda^{m_\mu})|_{\pi_\lambda^\mu(E_\mu)}\colon\pi_\lambda^\mu(E_\mu)\to\pi_\lambda^\mu(E_\mu)
\]
is mixing. Since $m_\mu|m_\nu$, $\pi_\lambda^\mu(E_\mu)$ is $f_\lambda^{m_\nu}$-invariant, and
\[
(f_\lambda^{m_\nu})|_{\pi_\lambda^\mu(E_\mu)}\colon\pi_\lambda^\mu(E_\mu)\to\pi_\lambda^\mu(E_\mu)
\]
is transitive. Take a transitive point $p\in\pi_\lambda^\mu(E_\mu)$, that is, $\pi_\lambda^\mu(E_\mu)=\omega(p,f_\lambda^{m_\nu})$, the $\omega$-limit set. By the choice of $\mu$, it holds that
\[
p\in\pi_\lambda^\mu(E_\mu)\subset\pi_\lambda^\mu(X_\mu)=\pi_\lambda^\nu(X_\nu),
\]
so we have $p=\pi_\lambda^\nu(q)$ for some $q\in X_\nu$. Take $F_\nu\in\mathcal{D}(f_\nu)$ and $F_\mu\in\mathcal{D}(f_\mu)$ such that $q\in F_\nu$ and $\pi_\mu^\nu(F_\nu)\subset F_\mu$. Then, since  $p=\pi_\lambda^\nu(q)\in\pi_\lambda^\nu(F_\nu)$, and $\pi_\lambda^\nu(F_\nu)$ is $f_\lambda^{m_\nu}$-invariant,
\[
\pi_\lambda^\mu(E_\mu)=\omega(p,f_\lambda^{m_\nu})\subset\pi_\lambda^\nu(F_\nu)=\pi_\lambda^\mu(\pi_\mu^\nu(F_\nu))\subset\pi_\lambda^\mu(F_\mu).
\]
By the maximality of $\pi_\lambda^\mu(E_\mu)$, we obtain
\[
\pi_\lambda^\mu(E_\mu)=\pi_\lambda^\nu(F_\nu)=\pi_\lambda^\mu(F_\mu).
\]
Take $k\ge0$ such that $D_\nu=f_\nu^k(F_\nu)$. Then, we have $f_\mu^k(F_\mu)\in\mathcal{D}(f_\mu)$ and
\[
\pi_\mu^\nu(D_\nu)=\pi_\mu^\nu(f_\nu^k(F_\nu))=f_\mu^k(\pi_\mu^\nu(F_\nu))\subset f^k_\mu(F_\mu).
\]
Because $\pi_\mu^\nu(D_\nu)\subset D_\mu$, this implies $D_\mu=f_\mu^k(F_\mu)$. From $\pi_\lambda^\mu(F_\mu)=\pi_\lambda^\nu(F_\nu)$ as shown above, it follows that
\begin{align*}
\pi_\lambda^\mu(D_\mu)&=\pi_\lambda^\mu(f_\mu^k(F_\mu))=f_\lambda^k(\pi_\lambda^\mu(F_\mu))\\
&=f_\lambda^k(\pi_\lambda^\nu(F_\nu))=\pi_\lambda^\nu(f_\nu^k(F_\nu))=\pi_\lambda^\nu(D_\nu),
\end{align*}
proving the claim. Since $\lambda\in I$ is arbitrary, we conclude that $\pi'$ satisfies the MLC, completing the proof.
\end{proof}

Let $\pi=(\pi_\lambda^\mu\colon(X_\mu,f_\mu)\to(X_\lambda,f_\lambda))_{\lambda\le\mu\in I}$ be an inverse system of equivariant maps such that $(X_\lambda,f_\lambda)$, $\lambda\in I$, are transitive subshifts of finite type for all $\lambda\in I$; and $\pi$ satisfies the MLC. By Lemmas 4.2 and 4.3, we have
\[
\mathcal{D}(f)=\{[D_\ast]\colon D_\ast\in\mathcal{D}_\pi\}, 
\]
and for every $D_\ast=(D_\lambda)_{\lambda\in I}\in\mathcal{D}_\pi$,
\[
\tilde{\pi}=((\pi_\lambda^\mu)|_{D_\mu}\colon D_\mu\to D_\lambda)_{\lambda\le\mu\in I}
\]
satisfies the MLC.

\begin{lem}
Let $(X,f)=\varprojlim(X_\lambda,f_\lambda)$. Then, $f\colon X\to X$ has DSP.
\end{lem}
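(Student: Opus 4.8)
Recall that $f$ is chain transitive by Lemma~2.2, so Theorem~1.1 applies; I will verify its condition~(3). Fix $\mathcal{V}\in\mathscr{U}(X)$; we must produce $\mathcal{U}\in\mathscr{U}(X)$ with $\mathcal{D}(f)=\mathcal{D}^{\mathcal{V},\mathcal{U}}(f)$, that is, so that every $\mathcal{U}$-pseudo orbit $(x_i)_{i\ge0}$ of $f$ with $x_0\in[D_\ast]$ is $\mathcal{V}$-shadowed by some point of $[D_\ast]$ (using $\mathcal{D}(f)=\{[D_\ast]:D_\ast\in\mathcal{D}_\pi\}$ from Lemma~4.2). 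First I would descend to a finite level: since the topology of $X\subset\prod_\lambda X_\lambda$ has a basis of sets $\pi_\lambda^{-1}(O)$ and $I$ is directed, one can choose $\lambda_0\in I$ and $\mathcal{V}_{\lambda_0}\in\mathscr{U}(X_{\lambda_0})$ with $\{\pi_{\lambda_0}^{-1}(W):W\in\mathcal{V}_{\lambda_0}\}$ refining $\mathcal{V}$. Then apply the MLC of $\pi$ at $\lambda_0$ to fix $\mu\ge\lambda_0$ with $\pi_{\lambda_0}^\mu(X_\mu)=\pi_{\lambda_0}^\nu(X_\nu)$ for all $\nu\ge\mu$. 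By Lemma~4.4 (whose proof shows that the $\mu$ coming from the MLC of $\pi$ already works) this single $\mu$ satisfies $\pi_{\lambda_0}^\mu(D_\mu)=\pi_{\lambda_0}^\nu(D_\nu)$ for \emph{every} $D_\ast\in\mathcal{D}_\pi$ and all $\nu\ge\mu$; since $[D_\ast]=\{x\in X:x_\lambda\in D_\lambda\ \forall\lambda\}$ is the inverse limit of $(D_\lambda)$ along the restricted bonding maps, the compactness identity $\pi_{\lambda_0}([D_\ast])=\bigcap_{\nu\ge\lambda_0}\pi_{\lambda_0}^\nu(D_\nu)$ then collapses to $\pi_{\lambda_0}([D_\ast])=\pi_{\lambda_0}^\mu(D_\mu)$.

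\textbf{Using the SFT structure at level $\mu$.} As $(X_\mu,f_\mu)$ is a transitive SFT, $\mathcal{D}(f_\mu)$ is a finite family of clopen sets on each of which a power of $f_\mu$ is a mixing SFT (see Section~2.6); combined with the shadowing property of $f_\mu$ and the positive separation between those clopen pieces, this gives at once that $f_\mu$ has DSP. Choose $\mathcal{V}_\mu\in\mathscr{U}(X_\mu)$ with $\{\pi_{\lambda_0}^\mu(W):W\in\mathcal{V}_\mu\}$ refining $\mathcal{V}_{\lambda_0}$, then $\mathcal{U}_\mu\in\mathscr{U}(X_\mu)$ with $\mathcal{D}(f_\mu)=\mathcal{D}^{\mathcal{V}_\mu,\mathcal{U}_\mu}(f_\mu)$, and set $\mathcal{U}=\{\pi_\mu^{-1}(W):W\in\mathcal{U}_\mu\}$.

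\textbf{Verification.} If $(x_i)_{i\ge0}$ is a $\mathcal{U}$-pseudo orbit of $f$ with $x_0\in[D_\ast]$, then $(\pi_\mu(x_i))_{i\ge0}$ is a $\mathcal{U}_\mu$-pseudo orbit of $f_\mu$ starting in $D_\mu$, hence $\mathcal{V}_\mu$-shadowed by some $y_\mu\in D_\mu$. Put $y_{\lambda_0}=\pi_{\lambda_0}^\mu(y_\mu)\in\pi_{\lambda_0}^\mu(D_\mu)=\pi_{\lambda_0}([D_\ast])$, so $y_{\lambda_0}=\pi_{\lambda_0}(y)$ for some $y\in[D_\ast]$. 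Using $\pi_{\lambda_0}=\pi_{\lambda_0}^\mu\circ\pi_\mu$, the $\lambda_0$-level orbit $f_{\lambda_0}^i(y_{\lambda_0})$ equals the $\pi_{\lambda_0}^\mu$-image of $f_\mu^i(y_\mu)$ and $\pi_{\lambda_0}(x_i)$ equals the $\pi_{\lambda_0}^\mu$-image of $\pi_\mu(x_i)$; pushing the $\mathcal{V}_\mu$-shadowing relation forward by the equivariant map $\pi_{\lambda_0}^\mu$ and pulling the result back by $\pi_{\lambda_0}^{-1}$ then shows that for every $i$ the pair $\{f^i(y),x_i\}$ lies in a common member of $\mathcal{V}$, i.e.\ $(x_i)$ is $\mathcal{V}$-shadowed by $y\in[D_\ast]$. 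Hence $[D_\ast]\in\mathcal{D}^{\mathcal{V},\mathcal{U}}(f)$ for all $D_\ast\in\mathcal{D}_\pi$, so $\mathcal{D}(f)=\mathcal{D}^{\mathcal{V},\mathcal{U}}(f)$; as $\mathcal{V}$ was arbitrary, $f$ has DSP.

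\textbf{Main obstacle.} The only substantive point is the identity $\pi_{\lambda_0}([D_\ast])=\pi_{\lambda_0}^\mu(D_\mu)$ holding with a single $\mu$ that is good for all pieces $D_\ast$ at once: this is exactly what makes a shadowing point manufactured at the finite level $\mu$ inside $D_\mu$ descend into $\pi_{\lambda_0}(X)$ and lift back to the \emph{correct} piece $[D_\ast]$ of the inverse limit, and it is where the MLC and Lemmas~4.2--4.4 are really used. In effect the proof is a decomposition-tracking refinement of the inverse-limit characterisation of the shadowing property of Good and Meddaugh \cite{GM}.
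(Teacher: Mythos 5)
Your proposal is correct and follows essentially the same route as the paper: descend to a level $\lambda_{0}$ where $\mathcal{V}$ is resolved, use the Mittag--Leffler condition of the restricted system $\tilde{\pi}$ (the paper's Lemma~4.3 --- your citation of ``Lemma~4.4'' for this is a mislabel, as that is the statement being proved) to get $\pi_{\lambda_{0}}([D_\ast])=\pi_{\lambda_{0}}^{\mu}(D_{\mu})$, invoke DSP of the transitive SFT at level $\mu$, and push the shadowing point back down to $\pi_{\lambda_{0}}(X)$ and lift it into $[D_\ast]$. The verification is the same equivariance computation the paper carries out explicitly, so no gap remains.
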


\begin{proof}
We shall prove that for any $D\in\mathcal{D}(f)$ and $\mathcal{V}\in\mathscr{U}(X)$, there is $\mathcal{U}\in\mathscr{U}(X)$ such that $D\in\mathcal{D}^{\mathcal{V},\mathcal{U}}(f)$. This implies $\mathcal{D}(f)=\mathcal{D}_{\rm sh}(f)$ and so that $f$ has DSP. Let $D\in\mathcal{D}(f)$ and $\mathcal{V}\in\mathscr{U}(X)$. Then, we have $D_\ast\in\mathcal{D}_\pi$ such that $D=[D_\ast]$; and
\[
\tilde{\pi}=((\pi_\lambda^\mu)|_{D_\mu}\colon D_\mu\to D_\lambda)_{\lambda\le\mu\in I}
\]
satisfies the MLC. Take $\lambda\in I$ and $\mathcal{V}_\lambda\in\mathscr{U}(X_\lambda)$ such that $\pi_\lambda^{-1}(S)\subset V_S$ for all $S\in\mathcal{V}_\lambda$ for some $V_S\in\mathcal{V}$. For such $\lambda$, since $\tilde{\pi}$ satisfies the MLC, there is $\lambda\le\mu$ such that $\pi_\lambda^\mu(D_\mu)=\pi_\lambda^\nu(D_\nu)$ for all $\mu\le\nu$, implying $\pi_\lambda([D_\ast])=\pi_\lambda^\mu(D_\mu)$. For such $\mu$, take $\mathcal{V}_\mu\in\mathscr{U}(X_\mu)$ such that $\pi_\lambda^\mu(T)\subset S_T$ for all $T\in\mathcal{V}_\mu$ for some $S_T\in\mathcal{V}_\lambda$. Then, since $(X_\mu,f_\mu)$ is a transitive subshift of finite type, $f_\mu\colon X_\mu\to X_\mu$ has DSP, so there is $\mathcal{U}_\mu\in\mathscr{U}(X_\mu)$ such that $\mathcal{D}(f_\mu)=\mathcal{D}^{\mathcal{V}_\mu,\mathcal{U}_\mu}(f_\mu)$, implying that every $\mathcal{U}_\mu$-pseudo orbit $\xi'=(y_i)_{i\ge0}$ of $f_\mu$ is $\mathcal{V}_\mu$-shadowed by some $y_\mu\in X_\mu$ with $y_\mu\sim_{f_\mu}y_0$. For such $\mathcal{U}_\mu$, take $\mathcal{U}\in\mathscr{U}(X)$ such that $\pi_\mu(U)\subset W_U$ for all $U\in\mathcal{U}$ for some $W_U\in\mathcal{U}_\mu$. We shall prove $D\in\mathcal{D}^{\mathcal{V},\mathcal{U}}(f)$.

Given any $\mathcal{U}$-pseudo orbit $\xi=(x_i)_{i\ge0}$ of $f$ with $x_0\in D=[D_\ast]$, let
\[
\xi'=(y_i)_{i\ge0}=(\pi_\mu(x_i))_{i\ge0}.
\]
Since $\pi_\mu\colon(X,f)\to(X_\mu,f_\mu)$ is an equivariant map, we have
\[
\{f_\mu(y_i),y_{i+1}\}=\{f_\mu(\pi_\mu(x_i)),\pi_\mu(x_{i+1})\}=\{\pi_\mu(f(x_i)),\pi_\mu(x_{i+1})\}
\]
for every $i\ge0$, so by the choice of $\mathcal{U}$, $\xi'$ is a $\mathcal{U}_\mu$-pseudo orbit of $f_\mu$, which is $\mathcal{V}_\mu$-shadowed by some $y_\mu\in X_\mu$ with $y_\mu\sim_{f_\mu} y_0$. By $y_0=\pi_\mu(x_0)$ and $x_0\in[D_\ast]$, we obtain $y_\mu\in D_\mu$. Letting $y_\lambda=\pi_\lambda^\mu(y_\mu)$, we have
\[
y_\lambda\in\pi_\lambda^\mu(D_\mu)=\pi_\lambda([D_\ast]),
\]
so $y_\lambda=\pi_\lambda(x)$ for some $x\in[D_\ast]$. Let
\[
\xi''=(z_i)_{i\ge0}=(\pi_\lambda^\mu(y_i))_{i\ge0}=(\pi_\lambda(x_i))_{i\ge0}
\]
and note that, since $\pi_\lambda^\mu\colon(X_\mu,f_\mu)\to(X_\lambda,f_\lambda)$ is an equivariant map,
\[
\{f_\lambda^i(y_\lambda),z_i\}=\{f_\lambda^i(\pi_\lambda^\mu(y_\mu)),\pi_\lambda^\mu(y_i)\}=\{\pi_\lambda^\mu(f_\mu^i(y_\mu)),\pi_\lambda^\mu(y_i)\}
\]
for all $i\ge0$, so by the choice of $\mathcal{V}_\mu$, $\xi''$ is $\mathcal{V}_\lambda$-shadowed by $y_\lambda$. Finally, since $\pi_\lambda\colon(X,f)\to(X_\lambda,f_\lambda)$ is an equivariant map, it holds that
\[
\{\pi_\lambda(f^i(x)),\pi_\lambda(x_i)\}=\{f_\lambda^i(\pi_\lambda(x)),\pi_\lambda(x_i)\}=\{f_\lambda^i(y_\lambda),z_i\},
\]
so by the choice of $\lambda$ and $\mathcal{V}_\lambda$, $\xi$ is $\mathcal{V}$-shadowed by $x\in[D_\ast]=D$. Since $\xi$ is arbitrary, we conclude that $D\in\mathcal{D}^{\mathcal{V},\mathcal{U}}(f)$, proving the lemma.
\end{proof}

The following lemma can be found in \cite{GM}.

\begin{lem}{\cite[Corollary 17]{GM}}
For any continuous map $f\colon X\to X$, if $\dim{X}=0$, and if $f$ has the shadowing property, then there is an
inverse system of equivariant maps
\[
\pi=(\pi_\lambda^\mu\colon(X_\mu,f_\mu)\to(X_\lambda,f_\lambda))_{\lambda\le\mu\in I}
\]
such that the following properties hold:
\begin{itemize}
\item[(1)] $(X_\lambda,f_\lambda)$, $\lambda\in I$, are subshifts of finite type for all $\lambda\in I$,
\item[(2)] $\pi$ satisfies the MLC,
\item[(3)] $(X,f)$ is topologically conjugate to $\varprojlim(X_\lambda,f_\lambda)$.
\end{itemize}
\end{lem}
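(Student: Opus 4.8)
The plan is to build the inverse system by hand from finite clopen partitions and then read off the Mittag--Leffler condition directly from shadowing. Since $\dim X=0$, the clopen subsets of $X$ form a basis, so every finite open cover is refined by a finite clopen partition; hence the set $I$ of finite clopen partitions of $X$, ordered by refinement (I write $\mathcal{P}\prec\mathcal{Q}$ when $\mathcal{Q}$ refines $\mathcal{P}$), is a directed set cofinal in $\mathscr{U}(X)$. For $\mathcal{P}=\{P_1,\dots,P_k\}\in I$ I would let $(X_\mathcal{P},f_\mathcal{P})$ be the one-step SFT on the alphabet $A_\mathcal{P}=\{1,\dots,k\}$ whose allowed transitions $i\to j$ are exactly those with $f(P_i)\cap P_j\ne\emptyset$, with $f_\mathcal{P}=\sigma|_{X_\mathcal{P}}$. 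The itinerary map $\iota_\mathcal{P}(x)=(a_n)_{n\ge0}$, defined by $f^n(x)\in P_{a_n}$, is an equivariant map $\iota_\mathcal{P}\colon(X,f)\to(X_\mathcal{P},f_\mathcal{P})$, its image lying in $X_\mathcal{P}$ because $f^{n+1}(x)\in f(P_{a_n})\cap P_{a_{n+1}}$. For $\mathcal{P}\prec\mathcal{Q}$, sending each element of $\mathcal{Q}$ to the unique element of $\mathcal{P}$ containing it gives an alphabet map $r\colon A_\mathcal{Q}\to A_\mathcal{P}$, and applying $r$ symbolwise yields an equivariant bonding map $\pi_\mathcal{P}^\mathcal{Q}\colon(X_\mathcal{Q},f_\mathcal{Q})\to(X_\mathcal{P},f_\mathcal{P})$ (if $f(Q)\cap Q'\ne\emptyset$ then $f(P)\cap P'\ne\emptyset$ for the containing $\mathcal{P}$-elements), and clearly $\iota_\mathcal{P}=\pi_\mathcal{P}^\mathcal{Q}\circ\iota_\mathcal{Q}$. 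This gives an inverse system of equivariant maps between SFTs, establishing property~(1).

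Next I would check that $\iota=(\iota_\mathcal{P})_{\mathcal{P}\in I}$ identifies $(X,f)$ with $\varprojlim(X_\mathcal{P},f_\mathcal{P})$, i.e.\ property~(3). Injectivity is immediate: distinct points are separated by some clopen partition, so differ already in one coordinate $\iota_\mathcal{P}$. For surjectivity, a point of the inverse limit assigns to each $\mathcal{P}$ and each time $n$ an element $P_n^\mathcal{P}\in\mathcal{P}$, coherently under refinement, so for fixed $n$ the nested family $\{P_n^\mathcal{P}\}_{\mathcal{P}\in I}$ of clopen sets determines a single point $x_n\in\bigcap_{\mathcal{P}\in I}P_n^\mathcal{P}$. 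The transition condition $f(P_n^\mathcal{P})\cap P_{n+1}^\mathcal{P}\ne\emptyset$ holding at all resolutions forces $f(x_n)=x_{n+1}$: otherwise a clopen set separates $f(x_n)$ from $x_{n+1}$, and continuity of $f$ applied to a sufficiently fine partition contradicts the transition condition. Hence $x_n=f^n(x_0)$ and $\iota(x_0)$ is the prescribed point. These arguments use only Hausdorff separation by clopen sets and compactness, so they go through for compact Hausdorff $X$ without a metric.

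The essential and, I expect, most delicate step is property~(2), the Mittag--Leffler condition, which is precisely where the shadowing hypothesis enters; equivalently I must show $\pi_\mathcal{P}(X)=\iota_\mathcal{P}(X)$, i.e.\ the images $\pi_\mathcal{P}^\mathcal{R}(X_\mathcal{R})$ stabilise to the genuine itinerary subshift $\iota_\mathcal{P}(X)$. Given $\mathcal{P}$, I would use shadowing for the cover $\mathcal{P}$ to pick a finer clopen partition $\mathcal{Q}\succ\mathcal{P}$ such that every $\mathcal{Q}$-pseudo orbit is $\mathcal{P}$-shadowed. For any $\mathcal{R}\succ\mathcal{Q}$ and any $\eta\in X_\mathcal{R}$, its projection $\zeta=(b_n)=\pi_\mathcal{Q}^\mathcal{R}(\eta)\in X_\mathcal{Q}$ satisfies $f(Q_{b_n})\cap Q_{b_{n+1}}\ne\emptyset$, so choosing $y_n\in Q_{b_n}$ with $f(y_n)\in Q_{b_{n+1}}$ produces an honest $\mathcal{Q}$-pseudo orbit $(y_n)$, which is therefore $\mathcal{P}$-shadowed by some $x\in X$. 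Since $y_n\in Q_{b_n}\subset P_{r(b_n)}$, shadowing places $f^n(x)\in P_{r(b_n)}$ for all $n$, whence $\iota_\mathcal{P}(x)=(r(b_n))_n=\pi_\mathcal{P}^\mathcal{R}(\eta)$, giving $\pi_\mathcal{P}^\mathcal{R}(X_\mathcal{R})\subset\iota_\mathcal{P}(X)$; the reverse inclusion is trivial from $\iota_\mathcal{P}=\pi_\mathcal{P}^\mathcal{R}\circ\iota_\mathcal{R}$. Thus $\pi_\mathcal{P}^\mathcal{R}(X_\mathcal{R})=\iota_\mathcal{P}(X)$ for all $\mathcal{R}\succ\mathcal{Q}$, which is the MLC. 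The crux is the translation between points of the symbolic transition SFT $X_\mathcal{Q}$ and genuine $\mathcal{Q}$-pseudo orbits of $f$, since it is exactly this bridge that lets the shadowing property be invoked; once it is set up, the stabilisation of the images follows at once.
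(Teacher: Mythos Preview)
The paper does not prove this lemma; it is simply quoted as \cite[Corollary~17]{GM} without argument. Your construction is correct and is essentially the Good--Meddaugh argument itself: the directed set of finite clopen partitions (cofinal in $\mathscr{U}(X)$ since $\dim X=0$), the associated one-step transition SFTs, itinerary maps giving the conjugacy with the inverse limit, and the use of shadowing to show that the bonding-map images stabilise at $\iota_\mathcal{P}(X)$, which is precisely the MLC.

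One small point to make explicit in the MLC step: shadowing for the cover $\mathcal{P}$ hands you some $\mathcal{U}\in\mathscr{U}(X)$, not a priori a clopen partition refining $\mathcal{P}$. You should choose $\mathcal{Q}\in I$ refining both $\mathcal{U}$ and $\mathcal{P}$ simultaneously (possible because clopen partitions are cofinal in $\mathscr{U}(X)$ and $I$ is directed); then every $\mathcal{Q}$-pseudo orbit is a $\mathcal{U}$-pseudo orbit and hence $\mathcal{P}$-shadowed, and $\mathcal{Q}\succ\mathcal{P}$ as your argument requires. With that clarification the proof goes through as written.
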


Finally, we apply these lemmas to prove Theorem 1.4.

\begin{proof}[Proof of Theorem 1.4]
By Lemmas 4.1 and 4.5, $(X,f)$ is topologically conjugate to
\[
(\hat{X},\hat{f})=\varprojlim(X_\lambda,f_\lambda),
\]
where $\pi=(\pi_\lambda^\mu\colon(X_\mu,f_\mu)\to(X_\lambda,f_\lambda))_{\lambda\le\mu\in I}$ is an inverse system of equivariant maps such that $(X_\lambda,f_\lambda)$, $\lambda\in I$, are transitive subshifts of finite type for all $\lambda\in I$; and $\pi$ satisfies the MLC. Then, by Lemma 4.4, $\hat{f}\colon\hat{X}\to\hat{X}$ has DSP, so $f$ also satisfies DSP.
\end{proof}

\section{$\overline{\omega}$-limit sets}

In this section, we recall some facts about $\overline{\omega}$-limit sets and prove some basic properties of them. Throughout this section, $X$ denotes a compact metric space endowed with a metric $d$. The definition of $\overline{\omega}$-limit sets is given in Section 1. Let $\mathcal{M}(X)$ denote the set of Borel probability measures on $X$. Note that $\mathcal{M}(X)$ is a compact metrizable space when equipped with the $\text{weak}^\ast$-topology. For every $\mu\in\mathcal{M}(X)$, let ${\rm supp}(\mu)$ denote the support of $\mu$: for any $p\in X$, $p\in{\rm supp}(\mu)$ if and only if $\mu(U)>0$ for all open neighborhood $U$ of $p$.  Also, for any $p\in X$, we denote by $\delta_p$ the Dirac measure at $p$.

The following lemma can be found in \cite{Sig}.

\begin{lem}{\cite[Proposition 1]{Sig}}
Let $f\colon X\to X$ be a continuous map. For any $x\in X$, let $S_f(x)$ be the set of $\mu\in\mathcal{M}(X)$ such that \[
\lim_{j\to\infty}\frac{1}{n_j}\sum_{i=0}^{n_j-1}\delta_{f^i(x)}=\mu
\]
for some sequence $1\le n_1<n_2<\cdots$. Then, it holds that
\[
\overline{\omega}(x,f)=\overline{\bigcup_{\mu\in S_f(x)}{\rm supp}(\mu)}.
\]
\end{lem}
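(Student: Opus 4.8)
The plan is to establish the two inclusions separately, the main tools being the portmanteau characterization of weak$^\ast$ convergence and the compactness of $\mathcal{M}(X)$ in the $\text{weak}^\ast$ topology. At the outset one notes that $S_f(x)\ne\emptyset$ (every subsequence of the empirical measures $\frac1n\sum_{i=0}^{n-1}\delta_{f^i(x)}$ has a weak$^\ast$-convergent sub-subsequence), so the right-hand side is non-empty.

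For the inclusion $\overline{\bigcup_{\mu\in S_f(x)}{\rm supp}(\mu)}\subset\overline{\omega}(x,f)$, since $\overline{\omega}(x,f)$ is closed it suffices to fix $\mu\in S_f(x)$ and prove ${\rm supp}(\mu)\subset\overline{\omega}(x,f)$. Given $y\in{\rm supp}(\mu)$ and $\epsilon>0$, write $\frac1{n_j}\sum_{i=0}^{n_j-1}\delta_{f^i(x)}\to\mu$; as $B_{\epsilon/2}(y)$ is open with $\mu(B_{\epsilon/2}(y))>0$, the portmanteau theorem gives $\liminf_{j\to\infty}\frac1{n_j}|\{0\le i\le n_j-1\colon f^i(x)\in B_{\epsilon/2}(y)\}|\ge\mu(B_{\epsilon/2}(y))>0$. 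Since $\overline{d}$ of a set is a $\limsup$ over all $n$ and hence at least this $\liminf$ over the subsequence $(n_j)$, and since $B_{\epsilon/2}(y)\subset B_\epsilon(y)$, we obtain $\overline{d}(\{i\ge0\colon f^i(x)\in B_\epsilon(y)\})>0$; as $\epsilon$ is arbitrary, $y\in\overline{\omega}(x,f)$.

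For the reverse inclusion I would argue by contradiction: suppose $y\in\overline{\omega}(x,f)$ but $y$ does not lie in the closure of $\bigcup_{\mu\in S_f(x)}{\rm supp}(\mu)$, and pick $\epsilon>0$ with $B_\epsilon(y)\cap{\rm supp}(\mu)=\emptyset$ for every $\mu\in S_f(x)$; since the complement of the support is $\mu$-null, $\mu(B_\epsilon(y))=0$ for all such $\mu$. From $y\in\overline{\omega}(x,f)$ we get $c>0$ and $n_1<n_2<\cdots$ with $\frac1{n_j}|\{0\le i\le n_j-1\colon f^i(x)\in B_{\epsilon/2}(y)\}|\ge c$ for all $j$. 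By compactness of $\mathcal{M}(X)$, after passing to a subsequence we may assume $\frac1{n_j}\sum_{i=0}^{n_j-1}\delta_{f^i(x)}\to\mu$ for some $\mu\in S_f(x)$. Applying the portmanteau theorem to the closed set $\overline{B_{\epsilon/2}(y)}\subset B_\epsilon(y)$ then yields $\mu(B_\epsilon(y))\ge\mu(\overline{B_{\epsilon/2}(y)})\ge\limsup_{j\to\infty}\frac1{n_j}|\{0\le i\le n_j-1\colon f^i(x)\in\overline{B_{\epsilon/2}(y)}\}|\ge c>0$, contradicting $\mu(B_\epsilon(y))=0$.

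The only point requiring care is the bookkeeping of open versus closed balls: the forward inclusion uses an open ball (for the $\liminf$ inequality in portmanteau), while the reverse inclusion uses a closed ball squeezed between two concentric open balls (for the $\limsup$ inequality), together with the elementary fact that $\limsup/\liminf$ along a subsequence control the $\limsup/\liminf$ along the full sequence of integers in the right direction. Once this is arranged the argument is entirely routine, and I do not foresee a genuine obstacle.
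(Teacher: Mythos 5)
Your proof is correct: both inclusions are established cleanly, the open/closed ball bookkeeping for the two halves of the portmanteau theorem is handled properly, and the passage to a weak$^\ast$-convergent subsequence to produce an element of $S_f(x)$ is exactly the right move. The paper itself gives no proof of this lemma (it is quoted from Sigmund's paper on minimal centers of attraction), and your argument is the standard one for that result, so there is nothing to compare beyond noting that your write-up is a complete and correct substitute for the omitted proof.
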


\begin{cor}
For any $x\in X$, $\overline{\omega}(x,f)$ is a non-empty closed $f$-invariant subset of $X$. 
\end{cor}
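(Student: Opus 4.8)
The plan is to read off all three assertions from Lemma 5.1, which identifies $\overline{\omega}(x,f)$ with $\overline{\bigcup_{\mu\in S_f(x)}{\rm supp}(\mu)}$. Closedness is then immediate, since the right-hand side is by construction a closure.

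For non-emptiness I would first note that $S_f(x)\neq\emptyset$: the empirical measures $\mu_n=\frac1n\sum_{i=0}^{n-1}\delta_{f^i(x)}$ form a sequence in $\mathcal{M}(X)$, which is compact in the weak$^\ast$-topology, so some subsequence $\mu_{n_j}$ converges, and its limit belongs to $S_f(x)$. Fixing any $\mu\in S_f(x)$, its support ${\rm supp}(\mu)$ is non-empty — otherwise every point of $X$ would have an open neighbourhood of $\mu$-measure zero, and by compactness finitely many such would cover $X$, forcing $\mu(X)=0$ — so $\overline{\omega}(x,f)\supset{\rm supp}(\mu)\neq\emptyset$.

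For $f$-invariance I would use two standard facts. First, every $\mu\in S_f(x)$ is $f$-invariant: if $\mu=\lim_j\mu_{n_j}$, then for each $\phi\in C(X)$ one has $\int\phi\circ f\,d\mu_{n_j}-\int\phi\,d\mu_{n_j}=\frac1{n_j}\bigl(\phi(f^{n_j}(x))-\phi(x)\bigr)\to0$ since $\phi$ is bounded, whence $\int\phi\circ f\,d\mu=\int\phi\,d\mu$. Second, for an $f$-invariant $\mu$ one has $f({\rm supp}(\mu))={\rm supp}(\mu)$: the inclusion "$\subset$" holds because for $p\in{\rm supp}(\mu)$ and open $U\ni f(p)$ the set $f^{-1}(U)$ is an open neighbourhood of $p$, so $\mu(U)=\mu(f^{-1}(U))>0$; the inclusion "$\supset$" holds because $f({\rm supp}(\mu))$ is compact, hence closed, and $\mu(f({\rm supp}(\mu)))\ge\mu(f^{-1}(f({\rm supp}(\mu))))\ge\mu({\rm supp}(\mu))=1$, so minimality of the support forces ${\rm supp}(\mu)\subset f({\rm supp}(\mu))$. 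Consequently $f\bigl(\bigcup_{\mu\in S_f(x)}{\rm supp}(\mu)\bigr)=\bigcup_{\mu\in S_f(x)}{\rm supp}(\mu)$, and since $f$ is a continuous self-map of the compact space $X$ it carries closures to closures (for $A\subseteq X$, $f(\overline A)$ is compact hence closed, so $f(\overline A)=\overline{f(A)}$), giving $f(\overline{\omega}(x,f))=\overline{\omega}(x,f)$.

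I do not expect any real obstacle here: granted Lemma 5.1, the corollary is a routine unwinding of definitions together with the compactness of $\mathcal{M}(X)$, the only point needing a line of care being the invariance of the support of an invariant measure.
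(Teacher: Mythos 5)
Your proof is correct and follows essentially the same route as the paper, which states this corollary without proof as an immediate consequence of Lemma 5.1 (non-emptiness coming from the compactness of $\mathcal{M}(X)$, and closedness and invariance already being noted after Definition 1.7). Your filling-in of the standard details — $S_f(x)\ne\emptyset$, invariance of limits of empirical measures, and $f(\mathrm{supp}(\mu))=\mathrm{supp}(\mu)$ — is exactly what is intended.
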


For any continuous map $f\colon X\to X$, a non-empty closed $f$-invariant subset $M$ of $X$ is said to be a {\em minimal set} for $f$ if closed $f|_M$-invariant subsets of $M$ are only $\emptyset$ and $M$. This condition is equivalent to $M=\omega(x,f)$ for all $x\in M$. We say that $f$ is {\em minimal} if $X$ is a minimal set for $f$. 

\begin{cor}
Let $f\colon X\to X$ be a continuous map. If $f$ is minimal, then $X=\overline{\omega}(x,f)$ for all $x\in X$. 
\end{cor}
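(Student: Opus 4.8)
The plan is to deduce this immediately from Corollary 5.1 together with the definition of a minimal set. First I would recall that, by Corollary 5.1, for every $x\in X$ the set $\overline{\omega}(x,f)$ is a non-empty closed $f$-invariant subset of $X$. Since $f$ is minimal, $X$ is a minimal set for $f$, which by definition means that the only closed $f$-invariant subsets of $X$ are $\emptyset$ and $X$. Because $\overline{\omega}(x,f)$ is non-empty, it must therefore equal $X$; as $x\in X$ was arbitrary, this proves the corollary.

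The one point deserving a word of care is that the $f$-invariance appearing in Corollary 5.1 is forward invariance, i.e.\ $f(\overline{\omega}(x,f))\subseteq\overline{\omega}(x,f)$, whereas the definition of minimality is phrased in terms of $f|_M$-invariant subsets of $M$. I would therefore spell out that in a minimal system a non-empty closed forward-invariant set $A$ is all of $X$: picking $y\in A$, the entire forward orbit of $y$ lies in $A$, so $\omega(y,f)\subseteq A$, and by the equivalent description of minimality ($M=\omega(y,f)$ for all $y\in M$) we get $\omega(y,f)=X$, hence $A=X$. Applying this with $A=\overline{\omega}(x,f)$ finishes the argument without ever conflating $\overline{\omega}$ with $\omega$, since only the soft properties (non-empty, closed, forward-invariant) of $\overline{\omega}(x,f)$ are used.

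As an alternative I would mention, but not carry out, a route through Lemma 5.1: when $f$ is minimal every $f$-invariant Borel probability measure has full support (its support is a non-empty closed invariant set, hence equals $X$), and every $\mu\in S_f(x)$ is $f$-invariant with $S_f(x)\neq\emptyset$ by compactness of $\mathcal{M}(X)$; thus $\overline{\omega}(x,f)=\overline{\bigcup_{\mu\in S_f(x)}{\rm supp}(\mu)}=X$. There is essentially no real obstacle in this statement — it is a routine consequence of Corollary 5.1 and minimality — so the ``hard part'' amounts only to stating the soft fact about non-empty closed invariant subsets of minimal systems cleanly enough that it applies to $\overline{\omega}$-limit sets rather than to $\omega$-limit sets.
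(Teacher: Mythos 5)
Your proof is correct and is exactly the argument the paper intends: Corollary 5.2 is stated without proof as an immediate consequence of Corollary 5.1 (that $\overline{\omega}(x,f)$ is a non-empty closed $f$-invariant subset of $X$) together with the definition of minimality. Your extra care about forward versus two-sided invariance is harmless but unnecessary here, since the paper's definition of a minimal set is already phrased in terms of closed $f|_M$-invariant (i.e.\ forward-invariant) subsets, so $\overline{\omega}(x,f)$ qualifies directly.
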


\begin{cor}
Let $f\colon X\to X$ be a continuous map.  For any $x\in X$ and $\mu\in\mathcal{M}(X)$, if
\[
\lim_{n\to\infty}\frac{1}{n}\sum_{i=0}^{n-1}\delta_{f^i(x)}=\mu,
\]
then $\overline{\omega}(x,f)={\rm supp}(\mu)$.
\end{cor}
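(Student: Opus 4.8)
The plan is to read this off from Lemma 5.1. Recall that $S_f(x)$ is the set of measures $\nu\in\mathcal{M}(X)$ that occur as $\text{weak}^\ast$-limits $\lim_{j\to\infty}\frac{1}{n_j}\sum_{i=0}^{n_j-1}\delta_{f^i(x)}=\nu$ along some strictly increasing sequence $1\le n_1<n_2<\cdots$. The hypothesis says precisely that the whole sequence $\bigl(\frac1n\sum_{i=0}^{n-1}\delta_{f^i(x)}\bigr)_{n\ge1}$ converges to $\mu$ in $\mathcal{M}(X)$; since $\mathcal{M}(X)$ is metrizable, in particular Hausdorff, a convergent sequence has $\mu$ as its only subsequential limit, so $S_f(x)=\{\mu\}$. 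Substituting this into Lemma 5.1 gives $\overline{\omega}(x,f)=\overline{\bigcup_{\nu\in S_f(x)}{\rm supp}(\nu)}=\overline{{\rm supp}(\mu)}$, and ${\rm supp}(\mu)$ is closed by definition, so $\overline{\omega}(x,f)={\rm supp}(\mu)$.

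If one prefers a self-contained argument not routed through Lemma 5.1, the same conclusion follows from the portmanteau theorem applied to $\frac1n\sum_{i=0}^{n-1}\delta_{f^i(x)}\to\mu$. For $y\in{\rm supp}(\mu)$ and $\epsilon>0$, the ball $B_\epsilon(y)$ is open, so $\liminf_{n\to\infty}\frac1n\bigl|\{0\le i\le n-1\colon f^i(x)\in B_\epsilon(y)\}\bigr|\ge\mu(B_\epsilon(y))>0$; hence $\overline{d}(\{i\ge0\colon f^i(x)\in B_\epsilon(y)\})>0$ for every $\epsilon>0$, i.e.\ $y\in\overline{\omega}(x,f)$, proving ${\rm supp}(\mu)\subset\overline{\omega}(x,f)$. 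Conversely, if $y\notin{\rm supp}(\mu)$, pick $\epsilon>0$ with $\mu(\overline{B_\epsilon(y)})=0$, for instance half the radius of a $\mu$-null open ball about $y$; then, $\overline{B_\epsilon(y)}$ being closed, $\limsup_{n\to\infty}\frac1n\bigl|\{0\le i\le n-1\colon f^i(x)\in B_\epsilon(y)\}\bigr|\le\mu(\overline{B_\epsilon(y)})=0$, so $y\notin\overline{\omega}(x,f)$. Together these inclusions give the claim.

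No genuine difficulty arises here. The one subtlety worth flagging is that $\text{weak}^\ast$-convergence only yields one-sided control of hitting-time frequencies, through open sets for lower bounds and closed sets for upper bounds; this is exactly why one estimates with $B_\epsilon(y)$ for the lower bound and with $\overline{B_\epsilon(y)}$ for the upper bound. I would present the one-line deduction from Lemma 5.1 as the proof, and at most mention the portmanteau argument as an aside.
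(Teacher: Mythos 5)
Your proof is correct and matches the paper's intent: the paper states this corollary without proof as an immediate consequence of Lemma 5.1, and your main argument (the whole sequence converging forces $S_f(x)=\{\mu\}$, then apply Lemma 5.1) is exactly that deduction. The portmanteau argument you sketch as an aside is also sound, but it is not needed.
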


\begin{rem}
\normalfont
If $f$ is minimal, then by Corollary 5.2, $f$ is clearly not $\overline{\omega}$-chaotic. If $f$ is {\em uniquely ergodic}, that is, there is a unique $f$-invariant Borel probability measure $\mu$ on $X$, then by Corollary 5.3, it holds that $\overline{\omega}(x,f)={\rm supp}(\mu)$ for every $x\in X$, so $f$ does not exhibit $\overline{\omega}$-chaos. Since there exists $f$ such that $h_{\rm top}(f)>0$ and is minimal or uniquely ergodic, $h_{\rm top}(f)>0$ does not necessarily imply $\overline{\omega}$-chaos. Assume that $f$ satisfies the shadowing property and $h_{\rm top}(f)=0$. Then, by \cite{M}, $f|_{CR(f)}\colon CR(f)\to CR(f)$ is an equicontinuous homeomorphism with $\dim{CR(f)}=0$ and so is chain continuous, therefore the chain recurrent set for $f$ is decomposed into the chain components for $f$:
\[
CR(f)=\bigsqcup_{M\in\mathcal{C}(f)}M,
\]
here, for each $M\in\mathcal{C}(f)$, $M$ is a periodic orbit, or $(M,f|_M)$ is topologically conjugate to an odometer (see also Section 6 of \cite{Ka1}). It follows that for any $x\in X$, there is $M\in\mathcal{C}(f)$ such that  $\overline{\omega}(x,f)\subset M$ implying $\overline{\omega}(x,f)=M$, a minimal set for $f$. Thus, $f$ does not exhibit $\overline{\omega}$-chaos.
\end{rem}

As shown in the following lemma, the $\overline{\omega}$-limit sets share basic properties of the standard $\omega$-limit sets.

\begin{lem}
Let $f\colon X\to X$ be a continuous map. For any $x\in X$, the following properties hold:
\begin{itemize}
\item[(1)] $\overline{\omega}(f^i(x),f)=\overline{\omega}(x,f)$ for all $i\ge0$,
\item[(2)] $f^i(\overline{\omega}(x,f^m))=\overline{\omega}(f^i(x),f^m)$ for any $i\ge0$ and $m\ge1$,
\item[(3)]
\[
\overline{\omega}(x,f)=\bigcup_{i=0}^{m-1}\overline{\omega}(f^i(x),f^m)
\]
for all $m\ge1$.
\end{itemize}
Let
\[
\overline{\omega}(f^m)=\bigcup_{x\in X}\overline{\omega}(x,f^m)
\]
for each $m\ge1$. Then, the following properties also hold:
\begin{itemize}
\item[(4)] $f(\overline{\omega}(f))=\overline{\omega}(f)$,
\item[(5)] $\overline{\omega}(f)=\overline{\omega}(f^m)$ for all $m\ge1$.
\end{itemize}
\end{lem}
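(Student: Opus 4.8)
The plan is to prove (1), (2), (3) first, and then read off (4) from (1)--(2) and (5) from (3). Items (1) and (3) are just manipulations of upper densities, while (2) is the only item for which a genuine tool seems necessary, namely the measure-theoretic description of $\overline{\omega}(x,f)$ in Lemma 5.1 (Sigmund). For (1): the orbit of $f^i(x)$ is the orbit of $x$ shifted by the fixed amount $i$, so for any $y\in X$ and $\epsilon>0$ the visit sets $\{k\ge0:f^k(f^i(x))\in B_\epsilon(y)\}$ and $\{k\ge0:f^k(x)\in B_\epsilon(y)\}$ have counting functions on $\{0,\dots,n-1\}$ differing by at most $i$; dividing by $n$ and passing to $\limsup$ shows they have equal upper density, whence $\overline{\omega}(f^i(x),f)=\overline{\omega}(x,f)$.

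For (2) I would first record two elementary facts valid for any continuous selfmap $g$ of a compact metric space: $\mathrm{supp}(g_\ast\nu)=g(\mathrm{supp}\,\nu)$ for every $\nu\in\mathcal{M}(X)$ (the inclusion $\supseteq$ is clear, and $\subseteq$ uses that $g(\mathrm{supp}\,\nu)$ is closed), and $g(\overline{A})=\overline{g(A)}$ for any $A\subseteq X$. Since $\frac1n\sum_{k=0}^{n-1}\delta_{(f^m)^k(f^i(x))}=(f^i)_\ast\!\bigl(\frac1n\sum_{k=0}^{n-1}\delta_{(f^m)^k(x)}\bigr)$, the $\mathrm{weak}^\ast$-continuity of pushforward together with compactness of $\mathcal{M}(X)$ yields $S_{f^m}(f^i(x))=(f^i)_\ast(S_{f^m}(x))$, where $S_{f^m}(\cdot)$ is as in Lemma 5.1 with $f$ replaced by $f^m$. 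Writing $E=\bigcup_{\nu\in S_{f^m}(x)}\mathrm{supp}\,\nu$ and noting $\bigcup_{\nu\in S_{f^m}(x)}\mathrm{supp}((f^i)_\ast\nu)=\bigcup_{\nu}f^i(\mathrm{supp}\,\nu)=f^i(E)$, Lemma 5.1 gives $\overline{\omega}(x,f^m)=\overline{E}$ and
\[
\overline{\omega}(f^i(x),f^m)=\overline{f^i(E)}=f^i(\overline{E})=f^i(\overline{\omega}(x,f^m)),
\]
which is (2).

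For (3), fix $y$ and $\epsilon>0$ and partition the visit set $A_\epsilon=\{k\ge0:f^k(x)\in B_\epsilon(y)\}$ according to residue modulo $m$; the $i$-th class is $\{mj+i:j\ge0,\ (f^m)^j(f^i(x))\in B_\epsilon(y)\}$, and a short computation shows its upper density equals $\tfrac1m$ times that of $E^{(i)}_\epsilon:=\{j\ge0:(f^m)^j(f^i(x))\in B_\epsilon(y)\}$. Since upper density is monotone and finitely subadditive, $\overline{d}(A_\epsilon)>0$ if and only if $\overline{d}(E^{(i)}_\epsilon)>0$ for some $i$. The inclusion $\supseteq$ in (3) is immediate from this; for $\subseteq$, if $y\notin\overline{\omega}(f^i(x),f^m)$ for every $i$, choose $\epsilon_i>0$ with $\overline{d}(E^{(i)}_{\epsilon_i})=0$, put $\epsilon=\min_i\epsilon_i>0$, and use $E^{(i)}_\epsilon\subseteq E^{(i)}_{\epsilon_i}$ to conclude $\overline{d}(A_\epsilon)=0$, i.e.\ $y\notin\overline{\omega}(x,f)$.

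Finally, (4) follows from (1) together with (2) applied with $m=1$: these give $f^i(\overline{\omega}(x,f))=\overline{\omega}(f^i(x),f)=\overline{\omega}(x,f)$ for all $i$, so $f(\overline{\omega}(f))=\bigcup_x f(\overline{\omega}(x,f))=\bigcup_x\overline{\omega}(x,f)=\overline{\omega}(f)$; and (5) follows from (3), whose $i=0$ summand gives $\overline{\omega}(x,f^m)\subseteq\overline{\omega}(x,f)$ (hence $\overline{\omega}(f^m)\subseteq\overline{\omega}(f)$) while the full decomposition gives $\overline{\omega}(x,f)\subseteq\bigcup_{z\in X}\overline{\omega}(z,f^m)=\overline{\omega}(f^m)$ (hence $\overline{\omega}(f)\subseteq\overline{\omega}(f^m)$). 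The one genuinely delicate point is the inclusion $\overline{\omega}(f^i(x),f^m)\subseteq f^i(\overline{\omega}(x,f^m))$ inside (2): a direct limit-point argument fails, because positive upper density of visit times need not survive passage to limit points of orbit segments, so the proof must descend to the level of empirical measures, where pushforward and support behave functorially and compactness of $\mathcal{M}(X)$ supplies the needed preimages.
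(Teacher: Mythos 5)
Your proposal is correct and follows essentially the same route as the paper: items (1) and (3) by direct manipulation of upper densities (which the paper dismisses as clear from the definitions), item (2) via Lemma 5.1 together with the identities $f^i_\ast(S_{f^m}(x))=S_{f^m}(f^i(x))$ and $f^i(\mathrm{supp}(\mu))=\mathrm{supp}(f^i_\ast(\mu))$, and then (4) from (1)--(2) and (5) from (3). Your added remarks on where compactness of $\mathcal{M}(X)$ is needed (for the surjectivity half of the pushforward identity) and on $f^i(\overline{A})=\overline{f^i(A)}$ simply make explicit details the paper leaves implicit.
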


\begin{proof}
Properties (1) and (3) are clear from the definitions. We prove property $(2)$. For any $i\ge0$, define $f_\ast^i\colon\mathcal{M}(X)\to\mathcal{M}(X)$ by $f_\ast^i(\mu)(A)=\mu(f^{-i}(A))$ for every $\mu\in\mathcal{M}(X)$ and every Borel subset $A$ of $X$. Then, we can show that $f_\ast^i(S_{f^m}(x))=S_{f^m}(f^i(x))$ for all $x\in X$ and $m\ge1$. On the other hand, we have $f^i({\rm supp}(\mu))={\rm supp}(f_\ast^i(\mu))$ for all $\mu\in\mathcal{M}(X)$. Thus, by Lemma 5.1, we obtain
\begin{align*}
f^i(\overline{\omega}(x,f^m))&=f^i(\overline{\bigcup_{\mu\in S_{f^m}(x)}{\rm supp}(\mu)})\\
&=\overline{\bigcup_{\mu\in S_{f^m}(x)}f^i({\rm supp}(\mu))}=\overline{\bigcup_{\mu\in S_{f^m}(x)}{\rm supp}(f_\ast^i(\mu))}\\
&=\overline{\bigcup_{\nu\in S_{f^m}(f^i(x))}{\rm supp}(\nu)}=\overline{\omega}(f^i(x),f^m),
\end{align*}
proving property (2). Property (4) follows from (1) and (2). Property (5) is a consequence of (3).
\end{proof}  

\begin{lem}
For any continuous map $f\colon X\to X$, if there is $p\in X$ such that $X=\overline{\omega}(p,f)$, then
\[
\overline{T}(f)=\{x\in X\colon X=\overline{\omega}(x,f)\}
\]
is a residual subset of $X$.
\end{lem}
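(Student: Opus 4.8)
The plan is to produce a dense $G_\delta$ subset of $X$ contained in $\overline{T}(f)$ and then conclude by the Baire category theorem. Fix a countable dense subset $\{y_k\}_{k\ge1}$ of $X$. Since each $\overline{\omega}(x,f)$ is closed, $X=\overline{\omega}(x,f)$ holds precisely when $y_k\in\overline{\omega}(x,f)$ for every $k$, i.e.\ when $\overline{d}(A^{k,m}(x))>0$ for all $k,m\ge1$, where $A^{k,m}(x)=\{i\ge0\colon f^i(x)\in B_{1/m}(y_k)\}$. For $N\ge1$ set $E^{k,m}_N(x)=\frac{1}{N}\bigl|A^{k,m}(x)\cap\{0,\dots,N-1\}\bigr|$, so that $\overline{d}(A^{k,m}(x))=\limsup_{N\to\infty}E^{k,m}_N(x)$. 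The first observation I would record is that $x\mapsto E^{k,m}_N(x)$ is lower semicontinuous, being a finite sum of indicator functions of the open sets $(f^i)^{-1}(B_{1/m}(y_k))$; consequently $\{x\colon E^{k,m}_N(x)>c\}$ is open for every $c\in\mathbb{R}$.

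Next I would use the hypothesis. Since $X=\overline{\omega}(p,f)$, for every $k,m$ the number $\theta_{k,m}:=\overline{d}(A^{k,m}(p))$ is positive (this is exactly the condition $y_k\in\overline{\omega}(p,f)$ at scale $1/m$), and, as $\overline{\omega}(p,f)\subset\overline{\{f^\ell(p)\colon\ell\ge0\}}$, the forward orbit $O_f(p)=\{f^\ell(p)\colon\ell\ge0\}$ is dense in $X$. Define
\[
G_{k,m}=\{x\in X\colon\limsup_{N\to\infty}E^{k,m}_N(x)\ge\theta_{k,m}\}.
\]
Each $G_{k,m}$ is a $G_\delta$ set: a $\limsup$ of real numbers is $\ge\theta$ if and only if, for every $j\ge1$, it exceeds $\theta-1/j$ infinitely often, so
\[
G_{k,m}=\bigcap_{j\ge1}\bigcap_{M\ge1}\bigcup_{N\ge M}\{x\colon E^{k,m}_N(x)>\theta_{k,m}-1/j\},
\]
a countable intersection of open sets by the previous step. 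And $G_{k,m}$ is dense because it contains $O_f(p)$: for $x=f^\ell(p)$ one has $A^{k,m}(f^\ell(p))=\{i\ge0\colon i+\ell\in A^{k,m}(p)\}$, and deleting an initial segment from a subset of $\mathbb{N}_0$ and then shifting leaves its upper density unchanged, so $\limsup_N E^{k,m}_N(f^\ell(p))=\overline{d}(A^{k,m}(p))=\theta_{k,m}$.

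Since $X$ is compact metric, hence a Baire space, $G:=\bigcap_{k,m\ge1}G_{k,m}$ is a dense $G_\delta$ subset of $X$. It remains to check $G\subset\overline{T}(f)$: given $x\in G$, an arbitrary $y\in X$ and $\epsilon>0$, pick $m$ with $2/m<\epsilon$ and $k$ with $d(y_k,y)<1/m$, so that $B_{1/m}(y_k)\subset B_\epsilon(y)$ and hence $\overline{d}(\{i\ge0\colon f^i(x)\in B_\epsilon(y)\})\ge\overline{d}(A^{k,m}(x))\ge\theta_{k,m}>0$; since $y$ and $\epsilon$ are arbitrary, $\overline{\omega}(x,f)=X$. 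Thus $\overline{T}(f)\supset G$ is residual. I expect the one delicate point to be the handling of the $\limsup$: it is neither upper nor lower semicontinuous in $x$, so one must both rewrite it as the displayed countable intersection and fix the threshold $\theta_{k,m}$ once and for all, independently of $\ell$, so that the single dense orbit $O_f(p)$ certifies the density of every $G_{k,m}$ simultaneously.
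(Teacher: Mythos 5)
Your proof is correct and follows essentially the same route as the paper: both fix the thresholds $\theta_{k,m}$ (the paper's $m_j$, indexed by a countable basis rather than balls about a countable dense set) from the distinguished point $p$, observe that the dense forward orbit of $p$ lies in each level set, and exhibit $\overline{T}(f)\supset G$ as a dense $G_\delta$ via the same rewriting of the $\limsup$ condition as a countable intersection of open sets. The "delicate point" you flag at the end is exactly the device the paper uses as well.
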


\begin{proof}
Let $\{U_j\colon j\ge1\}$ be a countable basis for the topology on $X$. Since $X=\overline{\omega}(p,f)$, letting
\[
m_j=\limsup_{n\to\infty}\frac{1}{n}|\{0\le i\le n-1\colon f^i(p)\in U_j\}|,
\]
$j\ge1$, we have $m_j>0$ for each $j\ge1$. Let
\[
A=\bigcap_{j=1}^\infty\{x\in X\colon\limsup_{n\to\infty}\frac{1}{n}|\{0\le i\le n-1\colon f^i(x)\in U_j\}|\ge m_j\}
\]
and note that $\{f^k(p)\colon k\ge0\}\subset A\subset\overline{T}(f)$. Since $X=\overline{\omega}(p,f)$, we have $X=\omega(p,f)$, so $\{f^k(p)\colon k\ge0\}$ is dense in $X$. By
\[
A=\bigcap_{j=1}^\infty\bigcap_{l=1}^\infty\bigcap_{N=1}^\infty\bigcup_{n\ge N}\{x\in X\colon\frac{1}{n}|\{0\le i\le n-1\colon f^i(x)\in U_j\}|> m_j-\frac{1}{l}\},
\]
we see that $A$ is a dense $G_\delta$-subset of $X$; therefore, $\overline{T}(f)$ is a residual subset of $X$, completing the proof.
\end{proof}

For any sequence $\xi=(x_i)_{i\ge0}$ of points in $X$, we define its $\overline{\omega}$-limit set by
\[
\overline{\omega}(\xi)=\{y\in X\colon\overline{d}(\{i\ge0\colon x_i\in B_\epsilon(y)\})>0,\:\forall\epsilon>0\}.
\]
In this notation, we have $\overline{\omega}(x,f)=\overline{\omega}((f^i(x))_{i\ge0})$ for any continuous map $f\colon X\to X$ and $x\in X$.

\begin{lem}
For any sequences $\xi=(x_i)_{i\ge0}$, $\xi'=(y_i)_{i\ge0}$ of points in $X$, if
\[
\lim_{n\to\infty}\frac{1}{n}\sum_{i=0}^{n-1}d(x_i,y_i)=0,
\]
then $\overline{\omega}(\xi)=\overline{\omega}(\xi')$.
\end{lem}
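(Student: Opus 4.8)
The plan is to prove the two inclusions simultaneously by a symmetric argument, so it suffices to establish $\overline{\omega}(\xi)\subseteq\overline{\omega}(\xi')$. I would fix $y\in\overline{\omega}(\xi)$ and an arbitrary $\epsilon>0$, and aim to verify the defining condition $\overline{d}(\{i\ge0\colon y_i\in B_\epsilon(y)\})>0$.

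The first step is to note that the ``bad'' index set $B=\{i\ge0\colon d(x_i,y_i)\ge\epsilon/2\}$ has density zero: by a Markov-type estimate,
\[
\frac{1}{n}\bigl|B\cap\{0,1,\dots,n-1\}\bigr|\le\frac{2}{\epsilon}\cdot\frac{1}{n}\sum_{i=0}^{n-1}d(x_i,y_i),
\]
and the right-hand side tends to $0$ by hypothesis, so $d(B)=0$. The second step is the triangle inequality: writing $A=\{i\ge0\colon x_i\in B_{\epsilon/2}(y)\}$, for every $i\in A\setminus B$ one has $d(y_i,y)\le d(y_i,x_i)+d(x_i,y)<\epsilon/2+\epsilon/2=\epsilon$, hence $A\setminus B\subseteq\{i\ge0\colon y_i\in B_\epsilon(y)\}$.

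Finally, since $y\in\overline{\omega}(\xi)$ forces $\overline{d}(A)>0$, and since $A\subseteq(A\setminus B)\cup B$, subadditivity of the upper density gives $\overline{d}(A\setminus B)\ge\overline{d}(A)-\overline{d}(B)=\overline{d}(A)>0$; therefore $\overline{d}(\{i\ge0\colon y_i\in B_\epsilon(y)\})>0$. As $\epsilon>0$ was arbitrary, $y\in\overline{\omega}(\xi')$, and the reverse inclusion follows by exchanging the roles of $\xi$ and $\xi'$.

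I do not expect a serious obstacle here; the computation is elementary. The only point requiring a little care is that the upper density is merely monotone and subadditive, not additive, so the decisive step must be phrased as ``deleting a set of density zero cannot destroy positivity of the upper density'' rather than via any equality of densities (in particular one should not hope that the two index sets $\{i\colon x_i\in B_\epsilon(y)\}$ and $\{i\colon y_i\in B_\epsilon(y)\}$ differ by a density-zero set for a \emph{fixed} $\epsilon$ — it is essential to exploit the quantification over all $\epsilon>0$ in the definition of $\overline{\omega}$).
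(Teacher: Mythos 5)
Your proposal is correct and follows essentially the same route as the paper: a Markov-type bound showing the set $\{i\colon d(x_i,y_i)\ge\epsilon/2\}$ has density zero, the triangle-inequality containment $A\setminus B\subseteq\{i\colon y_i\in B_\epsilon(y)\}$, and the observation that removing a density-zero set cannot destroy positive upper density. The paper merely phrases the last step via an explicit subsequence $(n_j)$ and constants $a,b$ with $b-2a>0$ instead of invoking subadditivity of $\overline{d}$ abstractly, but the argument is the same.
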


\begin{proof}
We shall prove $\overline{\omega}(\xi)\subset\overline{\omega}(\xi')$. For any $y\in\overline{\omega}(\xi)$ and $\epsilon>0$, take $a,b>0$ such that 
\[
\overline{d}(\{i\ge0\colon x_i\in B_{\epsilon/2}(y)\})>b
\]
and $b-2a>0$. Then, due to the assumption, there is a sequence $1\le n_1<n_2<\cdots$ such that
\[
\frac{1}{n_j}|\{0\le i\le n_j-1\colon x_i\in B_{\epsilon/2}(y)\}|>b
\]
and
\[
\frac{1}{n_j}\sum_{i=0}^{n_j-1}d(x_i,y_i)<a\epsilon
\]
for all $j\ge1$. The second condition implies that
\[
\frac{1}{n_j}|\{0\le i\le n_j-1\colon d(x_i,y_i)>\epsilon/2\}|<2a
\]
for every $j\ge1$. Note that
\[
\{0\le i\le n_j-1\colon y_i\in B_\epsilon(y)\}
\]
contains
\[
\{0\le i\le n_j-1\colon x_i\in B_{\epsilon/2}(y)\}\setminus\{0\le i\le n_j-1\colon d(x_i,y_i)>\epsilon/2\},
\]
so we have
\[
\frac{1}{n_j}|\{0\le i\le n_j-1\colon y_i\in B_\epsilon(y)\}|>b-2a>0
\]
for all $j\ge1$. This implies
\[
\overline{d}(\{i\ge0\colon y_i\in B_\epsilon(y)\})>0.
\]
Since $\epsilon>0$ is arbitrary, we obtain $y\in\overline{\omega}(\xi')$ and so $\overline{\omega}(\xi)\subset\overline{\omega}(\xi')$, proving the claim. Similarly, we can show that $\overline{\omega}(\xi')\subset\overline{\omega}(\xi)$, thus the lemma has been proved.
\end{proof}

\begin{cor}
Let $f\colon X\to X$ be a continuous map.  For any $x,y\in X$, if
\[
\lim_{n\to\infty}\frac{1}{n}\sum_{i=0}^{n-1}d(f^i(x),f^i(y))=0,
\]
then $\overline{\omega}(x,f)=\overline{\omega}(y,f)$.
\end{cor}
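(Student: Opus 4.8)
The plan is to deduce this corollary directly from the immediately preceding lemma, which states that two sequences $\xi=(x_i)_{i\ge0}$ and $\xi'=(y_i)_{i\ge0}$ in $X$ with $\lim_{n\to\infty}\frac{1}{n}\sum_{i=0}^{n-1}d(x_i,y_i)=0$ satisfy $\overline{\omega}(\xi)=\overline{\omega}(\xi')$. First I would instantiate that lemma at the orbit sequences $\xi=(f^i(x))_{i\ge0}$ and $\xi'=(f^i(y))_{i\ge0}$. Recall, from the definition of the $\overline{\omega}$-limit set of a sequence, that $\overline{\omega}(x,f)=\overline{\omega}((f^i(x))_{i\ge0})$ and $\overline{\omega}(y,f)=\overline{\omega}((f^i(y))_{i\ge0})$; this identity was recorded right when the notation $\overline{\omega}(\xi)$ was introduced.

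Next I would observe that the hypothesis $\lim_{n\to\infty}\frac{1}{n}\sum_{i=0}^{n-1}d(f^i(x),f^i(y))=0$ is, term by term, exactly the hypothesis $\lim_{n\to\infty}\frac{1}{n}\sum_{i=0}^{n-1}d(x_i,y_i)=0$ of the preceding lemma for this choice of $\xi$ and $\xi'$, since $x_i=f^i(x)$ and $y_i=f^i(y)$ for all $i\ge0$. Applying the lemma then yields $\overline{\omega}(\xi)=\overline{\omega}(\xi')$, that is, $\overline{\omega}(x,f)=\overline{\omega}(y,f)$, which is the assertion.

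There is no genuine obstacle here: the whole content is carried by the preceding lemma, and the corollary is merely an unwinding of notation. The one point that needs a word of care is the bookkeeping identity $\overline{\omega}(x,f)=\overline{\omega}((f^i(x))_{i\ge0})$, which is immediate from comparing the two definitions. (For orientation, the lemma itself rests on the elementary upper-density estimate: if $\{i:x_i\in B_{\epsilon/2}(y)\}$ has upper density exceeding some $b>0$ while the Ces\`aro averages of $d(x_i,y_i)$ tend to $0$, then along a suitable subsequence the set $\{i:d(x_i,y_i)>\epsilon/2\}$ has density smaller than any prescribed $2a<b$, so $\{i:y_i\in B_\epsilon(y)\}$ retains positive upper density, and the roles of $\xi,\xi'$ are symmetric.)
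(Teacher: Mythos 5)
Your proposal is correct and is exactly the paper's intended argument: the corollary is stated immediately after Lemma 5.4 with no separate proof, being the specialization of that lemma to the orbit sequences $\xi=(f^i(x))_{i\ge0}$ and $\xi'=(f^i(y))_{i\ge0}$ via the identity $\overline{\omega}(x,f)=\overline{\omega}((f^i(x))_{i\ge0})$ recorded when the notation was introduced. Nothing is missing.
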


Let $f\colon X\to X$ be a continuous map. For any $(x,y)\in X^2$ and $t>0$, we define
\[
F_{xy}(t)=\liminf_{n\to\infty}\frac{1}{n}|\{0\le i\le n-1\colon d(f^i(x),f^i(y))<t\}|
\]
and
\[
F_{xy}^\ast(t)=\limsup_{n\to\infty}\frac{1}{n}|\{0\le i\le n-1\colon d(f^i(x),f^i(y))<t\}|.
\]
A pair $(x,y)\in X^2$ is said to be {\em DC2-scrambled} if $F_{xy}(\delta)<1$ for some $\delta>0$, and $F_{xy}^\ast(t)=1$ for all $t>0$. It is known that these properties are equivalent to
\[
\limsup_{n\to\infty}\frac{1}{n}\sum_{i=0}^{n-1}d(f^i(x),f^i(y))>0\quad\text{and}\quad\liminf_{n\to\infty}\frac{1}{n}\sum_{i=0}^{n-1}d(f^i(x),f^i(y))=0,
\]
respectively.

The following two results describe relationships between DC2 and $\overline{\omega}$-chaos.

\begin{prop}
Let $f\colon X\to X$ be a continuous map. For any $x,y\in X$, if $\overline{\omega}(x,f)\setminus\overline{\omega}(y,f)\ne\emptyset$, then $F_{xy}(\delta)<1$ for some $\delta>0$.
\end{prop}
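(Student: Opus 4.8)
The plan is to exploit a witness point $z\in\overline{\omega}(x,f)\setminus\overline{\omega}(y,f)$ together with a single triangle-inequality comparison. Since $z\notin\overline{\omega}(y,f)$, by the characterization of $\overline{\omega}$-limit points recalled in Section 1 there is $\epsilon_0>0$ with
\[
\lim_{n\to\infty}\frac{1}{n}|\{0\le i\le n-1\colon f^i(y)\in B_{\epsilon_0}(z)\}|=0.
\]
Set $\delta=\epsilon_0/2$. On the other hand $z\in\overline{\omega}(x,f)$, so $c:=\overline{d}(\{i\ge0\colon f^i(x)\in B_\delta(z)\})>0$, and we may fix a sequence $1\le n_1<n_2<\cdots$ along which
\[
\frac{1}{n_j}|\{0\le i\le n_j-1\colon f^i(x)\in B_\delta(z)\}|\longrightarrow c .
\]
I claim this $\delta$ works; in fact $F_{xy}(\delta)\le 1-c<1$.

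The key observation is an inclusion of index sets. If $0\le i\le n-1$ satisfies both $f^i(x)\in B_\delta(z)$ and $d(f^i(x),f^i(y))<\delta$, then $d(f^i(y),z)\le d(f^i(y),f^i(x))+d(f^i(x),z)<2\delta=\epsilon_0$, so $f^i(y)\in B_{\epsilon_0}(z)$. Writing
\[
I_n=\{0\le i\le n-1\colon d(f^i(x),f^i(y))<\delta\},\qquad J_n=\{0\le i\le n-1\colon f^i(x)\in B_\delta(z)\},
\]
and $K_n=\{0\le i\le n-1\colon f^i(y)\in B_{\epsilon_0}(z)\}$, the observation says $I_n\cap J_n\subset K_n$, whence
\[
|I_n|\le|I_n\cap J_n|+\bigl(n-|J_n|\bigr)\le|K_n|+n-|J_n|.
\]
Dividing by $n$ and restricting to $n=n_j$, the right-hand side tends to $0+1-c$ as $j\to\infty$ (the $K$-term by the choice of $\epsilon_0$, the $J$-term by the choice of $(n_j)$), so
\[
F_{xy}(\delta)=\liminf_{n\to\infty}\frac{|I_n|}{n}\le\liminf_{j\to\infty}\frac{|I_{n_j}|}{n_j}\le 1-c<1,
\]
which proves the proposition.

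There is no serious obstacle: once the scales are matched as $\delta=\epsilon_0/2$, so that the triangle inequality traps the "$(x,y)$-close" times that also land in $B_\delta(z)$ inside the density-zero visiting set $K_n$, the conclusion is a one-line counting estimate. The only points needing a little care are (i) invoking the hypothesis $z\notin\overline{\omega}(y,f)$ in the strong form that the \emph{ordinary} density of $\{i\colon f^i(y)\in B_{\epsilon_0}(z)\}$ is $0$ (not merely the lower density), and (ii) passing to the subsequence $(n_j)$ realizing the positive upper density $c$, so that the three density contributions $|I_n|/n$, $|J_n|/n$, $|K_n|/n$ are all controlled along the same indices.
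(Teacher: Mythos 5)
Your proof is correct and follows essentially the same route as the paper: pick a witness $p\in\overline{\omega}(x,f)\setminus\overline{\omega}(y,f)$, use the density-zero visiting set of $y$ to a ball of radius $2\delta$ and the positive-upper-density visiting set of $x$ to the ball of radius $\delta$, and conclude by the triangle-inequality inclusion of index sets. The only cosmetic difference is that you bound $F_{xy}(\delta)\le 1-c$ directly along the subsequence realizing the upper density, while the paper phrases the same estimate as $\limsup_n\frac{1}{n}|\{0\le i\le n-1\colon d(f^i(x),f^i(y))\ge\delta\}|>0$.
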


\begin{proof}
Let $p\in\overline{\omega}(x,f)\setminus\overline{\omega}(y,f)$. Then, since $p\not\in\overline{\omega}(y,f)$, there is $\delta>0$ such that
\[
\lim_{n\to\infty}\frac{1}{n}|\{0\le i\le n-1\colon f^i(y)\in B_{2\delta}(p)\}|=0.
\]
For such $\delta>0$, because $p\in\overline{\omega}(x,f)$, we have
\[
\overline{d}(\{i\ge0\colon f^i(x)\in B_\delta(p)\})=\limsup_{n\to\infty}\frac{1}{n}|\{0\le i\le n-1\colon f^i(x)\in B_\delta(p)\}|>0.
\]
Since
\[
\{0\le i\le n-1\colon d(f^i(x),f^i(y))\ge\delta\}
\]
contains
\[
\{0\le i\le n-1\colon f^i(x)\in B_\delta(p)\}\setminus\{0\le i\le n-1\colon f^i(y)\in B_{2\delta}(p)\}
\]
for each $n\ge1$, the above conditions clearly imply
\[
\limsup_{n\to\infty}\frac{1}{n}|\{0\le i\le n-1\colon d(f^i(x),f^i(y))\ge\delta\}|>0,
\]
which is equivalent to $F_{xy}(\delta)<1$. Thus, the proposition has been proved.
\end{proof}

\begin{prop}
Let $f\colon X\to X$ be a continuous map. For any $x,y\in X$, if $F^\ast_{xy}(t)=1$ for all $t>0$, then $\overline{\omega}(x,f)\cap\overline{\omega}(y,f)\ne\emptyset$.
\end{prop}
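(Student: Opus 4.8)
The plan is to extract a common limit measure for the two empirical distributions and invoke Lemma 5.1. First I would use the equivalence recorded just before the statement: the hypothesis ``$F^\ast_{xy}(t)=1$ for all $t>0$'' is the same as
\[
\liminf_{n\to\infty}\frac{1}{n}\sum_{i=0}^{n-1}d(f^i(x),f^i(y))=0,
\]
so there is a strictly increasing sequence $1\le n_1<n_2<\cdots$ of positive integers along which $\frac{1}{n_j}\sum_{i=0}^{n_j-1}d(f^i(x),f^i(y))\to0$.

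Next, since $\mathcal{M}(X)$ is compact in the $\text{weak}^\ast$-topology, I would pass to a subsequence (still denoted $(n_j)_{j\ge1}$) along which both empirical measures converge:
\[
\frac{1}{n_j}\sum_{i=0}^{n_j-1}\delta_{f^i(x)}\longrightarrow\mu\in\mathcal{M}(X),\qquad\frac{1}{n_j}\sum_{i=0}^{n_j-1}\delta_{f^i(y)}\longrightarrow\nu\in\mathcal{M}(X).
\]
By construction $\mu\in S_f(x)$ and $\nu\in S_f(y)$ in the notation of Lemma 5.1. The key step is to show $\mu=\nu$. Fix $\phi\in C(X)$ and $\epsilon>0$; by uniform continuity of $\phi$ on the compact space $X$ there is $\delta>0$ with $|\phi(a)-\phi(b)|<\epsilon$ whenever $d(a,b)<\delta$. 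Splitting the sum $\frac{1}{n_j}\sum_{i=0}^{n_j-1}\bigl(\phi(f^i(x))-\phi(f^i(y))\bigr)$ according to whether $d(f^i(x),f^i(y))<\delta$ or not, the first part contributes at most $\epsilon$ in absolute value, while the second is bounded by $2\sup_{z\in X}|\phi(z)|$ times the fraction of indices $i<n_j$ with $d(f^i(x),f^i(y))\ge\delta$, which by Markov's inequality is at most $\tfrac{1}{\delta n_j}\sum_{i=0}^{n_j-1}d(f^i(x),f^i(y))\to0$. Hence $\bigl|\int\phi\,d\mu-\int\phi\,d\nu\bigr|\le\epsilon$, and since $\epsilon>0$ and $\phi\in C(X)$ are arbitrary, $\mu=\nu$.

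Finally, $\mu=\nu$ belongs to $S_f(x)\cap S_f(y)$, so Lemma 5.1 gives ${\rm supp}(\mu)\subset\overline{\omega}(x,f)$ and ${\rm supp}(\nu)={\rm supp}(\mu)\subset\overline{\omega}(y,f)$. As the support of a Borel probability measure on a non-empty compact metric space is non-empty, we conclude $\emptyset\ne{\rm supp}(\mu)\subset\overline{\omega}(x,f)\cap\overline{\omega}(y,f)$, which is the assertion. The only step requiring any care is the identity $\mu=\nu$, and even that is just uniform continuity together with a Markov-inequality estimate; I do not anticipate a genuine obstacle.
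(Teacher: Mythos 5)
Your proposal is correct and follows essentially the same route as the paper: both extract a subsequence along which the average distance tends to $0$ and the empirical measures of $x$ converge to some $\mu$, conclude that the empirical measures of $y$ converge to the same $\mu$, and finish with Lemma 5.1. You merely spell out (via uniform continuity and the Markov-type estimate) the step that the paper leaves implicit with the phrase ``these conditions imply.''
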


\begin{proof}
The assumption that $F^\ast_{xy}(t)=1$ for all $t>0$ is equivalent to
\[
\liminf_{n\to\infty}\frac{1}{n}\sum_{i=0}^{n-1}d(f^i(x),f^i(y))=0.
\]
Then, there are a sequence $1\le n_1<n_2<\cdots$ and $\mu\in\mathcal{M}(X)$ such that
\[
\lim_{j\to\infty}\frac{1}{n_j}\sum_{i=0}^{n_j-1}d(f^i(x),f^i(y))=0
\]
and
\[
\lim_{j\to\infty}\frac{1}{n_j}\sum_{i=0}^{n_j-1}\delta_{f^i(x)}=\mu.
\]
These conditions imply
\[
\lim_{j\to\infty}\frac{1}{n_j}\sum_{i=0}^{n_j-1}\delta_{f^i(y)}=\mu.
\]
By Lemma 5.1, we obtain ${\rm supp}(\mu)\subset\overline{\omega}(x,f)\cap\overline{\omega}(y,f)$ and so $\overline{\omega}(x,f)\cap\overline{\omega}(y,f)\ne\emptyset$, completing the proof.
\end{proof}

\section{Proof of Theorem 1.6}

In this section, we prove Theorem 1.6. The proof is by Lemma 1.1 stated in Section 1, and in order to obtain $\overline{\omega}$-chaos, we use with modifications a standard method of obtaining $\omega$-chaos, introduced in \cite{SHLi}.

The first lemma is well-known but we give a proof of it for completeness.

\begin{lem}
Let $f\colon X\to X$ be a continuous map. If $f\colon X\to X$ is minimal, then for any $\epsilon>0$, there is $k\ge1$ such that
\[
\{i\ge0\colon f^i(x)\in B_\epsilon(y)\}\cap\{j,j+1,\dots,j+k-1\}\ne\emptyset
\]
for all $x,y\in X$ and $j\ge0$.
\end{lem}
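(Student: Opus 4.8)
The plan is to use the classical fact that in a minimal system every nonempty open set is entered within a uniformly bounded number of iterates from every starting point, and then to upgrade the two-variable statement to a single uniform $k$ by a finite-cover argument.

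First I would fix $\epsilon>0$ and isolate the following claim: for any nonempty open subset $U$ of $X$, there is $N\ge0$ with $X=\bigcup_{i=0}^{N}f^{-i}(U)$. To prove it, set $V=\bigcup_{i\ge0}f^{-i}(U)$, which is open and nonempty since $U\subset V$. Its complement $X\setminus V$ is closed and forward invariant: if $f^i(x)\notin U$ for all $i\ge0$, then $f^i(f(x))=f^{i+1}(x)\notin U$ for all $i\ge0$, so $f(x)\in X\setminus V$. A nonempty closed forward-invariant set contains a minimal set, and by minimality of $f$ the only minimal subset of $X$ is $X$ itself; hence $X\setminus V$ is either empty or all of $X$, and since $V\ne\emptyset$ it must be empty, i.e.\ $V=X$. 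Compactness of $X$ then gives a finite subcover $X=\bigcup_{i=0}^{N}f^{-i}(U)$.

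Next I would cover $X$ by finitely many balls $B_{\epsilon/2}(y_1),\dots,B_{\epsilon/2}(y_m)$, apply the claim to each $U=B_{\epsilon/2}(y_l)$ to obtain $N_l\ge0$, and put $k=1+\max_{1\le l\le m}N_l$. Given $x,y\in X$ and $j\ge0$, choose $l$ with $y\in B_{\epsilon/2}(y_l)$, so that $B_{\epsilon/2}(y_l)\subset B_\epsilon(y)$. Applying the claim to the point $f^j(x)$ yields an index $i$ with $0\le i\le N_l\le k-1$ and $f^i(f^j(x))=f^{i+j}(x)\in B_{\epsilon/2}(y_l)\subset B_\epsilon(y)$. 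Since $i+j\in\{j,j+1,\dots,j+k-1\}$, this gives exactly the asserted nonempty intersection, and $k$ depends only on $\epsilon$, not on $x$, $y$, or $j$.

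I do not anticipate a genuine obstacle. The only two points needing mild care are that $f$ need not be invertible, which is why the argument is phrased throughout in terms of preimages $f^{-i}(U)$ and forward invariance of $X\setminus V$ rather than two-sided invariance; and the passage from a single $y$ to all $y$ simultaneously, which is handled by the finite subcover together with the $\epsilon/2$-to-$\epsilon$ enlargement.
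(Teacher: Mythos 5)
Your proof is correct, but it takes a genuinely different route from the paper's. The paper argues by contradiction: assuming the conclusion fails for every $k$, it extracts (by sequential compactness) limits $y$ of the witnesses $y_k$ and $z$ of the points $f^{j_k}(x_k)$, and shows the forward orbit of $z$ would avoid $B_\epsilon(y)$, contradicting minimality via density of orbits. You instead argue directly: you first establish the classical characterization that minimality forces $X=\bigcup_{i=0}^{N}f^{-i}(U)$ for every nonempty open $U$ and some finite $N$ (correctly handled for non-invertible $f$ by working with preimages and noting that $X\setminus\bigcup_{i\ge0}f^{-i}(U)$ is closed and forward invariant, hence empty), and then obtain uniformity in $y$ by covering $X$ with finitely many $\epsilon/2$-balls and taking $k=1+\max_l N_l$; the $\epsilon/2$-to-$\epsilon$ enlargement is handled correctly. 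Your approach is longer but yields an explicit bound for $k$ in terms of covering data and isolates a reusable fact about minimal systems, whereas the paper's contradiction argument is shorter but nonconstructive. Both are complete; there is no gap in your argument.
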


\begin{proof}
Assume the contrary. Then, there is $\epsilon>0$ such that for every $k\ge1$, we have
\[
\{i\ge0\colon f^i(x_k)\in B_\epsilon(y_k)\}\cap\{j_k,j_k+1,\dots,j_k+l_k-1\}=\emptyset
\]
for some $x_k,y_k\in X$, $j_k\ge0$, and $l_k\ge k$. By passing to subsequences if necessary, we may assume that $\lim_{k\to\infty}y_k=y$ and $\lim_{k\to\infty}f^{j_k}(x_k)=z$ for some $y,z\in X$. If $f^{l}(z)\in B_\epsilon(y)$ for some $l\ge0$, then we obtain $f^{j_k+l}(x_k)\in B_\epsilon(y_k)$ and $0\le l\le l_k-1$ for some $k\ge1$, which is a contradiction. It follows that $f^{l}(z)\not\in B_\epsilon(y)$ for all $l\ge0$, however this contradicts that $f$ is minimal. Thus, the lemma has been proved.
\end{proof}

Note that for any continuous map $f\colon X\to X$ and minimal sets $M,M'$ for $f$, we have $M=M'$ if and only if $M\cap M'\ne\emptyset$. The next lemma shows that the shadowing with positive topological entropy provides many minimal sets. 

\begin{lem}
Let $f\colon X\to X$ be a continuous map. If $f$ satisfies the shadowing property and $h_{\rm top}(f)>0$, then there is a set $\mathcal{F}$ of infinite minimal sets for $f$ such that $|\mathcal{F}|=|\mathbb{R}|$.
\end{lem}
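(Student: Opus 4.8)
The plan is to distil a full-shift ``horseshoe'' from the shadowing together with the positive entropy, and then to pull continuum many infinite minimal subsystems of a full shift back through it. As a preliminary reduction I would fix, by the variational principle, an ergodic $f$-invariant measure $\mu$ with $h_\mu(f)>0$ and set $C=\mathrm{supp}(\mu)$: this is a closed $f$-invariant set with $h_{\rm top}(f|_C)\ge h_\mu(f)>0$ on which $f$ is chain transitive (a standard fact for ergodic measures). I would then choose $\epsilon_0>0$ small enough that, along some sequence $n\to\infty$, the maximal number $s_n$ of $(n,\epsilon_0)$-separated points in $C$ grows exponentially (possible since $h_{\rm top}(f|_C)>0$).

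The main step is the horseshoe. Put $\epsilon_1=\epsilon_0/3$, let $\delta_1>0$ be a shadowing constant for accuracy $\epsilon_1$, fix $\rho<\delta_1/2$, and cover $C$ by $q=q(\rho)$ balls of radius $\rho$. For $n$ large along the sequence above, $s_n>q^2$, so in a maximal $(n,\epsilon_0)$-separated subset of $C$ there are points $a,b$ lying in one $\rho$-ball with $f^n(a),f^n(b)$ lying in one $\rho$-ball, while $d(f^j(a),f^j(b))>\epsilon_0$ for some $0\le j<n$. Using chain transitivity of $f|_C$, fix a finite $\delta_1$-chain of $f$, of some length $\ell$, connecting the ball containing $f^n(a),f^n(b)$ back to the ball containing $a,b$; set $N=n+\ell$. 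For $\omega=(\omega_i)_{i\ge0}\in\{0,1\}^{\mathbb{N}_0}$, concatenating, for each $i$, the genuine orbit segment $(a,f(a),\dots,f^{n-1}(a))$ or $(b,f(b),\dots,f^{n-1}(b))$ chosen by $\omega_i$, followed by the fixed connecting chain, produces a $\delta_1$-pseudo orbit $P_\omega$ of $f$ whose blocks all have length $N$ (the common-ball conditions and $\rho<\delta_1/2$ make the junctions $\delta_1$-small); let $z_\omega$ be an $\epsilon_1$-shadowing point of $P_\omega$. Then $f^{iN+j}(z_\omega)$ lies within $\epsilon_1$ of $f^j(a)$ or of $f^j(b)$ according to $\omega_i$. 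Let $B_0,B_1$ be the closed $\epsilon_1$-balls about $f^j(a),f^j(b)$; they are disjoint because $d(f^j(a),f^j(b))>\epsilon_0=3\epsilon_1$. Put
\[
Z=\{z\in X:f^{iN+j}(z)\in B_0\cup B_1\ \text{for all }i\ge0\}
\]
and define $\pi\colon Z\to\{0,1\}^{\mathbb{N}_0}$ by $\pi(z)_i=\varepsilon$ whenever $f^{iN+j}(z)\in B_\varepsilon$. Then $Z$ is closed and $f^N$-invariant, $\pi$ is continuous (each $B_\varepsilon$ is clopen in $B_0\cup B_1$) and satisfies $\pi\circ(f^N|_Z)=\sigma\circ\pi$, and $\pi$ is onto since $z_\omega\in Z$ with $\pi(z_\omega)=\omega$; that is, $\pi$ is a factor map from $(Z,f^N|_Z)$ onto the one-sided full $2$-shift $(\{0,1\}^{\mathbb{N}_0},\sigma)$.

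It remains to harvest minimal sets. The full $2$-shift contains continuum many pairwise disjoint infinite minimal subshifts, for instance the Sturmian subshifts $S_\alpha$ for irrational $\alpha\in(0,1)$ (these are minimal, infinite, and pairwise disjoint, $\alpha$ being an isomorphism invariant of $S_\alpha$). For each such $\alpha$ the set $\pi^{-1}(S_\alpha)\subset Z$ is nonempty, closed and $f^N$-invariant, hence contains a minimal set $\widehat M_\alpha$ for $f^N$; since $\pi(\widehat M_\alpha)$ is a nonempty closed $\sigma$-invariant subset of the minimal set $S_\alpha$ it equals $S_\alpha$, so $\widehat M_\alpha$ surjects onto the infinite set $S_\alpha$ and is therefore infinite, and the $\widehat M_\alpha$ are pairwise disjoint. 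Setting $\widetilde M_\alpha=\bigcup_{i=0}^{N-1}f^i(\widehat M_\alpha)$, each $f^i(\widehat M_\alpha)$ is $f^N$-minimal, so any nonempty closed $f$-invariant subset of $\widetilde M_\alpha$ meets, hence contains, one of them, and then — applying iterates of $f$ — all of them; thus $\widetilde M_\alpha$ is an infinite minimal set for $f$. Finally $\widetilde M_\alpha=\widetilde M_\beta$ forces $\widehat M_\alpha=f^i(\widehat M_\beta)$ for some $0\le i<N$, so $\alpha\mapsto\widetilde M_\alpha$ has finite fibres and its image is again of cardinality $|\mathbb{R}|$; then $\mathcal F=\{\widetilde M_\alpha\}$ is the required set.

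The one genuinely delicate point is the horseshoe step: converting the exponential growth of $(n,\epsilon_0)$-separated orbit segments into the realisation of \emph{every} binary itinerary by pseudo orbits of a common block length, and then packaging this into an honest factor map onto the full shift. Everything afterwards is routine manipulation of minimal sets and of cardinalities.
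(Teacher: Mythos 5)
Your proof is correct and, in its second half, follows exactly the paper's route: obtain a factor map from $(Y,f^{N}|_{Y})$ onto the full one-sided $2$-shift, pull back a continuum of pairwise disjoint infinite minimal subshifts, observe that each pullback contains an $f^{N}$-minimal set mapping \emph{onto} the chosen subshift (hence infinite), and then assemble $f$-minimal sets as $\bigcup_{i=0}^{N-1}f^{i}(\widehat M)$ with a finite-to-one identification. The only substantive difference is in the first half: the paper simply cites \cite{LiO,MO} for the existence of $m>0$, a closed $f^{m}$-invariant set $Y$, and a factor map $\pi\colon(Y,f^{m})\to(\{0,1\}^{\mathbb N},\sigma)$, whereas you reconstruct this horseshoe from scratch via $(n,\epsilon_0)$-separated sets, a pigeonhole on initial/terminal $\rho$-balls, and shadowing of concatenated pseudo-orbits; you also use Sturmian rather than Toeplitz subshifts as the continuum family, which is equally valid. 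Your horseshoe construction is sound up to one constant that needs tightening: at the junction where the connecting chain hands off to the next orbit block, the jump you control is bounded by (chain accuracy) plus (diameter of the common $\rho$-ball), so with $\delta_1$-chains and $\rho<\delta_1/2$ you only get a bound of roughly $2\delta_1$ rather than $\delta_1$; this is repaired by taking the connecting chain to be a $(\delta_1/2)$-chain and $\rho<\delta_1/4$, and is a bookkeeping adjustment rather than a gap. Your version buys self-containedness at the cost of length; the paper's buys brevity at the cost of an external citation.
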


\begin{proof}
Assume that $f$ satisfies the shadowing property and $h_{\rm top}(f)>0$. Then, we know that there are $m>0$ and  a closed $f^m$-invariant subset $Y$ of $X$ such that there is a factor map
\[
\pi\colon(Y,f^m)\to(\{0,1\}^\mathbb{N},\sigma),
\]
here $\sigma\colon\{0,1\}^\mathbb{N}\to\{0,1\}^\mathbb{N}$ is the shift map (see, e.g. \cite{LiO,MO}). Take a set $\mathcal{G}$ of infinite minimal sets for $\sigma$ such that $|\mathcal{G}|=|\mathbb{R}|$, e.g. $\mathcal{G}=\{M_\alpha\colon\alpha\in[0,\log{2})\}$, where $M_\alpha$, $\alpha\in[0,\log{2})$, are Toeplitz subshifts with $h_{\rm top}(\sigma|_{M_\alpha})=\alpha$ for all $\alpha\in[0,\log{2})$. For any $N\in\mathcal{G}$, since $\pi^{-1}(N)$ is a closed $f^m$-invariant subset of $X$, we can take a minimal set $M_N$ for $f^m$ with $M_N\subset\pi^{-1}(N)$. Then, for each $N\in\mathcal{G}$, since $\pi(M_N)$ is a closed $\sigma$-invariant subset of $N$, by the minimality of $N$ for $\sigma$, we have $\pi(M_N)=N$; therefore, $M_N$ is especially an infinite set. Let $\mathcal{H}=\{M_N\colon N\in\mathcal{G}\}$. For any $N,N'\in\mathcal{G}$ with $N\ne N'$, we have $M_N\cap M_{N'}=\emptyset$, so $|\mathcal{H}|=|\mathcal{G}|=|\mathbb{R}|$. For every $M\in\mathcal{H}$, it holds that $f^m(M)=M$,  $f^i(M)$, $0\le i\le m-1$, are minimal sets for $f^m$, and
\[
\langle M\rangle=\bigcup_{i=0}^{m-1}f^i(M)
\]
is an infinite minimal set for $f$. We shall define an equivalence relation $\equiv$ in $\mathcal{H}^2$ by for any $M,M'\in\mathcal{H}$, $M\equiv M'$ if and only if $M'=f^k(M)$ for some $0\le k\le m-1$. Since every equivalence class with respect to $\equiv$ has at most $m$ elements, we have $|\mathcal{H}\slash\equiv|=|\mathbb{R}|$. For each $s\in\mathcal{H}\slash\equiv$, take $M^s\in\mathcal{H}$ such that $M^s\in s$. For any $s,s'\in\mathcal{H}\slash\equiv$, if $\langle M^s\rangle\cap\langle M^{s'}\rangle\ne\emptyset$, then we have $f^i(M^s)=f^j(M^{s'})$ for some $0\le i,j\le m-1$, and so $M^{s'}=f^k(M^s)$ for some $0\le k\le m-1$, which implies $M^s\equiv M^{s'}$; therefore $s=s'$. It follows that $\langle M^s\rangle\ne\langle M^{s'}\rangle$ for all $s,s'\in\mathcal{H}\slash\equiv$ with $s\ne s'$, thus
\[
\mathcal{F}=\{\langle M^s\rangle\colon s\in\mathcal{H}\slash\equiv\}
\]
gives a set of infinite minimal sets for $f$ such that $|\mathcal{F}|=|\mathcal{H}\slash\equiv|=|\mathbb{R}|$.
\end{proof}

The proof of the next lemma is left to the reader (see Theorem 1.20 of \cite{W}).

\begin{lem}
For any bounded sequence $(d_i)_{i\ge0}$ of non-negative numbers, it holds that
\[
\lim_{n\to\infty}\frac{1}{n}\sum_{i=0}^{n-1}d_i=0
\]
if and only if there is a sequence $0\le i_0<i_1<\cdots$ such that $d(\{i_j\colon j\ge0\})=1$ and $\lim_{j\to\infty}d_{i_j}=0$. 
\end{lem}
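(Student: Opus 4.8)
The plan is to prove the two implications separately. The implication ``a density‑one good set exists $\Rightarrow$ the Ces\`aro mean vanishes'' is the short one and I would dispose of it first; the converse needs a diagonal construction of the good set, and that is where the work lies.

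\textbf{Easy direction.} Assume $0\le i_0<i_1<\cdots$ is given with $d(\{i_j:j\ge0\})=1$ and $\lim_{j\to\infty}d_{i_j}=0$, and set $M=\sup_{i\ge0}d_i<\infty$. Given $\epsilon>0$, fix $J$ with $d_{i_j}<\epsilon$ for all $j\ge J$, and let $B=\mathbb{N}_0\setminus\{i_j:j\ge0\}$, which has density $0$. Splitting $\{0,\dots,n-1\}$ into the finite set $\{i_0,\dots,i_{J-1}\}$, the indices $i_j$ with $j\ge J$ lying below $n$, and $B\cap\{0,\dots,n-1\}$, one gets
\[
\frac1n\sum_{i=0}^{n-1}d_i\le\frac{MJ}{n}+\epsilon+M\cdot\frac1n\bigl|B\cap\{0,\dots,n-1\}\bigr|,
\]
and letting $n\to\infty$ the first and third terms vanish, so $\limsup_n\frac1n\sum_{i<n}d_i\le\epsilon$; since $\epsilon>0$ was arbitrary the limit is $0$.

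\textbf{Main direction.} Assume $\frac1n\sum_{i=0}^{n-1}d_i\to0$. For $k\ge1$ set $A_k=\{i\ge0:d_i\ge 1/k\}$; from $\tfrac1k|A_k\cap\{0,\dots,n-1\}|\le\sum_{i=0}^{n-1}d_i$ and the hypothesis it follows that each $A_k$ has density $0$, and clearly $A_1\subseteq A_2\subseteq\cdots$. I would then choose inductively integers $N_1<N_2<\cdots$ so that, for every $m\ge1$,
\[
\frac1n\bigl|A_k\cap\{0,\dots,n-1\}\bigr|<\frac1m\quad\text{whenever }n\ge N_m\text{ and }1\le k\le m,
\]
and in addition $N_m\ge m\sum_{j=2}^{m-1}N_j$ (possible, since at stage $m$ only finitely many density‑zero sets and finitely many previously fixed $N_j$ are involved). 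Set $B=\bigcup_{m\ge1}\bigl(A_m\cap[N_m,N_{m+1})\bigr)$ and $E=\mathbb{N}_0\setminus B$. If $i\in E$ and $i\ge N_m$, then $i\in[N_j,N_{j+1})$ for some $j\ge m$ and $i\notin A_j$, so $d_i<1/j\le1/m$; hence $d_i\to0$ along $E$. For the density of $B$, when $N_m\le n<N_{m+1}$ only the blocks of index $\le m$ meet $[0,n)$, which gives
\[
\bigl|B\cap\{0,\dots,n-1\}\bigr|\le\sum_{k=1}^{m-1}\bigl|A_k\cap[0,N_{k+1})\bigr|+\bigl|A_m\cap[0,n)\bigr|<\sum_{k=1}^{m-1}\frac{N_{k+1}}{k+1}+\frac nm ,
\]
so dividing by $n\ge N_m$ and invoking the largeness condition $N_m\ge m\sum_{j=2}^{m-1}N_j$ yields $\frac1n|B\cap\{0,\dots,n-1\}|<3/m$. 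Letting $n\to\infty$ forces $m\to\infty$, so $d(B)=0$, hence $d(E)=1$; enumerating $E=\{i_0<i_1<\cdots\}$ produces the required sequence.

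\textbf{Main obstacle.} The only genuinely delicate point is the bookkeeping in the choice of the $N_m$: once $N_1,\dots,N_m$ are fixed the ``head'' $\sum_{k<m}|A_k\cap[0,N_{k+1})|$ is a fixed number, and the largeness requirement is precisely what makes it negligible against $N_m$, while the ``tail'' piece on $[N_m,n)$ is absorbed by the density estimate for $A_m$ that holds from $N_m$ on. Everything else is a routine splitting argument. (This is Theorem 1.20 of \cite{W}.)
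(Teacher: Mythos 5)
Your proof is correct. Note that the paper does not actually prove this lemma — it is explicitly left to the reader with a pointer to Theorem 1.20 of \cite{W} — and your argument (both the routine splitting for the easy direction and the diagonal construction over the density-zero sets $A_k=\{i\colon d_i\ge 1/k\}$ with the carefully chosen cutoffs $N_m$) is essentially the standard proof found there; the bookkeeping with the largeness condition $N_m\ge m\sum_{j=2}^{m-1}N_j$ checks out and yields the claimed $3/m$ bound.
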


By using Lemma 6.3, we prove Lemma 1.1.

\begin{proof}[Proof of Lemma 1.1]
First, we prove that, under the assumptions on $(x_i)_{i\ge0}$, there is a sequence $(y_i)_{i\ge0}$ of points in $X$ such that $y_0=x_0$, $f(y_i)\sim_f y_{i+1}$ for all $i\ge0$,
\[
\lim_{i\to\infty}d(f(y_i),y_{i+1})=0,
\]
and
\[
\lim_{n\to\infty}\frac{1}{n}\sum_{i=0}^{n-1}d(x_i,y_i)=0.
\]
Fix a sequence $\epsilon_j>0$, $j\ge1$, such that $\lim_{j\to\infty}\epsilon_j=0$. Since $f$ has DSP, we can take $\delta_j>0$, $j\ge1$, such that $\mathcal{D}(f)=\mathcal{D}^{\epsilon_j,\delta_j}(f)$ for all $j\ge1$. For each $j\ge1$, take $m_j, N_j>0$ as in the properties of $\sim_{f,\delta_j}$. For any $a,b\in\mathbb{N}_0$ with $a\le b$, let $\langle a,b\rangle$ denote the interval
\[
\{i\in\mathbb{N}_0\colon a\le i\le b\}.
\]
Since $(x_i)_{i\ge0}$ satisfies
\[
\lim_{n\to\infty}\frac{1}{n}\sum_{i=0}^{n-1}d(f(x_i),x_{i+1})=0,
\]
by Lemma 6.3, there is a sequence of intervals $\langle a_k,b_k\rangle$, $k\ge1$, such that $b_k+1<a_{k+1}$ for all $k\ge1$, and letting
\[
\bigsqcup_{k=1}^\infty\langle a_k,b_k\rangle=\{i_l\colon l\ge0\},
\]
here $i_0<i_1<\cdots$, we have $d(\{i_l\colon l\ge0\})=1$ and $\lim_{l\to\infty}d(f(x_{i_l}),x_{i_l+1})=0$. After a suitable change of $\langle a_k,b_k\rangle$, $k\ge1$, we can also assume that there is a sequence $1\le k_1<k_2<\cdots$ such that for any $j\ge1$ and $k_j\le k\le k_{j+1}-1$,
\[
a_{k+1}=b_k+m_jn_{j,k}
\]
for some $n_{j,k}\ge N_j$. Fix $j\ge1$. Then, for any $k_j\le k\le k_{j+1}-1$, since $(x_i)_{i\ge0}$ satisfies $f(x_i)\sim_f x_{i+1}$ for all $i\ge0$, we have
\[
f^{m_jn_{j,k}}(x_{b_k})=f^{a_{k+1}-b_k}(x_{b_k})\sim_f x_{a_{k+1}},
\]
so $f^{m_jn_{j,k}}(x_{b_k})\sim_{f,\delta_j} x_{a_{k+1}}$. By $x_{b_k}\sim_{f,\delta_j}f^{m_jn_{j,k}}(x_{b_k})$, we obtain $x_{b_k}\sim_{f,\delta_j} x_{a_{k+1}}$. Then, the choice of $m_j,N_j$ gives a $\delta_j$-chain $(z_i)_{i=0}^{m_jn_{j,k}}$ of $f$ such that $z_0=x_{b_k}$ and $z_{m_jn_{j,k}}=x_{a_{k+1}}$. By $\mathcal{D}(f)=\mathcal{D}^{\epsilon_j,\delta_j}(f)$, we obtain $d(f^i(z_{j,k}),z_i)\le\epsilon_j$ for all $0\le i\le m_jn_{j,k}$ for some $z_{j,k}\in X$ with $z_{j,k}\sim_f x_{b_k}$. Let
\[
\alpha_{k}=(x_{a_k},x_{a_k+1},\dots,x_{b_k})
\]
and
\[
\beta_k=(x_{b_k},f(z_{j,k}),f^2(z_{j,k}),\dots,f^{m_jn_{j,k}-1}(z_{j,k}),x_{a_{k+1}})
\] 
for each $k_j\le k\le k_{j+1}-1$. Let $\gamma_0=(x_0,x_1,\dots,x_{a_{k_1}})$,
\[
\gamma_j=\alpha_{k_j}\beta_{k_j}\alpha_{k_j+1}\beta_{k_j+1}\cdots\alpha_{k_{j+1}-1}\beta_{k_{j+1}-1}
\]
for every $j\ge1$, and consider a sequence
\[
(y_i)_{i\ge0}=\gamma_0\gamma_1\gamma_2\cdots.
\]
Then, $y_0=x_0$. By the choice of $\beta_k$, $k\ge1$, we have $f(y_i)\sim_f y_{i+1}$ for all $i\ge0$. Recall that
\[
\bigsqcup_{k=1}^\infty\langle a_k,b_k\rangle=\{i_l\colon l\ge0\}
\]
satisfies $d(\{i_l\colon l\ge0\})=1$ and $\lim_{l\to\infty}d(f(x_{i_l}),x_{i_l+1})=0$. Since $\lim_{j\to\infty}\epsilon_j=0$, the choice $\alpha_k,\beta_k$, $k\ge1$, implies
\[
\lim_{i\to\infty}d(f(y_i),y_{i+1})=0.
\]
Also, by Lemma 6.3, the choice of $\alpha_k$, $k\ge1$, implies
\[
\lim_{n\to\infty}\frac{1}{n}\sum_{i=0}^{n-1}d(x_i,y_i)=0,
\]
thus $(y_i)_{i\ge0}$ satisfies the desired properties.

For a sequence $(y_i)_{i\ge0}$ as above, we prove that for any $\epsilon>0$, there is $x\in X$ such that $x\sim_f y_0$, $d(x,y_0)<\epsilon$, and
\[
\lim_{n\to\infty}\frac{1}{n}\sum_{i=0}^{n-1}d(f^i(x),y_i)=0.
\]
This implies $x\sim_f x_0$, $d(x,x_0)<\epsilon$, and
\[
\lim_{n\to\infty}\frac{1}{n}\sum_{i=0}^{n-1}d(f^i(x),x_i)=0,
\]
thus completing the proof of the lemma.

Take a sequence $\eta_j>0$, $j\ge0$, such that $\sum_{j=0}^\infty\eta_j<\epsilon$. Since $f$ has DSP, we can take $\zeta_j>0$, $j\ge0$, such that $\mathcal{D}(f)=\mathcal{D}^{\eta_j,\zeta_j}(f)$ for all $j\ge0$. For each $j\ge0$, fix $m_j,N_j>0$ as in the properties of $\sim_{f,\zeta_j}$. Then, we take a sequence of integers $n_j\ge0$, $j\ge0$, such that  the following properties hold:
\begin{itemize}
\item $d(f(y_i),y_{i+1})\le\zeta_j$ for all $j\ge0$ and $i\ge n_j+m_jN_j$,
\item $n_j+m_jN_j\le n_{j+1}$ for any $j\ge0$.
\end{itemize}
By taking $N_0$ large enough, we may assume that $n_0=0$ and $n_1=m_0N_0$. Since $f(y_i)\sim_f y_{i+1}$ for all $i\ge0$, we have $f^{n_1}(y_0)\sim_f y_{n_1}$, implying $f^{n_1}(y_0)\sim_{f,\zeta_0}y_{n_1}$. By $y_0\sim_{f,\zeta_0}f^{n_1}(y_0)$, we obtain $y_0\sim_{f,\zeta_0}y_{n_1}$. Then, the choice of $m_0,N_0$ gives a $\zeta_0$-chain $\kappa_0=(v_{0,i})_{i=0}^{n_1}$ of $f$ with $v_{0,0}=y_0$ and $v_{0,n_1}=y_{n_1}$. By the choice of $n_1$, 
\[
\xi_0=\kappa_0(y_{n_1},y_{n_1+1},\dots)
\]
is a $\zeta_0$-pseudo orbit of $f$ and so $\eta_0$-shadowed by some $w_0\in X$ with $w_0\sim_f y_0$. In the following, we inductively construct a sequence $w_k\in X$, $k\ge1$, such that $w_k\sim_f y_0$ for all $k\ge1$; and for each $k\ge1$, a $\zeta_k$-pseudo orbit
\[
\xi_k=(w_{k-1},f(w_{k-1}),\dots,f^{n_k}(w_{k-1}))\kappa_k(y_{n_k+m_kN_k},y_{n_k+m_kN_k+1},\dots)
\]
of $f$ is $\eta_k$-shadowed by $w_k$, where $\kappa_k=(v_{k,i})_{i=0}^{m_kN_k}$ is a $\zeta_k$-chain of $f$ with $v_{k,0}=f^{n_k}(w_{k-1})$ and $v_{k,m_kN_k}=y_{n_k+m_kN_k}$.

First, since $w_0\sim_f y_0$, it holds that $f^{n_1}(w_0)\sim_f f^{n_1}(y_0)$. By $f^{n_1}(y_0)\sim_f y_{n_1}$, we obtain $f^{n_1}(w_0)\sim_f y_{n_1}$. Since $y_{n_1}\sim_{f,\zeta_1}f^{m_1N_1}(y_{n_1})$ and $f^{m_1N_1}(y_{n_1})\sim_f y_{n_1+m_1N_1}$, we have $y_{n_1}\sim_{f,\zeta_1}y_{n_1+m_1N_1}$. It follows that
\[
f^{n_1}(w_0)\sim_{f,\zeta_1}y_{n_1+m_1N_1}.
\]
Then, the choice of $m_1,N_1$ gives a $\zeta_1$-chain $\kappa_1=(v_{1,i})_{i=0}^{m_1N_1}$ of $f$ with $v_{1,0}=f^{n_1}(w_0)$ and $v_{1,m_1N_1}=y_{n_1+m_1N_1}$. By the choice of $n_1$, 
\[
\xi_1=(w_0,f(w_0),\dots,f^{n_1}(w_0))\kappa_1(y_{n_1+m_1N_1},y_{n_1+m_1N_1+1},\dots)
\]
is a $\zeta_1$-pseudo orbit of $f$ and so $\eta_1$-shadowed by some $w_1\in X$ with $w_1\sim_f w_0$.  By $w_0\sim_f y_0$, we obtain $w_1\sim_f y_0$. Assume that $w_k\in X$ is given as above for some $k\ge1$. Since $w_k\sim_f y_0$, it holds that $f^{n_{k+1}}(w_k)\sim_f f^{n_{k+1}}(y_0)$. By $f^{n_{k+1}}(y_0)\sim_f y_{n_{k+1}}$, we obtain $f^{n_{k+1}}(w_k)\sim_f y_{n_{k+1}}$. Since $y_{n_{k+1}}\sim_{f,\zeta_{k+1}}f^{m_{k+1}N_{k+1}}(y_{n_{k+1}})$ and $f^{m_{k+1}N_{k+1}}(y_{n_{k+1}})\sim_f y_{n_{k+1}+m_{k+1}N_{k+1}}$, we have $y_{n_{k+1}}\sim_{f,\zeta_{k+1}}y_{n_{k+1}+m_{k+1}N_{k+1}}$. It follows that
\[
f^{n_{k+1}}(w_k)\sim_{f,\zeta_{k+1}}y_{n_{k+1}+m_{k+1}N_{k+1}}.
\]
Then, the choice of $m_{k+1},N_{k+1}$ gives a $\zeta_{k+1}$-chain $\kappa_{k+1}=(v_{k+1,i})_{i=0}^{m_{k+1}N_{k+1}}$ of $f$ with $v_{k+1,0}=f^{n_{k+1}}(w_k)$ and $v_{k+1,m_{k+1}N_{k+1}}=y_{n_{k+1}+m_{k+1}N_{k+1}}$. By the choice of $n_{k+1}$, 
\[
\xi_{k+1}=(w_k,f(w_k),\dots,f^{n_{k+1}}(w_k))\kappa_{k+1}(y_{n_{k+1}+m_{k+1}N_{k+1}},y_{n_{k+1}+m_{k+1}N_{k+1}+1},\dots)
\]
is a $\zeta_{k+1}$-pseudo orbit of $f$ and so $\eta_{k+1}$-shadowed by some $w_{k+1}\in X$ with $w_{k+1}\sim_f w_k$.  By $w_k\sim_f y_0$, we obtain $w_{k+1}\sim_f y_0$, completing the construction.

By the construction above, we have
\[
d(y_0,w_0)+\sum_{k=1}^\infty d(w_{k-1},w_k)\le\eta_0+\sum_{k=1}^\infty\eta_k<\epsilon,
\]
implying that $(w_k)_{k\ge0}$ is a Cauchy sequence, and so $\lim_{k\to\infty}w_k=x$ for some $x\in X$ with $d(x,y_0)<\epsilon$. For such $x$, because $w_k\sim_f y_0$ for every $k\ge1$, we obtain $x\sim_f y_0$. Again by the construction above, we see that for any $j\ge0$ and $k\ge j$,
\[
(y_{n_j+m_jN_j},y_{n_j+m_jN_j+1},\dots,y_{n_{j+1}})
\]
is $\eta_j$-shadowed by $f^{n_j+m_jN_j}(w_j)$, and
\[
(f^{n_j+m_jN_j}(w_k),f^{n_j+m_jN_j+1}(w_k),\dots,f^{n_{j+1}}(w_k))
\]
is $\eta_{k+1}$-shadowed by $f^{n_j+m_jN_j}(w_{k+1})$. It follows that for any $j\ge0$ and $k\ge j$,
\[
(y_{n_j+m_jN_j},y_{n_j+m_jN_j+1},\dots,y_{n_{j+1}})
\]
is $\sum_{l=j}^\infty\eta_l$-shadowed by $f^{n_j+m_jN_j}(w_k)$. By taking the limit as $k\to\infty$, we obtain that
\[
(y_{n_j+m_jN_j},y_{n_j+m_jN_j+1},\dots,y_{n_{j+1}})
\]
is $\sum_{l=j}^\infty\eta_l$-shadowed by $f^{n_j+m_jN_j}(x)$ for all $j\ge0$. If $(n_j)_{j\ge0}$ is a rapidly increasing sequence, then letting
\[
A=\bigsqcup_{j=0}^\infty\langle n_j+m_jN_j,n_{j+1}\rangle,
\]
we have $d(A)=1$. Since $\lim_{j\to\infty}\sum_{l=j}^\infty\eta_l=0$, by Lemma 6.3, we conclude that 
\[
\lim_{n\to\infty}\frac{1}{n}\sum_{i=0}^{n-1}d(f^i(x),y_i)=0,
\]
completing the proof of the lemma.
\end{proof}

For the proof of Theorem 1.6, we need the following lemma. 

\begin{lem}
Let $f\colon X\to X$ be a chain transitive continuous map and let $M,M'$ be minimal sets for $f$.
Then, for any $x\in X$, there is a sequence $\xi=(x_i)_{i\ge0}$ of points in $X$ such that $x_0=x$, $f(x_i)\sim_f x_{i+1}$ for every $i\ge0$,
\[
\lim_{n\to\infty}\frac{1}{n}\sum_{i=0}^{n-1}d(f(x_i),x_{i+1})=0,
\]
and $\overline{\omega}(\xi)=M\cup M'$.
\end{lem}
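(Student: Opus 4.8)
The plan is to build $\xi$ by hand as an infinite concatenation of ever-longer blocks: round $j$ consists of a long block that runs along a true $f$-orbit lying entirely inside $M$, followed by a long block running along a true $f$-orbit lying entirely inside $M'$. The only deviations from a genuine orbit are the finitely many one-step ``jumps'' between consecutive blocks; since these occur with density $0$ they are invisible to the Cesàro average of $d(f(x_i),x_{i+1})$, and if the blocks grow fast enough then $\overline{\omega}(\xi)$ is forced to equal exactly $M\cup M'$. Note that, given $x_0=x$, the requirement $f(x_i)\sim_f x_{i+1}$ for all $i$ amounts (by induction, using $(f\times f)$-invariance of $\sim_f$) to $x_i\sim_f f^i(x)$ for all $i$; so each jump must land in the prescribed class $\mathcal{D}_f(f^{i+1}(x))$, which is possible thanks to the following fact, the one non-routine ingredient: \emph{every class $D\in\mathcal{D}(f)$ meets $M$, and likewise meets $M'$.}

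To prove this, fix $z\in D$ and write $[z]_\delta$ for the $\sim_{f,\delta}$-class of $z$, so that $D=\bigcap_{\delta>0}[z]_\delta$. Each $[z]_\delta$ is clopen, and (by the metric form of the $\mathcal{U}$-cyclic decomposition) $f$ cyclically permutes the finitely many $\sim_{f,\delta}$-classes; since $M$ meets at least one of them and $f(M)=M$, it meets all of them, so $[z]_\delta\cap M\neq\emptyset$ for every $\delta>0$. If $\delta'\le\delta$, every $\delta'$-cycle is a $\delta$-cycle, hence $m(\delta)\mid m(\delta')$, and then a $\delta'$-chain of length divisible by $m(\delta')$ is a $\delta$-chain of length divisible by $m(\delta)$, so $[z]_{\delta'}\subset[z]_\delta$. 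Thus $\{[z]_\delta\cap M\}_{\delta>0}$ is a decreasing family of nonempty closed subsets of the compact set $M$, and $D\cap M=\bigcap_{\delta>0}([z]_\delta\cap M)\neq\emptyset$; the same argument gives $D\cap M'\neq\emptyset$.

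Now construct $\xi$ recursively: set $x_0=x$, $T_1=1$, and suppose $\xi$ has been defined on $\{0,\dots,T_j-1\}$ with $x_i\in\mathcal{D}_f(f^i(x))$ for $i<T_j$. Choose $a_j\in\mathcal{D}_f(f^{T_j}(x))\cap M$ and set $x_{T_j+\ell}=f^\ell(a_j)$ for $0\le\ell<L_j$; then choose $b_j\in\mathcal{D}_f(f^{T_j+L_j}(x))\cap M'$ and set $x_{T_j+L_j+\ell}=f^\ell(b_j)$ for $0\le\ell<L'_j$; put $T_{j+1}=T_j+L_j+L'_j$. The lengths are chosen recursively ($L_j$ once $T_j$ is known, $L'_j$ once $L_j$ is known) so that $L_j\ge j(T_j+1)$ and $L'_j\ge j(T_j+L_j+1)$.

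For the verification: $x_i\in\mathcal{D}_f(f^i(x))$ holds at the start of each block by the choice of $a_j,b_j$ and propagates through a block since $\sim_f$ is $(f\times f)$-invariant (e.g.\ $f^\ell(a_j)\sim_f f^{T_j+\ell}(x)$), so $f(x_i)\sim_f f^{i+1}(x)\sim_f x_{i+1}$ for all $i$. Next, $d(f(x_i),x_{i+1})=0$ except at the at most $2j+1$ block-seams inside $\{0,\dots,T_{j+1}-1\}$, each $\le{\rm diam}(X)$, and $T_{j+1}\ge(j+1)T_j$ gives $T_j/j\to\infty$, so $\frac1n\sum_{i=0}^{n-1}d(f(x_i),x_{i+1})\le (2j+1){\rm diam}(X)/T_j\to0$ whenever $T_j\le n<T_{j+1}$. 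Every $x_i$ with $i\ge1$ lies in the closed set $M\cup M'$, so $\overline{\omega}(\xi)\subset M\cup M'$. For the reverse inclusion fix $y\in M$ and $\epsilon>0$; Lemma 6.1 applied to the minimal system $(M,f|_M)$ gives $k\ge1$ such that every $k$ consecutive points of any $f$-orbit in $M$ include one in $B_\epsilon(y)$, so the $M$-block of round $j$ contains at least $L_j/k-1$ indices $i$ with $x_i\in B_\epsilon(y)$; evaluating the frequency at $n=T_j+L_j$ gives $\frac1n|\{i<n:x_i\in B_\epsilon(y)\}|\ge (L_j/k-1)/(T_j+L_j)\to1/k$, because $T_j/L_j\le1/j$. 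Hence $\overline{d}(\{i:x_i\in B_\epsilon(y)\})\ge1/k>0$, so $y\in\overline{\omega}(\xi)$; the symmetric argument with the $M'$-blocks and the times $n=T_{j+1}$ gives $M'\subset\overline{\omega}(\xi)$, and therefore $\overline{\omega}(\xi)=M\cup M'$. The only step that is not mechanical is the structural fact of the second paragraph; apart from it, the single thing to get right is the growth of $L_j,L'_j$, which must dominate the accumulated length $T_j$ so that each block, measured at its own right endpoint, occupies almost the whole history.
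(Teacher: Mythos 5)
Your proof is correct, and the construction is essentially the paper's: both build $\xi$ by concatenating increasingly long genuine orbit segments that alternate between $M$ and $M'$, observe that the finitely many seams up to time $n$ have density zero so the Ces\`aro condition holds, and invoke Lemma 6.1 on the minimal systems $(M,f|_M)$, $(M',f|_{M'})$ to get a positive upper density of visits to every $B_\epsilon(y)$ with $y\in M\cup M'$. The one place where you genuinely diverge is the structural ingredient that every class $D\in\mathcal{D}(f)$ meets $M$ (and $M'$), which is what lets each jump land in the prescribed class $\mathcal{D}_f(f^{i+1}(x))$. The paper obtains this by passing to the factor $(Y,g)$ of $(X,f)$ by $\sim_f$, which is a periodic orbit or an odometer and hence minimal (citing \cite{RW}), so that $\pi(S)=Y$ for every nonempty closed $f$-invariant $S\subset X$. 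You instead argue directly with the finite cyclic decompositions: $f$ permutes the finitely many clopen $\sim_{f,\delta}$-classes cyclically and $f(M)=M$, so $M$ meets every class; the classes $[z]_\delta$ are nested as $\delta$ decreases because $m(\delta)\mid m(\delta')$ for $\delta'\le\delta$; and compactness of $M$ gives $D\cap M=\bigcap_{\delta>0}\bigl([z]_\delta\cap M\bigr)\ne\emptyset$. Your route is more elementary and self-contained, avoiding the structure theorem for the maximal $\sim_f$-factor at the cost of reproving a fact the paper can simply quote; you are also more explicit than the paper about the required growth of the block lengths (the paper just asks for a ``rapidly increasing'' sequence $(N_j)$). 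Both arguments are sound.
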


\begin{proof}
Let $(Y,g)$ be the factor of $(X,f)$ with respect to $\sim_f$ and let
\[
\pi\colon(X,f)\to(Y,g)
\]
be the factor map. We know that $Y$ is a periodic orbit, or $(Y,g)$ is topologically conjugate to an odometer, therefore $g\colon Y\to Y$ is minimal in both cases  (see the proof of \cite[Theorem 6]{RW}). For any closed $f$-invariant subset $S$ of $X$, $\pi(S)$ is a closed $g$-invariant subset of $Y$, thus $\pi(S)=Y$, since $g$ is minimal. This implies that for every $p\in X$, there is $q\in S$ such that $\pi(p)=\pi(q)$, that is, $p\sim_f q$. We take $y_1\in M$ with $f(x)\sim_f y_1$ and fix a sequence $1\le N_1<N_2<\cdots$. Then, take $z_1\in M'$ with $f^{N_1}(y_1)\sim_f z_1$, $y_2\in M$ with $f^{N_2}(z_1)\sim_f y_2$, $z_2\in M'$ with $f^{N_3}(y_2)\sim_f z_2$, $y_3\in M$ with $f^{N_4}(z_2)\sim_f y_3$, and so on. Continuing this process, we obtain $y_k\in M,z_k\in M'$, $k\ge1$, such that $f^{N_{2k-1}}(y_k)\sim_f z_k$ and $f^{N_{2k}}(z_k)\sim_f y_{k+1}$  for all $k\ge1$. Let
\[
\alpha_k=(y_k,f(y_k),\dots,f^{N_{2k-1}-1}(y_k))
\]
and
\[
\beta_k=(z_k,f(z_k),\dots,f^{N_{2k}-1}(z_k))
\]
for each $k\ge1$.  Consider a sequence
\[
\xi=(x_i)_{i\ge0}=(x)\alpha_1\beta_1\alpha_2\beta_2\cdots
\]
and note that $x_0=x$. By the choice of $y_k\in M,z_k\in M'$, $k\ge1$, it is clear that $f(x_i)\sim_f x_{i+1}$ for every $i\ge0$. By the choice of $\alpha_k,\beta_k$, $k\ge1$, if $(N_j)_{j\ge1}$ is a rapidly increasing sequence, then
\[
\lim_{n\to\infty}\frac{1}{n}\sum_{i=0}^{n-1}d(f(x_i),x_{i+1})=0.
\]
By the choice of $\alpha_k,\beta_k$, $k\ge1$, it is obvious that $\overline{\omega}(\xi)\subset M\cup M'$. Moreover, if $(N_j)_{j\ge1}$ is a rapidly increasing sequence, then since $M,M'$ are minimal sets for $f$, by Lemma 6.1, we have
\[
\overline{d}(\{i\ge0\colon x_i\in B_\epsilon(y)\})>0
\]
for all $y\in M\cup M'$ and $\epsilon>0$. This implies $ M\cup M'\subset\overline{\omega}(\xi)$, thus the lemma has been proved.
\end{proof}

By Lemmas 1.1, 6.2, and 6.4, we prove Theorem 1.6.

\begin{proof}[Proof of Theorem 1.6]
Due to Lemma 6.2, we have a set $\mathcal{F}$ of infinite minimal sets for $f$ such that $|\mathcal{F}|=|\mathbb{R}|$. Fix $M_0\in\mathcal{F}$ and let $\mathcal{G}=\mathcal{F}\setminus\{M_0\}$. Consider a triple
\[
(p,r,M)\in X\times(0,\infty)\times\mathcal{G}.
\]
Then, by Lemma 6.4, there is a sequence $\xi=(x_i)_{i\ge0}$ of points in $X$ such that $x_0=p$, $f(x_i)\sim_f x_{i+1}$ for every $i\ge0$,
\[
\lim_{n\to\infty}\frac{1}{n}\sum_{i=0}^{n-1}d(f(x_i),x_{i+1})=0,
\]
and $\overline{\omega}(\xi)=M_0\cup M$. For such $\xi$, since $f$ has DSP, Lemma 1.1 gives $x\in X$ with $d(x,x_0)<r$, $x\sim_f x_0$, and 
\[
\lim_{n\to\infty}\frac{1}{n}\sum_{i=0}^{n-1}d(f^i(x),x_i)=0.
\]
For such $x$, by Lemma 5.4, we obtain $\overline{\omega}(x,f)=\overline{\omega}(\xi)=M_0\cup M$. In other words, for any
$(p,r,M)\in X\times(0,\infty)\times\mathcal{G}$, we have $\overline{\omega}(x,f)=M_0\cup M$ for some $x\in X$ with $d(p,x)<r$ and $p\sim_f x$. This enables us to define a map
\[
\Gamma\colon X\times(0,\infty)\times\mathcal{G}\to X
\]
such that $d(p,\Gamma(p,r,M))<r$, $p\sim_f\Gamma(p,r,M)$, and
\[
\overline{\omega}(\Gamma(p,r,M),f)=M_0\cup M
\]
for all $(p,r,M)\in X\times(0,\infty)\times\mathcal{G}$. Since $|\mathcal{G}|=|\mathbb{R}|$ and
\[
|\mathcal{D}(f)|\le|X|\le|\mathbb{R}|,
\]
we can decompose $\mathcal{G}$ into a disjoint union of subsets $\mathcal{G}_D$, $D\in\mathcal{D}(f)$, of $\mathcal{G}$ such that $|\mathcal{G}_D|=|\mathbb{R}|$ for all $D\in\mathcal{D}(f)$. Then, since $|\{0,1\}^\mathbb{N}|=|\mathcal{G}_D|=|\mathbb{R}|$, we can take an injective map
\[
i_D\colon\{0,1\}^\mathbb{N}\to\mathcal{G}_D
\]
for every $D\in\mathcal{D}(f)$. For each $D\in\mathcal{D}(f)$, fix any continuous map
\[
\pi_D\colon\{0,1\}^\mathbb{N}\to D
\]
which is surjective. Then, we define $\phi\colon\{0,1\}^\mathbb{N}\to(0,\infty)$ by
\begin{equation*}
\phi(s)=
\begin{cases}
\exp(\sum_{n=1}^\infty(-1)^{n-1}\frac{s_n}{n})&\text{if $\sum_{n=1}^\infty(-1)^{n-1}\frac{s_n}{n}$ exists}\\
1&\text{if otherwise}
\end{cases}
\end{equation*}
for all $s=(s_n)_{n\ge1}\in\{0,1\}^\mathbb{N}$. By this definition, we see that $\phi(U)=(0,\infty)$ for any non-empty open subset $U$ of $\{0,1\}^\mathbb{N}$. For every $D\in\mathcal{D}(f)$, let $\Phi_D\colon\{0,1\}^\mathbb{N}\to D$ be a map defined by
\[
\Phi_D(s)=\Gamma(\pi_D(s),\phi(s),i_D(s))
\]
for all $s\in\{0,1\}^\mathbb{N}$. Note that
\[
\overline{\omega}(\Phi_D(s),f)=\overline{\omega}(\Gamma(\pi_D(s),\phi(s),i_D(s)),f)=M_0\cup i_D(s)
\]
for all $D\in\mathcal{D}(f)$ and $s\in\{0,1\}^\mathbb{N}$. For any $D\in\mathcal{D}(f)$, since $i_D$ is injective, this especially implies that $\Phi_D$ is also injective. Letting
\[
S=\bigcup_{D\in\mathcal{D}(f)}\Phi_D(\{0,1\}^\mathbb{N}),
\]
we show that $S$ is an $\overline{\omega}$-scrambled subset of $X$ for which $D\cap S$ is $\mathfrak{c}$-dense in $D$ for all $D\in\mathcal{D}(f)$.

Given any $x,y\in S$ with $x\ne y$, we have $x\in\Phi_D(\{0,1\}^\mathbb{N})$ and $y\in\Phi_{D'}(\{0,1\}^\mathbb{N})$ for some $D,D'\in\mathcal{D}(f)$. Then, there are $s,t\in\{0,1\}^\mathbb{N}$ such that $x=\Phi_D(s)$ and $y=\Phi_{D'}(t)$; therefore,
\[
\overline{\omega}(x,f)=\overline{\omega}(\Phi_D(s),f)=M_0\cup i_D(s)
\]
and
\[
\overline{\omega}(y,f)=\overline{\omega}(\Phi_{D'}(t),f)=M_0\cup i_{D'}(t).
\]
If $D\ne D'$, then since $i_D(s)\in\mathcal{G}_D$, $i_{D'}(t)\in\mathcal{G}_{D'}$, and $\mathcal{G}_D\cap\mathcal{G}_{D'}=\emptyset$, we have $i_D(s)\ne i_{D'}(t)$. If $D=D'$, then we have $s\ne t$, and so since $i_D=i_{D'}$ is injective, $i_D(s)\ne i_{D'}(t)$. In both cases, we obtain
\[
\overline{\omega}(x,f)\cap\overline{\omega}(y,f)=M_0,
\]
$\overline{\omega}(x,f)\setminus\overline{\omega}(y,f)=i_D(s)$, and $\overline{\omega}(y,f)\setminus\overline{\omega}(x,f)=i_{D'}(t)$. Since $x,y\in S$ with $x\ne y$ are arbitrary, this implies that $\Phi(\{0,1\}^\mathbb{N})$ is an $\overline{\omega}$-scrambled subset of $X$.

It remains to prove that $D\cap S$ is $\mathfrak{c}$-dense in $D$ for all $D\in\mathcal{D}(f)$. Let $D\in\mathcal{D}(f)$ and note that
\[
D\cap S=\Phi_D(\{0,1\}^\mathbb{N}).
\]
For any $p\in D$ and $\epsilon>0$, take $s\in\{0,1\}^\mathbb{N}$ and an open subset $U$ of $\{0,1\}^\mathbb{N}$ such that $\pi_D(s)=p$ and $\pi_D(U)\subset B_{\epsilon/2}(p)$. Then, since $\phi(U)=(0,\infty)$, for each $r\in(0,\epsilon/2)$, there is $s_r\in U$ such that $\phi(s_r)=r$. For such $s_r$, $r\in(0,\epsilon/2)$, we have
\begin{align*}
d(p,\Phi_D(s_r))&\le d(p,\pi_D(s_r))+d(\pi_D(s_r),\Phi_D(s_r))\\
&=d(p,\pi_D(s_r))+d(\pi_D(s_r),\Gamma(\pi_D(s_r),\phi(s_r),i_D(s_r)))\\
&<\epsilon/2+\phi(s_r)\\
&=\epsilon/2+r<\epsilon,
\end{align*}
thus $\Phi_D(s_r)\in B_\epsilon(p)\cap\Phi_D(\{0,1\}^\mathbb{N})$. For any $r,r'\in(0,\epsilon/2)$ with $r\ne r'$, because $\phi(s_r)=r$ and $\phi(s_{r'})=r'$, we have $s_r\ne s_{r'}$. Since $\Phi_D$ is injective, letting
\[
A=\{\Phi_D(s_r)\colon r\in(0,\epsilon/2)\},
\]
we obtain
\[
A\subset B_\epsilon(p)\cap\Phi_D(\{0,1\}^\mathbb{N})=B_\epsilon(p)\cap D\cap S
\]
and $|A|=|\mathbb{R}|$. Since $p\in D$ and $\epsilon>0$ are arbitrary, we conclude that $D\cap S$ is $\mathfrak{c}$-dense in $D$, proving the theorem.
\end{proof}

We present four corollaries of Lemma 1.1.

\begin{cor}
Let $f\colon X\to X$ be a chain transitive continuous map. If $f$ has DSP, then for any $x,y\in X$ with $y\in\mathcal{D}_f(x)$ and $\epsilon>0$, there is $z\in\mathcal{D}_f(x)$ such that $d(y,z)<\epsilon$ and
\[
\lim_{n\to\infty}\frac{1}{n}\sum_{i=0}^{n-1}d(f^i(x),f^i(z))=0.
\] 
\end{cor}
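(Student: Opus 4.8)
The plan is to obtain this directly from Lemma 1.1 by feeding it a pseudo-orbit that is the genuine orbit of $x$ with only its initial term replaced by $y$. Concretely, given $x,y\in X$ with $y\in\mathcal{D}_f(x)$ (equivalently $y\sim_f x$) and given $\epsilon>0$, I would set $x_0=y$ and $x_i=f^i(x)$ for all $i\ge1$, and apply Lemma 1.1 to the sequence $(x_i)_{i\ge0}$.

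First I would check the hypotheses of Lemma 1.1. For $i\ge1$ we have $f(x_i)=f^{i+1}(x)=x_{i+1}$, so $f(x_i)\sim_f x_{i+1}$ trivially; for $i=0$ we have $f(x_0)=f(y)$ and $x_1=f(x)$, and $f(y)\sim_f f(x)$ follows from $y\sim_f x$ together with the $(f\times f)$-invariance of $\sim_f$ recorded in Definition 1.3. Thus $f(x_i)\sim_f x_{i+1}$ for all $i\ge0$. Moreover every summand with $i\ge1$ in $\sum_{i=0}^{n-1}d(f(x_i),x_{i+1})$ vanishes, so this sum equals the constant $d(f(y),f(x))$ and
\[
\lim_{n\to\infty}\frac1n\sum_{i=0}^{n-1}d(f(x_i),x_{i+1})=0,
\]
so Lemma 1.1 applies.

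Applying Lemma 1.1 to $(x_i)_{i\ge0}$ with the prescribed $\epsilon$ yields $z\in X$ with $d(z,x_0)<\epsilon$, $z\sim_f x_0$, and $\lim_{n\to\infty}\frac1n\sum_{i=0}^{n-1}d(f^i(z),x_i)=0$. Since $x_0=y$, this gives $d(z,y)<\epsilon$ and $z\sim_f y$; combining $z\sim_f y$ with $y\sim_f x$ and transitivity of $\sim_f$ gives $z\sim_f x$, i.e. $z\in\mathcal{D}_f(x)$. Finally, $d(x_i,f^i(x))=0$ for $i\ge1$ while $d(x_0,x)=d(y,x)$, so by the triangle inequality
\[
\frac1n\sum_{i=0}^{n-1}d(f^i(z),f^i(x))\le\frac1n\sum_{i=0}^{n-1}d(f^i(z),x_i)+\frac{d(y,x)}{n},
\]
and the right-hand side tends to $0$ as $n\to\infty$, which is exactly the asserted property of $z$.

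There is essentially no real obstacle here: the substance is entirely contained in Lemma 1.1, and the only idea needed is the choice of the perturbed orbit $(y,f(x),f^2(x),\dots)$. The one point to be attentive to is bookkeeping — Lemma 1.1 delivers a point close to, and $\sim_f$-equivalent to, the base point $x_0=y$ (not $x$), and its conclusion controls the average distance to $(x_i)$ rather than to $(f^i(x))$ — but both discrepancies amount to a single term that is $O(1/n)$ and hence disappears in the Ces\`aro average.
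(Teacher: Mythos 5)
Your proposal is correct and follows exactly the paper's own argument: the paper also applies Lemma 1.1 to the sequence $(y,f(x),f^2(x),\dots)$, using the $(f\times f)$-invariance of $\sim_f$ to verify the hypothesis at $i=0$ and noting that the single discrepant term vanishes in the Ces\`aro average. No issues.
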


\begin{proof}
For any $x\in X$ and $y\in\mathcal{D}_f(x)$, let
\[
\xi=(x_i)_{i\ge0}=(y,f(x),f^2(x),\dots).
\]
Since $y\in\mathcal{D}_f(x)$ implies $f(y)\in\mathcal{D}_f(f(x))$, that is, $f(y)\sim_f f(x)$, $\xi$ satisfies $f(x_i)\sim_f x_{i+1}$ for all $i\ge0$; and also
\[
\lim_{n\to\infty}\frac{1}{n}\sum_{i=0}^{n-1}d(f(x_i),x_{i+1})=0.
\]
Then, for any $\epsilon>0$, Lemma 1.1 gives $z\in X$ such that $d(z,x_0)<\epsilon$, $z\sim_f x_0$, and 
\[
\lim_{n\to\infty}\frac{1}{n}\sum_{i=0}^{n-1}d(f^i(z),x_i)=0.
\]
It follows that $d(y,z)=d(x_0,z)<\epsilon$, $z\in\mathcal{D}_f(x_0)=\mathcal{D}_f(y)=\mathcal{D}_f(x)$, and
\[
\lim_{n\to\infty}\frac{1}{n}\sum_{i=0}^{n-1}d(f^i(x),f^i(z))=0,
\]
thus the corollary has been proved.
\end{proof}

\begin{cor}
Let $f\colon X\to X$ be a chain transitive continuous map. Let
\[
U_D(x)=\{y\in D\colon\liminf_{n\to\infty}\frac{1}{n}\sum_{i=0}^{n-1}d(f^i(x),f^i(y))=0\}
\]
for all $D\in\mathcal{D}(f)$ and $x\in D$. If $f$ has DSP, then for any $D\in\mathcal{D}(f)$ and any non-empty countable subset $S$ of $D$,
\[
\bigcap_{x\in S}U_D(x)
\]
is a dense $G_\delta$-subset of $D$.
\end{cor}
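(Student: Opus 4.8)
The plan is to reduce everything to two facts about each individual set $U_D(x)$, namely that $U_D(x)$ is a $G_\delta$-subset of $D$ and that it is dense in $D$, and then to invoke the Baire category theorem. Indeed, $D\in\mathcal{D}(f)$ is a closed subset of the compact metric space $X$ (since $\sim_f$ is a closed relation), so $D$ is itself a compact metric space, hence completely metrizable and a Baire space. A countable intersection of dense $G_\delta$-subsets of a Baire space is again a dense $G_\delta$-subset; since $S$ is countable, $\bigcap_{x\in S}U_D(x)$ is such an intersection, and the corollary follows. So the work is entirely in establishing the two properties of $U_D(x)$.

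First I would show $U_D(x)$ is $G_\delta$ in $D$. For each $i\ge0$ the function $y\mapsto d(f^i(x),f^i(y))$ is continuous on $D$, so for all $n,k\ge1$ the set
\[
O_{n,k}=\Bigl\{y\in D\colon\frac{1}{n}\sum_{i=0}^{n-1}d(f^i(x),f^i(y))<\frac{1}{k}\Bigr\}
\]
is open in $D$. Since for a sequence $(a_n)$ of non-negative reals one has $\liminf_{n\to\infty}a_n=0$ if and only if for every $k\ge1$ and every $N\ge1$ there is $n\ge N$ with $a_n<1/k$, we obtain
\[
U_D(x)=\bigcap_{k\ge1}\bigcap_{N\ge1}\bigcup_{n\ge N}O_{n,k},
\]
which exhibits $U_D(x)$ as a $G_\delta$-subset of $D$.

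Next I would show $U_D(x)$ is dense in $D$, and here the dynamical input is supplied by Corollary 6.1. Fix $y\in D$ and $\epsilon>0$. Since $x\in D$ we have $\mathcal{D}_f(x)=D$, so $y\in\mathcal{D}_f(x)$, and Corollary 6.1 gives $z\in\mathcal{D}_f(x)=D$ with $d(y,z)<\epsilon$ and $\lim_{n\to\infty}\frac{1}{n}\sum_{i=0}^{n-1}d(f^i(x),f^i(z))=0$; in particular the corresponding $\liminf$ is $0$, so $z\in U_D(x)$. As $y\in D$ and $\epsilon>0$ are arbitrary, $U_D(x)$ is dense in $D$. Combining this with the $G_\delta$ description and the Baire-category remark above finishes the proof.

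Honestly there is no serious obstacle here: all the genuine difficulty is already packaged in Corollary 6.1 (hence ultimately in Lemma 1.1), and what remains is the routine translation of the $\liminf=0$ condition into a $G_\delta$ form together with an application of the Baire category theorem on the complete metric space $D$. The only points requiring a moment's care are verifying that $D$ is indeed a Baire space (which is immediate once one observes $D$ is closed in the compact metric $X$) and that Corollary 6.1 is being applied with $x$ and $y$ both taken inside the fixed class $D$, so that the output point $z$ lies in $D$ as well.
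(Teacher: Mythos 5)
Your proof is correct and follows essentially the same route as the paper: exhibit $U_D(x)$ as a $G_\delta$-set via the standard countable intersection--union description of $\liminf=0$, obtain density from Corollary 6.1 applied within the class $D$, and conclude by Baire category on the compact (hence complete) metric space $D$. The only difference is that you spell out the Baire-category step more explicitly than the paper does.
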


\begin{proof}
Given any $D\in\mathcal{D}(f)$ and $x\in D$, we have
\[
U_D(x)=\bigcap_{l=1}^\infty\bigcap_{N=1}^\infty\bigcup_{n\ge N}\{y\in D\colon\frac{1}{n}\sum_{i=0}^{n-1}d(f^i(x),f^i(y))<\frac{1}{l}\},
\]
so $U_D(x)$ is a $G_\delta$-subset of $D$. Since
\[
\{y\in D\colon\lim_{n\to\infty}\frac{1}{n}\sum_{i=0}^{n-1}d(f^i(x),f^i(y))=0\}\subset U_D(x),
\] 
and by Corollary 6.1,
\[
\{y\in D\colon\lim_{n\to\infty}\frac{1}{n}\sum_{i=0}^{n-1}d(f^i(x),f^i(y))=0\}
\]
is dense in $D$, $U_D(x)$ is a dense $G_\delta$-subset of $D$. Thus,
\[
\bigcap_{x\in S}U_D(x)
\]
is a dense $G_\delta$-subset of $D$, proving the corollary.
\end{proof}

\begin{cor}
Let $f\colon X\to X$ be a chain transitive continuous map. If $f$ has DSP, then for any $x,y\in X$ and $\epsilon>0$, there are $j\ge0$ and $z\in X$ such that $d(y,z)<\epsilon$ and
\[
\lim_{n\to\infty}\frac{1}{n}\sum_{i=0}^{n-1}d(f^{i+j}(x),f^i(z))=0.
\] 
\end{cor}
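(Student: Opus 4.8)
The plan is to reduce the statement to Corollary 6.1 by passing to the maximal $\sim_f$-factor, using crucially the Vietoris continuity of $\mathcal{D}_f(\cdot)$ that DSP provides via Theorem 1.2. As in the proof of Lemma 6.4, let $\pi\colon(X,f)\to(Y,g)$ be the factor map onto the quotient $(Y,g)$ of $(X,f)$ with respect to $\sim_f$, and recall that $g$ is minimal ($Y$ is a periodic orbit, or $(Y,g)$ is topologically conjugate to an odometer). For $p\in X$ we have $\mathcal{D}_f(p)=\pi^{-1}(\pi(p))$, so $\mathcal{D}_f(\cdot)$ is constant on the fibres of $\pi$; since $\pi$ is a continuous surjection from a compact space onto a metrizable (hence Hausdorff) space, it is a closed map and therefore a quotient map, and since $\mathcal{D}_f(\cdot)\colon X\to K(X)$ is continuous by Theorem 1.2, the induced map $\Psi\colon Y\to K(X)$ defined by $\Psi(\pi(p))=\mathcal{D}_f(p)$ is continuous, where $K(X)$ carries the Hausdorff metric.

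First I would locate a suitable power $j$. Given $x,y\in X$ and $\epsilon>0$, by continuity of $\Psi$ at $\pi(y)$ there is an open neighborhood $V$ of $\pi(y)$ in $Y$ such that $d_H(\Psi(\bar p),\Psi(\pi(y)))<\epsilon/2$ for all $\bar p\in V$. Since $g$ is minimal, the forward orbit $\{g^j(\pi(x))\colon j\ge0\}$ is dense in $Y$, so $g^j(\pi(x))\in V$ for some $j\ge0$. Because $\pi(f^j(x))=g^j(\pi(x))$, this yields $d_H(\mathcal{D}_f(f^j(x)),\mathcal{D}_f(y))<\epsilon/2$, and as $y\in\mathcal{D}_f(y)$ we obtain a point $w\in\mathcal{D}_f(f^j(x))$ with $d(w,y)<\epsilon/2$.

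It then remains to invoke Corollary 6.1 with $f^j(x)$ in place of $x$, $w$ in place of $y$, and $\epsilon/2$ in place of $\epsilon$: since $w\in\mathcal{D}_f(f^j(x))$ and $f$ has DSP, there is $z\in\mathcal{D}_f(f^j(x))$ with $d(w,z)<\epsilon/2$ and $\lim_{n\to\infty}\tfrac1n\sum_{i=0}^{n-1}d(f^i(f^j(x)),f^i(z))=0$, that is, $\lim_{n\to\infty}\tfrac1n\sum_{i=0}^{n-1}d(f^{i+j}(x),f^i(z))=0$. Finally $d(y,z)\le d(y,w)+d(w,z)<\epsilon$, so $j$ and $z$ have the required properties.

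The one real subtlety — and the reason the statement does not follow immediately from Corollary 6.1 — is that the $f$-orbit of $x$ in $X$ need not come near $y$ at all; it is only after collapsing $\sim_f$ that the orbit becomes dense, and this is why minimality of $g$ enters. The step that makes the reduction work is the passage from ``$g^j(\pi(x))$ is close to $\pi(y)$ in $Y$'' to ``$\mathcal{D}_f(f^j(x))$ contains a point close to $y$ in $X$'', which is exactly where the Vietoris continuity of $\mathcal{D}_f(\cdot)$ (hence DSP, via Theorem 1.2) is used beyond what Corollary 6.1 already encodes; everything else is the triangle inequality.
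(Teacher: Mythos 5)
Your argument is correct, but it takes a genuinely different route from the paper's. The paper's proof of this corollary never passes to the factor $(Y,g)$: given $\epsilon>0$ it takes $\delta>0$ with $\mathcal{D}(f)=\mathcal{D}^{\epsilon/2,\delta}(f)$, uses chain transitivity to produce a finite $\delta$-chain $(x_i)_{i=0}^k$ from $x$ to $y$, shadows it by some $w\in\mathcal{D}_f(x)$, and observes that $f^k(w)\in\mathcal{D}_f(f^k(x))$ with $d(f^k(w),y)\le\epsilon/2$; the proof then concludes exactly as yours does, by feeding $f^k(x)$ and $f^k(w)$ into Corollary 6.1 and applying the triangle inequality. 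So the two proofs differ only in how they manufacture a point of $\mathcal{D}_f(f^j(x))$ close to $y$: the paper gets it by shadowing a finite chain (using only DSP itself, with $j=k$ the chain length), whereas you get it from the minimality of the maximal $\sim_f$-factor together with the Vietoris continuity of $\mathcal{D}_f(\cdot)$ from Theorem 1.2. Your route is valid --- the quotient $(Y,g)$ is indeed minimal (as the paper records in the proof of Lemma 6.4, citing \cite{RW}), the induced map $\Psi$ on $Y$ is continuous since $\mathcal{D}_f(\cdot)$ is constant on fibres of the quotient map, and the passage from $d_H(\mathcal{D}_f(f^j(x)),\mathcal{D}_f(y))<\epsilon/2$ to the existence of $w$ is immediate --- but it invokes two structural results (the classification of $(Y,g)$ and the continuity half of Theorem 1.2) where the paper's argument needs only the defining shadowing property of DSP and chain transitivity. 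What your version buys in exchange is a cleaner conceptual picture of \emph{why} the power $f^j(x)$ must eventually land near the class of $y$: the orbit becomes dense only after collapsing $\sim_f$, and continuity of the class map transports that density back to $X$. One cosmetic quibble: the Hausdorffness of $Y$ comes from $\sim_f$ being a closed relation (and metrizability then follows since $Y$ is a compact Hausdorff continuous image of a compact metric space), rather than the other way around as your parenthetical suggests; this does not affect the argument.
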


\begin{proof}
Since $f$ has DSP, for any $\epsilon>0$, we have $\mathcal{D}(f)=\mathcal{D}^{\epsilon/2,\delta}(f)$ for some $\delta>0$. For such $\delta>0$, the chain transitivity of $f$ gives a $\delta$-chain $\alpha=(x_i)_{i=0}^k$ of $f$ with $x_0=x$ and $x_k=y$. Then, it holds that $d(f^i(w),x_i)\le\epsilon/2$ for all $0\le i\le k$ for some $w\in\mathcal{D}_f(x_0)=\mathcal{D}_f(x)$, which implies $f^k(w)\in\mathcal{D}_f(f^k(x))$ and $d(f^k(w),y)=d(f^k(w),x_k)\le\epsilon/2$. By Corollary 6.1, we obtain $z\in\mathcal{D}_f(f^k(x))$ such that $d(f^k(w),z)<\epsilon/2$ and 
\[
\lim_{n\to\infty}\frac{1}{n}\sum_{i=0}^{n-1}d(f^{i+k}(x),f^i(z))=0.
\]
Since $d(y,z)\le d(f^k(w),y)+d(f^k(w),z)<\epsilon$, the corollary has been proved.
\end{proof}

\begin{cor}
Let $f\colon X\to X$ be a chain transitive continuous map and let $\mathcal{M}_f^{\rm erg}(X)$ be the set of ergodic $f$-invariant Borel probability measures on $X$. If $f$ has DSP, then for any $\mu\in\mathcal{M}_f^{\rm erg}(X)$,
\[
G_\mu=\{x\in X\colon\lim_{n\to\infty}\frac{1}{n}\sum_{i=0}^{n-1}\delta_{f^i(x)}=\mu\}
\]
is a dense subset of $X$.
\end{cor}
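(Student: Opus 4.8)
The plan is to deduce this from Corollary 6.3 together with the (essentially routine) fact that being a $\mu$-generic point is preserved under asymptotic Ces\`aro-closeness of orbits. First I would record that $G_\mu\neq\emptyset$: since $\mu$ is ergodic and $f$-invariant, Birkhoff's pointwise ergodic theorem, applied to a countable dense family in $C(X)$, shows that $\mu(G_\mu)=1$, so in particular $G_\mu\neq\emptyset$; fix once and for all some $x_0\in G_\mu$.

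Next I would establish the transport step: \emph{if $z\in X$ and $j\ge0$ satisfy $\lim_{n\to\infty}\tfrac1n\sum_{i=0}^{n-1}d(f^{i+j}(x_0),f^i(z))=0$, then $z\in G_\mu$.} Since $\mu$ is $f$-invariant, discarding the first $j$ Dirac masses does not change the Ces\`aro limit, so $f^j(x_0)\in G_\mu$ as well. Then for each $\phi\in C(X)$ one compares $\tfrac1n\sum_{i=0}^{n-1}\phi(f^{i+j}(x_0))$ with $\tfrac1n\sum_{i=0}^{n-1}\phi(f^i(z))$: given $\eta>0$, uniform continuity of $\phi$ yields $\delta>0$ with $|\phi(a)-\phi(b)|<\eta$ whenever $d(a,b)<\delta$, and the set of indices $i<n$ with $d(f^{i+j}(x_0),f^i(z))\ge\delta$ has upper density $0$ because $\tfrac1n\sum_{i=0}^{n-1}d(f^{i+j}(x_0),f^i(z))\to0$; since $\phi$ is bounded this forces the two averages to share the limit $\int\phi\,d\mu$. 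As $\phi\in C(X)$ is arbitrary, $z\in G_\mu$. This is essentially the same estimate as in the proof of Lemma 5.4, transcribed from $\overline{\omega}$-limit sets to empirical measures; alternatively one may invoke the Kantorovich metric on $\mathcal{M}(X)$ together with the diagonal coupling, which bounds the distance between the two empirical measures by $\tfrac1n\sum_{i=0}^{n-1}d(f^{i+j}(x_0),f^i(z))$.

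With these in hand the conclusion is immediate. Given $y\in X$ and $\epsilon>0$, apply Corollary 6.3 to $x_0$ and $y$ — this is where the hypotheses that $f$ is chain transitive and has DSP are used — to obtain $j\ge0$ and $z\in X$ with $d(y,z)<\epsilon$ and $\lim_{n\to\infty}\tfrac1n\sum_{i=0}^{n-1}d(f^{i+j}(x_0),f^i(z))=0$; by the transport step, $z\in G_\mu$. Since $y\in X$ and $\epsilon>0$ are arbitrary, $G_\mu$ is dense in $X$. I do not anticipate a real obstacle: all of the dynamical substance — the statistical shadowing provided by DSP — has already been packaged into Corollary 6.3 (hence into Lemma 1.1), and the only thing left to check carefully is the transport step, which is an elementary density computation.
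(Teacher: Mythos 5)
Your proposal is correct and follows the paper's own proof essentially verbatim: nonemptiness of $G_\mu$ via Birkhoff, shift-invariance of genericity, Corollary 6.3 to transport the generic point near an arbitrary $y$, and the Ces\`aro-closeness argument (which the paper leaves implicit but you rightly spell out) to conclude $z\in G_\mu$. No gaps.
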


\begin{proof}
For any $\mu\in\mathcal{M}_f^{\rm erg}(X)$, it is well-known that, as a consequence of  Birkhoff's ergodic theorem, $\mu(G_\mu)=1$, thus in particular, $G_\mu\ne\emptyset$. Taking $x\in G_\mu$, we obtain
\[
\lim_{n\to\infty}\frac{1}{n}\sum_{i=0}^{n-1}\delta_{f^i(x)}=\mu
\]
and so
\[
\lim_{n\to\infty}\frac{1}{n}\sum_{i=0}^{n-1}\delta_{f^{i+j}(x)}=\mu
\]
for every $j\ge0$. Then, for any $y\in X$ and $\epsilon>0$, by Corollary 6.3, we have $d(y,z)<\epsilon$ and
\[
\lim_{n\to\infty}\frac{1}{n}\sum_{i=0}^{n-1}d(f^{i+j}(x),f^i(z))=0
\]
for some $j\ge0$ and $z\in X$, implying
\[
\lim_{n\to\infty}\frac{1}{n}\sum_{i=0}^{n-1}\delta_{f^i(z)}=\lim_{n\to\infty}\frac{1}{n}\sum_{i=0}^{n-1}\delta_{f^{i+j}(x)}=\mu,
\]
that is, $z\in G_\mu$. This shows that $G_\mu$ is a dense subset of $X$, completing the proof.
\end{proof}

Finally, for the proof of Theorem 1.10 in Section 7, we prove the following lemma.

\begin{lem}
Let $f\colon X\to X$ be a chain transitive continuous map. If $f$ has DSP, then for every $D\in\mathcal{D}(f)$, $D\cap\overline{T}(f)$ is a residual subset of $D$, where
\[
\overline{T}(f)=\{x\in X\colon X=\overline{\omega}(x,f)\}.
\]
\end{lem}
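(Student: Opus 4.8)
The plan is to build, for a fixed $D\in\mathcal D(f)$, a dense $G_\delta$ subset of $D$ that is contained in $\overline T(f)$; since $D$ is closed in $X$ it is a complete metric space, hence a Baire space, so this immediately yields that $D\cap\overline T(f)$ is residual in $D$. Fix a countable base $\{U_j:j\ge1\}$ of nonempty open subsets of $X$. Because $\{U_j\}$ is a base, a point $x$ lies in $\overline T(f)$ if and only if $\overline d(\{i\ge0:f^i(x)\in U_j\})>0$ for every $j$. Hence it suffices to produce, for each $j$, a number $c_j>0$ for which
\[
H_{D,j}=\Big\{x\in D:\limsup_{n\to\infty}\frac1n\big|\{0\le i<n:f^i(x)\in U_j\}\big|>c_j\Big\}
\]
is a dense $G_\delta$ subset of $D$: then $\bigcap_{j}H_{D,j}$ is a dense $G_\delta$ subset of $D$ by the Baire category theorem, and it is contained in $D\cap\overline T(f)$. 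As $U_j$ is open, $H_{D,j}$ is automatically $G_\delta$ in $D$, so the whole point is its density.

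To reach density I would first manufacture a single point of $D$ that visits $U_j$ with positive frequency. Fix $j$, choose a nonempty open $V_j$ with $\overline{V_j}\subset U_j$, and fix $x'\in V_j$ and $\epsilon>0$ with $\overline{B_\epsilon(x')}\subset V_j$. By Corollary 1.1 (the metric form of DSP) there is $\delta>0$ with $\mathcal D(f)=\mathcal D^{\epsilon,\delta}(f)$. Pick any $x_0\in D$. Using chain transitivity (so $X=CR(f)$), choose a $\delta$-chain from $x_0$ to $x'$ and a $\delta$-cycle $(x',w_1,\dots,w_{k-1},x')$ at $x'$, and let $\xi$ be the $\delta$-pseudo orbit which follows the chain from $x_0$ to $x'$ and then repeats the cycle forever. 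Then $\xi$ starts in $D$, so $\xi$ is $\epsilon$-shadowed by some $p\in D$. The entry $x'$ occurs in $\xi$ along an arithmetic progression with common difference $k$, so $f^i(p)\in\overline{B_\epsilon(x')}\subset V_j$ for every $i$ in that progression; hence $\overline d(\{i:f^i(p)\in V_j\})\ge 1/k>0$. Set $c_j:=\frac12\,\overline d(\{i:f^i(p)\in V_j\})$.

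The second step is to spread this recurrence over a dense subset of $D$ using Corollary 6.1. Let $\rho:=d(\overline{V_j},X\setminus U_j)>0$. Given $y\in D$ and $\eta>0$, Corollary 6.1 applied to $p$ and $y$ (note $y\in D=\mathcal D_f(p)$) yields $z\in D$ with $d(y,z)<\eta$ and $\lim_{n\to\infty}\frac1n\sum_{i=0}^{n-1}d(f^i(p),f^i(z))=0$. Whenever $f^i(p)\in V_j$ and $d(f^i(p),f^i(z))<\rho$ one has $f^i(z)\in U_j$, while the number of $i<n$ with $d(f^i(p),f^i(z))\ge\rho$ is at most $\rho^{-1}\sum_{i<n}d(f^i(p),f^i(z))=o(n)$. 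Consequently
\[
\limsup_{n\to\infty}\frac1n\big|\{i<n:f^i(z)\in U_j\}\big|\ \ge\ \limsup_{n\to\infty}\frac1n\big|\{i<n:f^i(p)\in V_j\}\big|\ =\ 2c_j\ >\ c_j,
\]
so $z\in H_{D,j}$. Since $y\in D$ and $\eta>0$ were arbitrary, $H_{D,j}$ is dense in $D$, which is what remained to be proved.

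I expect the main obstacle to be the middle step. The temptation is to construct directly a single point of $D$ whose $\overline\omega$-limit set is all of $X$; instead one should treat the basic open sets $U_j$ one at a time, note that each of them is visited with positive frequency by a point of $D$ obtained from a periodic pseudo orbit shadowed \emph{inside} $D$, and this is exactly where DSP is needed in the strong form $\mathcal D(f)=\mathcal D^{\epsilon,\delta}(f)$ rather than mere shadowing. The transfer from this one recurrent point to a dense family, via Corollary 6.1 and the buffer $\overline{V_j}\subset U_j$ absorbing the $o(n)$ exceptional times, is then routine, as is the final assembly through the Baire category theorem.
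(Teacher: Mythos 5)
Your argument is correct, but it reaches the conclusion by a genuinely different route from the paper's. The paper's proof invokes Moothathu's theorem (Corollary 1 of \cite{M}) to obtain a countable family $\mathcal{F}$ of disjoint minimal sets with dense union, picks for each $M\in\mathcal{F}$ a representative $x_M\in M\cap D$ (every closed invariant set meets every $D\in\mathcal{D}(f)$, via the odometer factor argument recalled in the proof of Lemma 6.4), applies Corollary 6.2 to get a dense $G_\delta$ subset of $D$ consisting of points $y$ with $\liminf_{n}\frac1n\sum_{i<n}d(f^i(x_M),f^i(y))=0$ for all $M$, and then uses the Proposition 5.2 computation together with minimality to absorb each $M$, hence all of $X$, into $\overline{\omega}(y,f)$. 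You instead fix a countable base, and for each basic open set manufacture a point $p\in D$ visiting it with positive frequency by shadowing an eventually periodic $\delta$-pseudo orbit \emph{inside} $D$ (correctly identifying $\mathcal{D}(f)=\mathcal{D}^{\epsilon,\delta}(f)$ as the place where DSP, rather than mere shadowing, is needed), then spread that frequency over a dense subset of $D$ via Corollary 6.1 and a Markov-inequality estimate --- essentially the computation in Lemma 5.4. Your route avoids both the density of minimal points and the fact that minimal sets meet each $D$, at the cost of treating the basic open sets one at a time; the paper's route yields the slightly stronger intermediate fact that a residual set of $y\in D$ contains every minimal set in its $\overline{\omega}$-limit set. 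Both proofs ultimately rest on the same engine, Lemma 1.1, through Corollary 6.1 (you) or Corollary 6.2 (the paper).

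One small repair: with the strict inequality $>c_j$, the set $H_{D,j}$ is a countable union of $G_\delta$ sets and is not obviously $G_\delta$ itself, so the phrase ``automatically $G_\delta$'' is not justified as written. This costs nothing: your construction produces a dense set of $z\in D$ with $\limsup_{n}\frac1n\bigl|\{i<n:f^i(z)\in U_j\}\bigr|\ge 2c_j$, and the set defined by the non-strict inequality $\ge 2c_j$ is genuinely $G_\delta$ (write it as $\bigcap_{l}\bigcap_{N}\bigcup_{n\ge N}\{x\in D:\frac1n|\cdot|>2c_j-\frac1l\}$, exactly as in the proof of Lemma 5.3). Intersecting these dense $G_\delta$ sets over $j$ still lands inside $D\cap\overline{T}(f)$, so the residuality conclusion is unaffected.
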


\begin{proof}
We say that $x\in X$ is a {\em minimal point} for $f$ if $x\in\omega(x,f)$, and $\omega(x,f)$ is a minimal set for $f$. Since $f$ is chain transitive and has DSP (and so the shadowing property), by Corollary 1 of \cite{M}, the set of minimal points for $f$ is dense in $X$, therefore, we can take a countable family $\mathcal{F}$ of disjoint minimal sets for $f$ such that
\[
X=\overline{\bigcup_{M\in\mathcal{F}}M}.
\]
Let $D\in\mathcal{D}(f)$ and for any $M\in\mathcal{F}$, similarly as in the proof of Lemma 6.4, take $x_M\in M$ such that $x_M\in D$. Then, by Corollary 6.2,
\[
\bigcap_{M\in\mathcal{F}}U_D(x_M)
\]
is a dense $G_\delta$-subset of $D$. Given any
\[
y\in\bigcap_{M\in\mathcal{F}}U_D(x_M)
\]
and $M\in\mathcal{F}$, since $y\in U_D(x_M)$, similarly as in the proof of Proposition 5.2, we have
\[
\overline{\omega}(x_M,f)\cap\overline{\omega}(y,f)\ne\emptyset.
\]
By $x_M\in M$ and minimality, we obtain $M\cap\overline{\omega}(y,f)\ne\emptyset$ and so $M\subset\overline{\omega}(y,f)$, which implies
\[
X=\overline{\bigcup_{M\in\mathcal{F}}M}\subset\overline{\omega}(y,f);
\]
therefore, $y\in\overline{T}(f)$. It follows that
\[
\bigcap_{M\in\mathcal{F}}U_D(x_M)\subset D\cap\overline{T}(f),
\]
and thus $D\cap\overline{T}(f)$ is a residual subset of $D$, proving the lemma.
\end{proof}

\section{Proofs of Theorems 1.8, 1.9 and 1.10}

In this section, we prove Theorem 1.8, 1.9, and 1.10. First, we prove Theorem 1.8.

\begin{proof}[Proof of Theorem 1.8.]
For any $\phi\in C(X)$ and $a,b\in\mathbb{R}$, let
\begin{align*}
X^-(f,\phi,a)&=\{x\in X\colon\liminf_{n\to\infty}\frac{1}{n}\sum_{i=0}^{n-1}\phi(f^i(x))\le a\}\\
&=\bigcap_{m=1}^\infty\bigcap_{N=1}^\infty\bigcup_{n\ge N}\{x\in X\colon\frac{1}{n}\sum_{i=0}^{n-1}\phi(f^i(x))<a+\frac{1}{m}\}
\end{align*}
and
\begin{align*}
X^+(f,\phi,b)&=\{x\in X\colon\limsup_{n\to\infty}\frac{1}{n}\sum_{i=0}^{n-1}\phi(f^i(x))\ge b\}\\
&=\bigcap_{m=1}^\infty\bigcap_{N=1}^\infty\bigcup_{n\ge N}\{x\in X\colon\frac{1}{n}\sum_{i=0}^{n-1}\phi(f^i(x))>b-\frac{1}{m}\},
\end{align*}
which are $G_\delta$-subsets of $X$.

Since $f$ has DSP and so the shadowing property, by $h_{\rm top}(f)>0$ and Lemma 6.2, we have two disjoint closed $f$-invariant subsets $K_0,K_1$ of $X$. Take $\phi\in C(X)$ such that $\phi(x)=j$ for all $j\in\{0,1\}$ and $x\in K_j$. We choose $\epsilon>0$ so small  that $|\phi(p)-\phi(q)|<1/3$ for all $p,q\in X$ with $d(p,q)<\epsilon$. Fix any $D\in\mathcal{D}(f)$ and $0<\eta<\epsilon$. Since $f$ has DSP, we have $D\in\mathcal{D}^{\eta,\delta}(f)$ for some $\delta>0$. Then, for every $y\in D$, the chain transitivity of $f$ gives a pair of $\delta$-chains
\[
(\alpha_0,\alpha_1)=((x_i^{(0)})_{i=0}^k,(x_i^{(1)})_{i=0}^k)
\]
of $f$ such that $x_0^{(0)}=x_0^{(1)}=y$, $x_k^{(0)}\in K_0$, and $x_k^{(1)}\in K_1$. For each $j\in\{0,1\}$, since $D\in\mathcal{D}^{\eta,\delta}(f)$ and $x_0^{(j)}=y\in D$, a $\delta$-pseudo orbit
\[
\xi_j=\alpha_j(x_k^{(j)},f(x_k^{(j)}),f^2(x_k^{(j)}),\dots)
\]
of $f$ is $\eta$-shadowed by some $y_j\in D$. Then, by the choice of $\epsilon$ and $\xi_j$, $j\in\{0,1\}$, we obtain $y_0\in X^-(f,\phi,1/3)$ and $y_1\in X^+(f,\phi,2/3)$. Note that $d(y_j,y)=d(y_j,x_0^{(j)})\le\eta$ for each $j\in\{0,1\}$. Since $0<\eta<\epsilon$ is arbitrary, it follows that $D\cap X^-(f,\phi,1/3)$ and $D\cap X^+(f,\phi,2/3)$ are dense $G_\delta$-subsets of $D$; therefore,
\[
D\cap[X^-(f,\phi,1/3)\cap X^+(f,\phi,2/3)]
\]
is a dense $G_\delta$-subset of $D$. By
\[
D\cap[X^-(f,\phi,1/3)\cap X^+(f,\phi,2/3)]\subset D\cap I(f,\phi),
\]
we conclude that $D\cap I(f,\phi)$ is a residual subset of $D$. Since $D\in\mathcal{D}(f)$ is arbitrary, the proof has been completed. 
\end{proof}

\begin{rem}
\normalfont
By a similar argument as in the above proof of Theorem 1.8, we can show that for any chain transitive continuous map $f\colon X\to X$, if $f$ satisfies the shadowing property and $h_{\rm top}(f)>0$, then there exists $\phi\in C(X)$ such that $I(f,\phi)$ is a residual subset of $X$.
\end{rem}

Next, we prove Theorem 1.9.

\begin{proof}[Proof of Theorem 1.9]
Note that
\[
I(f,\phi)=\bigcup_{\epsilon>0}\bigcap_{N=1}^\infty\bigcup_{m,n\ge N}\{x\in X\colon|\frac{1}{m}\sum_{i=0}^{m-1}\phi(f^i(x))-\frac{1}{n}\sum_{i=0}^{n-1}\phi(f^i(x))|>\epsilon\}.
\]
Since $I(f,\phi)\ne\emptyset$, letting
\[
I(f,\phi,\epsilon)=\bigcap_{N=1}^\infty\bigcup_{m,n\ge N}\{x\in X\colon|\frac{1}{m}\sum_{i=0}^{m-1}\phi(f^i(x))-\frac{1}{n}\sum_{i=0}^{n-1}\phi(f^i(x))|>\epsilon\},
\]
$\epsilon>0$, we have $I(f,\phi,3\epsilon)\ne\emptyset$ for some $\epsilon>0$. For such $\epsilon>0$, fix
\[
x\in I(f,\phi,3\epsilon)
\]
and take $\zeta>0$ such that $d(a,b)\le\zeta$ implies
\[
|\phi(a)-\phi(b)|\le\epsilon/3
\]
for all $a,b\in X$. Since $f$ has DSP, we have
\[
\mathcal{D}(f)=\mathcal{D}^{\zeta,\delta}(f)
\]
for some $\delta>0$. Then, for any $D\in\mathcal{D}(f)$ and $y\in D$, the chain transitivity of $f$ gives a $\delta$-chain $\alpha=(x_i)_{i=0}^k$ of $f$ with $x_0=y$ and $x_k=x$. Consider a $\delta$-pseudo orbit
\[
\xi=(x_i)_{i\ge0}=\alpha(x,f(x),f^2(x),\dots)
\]
and note that $\xi$ is $\zeta$-shadowed by some $z\in\mathcal{D}_f(x_0)$, implying $z\in\mathcal{D}_f(y)=D$ and $d(y,z)=d(z,x_0)\le\zeta$.

Let $M=\sup_{x\in X}|\phi(x)|$ and take $N\ge1$ so large that $\frac{k}{N+k}M\le\epsilon/3$. Since $x\in I(f,\phi,3\epsilon)$, we can take $m,n\ge N$ such that
\[
|\frac{1}{m}\sum_{i=0}^{m-1}\phi(f^i(x))-\frac{1}{n}\sum_{i=0}^{n-1}\phi(f^i(x))|>3\epsilon.
\]
Then, because
\begin{align*}
&|\frac{1}{m+k}\sum_{i=0}^{m+k-1}\phi(f^i(z))-\frac{1}{m+k}\sum_{i=k}^{m+k-1}\phi(f^i(z))|=|\frac{1}{m+k}\sum_{i=0}^{k-1}\phi(f^i(z))|\\
&\le\frac{k}{m+k}M\le\frac{k}{N+k}M\le\epsilon/3,
\end{align*}
\begin{align*}
&|\frac{1}{m+k}\sum_{i=k}^{m+k-1}\phi(f^i(z))-\frac{1}{m}\sum_{i=k}^{m+k-1}\phi(f^i(z))|=|-\frac{k}{(m+k)m}\sum_{i=k}^{m+k-1}\phi(f^i(z))|\\
&\le\frac{km}{(m+k)m}M\le\frac{k}{N+k}M\le\epsilon/3,
\end{align*}
and
\begin{align*}
&|\frac{1}{m}\sum_{i=k}^{m+k-1}\phi(f^i(z))-\frac{1}{m}\sum_{i=0}^{m-1}\phi(f^i(x))|=|\frac{1}{m}\sum_{i=k}^{m+k-1}(\phi(f^i(z))-\phi(x_i))|\\
&\le\frac{m}{m}\cdot\epsilon/3=\epsilon/3,
\end{align*}
we have
\[
|\frac{1}{m+k}\sum_{i=0}^{m+k-1}\phi(f^i(z))-\frac{1}{m}\sum_{i=0}^{m-1}\phi(f^i(x))|\le\epsilon.
\]
Similarly, we have
\[
|\frac{1}{n+k}\sum_{i=0}^{n+k-1}\phi(f^i(z))-\frac{1}{n}\sum_{i=0}^{n-1}\phi(f^i(x))|\le\epsilon,
\]
thus by the choice of $m,n$, we obtain
\[
|\frac{1}{m+k}\sum_{i=0}^{m+k-1}\phi(f^i(z))-\frac{1}{n+k}\sum_{i=0}^{n+k-1}\phi(f^i(z))|>\epsilon.
\]
Note that $m+k,n+k\ge N$. Since $N$ can be taken arbitrarily large, it follows that $z\in I(f,\phi,\epsilon)$. Recall that $z\in D$ and $d(y,z)\le\zeta$. Because $\zeta$ can be taken arbitrarily small, we conclude that
\[
y\in\overline{D\cap I(f,\phi,\epsilon)},
\]
and so
\[
D\cap I(f,\phi,\epsilon)
\]
is a dense $G_\delta$-subset of $D$. Since $D\cap I(f,\phi)$ contains $D\cap I(f,\phi,\epsilon)$, $D\cap I(f,\phi)$ is a residual subset of $D$. Since $D\in\mathcal{D}(f)$ is arbitrary, the proof has been completed.
\end{proof}

\begin{rem}
\normalfont
By a similar argument as in the above proof of Theorem 1.9, we can show that for any chain transitive continuous map $f\colon X\to X$, if $f$ satisfies the shadowing property, then for any $\phi\in C(X)$, $I(f,\phi)\ne\emptyset$ implies that $I(f,\phi)$ is a residual subset of $X$.
\end{rem}

Finally, we prove Theorem 1.10. For the proof, we use a simplified version of Mycielski's theorem.

\begin{lem}{\cite[Theorem 1]{My}}
Let $X$ be a perfect complete metric space. If $R_n$ is a residual subset of $X^n$ for each $n\ge2$, then there is a Mycielski set $S$ which is dense in $X$ and satisfies
$(x_1,x_2,\dots,x_n)\in R_n$ for any $n\ge2$ and distinct $x_1,x_2,\dots,x_n\in S$.
\end{lem}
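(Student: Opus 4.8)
The plan is to run the classical Cantor-scheme (``fusion'') argument underlying Mycielski's theorem. Write each $R_n=\bigcap_{m\ge 1}G_{n,m}$ with $G_{n,m}$ open and dense in $X^n$, and fix a countable base $\{B_j\}_{j\ge1}$ of $X$ (here ``perfect complete metric space'' is read as separable, which is harmless since $X$ is compact metric in all applications and is anyway forced, a dense Mycielski set being separable). I will produce $S$ as $\bigcup_{j\ge1}C_j$ with each $C_j$ a Cantor set contained in $B_j$, so that $S$ is a Mycielski set and is automatically dense. To this end I index by pairs $(j,\sigma)$, where $j\ge1$ and $\sigma$ ranges over finite binary strings, and construct nonempty open sets $U_{(j,\sigma)}\subset X$ such that $\overline{U_{(j,\sigma0)}}$ and $\overline{U_{(j,\sigma1)}}$ are disjoint subsets of $U_{(j,\sigma)}$, $\overline{U_{(j,\sigma)}}\subset B_j$ when $|\sigma|=0$, and ${\rm diam}\,U_{(j,\sigma)}\le 2^{-|\sigma|}$. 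Perfectness of $X$ is used precisely once here: a nonempty open subset of a perfect metric space is infinite, so each $U_{(j,\sigma)}$ can be split into two pieces with disjoint closures inside it.

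The construction is driven by a fair enumeration of requirements, a \emph{requirement} being a triple $(n,m,(a_1,\dots,a_n))$ with $n\ge2$, $m\ge1$, and $a_1,\dots,a_n$ pairwise distinct pairs $a_\ell=(j_\ell,\sigma_\ell)$ satisfying $|\sigma_1|=\dots=|\sigma_n|$; \emph{processing} it means replacing the current $U_{a_1},\dots,U_{a_n}$ by nonempty open sets $V_\ell\subset U_{a_\ell}$ with $\overline{V_1}\times\dots\times\overline{V_n}\subset G_{n,m}$, which is possible because $G_{n,m}$ is open and dense in $X^n$, so $(U_{a_1}\times\dots\times U_{a_n})\cap G_{n,m}$ is a nonempty open set containing a closed box. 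I organize things by levels: having defined all $U_{(j,\sigma)}$ with $|\sigma|\le K$, I first split every leaf to obtain level $K+1$ (keeping the geometric conditions above), and then process one after another the finitely many requirements $(n,m,(a_1,\dots,a_n))$ with $n\le K+1$, $m\le K+1$, each $j_\ell\le K+1$, and $|\sigma_\ell|=K+1$. Two invariants persist: shrinking a leaf only shrinks the boxes occurring in requirements processed earlier at the same level, and those boxes stay inside the relevant $G_{n',m'}$; and shrinking preserves nesting, disjointness and the diameter bound. Once level $K$ is finished its sets are never altered again, so completeness of $X$ together with ${\rm diam}\,U_{(j,\sigma)}\to0$ (Cantor's intersection theorem) yields, for every branch $(j,\alpha)$ with $\alpha\in\{0,1\}^\mathbb{N}$, a point $x_{(j,\alpha)}$ with $\{x_{(j,\alpha)}\}=\bigcap_K\overline{U_{(j,\alpha|_K)}}$. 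I set $C_j=\{x_{(j,\alpha)}:\alpha\in\{0,1\}^\mathbb{N}\}$, a Cantor set inside $B_j$, and $S=\bigcup_{j\ge1}C_j$.

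It then remains to verify the two conclusions. Density of $S$ is clear since $C_j\subset B_j$ and $\{B_j\}$ is a base. For the main property, let $y_1,\dots,y_n\in S$ be distinct, choose an address for each to obtain pairwise distinct pairs $(j_\ell,\alpha_\ell)$, and fix $k_0$ with the strings $(j_\ell,\alpha_\ell|_k)$ pairwise distinct for all $k\ge k_0$. Given $m\ge1$, put $k=\max\{k_0,n,m,j_1,\dots,j_n\}$; then the requirement $(n,m,((j_1,\alpha_1|_k),\dots,(j_n,\alpha_n|_k)))$ was processed at level $k$ (its parameters satisfy $n\le k$, $m\le k$, $j_\ell\le k$, and its strings have length $k$), so $\overline{U_{(j_1,\alpha_1|_k)}}\times\dots\times\overline{U_{(j_n,\alpha_n|_k)}}\subset G_{n,m}$, and this still holds at the end of the construction. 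Since $y_\ell=x_{(j_\ell,\alpha_\ell)}\in\overline{U_{(j_\ell,\alpha_\ell|_k)}}$, the tuple $(y_1,\dots,y_n)$ lies in $G_{n,m}$; as $m$ was arbitrary, $(y_1,\dots,y_n)\in R_n$.

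The only genuine obstacle is the combinatorial bookkeeping: the enumeration of requirements must be fair---so that every requirement is processed once the level exceeds its parameters $n$, $m$ and $j_1,\dots,j_n$---while each level stays finitary (finitely many requirements, hence finitely many shrinking steps) and the Cantor-scheme geometry is maintained throughout. Granting this, everything else reduces to routine uses of the openness and density of the $G_{n,m}$ and of Cantor's intersection theorem.
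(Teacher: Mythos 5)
The paper offers no proof of this lemma---it is imported verbatim from Mycielski's paper---so there is nothing internal to compare against; what you have written is the standard Cantor-scheme proof of the dense-Mycielski-set form of that theorem, and it is correct. The level-by-level bookkeeping (split every leaf, then process the finitely many requirements whose parameters $n$, $m$, $j_1,\dots,j_n$ and string length are all bounded by the current level) is fair in exactly the sense needed for the final verification, where for given distinct $y_1,\dots,y_n$ and given $m$ you invoke the requirement processed at level $k=\max\{k_0,n,m,j_1,\dots,j_n\}$; the shrinking steps preserve nesting, sibling-disjointness, the diameter bound, and the inclusion of earlier boxes in their $G_{n',m'}$, and Cantor's intersection theorem together with injectivity of the branch map (which follows from sibling-disjointness) makes each $C_j$ a Cantor set inside $B_j$, giving density. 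Two cosmetic points: since residual means \emph{containing} a dense $G_\delta$, you should write $R_n\supseteq\bigcap_{m\ge1}G_{n,m}$ rather than equality (this changes nothing downstream); and your reading of the hypothesis as including separability is the right one---a dense Mycielski set forces $X$ to be separable, and in the paper's only application of the lemma $X$ is a closed subset of a compact metric space---though it would be cleaner to state that assumption up front rather than inside a parenthesis.
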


\begin{proof}[Proof of Theorem 1.10]
Since $f$ satisfies DSP and $h_{\rm top}(f)>0$, as shown in the proof of \cite[Lemma 1.3]{Ka4}, there exists a sequence of positive numbers $(\delta_n)_{n\ge2}$ such that
\[
D^n\cap{\rm DC1}_n^{\delta_n}(X,f)
\]
is a residual subset of $D^n$ for all $D\in\mathcal{D}(f)$ and $n\ge2$, where ${\rm DC1}_n^{\delta_n}(X,f)$ is the set of distributionally $n$-$\delta_n$-scrambled $n$-tuples for $f$. By Theorem 1.8, we have $I(f,\phi)\ne\emptyset$ for some $\phi\in C(X)$. Given any $\psi\in C(X)$ with $I(f,\psi)\ne\emptyset$, combining Lemma 6.5 and Theorem 1.9, we see that 
\[
D^n\cap[\overline{T}(f)^n\cap I(f,\psi)^n\cap{\rm DC1}_n^{\delta_n}(X,f)]
\]
is a residual subset of $D^n$ for all $D\in\mathcal{D}(f)$ and $n\ge2$. From Lemma 7.1, it follows that every $D\in\mathcal{D}(f)$ contains a dense Mycielski subset $S$ such that
\[
(x_1,x_2,\dots,x_n)\in\overline{T}(f)^n\cap I(f,\psi)^n\cap{\rm DC1}_n^{\delta_n}(X,f)
\]
for any $n\ge2$ and distinct $x_1,x_2,\dots,x_n\in S$, which completes the proof.
\end{proof}

\appendix
\section{}

The aim of this appendix is to prove the following theorem.

\begin{thm}
Let $I=[0,1]$ and let $f\colon I\to I$ be a continuous map. Then, $f$ is $\overline{\omega}$-chaotic if and only if $h_{\rm top}(f)>0$. 
\end{thm}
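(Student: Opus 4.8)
The statement is an equivalence, and I would prove the two implications separately. The implication ``$f$ is $\overline{\omega}$-chaotic $\Rightarrow h_{\rm top}(f)>0$'' I would argue by contraposition: if $h_{\rm top}(f)=0$ then, by the structure theory of zero entropy interval maps, $\overline{\omega}(x,f)$ is a minimal set for every $x\in[0,1]$ (each orbit is eventually captured in the basin of a single periodic orbit or of a solenoidal minimal Cantor set, and the upper density of the time spent away from it is $0$). Then, if $S\subset[0,1]$ were an $\overline{\omega}$-scrambled set with two distinct points $x,y$, condition $(2)$ would give $\overline{\omega}(x,f)\cap\overline{\omega}(y,f)\ne\emptyset$, hence by minimality $\overline{\omega}(x,f)=\overline{\omega}(y,f)$, contradicting condition $(1)$. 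So $f$ is not $\overline{\omega}$-chaotic.

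For the reverse implication ``$h_{\rm top}(f)>0\Rightarrow f$ is $\overline{\omega}$-chaotic'' the plan is to transport an $\overline{\omega}$-scrambled set from a full shift. By Misiurewicz's theorem some iterate $g=f^{n}$ has a horseshoe; taking its two closed intervals $J_{0},J_{1}$ and refining to full monotone branches $J_{i0},J_{i1}\subset J_{i}$ with $g(J_{ij})=J_{j}$, one obtains a closed $g$-invariant set $\Lambda\subset[0,1]$ and an itinerary factor map $\pi\colon(\Lambda,g)\to(\{0,1\}^{\mathbb{N}},\sigma)$. The key structural point is that the fibres $\pi^{-1}(s)=\bigcap_{k\ge0}(J_{s_0s_1}\cap g^{-1}(J_{s_1s_2})\cap\dots\cap g^{-(k-1)}(J_{s_{k-1}s_k}))$ are pairwise disjoint closed subintervals of $[0,1]$, so all but countably many are degenerate; i.e. $\pi$ is almost one-to-one. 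Since $(\{0,1\}^{\mathbb{N}},\sigma)$ is chain mixing, zero-dimensional and has the shadowing property, it has DSP by Theorem 1.4, and $h_{\rm top}(\sigma)=\log 2>0$. I would then fix a family $\{N_{0}\}\cup\{N_{\alpha}\}_{\alpha\in A}$, with $|A|=|\mathbb{R}|$, of pairwise disjoint Sturmian subshifts of $\{0,1\}^{\mathbb{N}}$ whose rotation numbers, together with $1$, are linearly independent over $\mathbb{Q}$; for each $\alpha$, the construction behind Theorem 1.6 (Lemma 6.4 to produce the required pseudo-orbit, then Lemma 1.1 applied to $\sigma$, then Lemma 5.3) yields $\theta_{\alpha}\in\{0,1\}^{\mathbb{N}}$ with $\overline{\omega}(\theta_{\alpha},\sigma)=N_{0}\cup N_{\alpha}$, and I pick any lift $x_{\alpha}\in\pi^{-1}(\theta_{\alpha})$.

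Using Lemma 5.1 one checks that $\pi$ carries $\overline{\omega}$-limit sets onto $\overline{\omega}$-limit sets, so $\pi(\overline{\omega}(x_{\alpha},g))=N_{0}\cup N_{\alpha}$, whence $\overline{\omega}(x_{\alpha},g)$ meets both $\pi^{-1}(N_{0})$ and $\pi^{-1}(N_{\alpha})$ and is contained in their union. Because $\pi$ is almost one-to-one, each $\pi^{-1}(N)$ (for $N$ one of the Sturmian factors) is an almost automorphic extension of $N$, so it has a unique minimal set $M_{N}$, which lies in every non-empty closed $g$-invariant subset of $\pi^{-1}(N)$; in particular $M_{N_{0}}\subset\overline{\omega}(x_{\alpha},g)$ for all $\alpha$, and $M_{N_{\alpha}}\subset\overline{\omega}(x_{\alpha},g)$, the $M_{N_{\alpha}}$ being pairwise distinct infinite minimal Cantor sets whose maximal equicontinuous factor is the rotation by the rotation number of $N_{\alpha}$. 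Passing back from $g=f^{n}$ to $f$ via Lemma 5.4, i.e. $\overline{\omega}(x,f)=\bigcup_{i<n}f^{i}(\overline{\omega}(x,g))$, I would take $S=\{x_{\alpha}:\alpha\in A\}$ and verify, for distinct $x_{\alpha},x_{\beta}$: condition $(2)$ since $M_{N_{0}}\subset\overline{\omega}(x_{\alpha},f)\cap\overline{\omega}(x_{\beta},f)$; condition $(3)$ since $M_{N_{\alpha}}\subset\overline{\omega}(x_{\alpha},f)$ has no periodic point; and condition $(1)$ since $\overline{\omega}(x_{\beta},f)\subset\bigcup_{i<n}f^{i}(\pi^{-1}(N_{0}\cup N_{\beta}))$, so that if the minimal set $M_{N_{\alpha}}$ met $\overline{\omega}(x_{\beta},f)$ it would be contained in some $f^{i}(\pi^{-1}(N_{0}\cup N_{\beta}))$, and then applying $f^{n-i}$ together with the almost automorphy would force $M_{N_{\alpha}}$ to factor onto $M_{N_{0}}$ or $M_{N_{\beta}}$ --- for $i=0$ this says two distinct minimal sets coincide, and for $i\ge1$ it makes the rotation by the rotation number of $N_{\alpha}$ a factor of that of $N_{0}$ or $N_{\beta}$, both impossible by the rational independence. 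Hence $\overline{\omega}(x_{\alpha},f)\setminus\overline{\omega}(x_{\beta},f)\supset M_{N_{\alpha}}$ is uncountable, $S$ is an uncountable $\overline{\omega}$-scrambled set, and $f$ is $\overline{\omega}$-chaotic.

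The main obstacle is exactly this transfer across the horseshoe semiconjugacy: unlike in S.~H.~Li's argument for ordinary $\omega$-chaos, one cannot just lift a scrambled set along the highly non-injective map $\pi$. One must force all lifted $\overline{\omega}$-limit sets to share a common point --- resolved by choosing the common factor-level minimal set so that its preimage is an almost automorphic extension with a unique minimal set --- and one must keep the differences uncountable both under $\pi$ and under the passage from $f^{n}$ to $f$ --- resolved by taking the varying Sturmian factors with rationally independent rotation numbers, which excludes the stray factor maps between their minimal lifts. A cleaner but possibly more laborious alternative is to place the horseshoe inside a periodic interval of period $n$, making $\Lambda,f(\Lambda),\dots,f^{n-1}(\Lambda)$ pairwise disjoint, after which the $f^{n}$-versus-$f$ bookkeeping in condition $(1)$ becomes automatic.
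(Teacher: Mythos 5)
Your proposal is correct, and its skeleton matches the paper's: zero-entropy structure theory for one direction, and for the other a horseshoe for some iterate $f^{n}$, an itinerary factor map $\pi$ onto $(\{0,1\}^{\mathbb N},\sigma)$ with only countably many non-degenerate fibres, DSP of the shift feeding the Theorem 1.6 machinery to produce points whose $\overline{\omega}$-limit set is a union of two minimal subshifts, and a lift back to the interval. Where you genuinely diverge is in how you defuse the non-injectivity of $\pi$, which you correctly identify as the main obstacle. The paper's Lemma A.3 simply restricts to the continuum-many Toeplitz minimal sets $M$ with $M\cap Q=\emptyset$ (where $Q$ is the countable set of bad fibres), so that $\pi^{-1}(M_{0})$ and $\pi^{-1}(M)$ are themselves minimal and $\overline{\omega}(y_{M},f^{m})=\pi^{-1}(M_{0})\cup\pi^{-1}(M)$ computes exactly; the passage from $f^{m}$ to $f$ is then handled by the same $\equiv$ bookkeeping as in Lemma 6.2. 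You instead keep arbitrary Sturmian fibres, extract the unique minimal set of each almost one-to-one extension $\pi^{-1}(N)$, and rule out unwanted coincidences under $f^{i}$ via maximal equicontinuous factors and rational independence of rotation numbers. This works, but it imports noticeably heavier machinery (proximal extensions preserving the MEF, the factor structure of irrational rotations) for a step the paper dispatches by discarding a countable subfamily --- a simplification available to you verbatim, since your Sturmian family also has cardinality $|\mathbb{R}|$ while $Q$ is countable. Three small repairs: the iterate identity $\overline{\omega}(x,f)=\bigcup_{i<n}f^{i}(\overline{\omega}(x,f^{n}))$ is Lemma 5.2 (parts (2) and (3)), not Lemma 5.4; in the rotation-number step the factor relation runs the other way ($f^{n-i}$ exhibits $M_{N_{0}}$ or $M_{N_{\beta}}$ as a factor of $M_{N_{\alpha}}$, hence $R_{\rho_{0}}$ or $R_{\rho_{\beta}}$ as a factor of $R_{\rho_{\alpha}}$), though rational independence kills it either way; and your opening claim that $h_{\rm top}(f)=0$ forces every $\overline{\omega}(x,f)$ to be minimal is true but not free --- it needs the facts behind the paper's Lemma A.1 (every $\omega$-limit set contains a unique minimal set, and $\Lambda^{2}=M(f)$) together with an ergodic-decomposition argument showing every invariant measure carried by $\omega(x,f)$ is supported on that unique minimal set; as written it is asserted rather than proved, whereas the paper's countability argument via $\Lambda\setminus\Lambda^{2}$ bypasses minimality of $\overline{\omega}(x,f)$ altogether.
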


For the ``only if" part, it suffices to prove the following lemma which slightly generalizes Proposition 4.6 of \cite{SHLi}.

\begin{lem}
Let $f\colon I\to I$ be a continuous map. If there are $x,y\in I$ and closed $f$-invariant subsets $\Lambda(x),\Lambda(y)$ of $I$ such that
\begin{itemize}
\item[(1)] $\Lambda(x)\subset\omega(x,f)$ and $\Lambda(y)\subset\omega(y,f)$,
\item[(2)] $\Lambda(x)\setminus\Lambda(y)$ is an uncountable set,
\item[(3)] $\Lambda(x)\cap\Lambda(y)\ne\emptyset$,
\end{itemize}
then $h_{\rm top}(f)>0$.
\end{lem}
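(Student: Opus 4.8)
The plan is to argue by contradiction: assuming $h_{\rm top}(f)=0$, I will derive that $\Lambda(x)\setminus\Lambda(y)$ is countable, contradicting hypothesis (2); hence $h_{\rm top}(f)>0$. The argument parallels the proof of \cite[Proposition 4.6]{SHLi} and rests on the classical structure theory of $\omega$-limit sets for zero-entropy interval maps, which I would quote: if $g\colon I\to I$ is continuous with $h_{\rm top}(g)=0$, then every infinite $\omega$-limit set $\omega(p,g)$ contains a unique minimal subset $M=M(p,g)$, this $M$ is an infinite (solenoidal Cantor) set, and $\omega(p,g)\setminus M$ is a countable set.

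First I would note that, by hypothesis (2), $\Lambda(x)$ is uncountable, hence infinite, and since $\Lambda(x)\subset\omega(x,f)$ by hypothesis (1), the $\omega$-limit set $\omega(x,f)$ is infinite. Applying the structure theorem to $\omega(x,f)$ produces its unique minimal subset $M$, with $\omega(x,f)\setminus M$ countable.

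Next I would use hypothesis (3): fix $z\in\Lambda(x)\cap\Lambda(y)$. Since $\Lambda(x)$ and $\Lambda(y)$ are closed and $f$-invariant, the orbit closure $Z=\overline{\{f^{i}(z):i\ge0\}}$ is a non-empty closed $f$-invariant subset of $\Lambda(x)\cap\Lambda(y)$, and therefore contains a minimal set $M_{0}$ with $M_{0}\subset\Lambda(x)\cap\Lambda(y)$. In particular $M_{0}$ is a minimal subset of $\omega(x,f)$, so $M_{0}=M$ by uniqueness; hence $M\subset\Lambda(x)\cap\Lambda(y)$, and in particular $M\subset\Lambda(y)$.

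Finally, combining $M\subset\Lambda(y)$ with $\Lambda(x)\subset\omega(x,f)$ gives
\[
\Lambda(x)\setminus\Lambda(y)\ \subset\ \Lambda(x)\setminus M\ \subset\ \omega(x,f)\setminus M,
\]
and the last set is countable by the structure theorem, contradicting hypothesis (2). I expect the main obstacle to be the correct statement and (if one wants a self-contained argument rather than a citation) derivation of the structure theorem for zero-entropy interval maps — the uniqueness of the minimal subset, its infinitude, and the countability of its complement; once that is available, passing from the full $\omega$-limit sets of \cite[Proposition 4.6]{SHLi} to the closed invariant subsets $\Lambda(x),\Lambda(y)$ is straightforward, the two points needing attention being that $\Lambda(x)$ still captures $M$ through the orbit closure of the common point $z$, and that $\Lambda(x)\setminus M$ remains countable simply because it lies inside $\omega(x,f)\setminus M$.
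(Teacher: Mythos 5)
Your proof is correct and takes essentially the same route as the paper's: both argue by contradiction from $h_{\rm top}(f)=0$, use the uniqueness of the minimal subset of $\omega(x,f)$ for zero-entropy interval maps together with hypothesis (3) to show that $\Lambda(x)$ and $\Lambda(y)$ contain the same minimal set $M$, and then conclude that $\Lambda(x)\setminus\Lambda(y)$ is countable, contradicting (2). The only difference is bookkeeping: the paper quotes the countability fact globally as ``$\Lambda\setminus\Lambda^2$ is countable'' together with ``$\Lambda^2=M(f)$'' from \cite{SHLi}, whereas you package the same information locally as ``$\omega(x,f)\setminus M$ is countable,'' an equivalent consequence of those facts.
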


\begin{proof}
We assume $h_{\rm top}(f)=0$ to obtain a contradiction. Let $\Lambda=\bigcup_{p\in I}\omega(p,f)$ and $\Lambda^2=\bigcup_{p\in\Lambda}\omega(p,f)$. Let $M(f)$ denote the set of minimal points for $f$:
\[
M(f)=\{p\in I\colon\text{$p\in\omega(p,f)$ and $\omega(p,f)$ is a minimal set for $f$}\}.
\]
Following the proof of Proposition 4.6 in \cite{SHLi}, we use the following facts (see \cite{SHLi} for details): given any continuous map $f\colon I\to I$,
\begin{itemize}
\item[(i)]  if $h_{\rm top}(f)=0$, then for every $p\in I$, $\omega(p,f)$ contains a unique minimal set for $f$,
\item[(ii)] if $h_{\rm top}(f)=0$, then $\Lambda^2=M(f)$,
\item[(iii)] $\Lambda\setminus\Lambda^2$ is a countable set.
\end{itemize}

By (1) and (i), $\Lambda(x)\subset\Lambda$ (resp.\:$\Lambda(y)\subset\Lambda$) and $\Lambda(x)$ (resp.\:$\Lambda(y)$) contains a unique minimal set $M$ (resp.\:$M'$) for $f$. Since $M\subset M(f)\subset\Lambda^2$, we have 
\[
M\subset\Lambda(x)\cap\Lambda^2.
\]
For every $z\in\Lambda(x)\cap\Lambda^2$, by (ii), we have $z\in M(f)$, so
\[
z\in\omega(z,f)\subset\Lambda(x)
\]
and $\omega(z,f)$ is a minimal set for $f$, implying $z\in\omega(z,f)=M$ by the uniqueness of $M$. It follows that $\Lambda(x)\cap\Lambda^2\subset M$ and so $\Lambda(x)\cap\Lambda^2=M$. Similarly, we obtain $\Lambda(y)\cap\Lambda^2=M'$. By (3), we can take a minimal set $M''$ for $f$ such that
\[
M''\in\Lambda(x)\cap\Lambda(y).
\] 
By the uniqueness of $M$ and $M'$, we obtain $M''=M=M'$ and so
\[
\Lambda(x)\cap\Lambda^2=M=M'=\Lambda(y)\cap\Lambda^2.
\]
It follows that $\Lambda(x)\setminus\Lambda(y)\subset\Lambda\setminus\Lambda^2$, therefore by (iii), $\Lambda(x)\setminus\Lambda(y)$ is a countable set, but this contradicts (2). Thus, we conclude that $h_{\rm top}(f)>0$, proving the lemma.
\end{proof}

\begin{cor}
For any continuous map $f\colon I\to I$, if there are $x,y\in I$ with $x\ne y$ such that $\{x,y\}$ is $\overline{\omega}$-scrambled, then $h_{\rm top}(f)>0$. In particular, if $f$ is $\overline{\omega}$-chaotic, then $h_{\rm top}(f)>0$.
\end{cor}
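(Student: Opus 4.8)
The plan is to deduce Corollary A.1 immediately from Lemma A.1. The one point worth isolating first is the inclusion $\overline{\omega}(p,f)\subset\omega(p,f)$, valid for every continuous map $f$ and every $p$: if $\overline{d}(\{i\ge0\colon f^i(p)\in B_\epsilon(q)\})>0$ then in particular this set of indices is infinite, so $q\in\omega(p,f)$. Together with the fact (recorded in Section 1 and Corollary 5.1) that $\overline{\omega}(p,f)$ is a non-empty closed $f$-invariant set, this makes $\overline{\omega}(\cdot,f)$ a legitimate candidate for the sets $\Lambda(\cdot)$ appearing in Lemma A.1.

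Now I would argue as follows. Suppose $x\ne y$ and $\{x,y\}$ is $\overline{\omega}$-scrambled. Put $\Lambda(x)=\overline{\omega}(x,f)$ and $\Lambda(y)=\overline{\omega}(y,f)$. By the previous paragraph these are closed $f$-invariant subsets of $I$ with $\Lambda(x)\subset\omega(x,f)$ and $\Lambda(y)\subset\omega(y,f)$, so hypothesis (1) of Lemma A.1 holds. Condition (1) in the definition of an $\overline{\omega}$-scrambled set says that $\overline{\omega}(x,f)\setminus\overline{\omega}(y,f)=\Lambda(x)\setminus\Lambda(y)$ is uncountable, which is hypothesis (2) of Lemma A.1, and condition (2) says that $\overline{\omega}(x,f)\cap\overline{\omega}(y,f)=\Lambda(x)\cap\Lambda(y)\ne\emptyset$, which is hypothesis (3). (Condition (3) of the definition, concerning $Per(f)$, is not needed here.) Hence Lemma A.1 applies and gives $h_{\rm top}(f)>0$.

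For the final clause, if $f$ is $\overline{\omega}$-chaotic then by definition $I$ contains an uncountable $\overline{\omega}$-scrambled set $S$; picking any two distinct points $x,y\in S$, the pair $\{x,y\}$ is $\overline{\omega}$-scrambled, and the first part yields $h_{\rm top}(f)>0$.

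I do not expect any real obstacle: the only step requiring thought is the elementary inclusion $\overline{\omega}(p,f)\subset\omega(p,f)$, and the rest is a direct matching of the definition of $\overline{\omega}$-scrambled against the hypotheses of Lemma A.1. All the substance of the ``only if'' direction of Theorem A.1 is carried by Lemma A.1, whose proof in turn rests on the structure theory of zero-entropy interval maps (facts (i)--(iii), taken from \cite{SHLi}).
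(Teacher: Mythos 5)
Your proposal is correct and follows exactly the route the paper intends: the corollary is an immediate application of Lemma A.1 with $\Lambda(x)=\overline{\omega}(x,f)$ and $\Lambda(y)=\overline{\omega}(y,f)$, using the elementary inclusion $\overline{\omega}(p,f)\subset\omega(p,f)$ (already noted in Section 1) together with Corollary 5.1. No further comment is needed.
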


For the proof of the ``if" part, we need the following lemma.

\begin{lem}
Let $X,Y$ be compact metric spaces, let $f\colon X\to X$, $g\colon Y\to Y$ be continuous maps, and let
\[
\pi\colon(X,f)\to(Y,g) 
\]
be a factor map. Then, it holds that $\pi(\overline{\omega}(x,f))=\overline{\omega}(\pi(x),g)$ for every $x\in X$.
\end{lem}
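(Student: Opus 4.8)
The plan is to route everything through the measure-theoretic description of $\overline{\omega}$-limit sets furnished by Lemma 5.1, namely $\overline{\omega}(x,f)=\overline{\bigcup_{\mu\in S_f(x)}{\rm supp}(\mu)}$ and the analogous formula for $g$ at $\pi(x)$. Let $\pi_\ast\colon\mathcal{M}(X)\to\mathcal{M}(Y)$ denote the push-forward $\mu\mapsto\mu\circ\pi^{-1}$, which is continuous when both spaces carry the $\text{weak}^\ast$-topology. Since $\pi\circ f=g\circ\pi$, for every $n\ge1$ we have $\pi_\ast\big(\frac1n\sum_{i=0}^{n-1}\delta_{f^i(x)}\big)=\frac1n\sum_{i=0}^{n-1}\delta_{g^i(\pi(x))}$. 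The first step is to deduce from this identity, the continuity of $\pi_\ast$, and the compactness of $\mathcal{M}(X)$ that $\pi_\ast(S_f(x))=S_g(\pi(x))$: the inclusion $\subseteq$ is immediate upon passing to limits along the subsequence defining a given element of $S_f(x)$, and $\supseteq$ follows by extracting from the empirical measures $\frac1n\sum_{i=0}^{n-1}\delta_{f^i(x)}$ a convergent subsequence in $\mathcal{M}(X)$ and pushing its limit forward.

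The second step is the elementary identity ${\rm supp}(\pi_\ast\mu)=\pi({\rm supp}(\mu))$ for every $\mu\in\mathcal{M}(X)$: the set $\pi({\rm supp}(\mu))$ is closed, being a continuous image of a compact set, and has full $\pi_\ast\mu$-measure, while every point of ${\rm supp}(\mu)$ lands in ${\rm supp}(\pi_\ast\mu)$ because preimages of open neighbourhoods are open. Combining the two steps with Lemma 5.1 and the remark that $\pi(\overline{A})=\overline{\pi(A)}$ for any $A\subset X$ (as $\pi(\overline A)$ is compact, hence closed, and contains $\pi(A)$), one computes $\pi(\overline{\omega}(x,f))=\pi\big(\overline{\bigcup_{\mu\in S_f(x)}{\rm supp}(\mu)}\big)=\overline{\bigcup_{\mu\in S_f(x)}{\rm supp}(\pi_\ast\mu)}=\overline{\bigcup_{\nu\in S_g(\pi(x))}{\rm supp}(\nu)}=\overline{\omega}(\pi(x),g)$.

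I expect the main (still mild) obstacle to be the inclusion $S_g(\pi(x))\subseteq\pi_\ast(S_f(x))$: one must be careful that a subsequence of empirical measures downstairs converging to a given $\nu$ can be thinned further so that the corresponding empirical measures upstairs also converge, which is precisely where compactness of $\mathcal{M}(X)$ is used. As a sanity check, the inclusion $\pi(\overline{\omega}(x,f))\subseteq\overline{\omega}(\pi(x),g)$ also admits a direct proof: given $y\in\overline{\omega}(x,f)$ and $\epsilon>0$, continuity of $\pi$ yields $\delta>0$ with $\pi(B_\delta(y))\subset B_\epsilon(\pi(y))$, so $\{i\ge0\colon g^i(\pi(x))\in B_\epsilon(\pi(y))\}\supseteq\{i\ge0\colon f^i(x)\in B_\delta(y)\}$ has positive upper density; it is the reverse inclusion that genuinely needs the measures.
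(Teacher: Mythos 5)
Your proposal is correct and follows essentially the same route as the paper's own proof: both pass through Lemma 5.1, the identity $\pi_\ast(S_f(x))=S_g(\pi(x))$ obtained from equivariance and compactness of $\mathcal{M}(X)$, and the fact that ${\rm supp}(\pi_\ast\mu)=\pi({\rm supp}(\mu))$. You simply supply more detail (in particular for the inclusion $S_g(\pi(x))\subseteq\pi_\ast(S_f(x))$ and for commuting $\pi$ with closures) than the paper, which leaves these steps as "we can show".
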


\begin{proof}
The proof is similar to that of property $(2)$ in Lemma 5.2. We define
\[
\pi_\ast\colon\mathcal{M}(X)\to\mathcal{M}(Y)
\]
by $\pi_\ast(\mu)(A)=\mu(\pi^{-1}(A))$ for every $\mu\in\mathcal{M}(X)$ and every Borel subset $A$ of $Y$. Then, we can show that $\pi_\ast(S_f(x))=S_g(\pi(x))$ for all $x\in X$. On the other hand, we have $\pi({\rm supp}(\mu))={\rm supp}(\pi_\ast(\mu))$ for all $\mu\in\mathcal{M}(X)$. Thus, by Lemma 5.1, we obtain
\begin{align*}
\pi(\overline{\omega}(x,f))&=\pi(\overline{\bigcup_{\mu\in S_f(x)}{\rm supp}(\mu)})\\
&=\overline{\bigcup_{\mu\in S_f(x)}\pi({\rm supp}(\mu))}=\overline{\bigcup_{\mu\in S_f(x)}{\rm supp}(\pi_\ast(\mu))}\\
&=\overline{\bigcup_{\nu\in S_g(\pi(x))}{\rm supp}(\nu)}=\overline{\omega}(\pi(x),g),
\end{align*}
proving the lemma.
\end{proof}

We prove the ``if" part of Theorem A.1.

\begin{lem}
For any continuous map $f\colon I\to I$, if $h_{\rm top}(f)>0$, then $f$ exhibits $\overline{\omega}$-chaos.
\end{lem}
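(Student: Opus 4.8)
The plan is to deduce the statement from the machinery built in the body of the paper, after localising to a suitable subsystem. It is classical in one-dimensional dynamics that $h_{\rm top}(f)>0$ produces a horseshoe for some iterate: there are $m\ge 1$ and disjoint closed subintervals $J_0,J_1\subseteq I$ with $f^m(J_0)\cap f^m(J_1)\supseteq J_0\cup J_1$, and, after replacing $J_0,J_1$ by suitable monotone subintervals, the closed set $C=\bigcap_{k\ge 0}(f^m)^{-k}(J_0\cup J_1)$ may be taken to be a Cantor set on which $f^m$ is topologically conjugate to the full two-shift $(\{0,1\}^{\mathbb N},\sigma)$. Passing to $\Lambda=\bigcup_{i=0}^{m-1}f^i(C)$, which is a zero-dimensional closed $f$-invariant subset of $I$, I would invoke the classical fact --- this being the only input from interval dynamics --- that, choosing the horseshoe inside a cyclic family of pairwise disjoint intervals and passing to the primitive period if necessary, $(\Lambda,f|_\Lambda)$ is topologically conjugate to a transitive subshift of finite type with $h_{\rm top}(f|_\Lambda)=h_{\rm top}(f)>0$.

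With $\Lambda$ in hand the remainder is routine. A transitive subshift of finite type is chain transitive, zero-dimensional, and has the shadowing property (Section 2.6), so Theorem 1.4 applies and $f|_\Lambda$ has DSP. Since also $h_{\rm top}(f|_\Lambda)>0$, Theorem 1.6 yields an uncountable (indeed $\mathfrak{c}$-dense) $\overline{\omega}$-scrambled subset $S\subseteq\Lambda$ for $f|_\Lambda$. Finally, because $\Lambda$ is closed and $f$-invariant, for every $x\in\Lambda$ the forward orbit of $x$ stays in $\Lambda$, so $\overline{\omega}(x,f|_\Lambda)=\overline{\omega}(x,f)$ as subsets of $I$ and $Per(f|_\Lambda)=Per(f)\cap\Lambda$; hence conditions (1) and (2) in the definition of an $\overline{\omega}$-scrambled set transfer verbatim from $f|_\Lambda$ to $f$, and for (3) any $q\in\overline{\omega}(x,f|_\Lambda)\setminus Per(f|_\Lambda)$ lies in $\overline{\omega}(x,f)$ and, being a point of $\Lambda$, is not in $Per(f)$. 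Thus $S$ is an uncountable $\overline{\omega}$-scrambled set for $f$, and $f$ is $\overline{\omega}$-chaotic.

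I expect the main difficulty to be the first step: extracting from positive entropy a genuinely $f$-invariant (not merely $f^m$-invariant) closed subsystem whose dynamics is still a transitive subshift of finite type. The horseshoe itself only gives an $f^m$-invariant Cantor set, and its $f$-translates may overlap or fail to carry shift dynamics under the intermediate iterates; the standard remedy --- positioning the horseshoe inside a cyclic family of pairwise disjoint intervals, so that $f$ cyclically permutes the translates --- is classical interval-map theory but is the only non-formal ingredient. Everything downstream is a direct appeal to Theorems 1.4 and 1.6 together with the observation that $\overline{\omega}$-limit sets and periodic points are intrinsic to closed invariant subsets.
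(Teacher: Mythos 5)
The downstream part of your argument --- applying Theorems 1.4 and 1.6 to a zero-dimensional transitive SFT subsystem and then transferring the scrambled set back to $f$ because $\overline{\omega}$-limit sets and periodic points are intrinsic to closed invariant subsets --- is fine. The gap is exactly where you anticipated it, and it is not a removable technicality: the ``classical'' fact you invoke is not available in the generality you need. For an arbitrary continuous interval map with $h_{\rm top}(f)>0$, Misiurewicz's theorem gives a strict horseshoe for some $f^m$, but the resulting itinerary map is only a \emph{semi}conjugacy onto $(\{0,1\}^{\mathbb N},\sigma)$: its fibres are pairwise disjoint closed subintervals of $I$, so all but countably many are singletons (this is precisely Ruette's Proposition 5.15, the result the paper cites), but the non-degenerate fibres can be genuine intervals --- think of flat or non-expanding pieces inside the horseshoe branches --- and replacing $J_0,J_1$ by monotone subintervals does not remove them. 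Consequently $C$ need not be a Cantor set, $(C,f^m|_C)$ need not be conjugate to any subshift, and in particular need not be zero-dimensional or have the shadowing property, so Theorem 1.4 does not apply to it. Nor can one repair this by passing to a smaller conjugately embedded subshift: such a subshift must avoid the countable exceptional set $Q$ of non-trivial fibres, and $Q$ may contain every periodic itinerary, whereas every infinite transitive SFT contains periodic points. (The further claims that the translates $f^i(C)$ can be arranged as a cyclic family of pairwise disjoint intervals and that $h_{\rm top}(f|_\Lambda)=h_{\rm top}(f)$ are also unjustified, though only positivity of the entropy is actually needed.)

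The paper's proof circumvents all of this by never requiring the interval subsystem itself to have shadowing or DSP. It runs the construction of Theorem 1.6 \emph{downstairs} in the full shift, which does have DSP, producing points $s_M$ with $\overline{\omega}(s_M,\sigma)=M_0\cup M$ for $\mathfrak{c}$ many infinite minimal sets $M$ chosen to avoid the countable set $Q$, and then pulls these back through the semiconjugacy using Lemma A.2 (namely $\pi(\overline{\omega}(x,f^m))=\overline{\omega}(\pi(x),\sigma)$) together with the injectivity of $\pi$ over $M_0\cup M$; the passage from $f^m$ to $f$ is handled by Lemma 5.2 and an equivalence relation on the translates rather than by a disjoint cyclic family of intervals. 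To keep your architecture you would have to prove the conjugacy you assert, which fails in general; the viable route is to work through the semiconjugacy as the paper does.
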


\begin{proof}
According to Proposition 5.15 of \cite{Ru}, we know that if $h_{\rm top}(f)>0$, then there are $m>0$, a closed $f^m$-invariant subset $Y$ of $I$, and a factor map
\[ 
\pi\colon(Y,f^m)\to(\{0,1\}^\mathbb{N},\sigma),
\]
here $\sigma\colon\{0,1\}^\mathbb{N}\to\{0,1\}^\mathbb{N}$ is the shift map, such that
\[
Q=\{q\in\{0,1\}^\mathbb{N}\colon|\pi^{-1}(q)|>1\}
\]
is a countable set. As in the proof of Lemma 6.2, we take a set $\mathcal{F}$ of infinite minimal sets for $\sigma$ such that $|\mathcal{F}|=|\mathbb{R}|$. Since
\[
|\{M\in\mathcal{F}\colon q\in M\}|\le1
\]
for every $q\in Q$, letting
\[
\mathcal{G}=\{M\in\mathcal{F}\colon M\cap Q=\emptyset\},
\]
we have $|\mathcal{G}|=|\mathbb{R}|$. Fix $M_0\in\mathcal{G}$. Since $\sigma$ has the shadowing property and is chain mixing, $\sigma$ satisfies DSP; therefore, as shown in the proof of Theorem 1.6, for any $M\in\mathcal{G}\setminus\{M_0\}$, we can take $s_M\in\{0,1\}^\mathbb{N}$ such that
\[
\overline{\omega}(s_M,\sigma)=M_0\cup M.
\]
For each $M\in\mathcal{G}\setminus\{M_0\}$, taking $y_M\in Y$ with $\pi(y_M)=s_M$, by Lemma A.2, we obtain
\[
\pi(\overline{\omega}(y_M,f^m))=\overline{\omega}(s_M,\sigma)=M_0\cup M.
\]
Then, by the choice of $\mathcal{G}$, for any $M\in\mathcal{G}\setminus\{M_0\}$, we have
\[
\overline{\omega}(y_M,f^m)=\pi^{-1}(M_0\cup M)=\pi^{-1}(M_0)\cup\pi^{-1}(M),
\]
and $\pi^{-1}(M)$,$\pi^{-1}(M_0)$ are minimal sets for $f^m$. Let
\[
\mathcal{H}=\{\pi^{-1}(M_0)\}\cup\{\pi^{-1}(M)\colon M\in\mathcal{G}\setminus\{M_0\}\}.
\]
Then, for every $N\in\mathcal{H}$, it holds that $f^m(N)=N$,  $f^i(N)$, $0\le i\le m-1$, are minimal sets for $f^m$, and
\[
\langle N\rangle=\bigcup_{i=0}^{m-1}f^i(N)
\]
is an infinite minimal set for $f$. Similarly as in the proof of Lemma 6.2, we define an equivalence relation $\equiv$ in $\mathcal{H}^2$ by for all $N,N'\in\mathcal{H}$, $N\equiv N'$ if and only if $N'=f^k(N)$ for some $0\le k\le m-1$. Since every equivalence class with respect to $\equiv$ has at most $m$ elements, we have $|\mathcal{H}\slash\equiv|=|\mathbb{R}|$. For each $t\in\mathcal{H}\slash\equiv$, take $N^t\in\mathcal{H}$ such that $N^t\in t$. As in the proof of Lemma 6.2, we see that $t\ne t'$ implies $\langle N^t\rangle\ne\langle N^{t'}\rangle$ for all $t,t'\in\mathcal{H}\slash\equiv$. For the $t_0\in\mathcal{H}\slash\equiv$ with $\pi^{-1}(M_0)\in t_0$, we may assume $N^{t_0}=\pi^{-1}(M_0)$. For every $t\in\mathcal{H}\slash\equiv$ with $t\ne t_0$, we have $N^t=\pi^{-1}(M_t)$ for some $M_t\in\mathcal{G}\setminus\{M_0\}$. Let $\mathcal{K}=(\mathcal{H}\slash\equiv)\setminus\{t_0\}$ and let
\[
S=\{y_{M_t}\colon t\in\mathcal{K}\}.
\]
Then, for every $t\in\mathcal{K}$, by properties (2) and (3) in Lemma 5.2, it holds that
\begin{align*}
\overline{\omega}(y_{M_t},f)&=\bigcup_{i=0}^{m-1}\overline{\omega}(f^i(y_{M_t}),f^m)=\bigcup_{i=0}^{m-1}f^i(\overline{\omega}(y_{M_t},f^m))\\
&=\bigcup_{i=0}^{m-1}f^i(\pi^{-1}(M_0)\cup\pi^{-1}(M_t))=\bigcup_{i=0}^{m-1}f^i(\pi^{-1}(M_0))\cup\bigcup_{i=0}^{m-1}f^i(\pi^{-1}(M_t))\\
&=\bigcup_{i=0}^{m-1}f^i(N^{t_0})\cup\bigcup_{i=0}^{m-1}f^i(N^t)=\langle N^{t_0}\rangle\cup\langle N^t\rangle.
\end{align*}
It follows that for all $t,t'\in\mathcal{K}$ with $t\ne t'$, $\overline{\omega}(y_{M_t},f)\setminus\overline{\omega}(y_{M_{t'}},f)=\langle N^t \rangle$, $\overline{\omega}(y_{M_{t'}},f)\setminus\overline{\omega}(y_{M_{t}},f)=\langle N^{t'} \rangle$, and
\[
\overline{\omega}(y_{M_t},f)\cap\overline{\omega}(y_{M_{t'}},f)=\langle N^{t_0}\rangle.
\]
This implies that $S$ is an $\overline{\omega}$-scrambled subset of $I$ with $|S|=|\mathcal{K}|=|\mathcal{H}\slash\equiv|=|\mathbb{R}|$, and so $f$ exhibits $\overline{\omega}$-chaos. Thus, the lemma has been proved.
\end{proof}

\section{}

In this appendix, we discuss properties of $\overline{\omega}$-limit sets other than those discussed in Section 5. Let $X$ be a compact metric space. For any non-empty subset $S$ of $\mathcal{M}(X)$, following \cite{Sig}, let
\[
{\rm Supp}(S)=\overline{\bigcup_{\mu\in S}{\rm supp}(\mu)}.
\]
Let $K(X)$ denote the set of non-empty closed subsets of $X$. Then, it is easy to see that
\[
{\rm Supp}(S)=\bigcap\{C\in K(X)\colon\mu(C)=1,\,\forall\mu\in S\}.
\]
Let $f\colon X\to X$ be a continuous map. For any $x\in X$ and $C\in K(X)$, we have $d(\{i\ge0\colon d(f^i(x),C)<\epsilon\})=1$ for all $\epsilon>0$, if and only if
\[
\lim_{n\to\infty}\frac{1}{n}\sum_{i=0}^{n-1}d(f^i(x),C)=0.
\]
By this, we easily see that $\overline{\omega}(x,f)\subset C$ is equivalent to
\[
\lim_{n\to\infty}\frac{1}{n}\sum_{i=0}^{n-1}d(f^i(x),C)=0.
\]
Given any non-empty subset $Y$ of $X$, from this equivalence, it follows that
\[
\overline{\bigcup_{x\in Y}\overline{\omega}(x,f)}=\bigcap\{C\in K(X)\colon\lim_{n\to\infty}\frac{1}{n}\sum_{i=0}^{n-1}d(f^i(x),C)=0,\,\forall x\in Y\}.
\]
In particular, when $Y=\{x\}$, this means
\[
\overline{\omega}(x,f)=\bigcap\{C\in K(X)\colon\lim_{n\to\infty}\frac{1}{n}\sum_{i=0}^{n-1}d(f^i(x),C)=0\}.
\]
On the other hand, letting $S_f(Y)=\bigcup_{x\in Y}S_f(x)$, by Lemma 5.1, we obtain
\begin{align*}
\overline{\bigcup_{x\in Y}\overline{\omega}(x,f)}&=\overline{\bigcup_{x\in Y}\overline{\bigcup_{\mu\in S_f(x)}{\rm supp}(\mu)}}=\overline{\bigcup_{x\in Y}\bigcup_{\mu\in S_f(x)}{\rm supp}(\mu)}=\overline{\bigcup_{\mu\in S_f(Y)}{\rm supp}(\mu)}\\
&={\rm Supp}(S_f(Y))=\bigcap\{C\in K(X)\colon\mu(C)=1,\,\forall\mu\in S_f(Y)\}.
\end{align*}
We consider the case where $Y=X$. Let $\mathcal{M}_f(X)$ (resp.\:$\mathcal{M}_f^{\rm erg}(X)$) denote the set of $f$-invariant (resp.\:ergodic $f$-invariant) $\mu\in\mathcal{M}(X)$. Then, for every $\mu\in\mathcal{M}_f^{\rm erg}(X)$, by Birkhoff's ergodic theorem, there is $x\in X$ such that
\[
\lim_{n\to\infty}\frac{1}{n}\sum_{i=0}^{n-1}\delta_{f^i(x)}=\mu
\]
and so $\mu\in S_f(x)\subset S_f(X)$. Given any $C\in K(X)$, it follows that if $\mu(C)=1$ for all $\mu\in S_f(X)$, then $\mu(C)=1$ for every $\mu\in\mathcal{M}_f^{\rm erg}(X)$, and this, in turn, implies $\mu(C)=1$ for all $\mu\in\mathcal{M}_f(X)$. Conversely, if $\mu(C)=1$ for all $\mu\in\mathcal{M}_f(X)$, then because $S_f(X)\subset\mathcal{M}_f(X)$, $\mu(C)=1$ for all $\mu\in S_f(X)$.
Therefore, by the above equalities, we obtain
\begin{align*}
\overline{\bigcup_{x\in X}\overline{\omega}(x,f)}&=\bigcap\{C\in K(X)\colon\lim_{n\to\infty}\frac{1}{n}\sum_{i=0}^{n-1}d(f^i(x),C)=0,\,\forall x\in X\}\\
&=\bigcap\{C\in K(X)\colon\mu(C)=1,\,\forall\mu\in\mathcal{M}_f(X)\}\\
&={\rm Supp}(\mathcal{M}_f(X)).
\end{align*}
This closed $f$-invariant subset of $X$ is often called the {\em measure center} for $f$. For any minimal set $M$ for $f$, we have $M=\overline{\omega}(x,f)$ for every $x\in M$, so
\[
M\subset\overline{\bigcup_{x\in X}\overline{\omega}(x,f)}.
\]
It follows that the measure center for $f$ contains the closure of the union of all minimal sets for $f$. Let $\overline{R}(f)=\{x\in X\colon x\in\overline{\omega}(x,f)\}$. Then, it is obvious that
\[
\overline{\overline{R}(f)}\subset\overline{\bigcup_{x\in X}\overline{\omega}(x,f)}.
\]
Since
\[
\overline{R}(f)=\bigcap_{l=1}^\infty\bigcup_{m=1}^\infty\bigcap_{N=1}^\infty\bigcup_{n\ge N}\{x\in X\colon\frac{1}{n}|\{0\le i\le n-1\colon d(f^i(x),x)<\frac{1}{l}\}|>\frac{1}{m}\},
\]
$\overline{R}(f)$ is a Borel subset of $X$. Given any $\mu\in\mathcal{M}_f^{\rm erg}(X)$, by Birkhoff's ergodic theorem, the set of generic points for $\mu$
\[
G_\mu=\{x\in X\colon\lim_{n\to\infty}\frac{1}{n}\sum_{i=0}^{n-1}\delta_{f^i(x)}=\mu\}
\]
is a Borel subset of $X$ and satisfies $\mu(G_\mu)=1$. Since
\[
x\in{\rm supp}(\mu)=\overline{\omega}(x,f)
\]
for all $x\in G_\mu\cap{\rm supp}(\mu)$, we obtain $G_\mu\cap{\rm supp}(\mu)\subset\overline{R}(f)$ and so 
$\mu(\overline{R}(f))=1$, implying $\mu(\overline{\overline{R}(f)})=1$. Since $\mu\in\mathcal{M}_f^{\rm erg}(X)$ is arbitrary, it follows that $\mu(\overline{\overline{R}(f)})=1$ for every $\mu\in\mathcal{M}_f(X)$. Therefore, by the above equality, we obtain
\[
\overline{\bigcup_{x\in X}\overline{\omega}(x,f)}\subset\overline{\overline{R}(f)}.
\]
In other words, the measure center for $f$ coincides with the closure of $\overline{R}(f)$.

\end{document}